\newcommand\abs[1]{\ensuremath{\lvert #1\rvert}}
\newtheorem{theorem}{Theorem}[section]
\newtheorem{lemma}[theorem]{Lemma}
\newtheorem{corollary}[theorem]{Corollary}
\newtheorem{proposition}[theorem]{Proposition}
\newtheorem{conjecture}[theorem]{Conjecture}
\newtheorem{observation}[theorem]{Observation}
\newtheorem{claim}{Claim}
\newtheorem*{claim*}{Claim}
\newtheorem{problem}{Problem}
\newenvironment{proofofclaim}[1][Proof.]{%
    \begin{proof}[{#1}]%
        }{%
    \end{proof}}
\theoremstyle{definition}
\newtheorem{definition}[theorem]{Definition}
\newcommand\cT{\mathcal{T}}
\newcommand{\gen}[1]{\ensuremath{\langle #1\rangle}}
\newcommand{\branch}{\ensuremath{V_{\neq 2}}}
\newcommand{\maxk}{\ensuremath{k^{\star}}}
\def\lowfwd #1#2#3{{\mathop{\kern0pt #1}\limits^{\kern#2pt\raise.#3ex
\vbox to 0pt{\hbox{$\scriptscriptstyle\rightarrow$}\vss}}}}
\def\lowbkwd #1#2#3{{\mathop{\kern0pt #1}\limits^{\kern#2pt\raise.#3ex
\vbox to 0pt{\hbox{$\scriptscriptstyle\leftarrow$}\vss}}}}
\DeclareMathOperator{\arc}{arc}
        \newcommand{\tikzwall}[6]{% 
            %inputs: #columns, #rows, x, y, vertex style, edge style
            % #columns and #rows have to be at least 2 (otherwise blank output)
            % (x,y) are the bottom left coordinates
            % wall has vertex at (x,y) iff x+y is odd
            \begin{scope}
                \pgfmathtruncatemacro{\c}{ #1 * 2 - 1 }
                \pgfmathtruncatemacro{\r}{ #2 - 1}
                \pgfmathtruncatemacro{\basex}{#3}
                \pgfmathtruncatemacro{\basey}{#4}
                \pgfmathtruncatemacro{\bm}{Mod(\basex+\basey,2)}
                \pgfmathtruncatemacro{\bmi}{Mod(\basex+\basey+1,2)}
                
                \ifthenelse{\c > 1 \and \r > 0}{
                    \foreach \x in {0, ..., \c}{
                        \ifthenelse{0 < \x \and \x < \c}{
                            \pgfmathtruncatemacro{\start}{0}
                            \pgfmathtruncatemacro{\t}{\r}
                        }{
                            \ifthenelse{\x = 0}{
                                \pgfmathtruncatemacro{\start}{\bmi}
                                \pgfmathtruncatemacro{\t}{\r - Mod(\r+\bm,2)}
                            }{
                                \pgfmathtruncatemacro{\start}{\bm}
                                \pgfmathtruncatemacro{\t}{\r - Mod(\r+\bmi,2)}
                            }
                        }
                        \foreach \y in {\start,...,\t} {
                            \ifthenelse{\start < \t}{
                                \node at (\basex+\x,\basey+\y) [#5] (v\x-\y) {};
                            }{}
                        }
                    }
                    \foreach \x in {0, ..., \c}{
                        \foreach \y in {0,...,\r} {
                            \pgfmathtruncatemacro{\nextx}{\x+1}
                            \pgfmathtruncatemacro{\nexty}{\y+1}
                            \pgfmathtruncatemacro{\sum}{\x+\y+\bm}
                            \pgfmathtruncatemacro{\p}{Mod(\r+\bmi,2)}
                            \pgfmathtruncatemacro{\q}{Mod(\r+\bm,2)}
                            
                            \ifthenelse{\isodd{\sum} \and \y < \r}{
                                \draw [#6] (v\x-\y) edge (v\x-\nexty);
                            }{}
                            
                            \ifthenelse{\x < \c}{
                                \ifthenelse{0 < \y \and \y < \r}{
                                    \draw [#6] (v\x-\y) edge (v\nextx-\y);
                                }{
                                    \ifthenelse{\y = 0 \and \the\numexpr \bmi - 1 \relax < \x \and \x < \the\numexpr \c - \bm \relax}{
                                        \draw [#6] (v\x-\y) edge (v\nextx-\y);
                                    }{}
                                    \ifthenelse{\y = \r \and \the\numexpr \q - 1 \relax < \x \and \x < \the\numexpr \c - \p \relax}{
                                        \draw [#6] (v\x-\y) edge (v\nextx-\y);
                                    }{}
                                    
                                }
                            }{
                            }
                        }
                    }
                }{}
            \end{scope}
        }
        \newcommand{\subdividewall}[3]{%
            \begin{scope}
                \pgfmathtruncatemacro{\c}{ #1 * 2 - 1 }
                \pgfmathtruncatemacro{\r}{ #2 - 1}
                
                \foreach \x in {0, ..., \the\numexpr \c \relax} {
                    \foreach \y in {0, ..., \the\numexpr \r \relax} {
                        \pgfmathtruncatemacro{\z}{\x+\y}
                        \ifthenelse{\z = 0}{}{
                            \ifthenelse{\isodd{\r}}{
                                \ifthenelse{\x = 0 \and \y = \r}{}{
                                    \ifthenelse{\x = \c}{}{
                                        \node [#3] at (\x+0.5,\y) {};
                                    }
                                    \ifthenelse{\isodd{\z}}{
                                        \ifthenelse{\y = \r}{}{
                                            \node [#3] at (\x,\y+0.5) {};
                                        }
                                    }{}
                                }
                            }{
                                \ifthenelse{\x = \the\numexpr \c-1 \relax \and \y = \r}{}{
                                    \ifthenelse{\x = \c}{}{
                                        \node [#3] at (\x+0.5,\y) {};
                                    }
                                    \ifthenelse{\isodd{\z}}{
                                        \ifthenelse{\y = \r}{}{
                                            \node [#3] at (\x,\y+0.5) {};
                                        }
                                    }{}
                                }
                            }
                        }
                    }
                }
            \end{scope}
        }
        \newcommand{\eschercycle}[3]{%
            \begin{scope}
                \pgfmathtruncatemacro{\h}{#1}
                \pgfmathtruncatemacro{\x}{#2}
                \renewcommand{\c}[0]{#3}
                \pgfmathtruncatemacro{\y}{\h-\x-1}
                \pgfmathtruncatemacro{\p}{0}
                
                \pgfmathtruncatemacro{\z}{Mod(\h-1,2)}
                \draw [out=20, \c, dashed] (2*\x+1,0) .. controls (3*\h,-\h+1) and (3*\h,2*\h-2) .. (2*\y+\z,\h-1);
                
                \ifthenelse{\x < \the\numexpr \h/2 \relax}{
                    \pgfmathtruncatemacro{\p}{0}
                }{
                    \pgfmathtruncatemacro{\p}{3}
                }
                
                \foreach \i in {0,...,\the\numexpr \x-1 \relax}{
                    \ifthenelse{\x > 0}{
                        \ifthenelse{\isodd{\i}}{
                            \draw [\c, thick, dashed, dash phase = \p pt] (2*\x,\i) edge (2*\x,\i+1) {};
                        }{
                            \draw [\c, thick, dashed, dash phase = \p pt] (2*\x+1,\i) edge (2*\x+1,\i+1) {};
                        }
                        \ifthenelse{\i = \the\numexpr \x - 1 \relax}{
                        }{
                            \draw [\c, thick, dashed, dash phase = \p pt] (2*\x,\i+1) edge (2*\x+1,\i+1) {};
                        }
                    }{}
                }
                \foreach \i in {\x,...,\the\numexpr \h-2 \relax}{
                    \ifthenelse{\x < \the\numexpr \h-1 \relax}{
                        \ifthenelse{\isodd{\i}}{
                            \draw [\c, thick, dashed, dash phase = \p pt] (2*\y,\i) edge (2*\y,\i+1) {};
                        }{
                            \draw [\c, thick, dashed, dash phase = \p pt] (2*\y+1,\i) edge (2*\y+1,\i+1) {};
                        }
                        \ifthenelse{\i = \x}{
                        }{
                            \draw [\c, thick, dashed, dash phase = \p pt] (2*\y,\i) edge (2*\y+1,\i) {};
                        }
                    }{}
                }
                
                \ifthenelse{\isodd{\h}}{
                    \ifthenelse{\x = 0}{
                        \pgfmathtruncatemacro{\a}{1}
                        \pgfmathtruncatemacro{\b}{2*\h-2}
                    }{
                        \ifthenelse{\x = \the\numexpr \h-1 \relax}{
                            \pgfmathtruncatemacro{\a}{0}
                            \pgfmathtruncatemacro{\b}{2*\h-3}
                        }{
                            \ifthenelse{\the\numexpr 2*\x \relax = \the\numexpr \h-1 \relax}{
                                \pgfmathtruncatemacro{\a}{\h-1}
                                \pgfmathtruncatemacro{\b}{\h-1}
                            }{
                                \ifthenelse{\x < \the\numexpr \h/2 \relax}{
                                    \ifthenelse{\isodd{\x}}{
                                        \pgfmathtruncatemacro{\a}{2*\x+1}
                                        \pgfmathtruncatemacro{\b}{2*\y-1}
                                    }{
                                        \pgfmathtruncatemacro{\a}{2*\x}
                                        \pgfmathtruncatemacro{\b}{2*\y}
                                    }
                                }{
                                    \ifthenelse{\isodd{\x}}{
                                        \pgfmathtruncatemacro{\a}{2*\y}
                                        \pgfmathtruncatemacro{\b}{2*\x}
                                    }{
                                        \pgfmathtruncatemacro{\a}{2*\y+1}
                                        \pgfmathtruncatemacro{\b}{2*\x-1}
                                    }
                                }
                            }
                        }
                    }
                }{
                    \ifthenelse{\x = 0 \or \x = \the\numexpr \h-1 \relax}{
                        \pgfmathtruncatemacro{\a}{1}
                        \pgfmathtruncatemacro{\b}{2*\h-2}
                    }{
                        \ifthenelse{\x < \the\numexpr \h/2 \relax}{
                            \ifthenelse{\isodd{\x}}{
                                \pgfmathtruncatemacro{\a}{2*\x+1}
                                \pgfmathtruncatemacro{\b}{2*\y-1}
                            }{
                                \pgfmathtruncatemacro{\a}{2*\x}
                                \pgfmathtruncatemacro{\b}{2*\y}
                            }
                        }{
                            \ifthenelse{\isodd{\x}}{
                                \pgfmathtruncatemacro{\a}{2*\y}
                                \pgfmathtruncatemacro{\b}{2*\x}
                            }{
                                \pgfmathtruncatemacro{\a}{2*\y+1}
                                \pgfmathtruncatemacro{\b}{2*\x-1}
                            }
                        }
                    }
                }
                
                \foreach \i in {\a,...,\b} {
                    \ifthenelse{\x < \the\numexpr \h/2 \relax \and \i < \the\numexpr \h \relax}{
                        \pgfmathtruncatemacro{\q}{3-\p}
                    }{
                        \ifthenelse{\i < \the\numexpr \h-1 \relax}{
                            \pgfmathtruncatemacro{\q}{3-\p}
                        }{
                            \pgfmathtruncatemacro{\q}{\p}
                        }
                    }
                    \draw [\c, thick, dashed, dash phase = \q pt] (\i,\x) edge (\i+1,\x) {};
                }
            \end{scope}
        }
        \newcommand{\serialcycle}[3]{%
            \begin{scope}
                \pgfmathtruncatemacro{\k}{#1}
                \pgfmathtruncatemacro{\h}{2*\k-1}
                \pgfmathtruncatemacro{\x}{#2}
                \renewcommand{\c}[0]{#3}
                \pgfmathtruncatemacro{\p}{0}
                
                \draw [out=20, \c, thick, dashed] (0,2*\x-2) .. controls (-2,2*\x-1.5) .. (0,2*\x-1);
                \draw [out=20, \c, thick, dashed] (2*\h+1,2*\x-2) .. controls (2*\h+3,2*\x-1.5) .. (2*\h+1,2*\x-1);
                \draw [out=20, \c, thick, dashed] (4*\x-3,0) .. controls (4*\x-2,-2) .. (4*\x-1,0);
                
                \ifthenelse{\x = 1}{
                    \draw [\c, thick, dashed] (0,0) -- (1,0) {};
                }{}
                \ifthenelse{\x = \k}{
                    \draw [\c, thick, dashed] (0,2*\k-1) -- (1,2*\k-1) {};
                }{}
                
                \foreach \i in {0, ..., \the\numexpr 2*\h \relax} {
                    \ifthenelse{\i < 3 \and \x = 1}{}{
                        \ifthenelse{\i = \the\numexpr 4*\x-4 \relax}{}{
                            \ifthenelse{\i = \the\numexpr 4*\x-3 \relax}{}{
                                \draw [\c, thick, dashed] (\i,2*\x-2) -- (\i+1,2*\x-2) {};
                            }
                         }
                    }
                    \ifthenelse{\i = 0 \and \x = \k}{}{
                        \draw [\c, thick, dashed] (\i,2*\x-1) -- (\i+1,2*\x-1) {};
                    }
                }
                
                \foreach \i in {0, ..., \the\numexpr 2*\x-3 \relax} {
                    \pgfmathtruncatemacro{\p}{Mod(\i,2)}
                    \ifthenelse{\x = 1}{}{
                        \draw [\c, thick, dashed, dash phase = 3pt] (4*\x-3-\p,\i) -- (4*\x-3-\p,\i+1) {};
                        \draw [\c, thick, dashed, dash phase = 3pt] (4*\x-1-\p,\i) -- (4*\x-1-\p,\i+1) {};
                        \ifthenelse{\i > 0}{
                            \draw [\c, thick, dashed, dash phase = 3pt] (4*\x-4,\i) -- (4*\x-3,\i) {};
                            \draw [\c, thick, dashed, dash phase = 3pt] (4*\x-2,\i) -- (4*\x-1,\i) {};
                        }{}
                    }
                }
                
            \end{scope}
        }
        \newcommand{\mixedcycle}[3]{%
            \begin{scope}
                \pgfmathtruncatemacro{\k}{#1}
                \pgfmathtruncatemacro{\h}{2*\k}
                \pgfmathtruncatemacro{\x}{#2}
                \renewcommand{\c}[0]{#3}
                \pgfmathtruncatemacro{\p}{0}
                
                \draw [out=20, \c, thick, dashed] (0,\x-1) .. controls (-1-\k+\x,\h/2 - 0.5) .. (0,\h-\x);
                \draw [out=20, \c, thick, dashed] (4*\x-3,0) .. controls (4*\x-2,-2) .. (4*\x-1,0);
                
                \ifthenelse{\x = 1}{
                    \draw [\c, thick, dashed] (0,0) -- (1,0) {};
                    \draw [\c, thick, dashed] (0,\h-1) -- (1,\h-1) {};
                }{}
                
                \foreach \i in {0, ..., \the\numexpr \h-\x-1 \relax} {
                    \pgfmathtruncatemacro{\p}{Mod(\i,2)}
                    \draw [\c, thick, dashed, dash phase = 3pt] (4*\x-1-\p,\i) -- (4*\x-1-\p,\i+1) {};
                        \ifthenelse{\i < \the\numexpr \h-\x-1 \relax}{
                            \draw [\c, thick, dashed, dash phase = 3pt] (4*\x-2,\i+1) -- (4*\x-1,\i+1) {};
                        }{}
                }
                
                \ifthenelse{\x > 1}{
                    \foreach \i in {0, ..., \the\numexpr \x-2 \relax} {
                        \pgfmathtruncatemacro{\p}{Mod(\i,2)}
                        \draw [\c, thick, dashed, dash phase = 3pt] (4*\x-3-\p,\i) -- (4*\x-3-\p,\i+1) {};
                        \ifthenelse{\i < \the\numexpr \x-2 \relax}{
                            \draw [\c, thick, dashed, dash phase = 3pt] (4*\x-4,\i+1) -- (4*\x-3,\i+1) {};
                        }{}
                    }
                }{}
                
                \pgfmathtruncatemacro{\q}{Mod(\x+1,2)}
                \foreach \i in {0, ..., \the\numexpr 4*\x \relax} {
                    \ifthenelse{\x = 1 \and \i = 0}{}{
                        \ifthenelse{\i < \the\numexpr 4*\x-4+\q \relax}{
                            \draw [\c, thick, dashed] (\i,\x-1) -- (\i+1,\x-1) {};
                        }{}
                    }
                    \ifthenelse{\x = 1 \and \i = \the\numexpr \h-\x \relax}{}{
                        \ifthenelse{\i < \the\numexpr 4*\x-1-\q \relax}{
                            \draw [\c, thick, dashed] (\i,\h-\x) -- (\i+1,\h-\x) {};
                        }{}
                    }
                }
            \end{scope}
        }
\begin{document}
\title[Erd\H{o}s-P\'{o}sa for cycles in graphs labelled by multiple abelian groups]{A unified Erd\H{o}s-P\'{o}sa theorem for cycles in graphs labelled by multiple abelian groups}
\author{J.~Pascal Gollin} 
\author{Kevin Hendrey}

\address[Gollin]{\small FAMNIT, University of Primorska, Koper, Slovenia.}
\address[Hendrey]{\small School of Mathematics, Monash University, Melbourne, Australia}
\address[Kwon, Oum]{\small Discrete Mathematics Group, Institute~for~Basic~Science~(IBS), Daejeon,~South~Korea.}

\author{O-joung Kwon}

\address[Kwon]{\small Department of Mathematics, Hanyang~University, Seoul,~South~Korea.}

\author{Sang-il~Oum}

\author{Youngho Yoo}
\address[Yoo]{\small Department of Mathematics, Texas A\&M University, College Station, TX, USA}

\email{pascal.gollin@famnit.upr.si}
\email{kevin.hendrey1@monash.edu}
\email{ojoungkwon@hanyang.ac.kr}
\email{sangil@ibs.re.kr}
\email{yyoo.math@gmail.com}
\thanks{All authors except the last are supported by the Institute for Basic Science (IBS-R029-C1). The first author is supported by the Institute for Basic Science (IBS-R029-Y3) and in part by the Slovenian Research and Innovation Agency (research project N1-0370). 
The second author is also supported by the Australian Research Council.
The third author is also supported by the National Research Foundation of Korea (NRF) grant funded by the Ministry of Science and ICT (No. NRF-2021K2A9A2A11101617 and RS-2023-00211670). The last author is partially supported by NSERC PGSD2-532637. }
\date{}

\begin{abstract}
    In 1965, Erd\H{o}s and P\'{o}sa proved that there is an (approximate) duality between the maximum size of a packing of cycles and the minimum size of a vertex set hitting all cycles.
    Such a duality does not hold for odd cycles, and Dejter and Neumann-Lara asked in 1988 to find all pairs~${(\ell, z)}$ of integers where such a duality holds for the family of cycles of length~$\ell$ modulo~$z$. 
    We characterise all such pairs, and we further generalise this characterisation to cycles in graphs labelled with a bounded number of abelian groups, whose values avoid a bounded number of elements of each group. 
    This unifies almost all known types of cycles that admit such a duality, and it also provides new results. 
    Moreover, we characterise the obstructions to such a duality in this setting, and thereby obtain an analogous characterisation for cycles in graphs embeddable on a fixed compact orientable surface. 
\end{abstract}

\keywords{Erd\H{o}s-P\'osa property, group-labelled graph}

\subjclass[2020]{05C38, 05C70, 05C78, 05C25}

\maketitle

\section{Introduction}
\label{sec:intro}

Erd\H{o}s and P\'{o}sa~\cite{ErdosP1965} proved in 1965 that every graph contains either~$k$ vertex-disjoint cycles or a set of~${\mathcal{O}(k\log k)}$ vertices that hits all cycles of the graph. 
This breakthrough result sparked extensive research on finding hitting-packing dualities for various graph classes. 
In particular, cycles with modularity constraints have been considered. 
For example, Thomassen~\cite{Thomassen1988} showed that for every positive integer~$z$, an analogue of the Erd\H{o}s-P\'{o}sa theorem holds for cycles of length~$0$ modulo~$z$, and Thomas and Yoo~\cite{YooR2020} proved that for every integer~${\ell}$ and every odd prime power~${z}$, an analogue of the Erd\H{o}s-P\'{o}sa theorem holds for cycles of length~${\ell}$ modulo~${z}$. 
However, this property does not hold for all pairs of integers~$\ell$ and~$z$. 
Lov\'asz and Schrijver (see~\cite{Thomassen1988}) found a class of graphs, so called \emph{Escher walls} (see Figure~\ref{fig:obstructions-intro}(a)), which demonstrate that such a duality does not exist for odd cycles. 
Reed~\cite{Reed1999} showed that large Escher walls are contained in every graph that contains neither many vertex-disjoint odd cycles nor a small set of vertices hitting all odd cycles, yielding a structural characterisation of graphs failing to satisfy this type of hitting-packing duality for odd cycles. 
Using this structural characterisation, Reed concluded that a \emph{half-integral} version of the Erd\H{o}s-P\'{o}sa theorem holds: every graph contains either a set of~$k$ odd cycles such that each vertex of the graph is contained in at most two of the cycles, or a set of at most~$f(k)$ vertices that hits all odd cycles of the graph. 
In the precursor to this paper, Gollin, Hendrey, Kawarabayashi, Kwon, and Oum~\cite{GollinHKKO2021} generalised this conclusion and established a unified framework for generating half-integral Erd\H{o}s-P\'{o}sa results, as we will discuss. 

Escher walls can be modified to give infinitely many pairs~${(\ell, z)}$ for which an analogue of the Erd\H{o}s-P\'{o}sa theorem does not hold for cycles of length~$\ell$ modulo~$z$.
This was essentially shown by Dejter and Neumann-Lara~\cite{DejterN1988}, who then asked to find all pairs~${(\ell, z)}$ of integers for which an analogue of the Erd\H{o}s-P\'{o}sa theorem does hold for cycles of length~$\ell$ modulo~$z$. 
Note that a half-integral analogue of the Erd\H{o}s-P\'{o}sa theorem holds for all pairs~${(\ell,z)}$ (see~\cite{GollinHKKO2021}). 
In this paper, we answer the question of Dejter and Neumann-Lara completely as a corollary of our main result. 
For an integer~$m$, let~${[m]}$ denote the set of positive integers~${z}$ with~${z \leq m}$. 

\begin{theorem}
    \label{cor:mainmod}
    Let~$\ell$ and~$z$ be integers with~${z \geq 2}$, and let~${p_1^{a_1} \cdots p_n^{a_n}}$ be the prime factorisation of~$z$ with~${p_{i} < p_{i+1}}$ for all~${i \in [n-1]}$.
    The following statements are equivalent. 
    \begin{itemize}
        \item There is a function~${f \colon \mathbb{N} \to \mathbb{N}}$ such that 
            for every positive integer~$k$, every graph contains~$k$ vertex-disjoint cycles of length~$\ell$ modulo~$z$ or a set of at most~${f(k)}$ vertices hitting all such cycles. 
    \item Both of the following conditions are satisfied. 
        \begin{enumerate}
            \item If~${p_1 = 2}$, then~${\ell \equiv 0 \pmod {p_1^{a_1}}}$. 
            \item There do not exist distinct~${i_1, i_2, i_3 \in [n]}$ such that~${\ell \not\equiv 0 \pmod {p_{i_j}^{a_{i_j}}}}$ for each~${j \in [3]}$.
        \end{enumerate}
    \end{itemize}
\end{theorem}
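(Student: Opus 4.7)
My plan is to deduce Theorem~\ref{cor:mainmod} from the main theorem (Theorem~\ref{thm:mainmod}) via a Chinese Remainder reduction. Set $\Gamma_{i} := \mathbb{Z}/p_{i}^{a_{i}}\mathbb{Z}$ and $\ell_{i} := \ell \bmod p_{i}^{a_{i}}$. Using the isomorphism $\mathbb{Z}/z\mathbb{Z} \cong \prod_{i=1}^{n} \Gamma_{i}$, I label every edge of an input graph by the generator $1$ in each $\Gamma_{i}$. A cycle has length $\ell$ modulo $z$ precisely when, for each $i \in [n]$, its $\Gamma_{i}$-value equals $\ell_{i}$; equivalently, the cycle avoids the forbidden set $F_{i} := \Gamma_{i} \setminus \{\ell_{i}\}$ of size $p_{i}^{a_{i}}-1$. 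Thus the family of cycles under consideration is a family of $F$-avoiding cycles in a graph labelled by the tuple $(\Gamma_{1}, \ldots, \Gamma_{n})$ with bounded forbidden sets, which is exactly the setting of Theorem~\ref{thm:mainmod}.

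For the backward direction, I apply Theorem~\ref{thm:mainmod} to the tuple $(\Gamma_{i}, F_{i})_{i \in [n]}$ defined above. The theorem's hypothesis will rule out a small list of structural obstructions---in the present setting, Escher walls and their serial and mixed generalisations---whenever certain combinatorial conditions on $(\Gamma_{i}, F_{i})_{i \in [n]}$ are met. I need to verify that conditions (1) and (2) of the corollary are exactly these combinatorial conditions. Condition~(1) corresponds to $0 \notin F_{1}$ when $p_{1} = 2$, which excludes the classical Lov\'asz--Schrijver parity obstruction in the $2$-torsion factor; condition~(2) restricts to at most two the number of indices $i$ with $\ell_{i} \neq 0$, which is exactly what is needed to preclude the obstruction coming from three independent twist families.

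For the forward direction I construct explicit obstructions whenever (1) or (2) fails. If (1) fails, a sufficiently large Escher wall with edges labelled uniformly yields a graph in which, by a short count modulo $2^{a_{1}}$, no two vertex-disjoint cycles of length $\ell$ modulo $z$ exist, while every small vertex set leaves such a cycle behind. If (2) fails and $i_{1}, i_{2}, i_{3}$ are three distinct indices with $\ell_{i_{j}} \neq 0$, I build a serial or mixed wall of sufficient height with three families of non-standard ``twist'' edges labelled non-trivially in $\Gamma_{i_{1}}, \Gamma_{i_{2}}, \Gamma_{i_{3}}$ respectively; a target cycle must then traverse one twist of each of the three prescribed types, and a routine pigeonhole--modular argument in $\prod_{j} \Gamma_{i_{j}}$ rules out both large packings and small hitting sets.

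The main obstacle is translating the hypothesis of Theorem~\ref{thm:mainmod}---which will be phrased in terms of intrinsic properties of the list $(\Gamma_{i}, F_{i})_{i \in [n]}$---into the elementary number-theoretic conditions (1) and (2) of the corollary. Once this dictionary is in place, the backward direction is a direct invocation of Theorem~\ref{thm:mainmod}, and the forward direction amounts to the sequence of wall constructions outlined above, with the verification in each case being a modular computation in the relevant factor $\Gamma_{i}$.
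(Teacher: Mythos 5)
Your proposal is correct in outline, and it takes a genuinely different route from the paper's. Where you apply the Chinese Remainder Theorem upfront to decompose $\mathbb{Z}_z$ into its prime-power factors $\Gamma_i = \mathbb{Z}/p_i^{a_i}\mathbb{Z}$, so that the allowable set $A$ collapses to the singleton $\{(\ell_1,\dots,\ell_n)\}$, the paper (in Corollary~\ref{cor:Sml}, which it proves in place of Theorem~\ref{cor:mainmod} directly) keeps $\mathbb{Z}_z$ as a single coordinate of the product, adjoins an auxiliary $\mathbb{Z}$ coordinate to handle a length lower bound and $\mathcal S$-constraints, and invokes the refined Theorem~\ref{thm:maingroup-involved} rather than Theorem~\ref{thm:maingroup}. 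Your decomposition makes the group-theoretic verification more transparent: with $A$ a singleton, condition~\ref{item:mainintro1} of Theorem~\ref{thm:maingroup} is literally ``$(\ell_1,\dots,\ell_n)$ has odd order,'' and condition~\ref{item:mainintro2} reduces to ``at most two coordinates $\ell_i$ are non-zero'' after a CRT argument on pairwise-coprime moduli $\operatorname{ord}(\pi_{i}(\cdot))$ together with the fact that subgroups of a cyclic $p$-group form a chain. The paper does essentially the same arithmetic but inside $\mathbb{Z}_z$, working explicitly with the integers $q_i = \prod_{j\neq i}p_j^{a_j}$; the net computation is equivalent, but the paper's version must be bundled with the $\mathcal S$-cycle and length-$\geq L$ machinery since it proves the stronger Corollary~\ref{cor:Sml} in one pass.

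Two remarks on gaps you would need to fill in. First, the reference \verb|\ref{thm:mainmod}| does not resolve to anything in the paper; for the sufficient direction you want Theorem~\ref{thm:maingroup} (applied to the product group with the all-$1$ labelling), and for the necessary direction you can either construct obstructions directly or appeal to Theorem~\ref{thm:maingroup2} / Proposition~\ref{prop:obstructions}. If you go via the abstract group-labelled counterexamples, you also need the explicit translation from a labelled graph back to an unlabelled one --- replacing each edge $e$ by a path of length $\gamma(e)\pmod z$ --- which you do not mention; the paper performs exactly this step in the proof of Theorem~\ref{thm:planarmod}. Second, your forward direction is quite compressed: ``a short count modulo $2^{a_1}$'' and a ``routine pigeonhole--modular argument'' are not trivial for a reader to reconstruct. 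What actually underpins the $\leq 2$-disjoint-cycle claim in the Escher wall is property~\ref{item:obstructions-even} of Definition~\ref{def:obstructions} (forcing the number of handles traversed to be odd whenever $\operatorname{ord}(a^*)$ is even) followed by Proposition~\ref{prop:obstructions}; for the three-series obstruction, it is property~\ref{item:obstructions-minimality} (no proper subfamily generates $a^*$) together with Proposition~\ref{prop:obstructions}. You should either state these facts explicitly, or reuse the explicit constructions and verifications already carried out in the proof of Theorem~\ref{thm:maingroup2}.
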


Other types of constraints for cycles have been considered. 
Given a set~$S$, an \emph{$S$-cycle} is a cycle containing a vertex in~$S$. 
Kakimura, Kawarabayashi, and Marx~\cite{KakimuraKM2011} showed that an analogue of the Erd\H{o}s-P\'{o}sa theorem holds for $S$-cycles. 
Bruhn, Joos, and Schaudt~\cite{BruhnJS2017} extended this result to $S$-cycles of length at least some fixed integer~$L$. 
Note that there cannot be an extension of their result to odd $S$-cycles due to Escher walls, but there are other types of obstructions, see Figure~\ref{fig:obstructions-intro}(b). 
Kakimura and Kawarabayashi~\cite{KakimuraK2013} showed a half-integral analogue of the Erd\H{o}s-P\'{o}sa theorem for odd $S$-cycles. 

Given a family~$\mathcal{S}$ of sets, an \emph{$\mathcal{S}$-cycle} is a cycle containing at least one vertex from each member of~$\mathcal{S}$. 
Huynh, Joos, and Wollan~\cite{HuynhJW2017} proved an analogue of the Erd\H{o}s-P\'{o}sa theorem for ${(S_1,S_2)}$-cycles. 
An extension of their result to ${(S_1,S_2,S_3)}$-cycles fails, and 
a third type of obstruction appears in this setting, see Figure~\ref{fig:obstructions-intro}(c). 
A half-integral analogue of the Erd\H{o}s-P\'{o}sa theorem holds for all finite families~$\mathcal{S}$ (see~\cite{GollinHKKO2021}). 

\begin{figure}[htbp]
    \centering

    \begin{subfigure}{0.45\linewidth}
        \centering
        \begin{tikzpicture}
            [scale=0.4]
                \tikzset{vx/.style = {circle, draw, fill=black!0, inner sep=0pt, minimum width=4pt}}
                \tikzset{vxsub/.style = {circle, draw, fill=black!50, inner sep=0pt, minimum width=2pt}}
                
                \pgfmathtruncatemacro{\h}{6}

                \pgfmathtruncatemacro{\height}{\h+1}

                \tikzwall{\height}{\height}{0}{0}{vx}{gray}
                \foreach \x in {0,...,\the\numexpr \h-1 \relax} {
                    \pgfmathtruncatemacro{\y}{\h-\x-1}
                    \pgfmathtruncatemacro{\source}{\x+1}
                    \pgfmathtruncatemacro{\targetx}{2*\height-1}
                    \pgfmathtruncatemacro{\targety}{\y}
                    \draw [red,dashed,rounded corners=5pt] %
                    (v0-\source) 
                    to (-0.5*\x-1,\x+1) 
                    to (-0.5*\x-1,\h+.2*\x+.5)
                    to (2*\h+0.5*\x+2,\h+.2*\x+.5)
                    to (2*\h+0.5*\x+2,\y)
                    to (v\targetx-\targety);

                }
        \end{tikzpicture}
    \caption{An \emph{Escher wall}, the obstruction for odd cycles. We refer to the arrangement of the red dashed paths around the wall as `crossing'}
    \end{subfigure}
    \hfill
    \begin{subfigure}{0.48\linewidth}
        \centering
        \begin{tikzpicture}[scale=0.4]
            \tikzset{vx/.style = {circle, draw, fill=black!0, inner sep=0pt, minimum width=4pt}}
            \tikzset{vxsub/.style = {circle, draw, fill=black!50, inner sep=0pt, minimum width=2pt}}
            \tikzset{vx1/.style = {circle, draw, fill=red, inner sep=0pt, minimum width=4pt}}
            
            \pgfmathtruncatemacro{\kay}{4}
            \pgfmathtruncatemacro{\height}{2 * \kay}
            \tikzwall{\height}{\height}{0}{0}{vx}{gray}
            \pgfmathtruncatemacro{\targetx}{2*\height-1}
            \pgfmathtruncatemacro{\targetxm}{2*\height-2}

            \foreach \i in {1,...,\kay} {
                \pgfmathtruncatemacro{\s}{\i-1}
                \pgfmathtruncatemacro{\t}{\height-\i}
                
                \ifthenelse{\i=1}
                {
                    \draw [blue, thick, dashed,rounded corners=5pt,out=180,in=180] (v1-\s) -- ++(-3-0.2*\kay+0.5*\i,0) -- ++(0,\height-1) -- (v1-\t);
                } 
                {
                    \draw [blue, thick, dashed,rounded corners=5pt,out=180,in=180] (v0-\s) -- ++(-2-0.2*\kay+0.5*\i,0) -- ++(0,\t-\i+1) -- (v0-\t);
                }

                \node at (2*\height,2*\i-1.5) (a\i) [vx1] {};
                \pgfmathtruncatemacro{\tone}{2*\i-2}
                \pgfmathtruncatemacro{\ttwo}{2*\i-1}
                \draw [red,thick] (v\targetx-\tone)  to (a\i) to (v\targetx-\ttwo);

            }

        \end{tikzpicture}
        \caption{An obstruction for odd $S$-cycles, where vertices in $S$ are shown in red. We refer to the arrangement of the blue dashed paths around the wall as `nested', and of the red solid paths as `in series'.}
    \end{subfigure}
    \hfill
        \begin{subfigure}{\linewidth}
            \centering
            \begin{tikzpicture}[scale=0.3]
                \tikzset{vx/.style = {circle, draw, fill=black!0, inner sep=0pt, minimum width=4pt}}
                \tikzset{vxsub/.style = {circle, draw, fill=black!50, inner sep=0pt, minimum width=2pt}}
                \tikzset{vx1/.style = {circle, draw, fill=red!30, inner sep=0pt, minimum width=4pt}}
                \tikzset{vx2/.style = {circle, draw, fill=blue!30, inner sep=0pt, minimum width=4pt}}
                \tikzset{vx3/.style = {circle, fill=green!70!blue, draw,
                inner sep=0pt, minimum width=4pt}}
                
                \pgfmathtruncatemacro{\kay}{6}
                \pgfmathtruncatemacro{\height}{2 * \kay}
                \pgfmathtruncatemacro{\kaym}{\kay-1}
                \tikzwall{\height}{\height}{0}{0}{vx}{gray}
                \pgfmathtruncatemacro{\targetx}{2*\height-1}
                \foreach \i in {2,...,\kaym} {
                    \pgfmathtruncatemacro{\di}{2 * \i}
                    \pgfmathtruncatemacro{\yone}{2*\i-2}
                    \pgfmathtruncatemacro{\ytwo}{2*\i-1}

                    \ifthenelse{\di>\kay}{
                        \node [vx2] at (-1.5,2*\i-1.5) (b\i) {\tiny 2};
                        \draw [blue, thick, dotted] (v0-\yone) [out=180,in=-60] to (b\i) [out=60,in=180] to (v0-\ytwo);
                    }{
                        \node [vx1] at (-1.5,2*\i-1.5) (b\i) {\tiny 1};
                        \draw [red, thick, dotted] (v0-\yone) [out=180,in=-60] to (b\i)[out=60,in=180] to (v0-\ytwo);
                    }
                    \node at (2*\height+0.5,2*\i-1.5) (a\i) [vx3] {\tiny 3};
                    \draw [green!70!blue, thick, dotted] (v\targetx-\yone)  [out=0,in=-120] to (a\i)[out=120,in=0] to (v\targetx-\ytwo);
                }
                \pgfmathtruncatemacro{\yone}{2*\kay-2}
                \pgfmathtruncatemacro{\ytwo}{2*\kay-1}
                \node [vx2] at (-1.5,2*\kay-1.5) (b) {\tiny 2};
                \draw [blue, thick, dotted] (v0-\yone)[out=180,in=-60]  to(b) [out=60,in=180]to (v1-\ytwo);
                \node [vx1] at (-1.5,2*1-1.5) (c) {\tiny 1};

                \draw [red, thick, dotted] (v1-0) to[out=180,in=-60]  (c)[out=60,in=180] to (v0-1);

            \end{tikzpicture}
        \caption{An obstruction for $(S_1,S_2,S_3)$-cycles, where vertices in $S_1$, $S_2$, and $S_3$ are shown in red, blue, and green colours respectively.}
    \end{subfigure}

    \begin{subfigure}{\linewidth}
        \centering
        \begin{tikzpicture}[scale=0.3]
            \tikzset{vx/.style = {circle, draw, fill=black!0, inner sep=0pt, minimum width=4pt}}
            \tikzset{vxsub/.style = {circle, draw, fill=black!50, inner sep=0pt, minimum width=2pt}}
            
            \tikzset{c1/.style={cyan, line width=6pt,opacity=0.8,rounded corners}}
            \tikzset{c2/.style={purple, line width=6pt,opacity=0.8,rounded corners}}
            \tikzset{vx1/.style = {circle, draw, solid, black, fill=red!30, inner sep=0pt, minimum width=4pt}}
            \tikzset{vx2/.style = {circle, draw, solid, black, fill=blue!30, inner sep=0pt, minimum width=4pt}}
            \tikzset{vx3/.style = {circle, black, solid, fill=orange, draw,
            inner sep=0pt, minimum width=4pt}}

            \draw [c1] (0,11) to (25,11);

            \draw [c1] (0,2) to (13,2)
            to ++(0,1) to ++(1,0) to ++(0,1) to ++(1,0) to ++(0,1) to ++(1,0) to ++(0,1) to (25,6);

            \draw [c2] (0,9) to ++(3,0) to ++(0,-1) to ++(1,0) to ++(0,-1) to ++(-1,0) to ++(0,-1) to (0,6);

            \draw [c2] (0,5) to (10,5)
            to ++(0,1) to ++(1,0) to ++(0,1) to ++(1,0) 
            to ++(0,1) to (25,8);
            
            \draw [c2] (25,3) to ++(-3,0) to ++(0,1) to (25,4); 

            \draw [c1] (0,1) to ++(7,0) to ++(0,-1) to (25,0);

            \tikzwall{13}{13}{0}{0}{vx}{gray}
            \foreach \i in {1,2,3,4} {
                \pgfmathtruncatemacro{\yone}{12-\i}
                \pgfmathtruncatemacro{\yonee}{-1+\i}
                \draw [red,thick,dotted,rounded corners=5pt] (v25-\yonee) 
                to ++(7-0.5*\i,0) to ++(0,3.5) node [vx1] {\tiny 3}
                to ++(0,9.5-2*\i)
                to (v25-\yone);
                \pgfmathtruncatemacro{\ytwo}{13-\i}
                \pgfmathtruncatemacro{\iminus}{8-\i}
                \draw [blue,thick,dotted,rounded corners=5pt] (v0-\ytwo)
                to ++(-1-0.5*\i,0) to ++(0,1.5*\i) 
                to ++(10+2*\i,0) node [vx2]{\tiny 2} 
                to ++(17-\i,0) to ++(0,-0.5*\i-5-\i)                
                to (v25-\iminus);
            }   
            \foreach \i in {1,2,3,4}{
                \pgfmathtruncatemacro{\seriesone}{2*\i-1}
                \pgfmathtruncatemacro{\seriestwo}{2*\i}
                \draw [orange,thick,dotted,out=180,in=-180] (v0-\seriesone) 
                to ++(-3,.5) node [vx3]{\tiny 1}
                to (v0-\seriestwo) ;
            }
        \end{tikzpicture}
        \caption{An obstruction for cycles containing at least one vertex of $S_1$, odd number of vertices in $S_2$, and odd number of vertices in $S_3$, where vertices in $S_1$, $S_2$, and $S_3$ are shown in orange, blue, and red colours respectively. This is an example where there are two vertex-disjoint such cycles (marked in the figure) but no three vertex-disjoint such cycles.}
    \end{subfigure}
    \caption{Some obstructions for Erd\H{o}s-P\'{o}sa type results for constrained cycles. Dashed lines represent paths of odd length, solid lines (including lines in the wall) represent paths of even length, and dotted lines represent arbitrary paths.%
    }
    \label{fig:obstructions-intro}
\end{figure}

\subsection*{Our main theorem on group-labelled graphs} We consider a unified approach to discuss a vast number of such constraints in a common setting. 
For an abelian group~${\Gamma}$, a \emph{$\Gamma$-labelling} of a graph~$G$ is a function~${\gamma \colon E(G) \to \Gamma}$.
The \emph{$\gamma$-value} of a subgraph~$H$ of~$G$ is the sum of~${\gamma(e)}$ over all edges~$e$ in~$H$. 

Cycles of length~$\ell$ modulo~$z$ can be naturally encoded in the setting of~${\mathbb{Z}_z}$-labelled graphs, where each edge has value~$1$ and the target cycles have values exactly~$\ell$. 
Given a set~$S$, $S$-cycles can be encoded as non-zero cycles with respect to the $\mathbb{Z}$-labelling which assigns value~$1$ to edges incident with vertices in~$S$ and~$0$ to all other edges. 
Using multiple abelian groups, we may encode cycles satisfying several properties simultaneously.  
A more comprehensive discussion on how to encode different types of constraints can be found in~\cite{GollinHKKO2021}.

Gollin et al.~\cite{GollinHKKO2021} considered graphs labelled by multiple abelian groups and proved that a half-integral analogue of the Erd\H{o}s-P\'{o}sa theorem holds for cycles whose values avoid a fixed finite set for each abelian group.
Henceforth we shall call these cycles (or the cycles in any class for which we are attempting to prove or disprove an Erd\H{o}s-P\'{o}sa property) the \emph{allowable} cycles.
In this paper, we extend their work by proving necessary and sufficient conditions on such sets of values for which the allowable cycles satisfy an analogue of the Erd\H{o}s-P\'osa theorem.

In fact, we prove a characterisation of the structural obstructions to an analogue of the Erd\H{o}s-P\'osa theorem in this setting; this is a far-reaching generalisation of Reed's result~\cite{Reed1999}. 
These obstructions are described in Section~\ref{sec:structuralthm} 
(see Definition~\ref{def:obstructions}).
This allows us to prove, for all positive integers~$k$ and~$t$, that if a graph~$G$ is labelled with multiple abelian groups and~$\mathcal{O}$ is the set of cycles in~$G$ whose values avoid a fixed finite set for each abelian group, then $G$ contains either 
\begin{itemize}
    \item a \emph{packing} in~$\mathcal{O}$ of size~$k$ (that is a subset of~$\mathcal{O}$ of pairwise vertex-disjoint cycles), 
    \item a \emph{hitting set} for~$\mathcal{O}$ of bounded size (that is a set of vertices hitting each cycle in~$\mathcal{O}$), or
    \item a subgraph that is `equivalent' to some obstruction described in Definition~\ref{def:obstructions} 
    for some subset of abelian groups,
    and which contains a \emph{half-integral packing} in~$\mathcal{O}$ of size~$t$ (that is a subset of~$\mathcal{O}$ such that each vertex of~$G$ is contained in at most two of the cycles), but no packing of more than two cycles in~$\mathcal{O}$. 
\end{itemize}
For the precise statement, see Theorem~\ref{thm:mainobstruction}. 

Let us now give a loose description of the obstructions. 
Each obstruction consists of a wall, in which every cycle has value zero in every group, together with a collection of sets of paths arranged around the boundary of the wall, so that each set of paths is  `nested', `crossing', or `in series' (see Figure~\ref{fig:obstructions-intro} for examples). 
Moreover, this collection of sets is minimally sufficient to find allowable cycles, in that every allowable cycle contains a path from each of these sets, and every cycle which contains exactly one path from each set is allowable.
Additionally, every allowable cycle must contain an odd number of paths from each set that is not in series.
Finally, one of the following conditions must be satisfied: 
\begin{itemize}
    \item the number of crossing sets of paths is odd (see for example Figure~\ref{fig:obstructions-intro}(a)), 
    \item at least one but not all sets are arranged in series (see for example Figure~\ref{fig:obstructions-intro}(b)), or
    \item at least three sets of these paths are arranged in series (see for example Figure~\ref{fig:obstructions-intro}(c)). 
\end{itemize}

As we show in Subsection~\ref{subsec:reallyobstructions},  
these obstructions do not contain a packing of more than two allowable cycles. 
On the other hand, the minimum size of a hitting set for the allowable cycles can be made arbitrarily large by taking the wall and the sets of paths to be sufficiently large (see Theorem~\ref{thm:mainobstruction}).

\subsection*{Corollaries to graphs labelled by a single group} As a corollary of this structural result, we find a pair of necessary and sufficient conditions on abelian groups and forbidden values for an analogue of the Erd\H{o}s-P\'{o}sa theorem for allowable cycles as defined above. 
These conditions are motivated by the obstructions as follows. 
Let~$\Gamma$ be an abelian group and let~$A\subseteq \Gamma$ be a set of allowable values.
First, if there is an allowable value~$a\in A$ such that the subgroup $\gen{2a}$ generated by $2a$ does not contain an allowable value, then it is possible to construct an analogue of the Escher wall obstruction in Figure~\ref{fig:obstructions-intro}(a).
Second, if there exist three group elements generating an allowable value such that no two of them generate an allowable value, then it is possible to construct an analogue of the obstruction in Figure~\ref{fig:obstructions-intro}(c).

\begin{restatable}{theorem}{groupnecessary}
    \label{thm:maingroup2}
    Let~$A$ be a subset of an abelian group~$\Gamma$
    such that at least one of the following conditions fails to hold:
   \begin{enumerate}
        [label=(\arabic*)]
        \item \label{item:mainintro1-2} ${\gen{2a} \cap A \neq \emptyset}$ for all~${a \in A}$,
        \item \label{item:mainintro2-2} if~${a,b,c \in \Gamma}$ and ${\gen{a,b,c} \cap A \neq \emptyset}$, then ${(\gen{a,b} \cup \gen{b,c} \cup \gen{a,c}) \cap A \neq \emptyset}$.
    \end{enumerate}
    Then for every positive integer~$t$, there is a graph~$G_{\Gamma,A,t}$ with a $\Gamma$-labelling~$\gamma$ such that for the set~$\mathcal{O}$ of cycles of~$G_{\Gamma,A,t}$ with~$\gamma$-values in~$A$,
    there are no two vertex-disjoint cycles in~$\mathcal{O}$ and there is no hitting set for~$\mathcal{O}$ of size at most~$t$.
\end{restatable}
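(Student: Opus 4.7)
The proof splits into two cases, each producing a different obstruction.

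\textbf{Case 1 (condition~\ref{item:mainintro1-2} fails).} Choose $a \in A$ with $\langle 2a\rangle \cap A = \emptyset$. Given $t$, take an Escher wall (as in Figure~\ref{fig:obstructions-intro}(a)) of sufficiently large height $n$, label every edge of the wall by $0\in\Gamma$, and label each crossing path so that its $\gamma$-value is~$a$. A cycle using $j$ of the crossing paths has value $\varepsilon_1 a + \dots + \varepsilon_j a$ for signs $\varepsilon_i \in \{\pm 1\}$; when $j$ is even this value lies in $\langle 2a \rangle$ and hence not in $A$, and when $j = 1$ the value is $\pm a$, which is in $A$ after an appropriate orientation. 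Hence members of $\mathcal{O}$ are ``odd'' cycles in the Escher wall. The classical Lov\'asz--Schrijver analysis (see~\cite{Reed1999}) shows that no two odd cycles in the Escher wall are vertex-disjoint, and that the minimum odd-cycle transversal grows with $n$; choosing $n$ large enough gives an obstruction with no hitting set of size~$t$.

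\textbf{Case 2 (condition~\ref{item:mainintro2-2} fails).} Fix $a, b, c \in \Gamma$ and $d = \alpha a + \beta b + \gamma c \in \langle a, b, c \rangle \cap A$ such that $(\langle a, b\rangle \cup \langle b, c\rangle \cup \langle a, c\rangle) \cap A = \emptyset$. Replacing $(a, b, c)$ by $(\alpha a, \beta b, \gamma c)$ only shrinks the pairwise subgroups, so we may assume $a + b + c \in A$. For large $n$, take a planar wall $W$ of height $n$ (all edges labelled $0$) and attach three ``in-series'' families of $n$ balloon paths $\mathcal{P}_a, \mathcal{P}_b, \mathcal{P}_c$ along three pairwise disjoint arcs of the boundary of $W$ (as in Figure~\ref{fig:obstructions-intro}(c)); label the balloons of $\mathcal{P}_a, \mathcal{P}_b, \mathcal{P}_c$ with $\gamma$-values $a, b, c$, respectively. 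Any cycle missing a whole family has value in a pairwise subgroup disjoint from $A$, so is not in $\mathcal{O}$; a cycle using exactly one balloon from each family has value $a + b + c \in A$, so is in $\mathcal{O}$. Consequently each cycle in $\mathcal{O}$ uses at least one balloon from every family, and so its intersection with $W$ contains a connected subgraph reaching all three boundary arcs. A planarity argument on $W$ (exploiting that the three sets of balloon feet occupy disjoint boundary arcs, so wall paths linking them must interleave and therefore meet) forces any two cycles of~$\mathcal{O}$ to share a vertex of $W$; the minimum transversal again grows with~$n$.

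\textbf{Main obstacle.} The delicate step is the no-two-vertex-disjoint property in Case~2: a cycle in $\mathcal{O}$ using several balloons per family may have value $\varepsilon a + \eta b + \theta c$ for nontrivial signed integers $\varepsilon, \eta, \theta$, which could itself lie in $A$. The argument nevertheless applies, since every member of $\mathcal{O}$ still uses at least one balloon from each family, so the disjointness question reduces to the planar-paths problem on~$W$ described above.
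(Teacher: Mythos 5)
Your constructions agree with the paper's: in Case 1 a wall with a crossing handlebar (Escher wall) labelled so that the crossing paths have value $a$ and the wall has value $0$, and in Case 2 a wall with three in-series handlebars labelled $a,b,c$ after replacing $(a,b,c)$ by $(\alpha a, \beta b, \gamma c)$; the reduction to $a+b+c\in A$ and the argument that no small hitting set exists are also the same. The difference is how you justify the no-two-disjoint-cycles claim: the paper proves a single Proposition (prop:obstructions) that any $(G,\gamma)\in\mathcal{C}(\kappa,\theta,\Gamma,A)$ satisfying (O6a) or (O6c) has no two disjoint allowable cycles, via a surface-gluing/$\mathbb{Z}_2$-parity argument in the crossing case and an explicit $K_5$-minor argument (adding apex vertices $w_1,w_2,w_3$) in the three-in-series case; you instead appeal to ``classical Lov\'asz--Schrijver analysis'' in Case 1 and a sketched planarity/interleaving argument in Case 2. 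Your interleaving intuition in Case 2 is on the right track, but as stated it is not a proof — the paper's $K_5$-minor argument is the rigorous version, and it takes some care (e.g.\ you need to explain why $G'$ with the apex vertices is planar, and why the two cycles together with a connecting wall path and the three apices yield a $K_5$ minor).

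One technical slip worth fixing: you repeatedly write cycle values as $\varepsilon_1 a + \dots + \varepsilon_j a$ with signs $\varepsilon_i\in\{\pm1\}$, and say ``$\pm a$, which is in $A$ after an appropriate orientation.'' This paper's labelling is on \emph{undirected} edges (not orientations, as in Huynh--Joos--Wollan), so there are no signs: a cycle using $j$ crossing paths has value exactly $ja$. For $j$ even, $ja\in\gen{2a}$ so $ja\notin A$; for $j=1$, the value is $a\in A$. Note also that cycles using an odd number $j\ge 3$ of crossing paths need not lie in $\mathcal O$ (we don't know $3a\in A$), so $\mathcal{O}$ is only a \emph{subset} of the topologically odd cycles; this is harmless (a subfamily of a pairwise-intersecting family is pairwise-intersecting, and the single-crossing-path cycles already witness a large hitting set), but the phrasing ``members of $\mathcal{O}$ are `odd' cycles'' should be ``members of $\mathcal{O}$ use an odd number of crossing paths.''
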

Note that conditions~\ref{item:mainintro1-2} and~\ref{item:mainintro2-2} are necessary in general.
It turns out that the obstruction in  Figure~\ref{fig:obstructions-intro}(b) does not give rise to any new necessary conditions here; the natural condition would be two elements~$a$ and~$b$ generating an allowable value such that~$2a$ and~$b$ do not generate an allowable value, but in this case it is easy to see that it is also possible to construct an analogue of the Escher wall.
We will see later that Figure~\ref{fig:obstructions-intro}(b) does give a new necessary condition when restricted to graphs embedded on a fixed compact orientable surface because such a surface does not admit an embedding of large Escher walls (see Theorem~\ref{thm:planarmod} and Subsection~\ref{subsec:orientable}).

Let us now discuss the sufficiency of conditions~\ref{item:mainintro1-2} and~\ref{item:mainintro2-2} in Theorem~\ref{thm:maingroup2}.
Labellings of graphs by multiple abelian groups~$\Gamma_1$, $\ldots$, $\Gamma_m$ can be regarded as a single~$\Gamma$-labelling by the product group~$\Gamma = \prod_{j\in[m]}\Gamma_j$.
In this case, for~${g = (g_j \colon j \in [m]) \in \Gamma}$, we write~${\pi_j(g)}$ to denote~${g_j \in \Gamma_j}$.
We prove that conditions~\ref{item:mainintro1-2} and~\ref{item:mainintro2-2} are sufficient under the additional assumption that~$A$ is the set of values avoiding a fixed finite set in each $\Gamma_j$. 

\begin{theorem}
    \label{thm:maingroup}
    For all positive integers~$m$ and~$\omega$, there is a function~${f_{m,\omega} \colon \mathbb{N} \to \mathbb{N}}$ satisfying the following property. 
    Let ${\Gamma = \prod_{j \in [m]} \Gamma_j}$ be a product of~$m$ abelian groups, and for each~${j \in [m]}$, let~$\Omega_j$ be a subset of~$\Gamma_j$ with~${\abs{\Omega_j} \leq \omega}$.
    Let~$A$ be the set of all elements~${g \in \Gamma}$ such that~${\pi_j(g) \in \Gamma_j \setminus \Omega_j}$ for all~${j \in [m]}$. 
    If 
    \begin{enumerate}
        [label=(\arabic*)]
        \item \label{item:mainintro1} ${\gen{2a} \cap A \neq \emptyset}$ for all~${a \in A}$ and 
        \item \label{item:mainintro2} if~${a,b,c \in \Gamma}$ and~${\gen{a,b,c} \cap A \neq \emptyset}$, then~${(\gen{a,b} \cup \gen{b,c} \cup \gen{a,c}) \cap A \neq \emptyset}$, 
    \end{enumerate}
    then for every $\Gamma$-labelled graph~$G$ with a $\Gamma$-labelling~$\gamma$ and its set~${\mathcal{O}}$ of all cycles whose $\gamma$-values are in~$A$
    and for all~${k \in \mathbb{N}}$, there exists 
    a set of~$k$ pairwise vertex-disjoint cycles in~$\mathcal{O}$
    or a hitting set for~$\mathcal{O}$ of size at most~${f_{m,\omega}(k)}$. 
\end{theorem}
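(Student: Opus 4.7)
The plan is to derive Theorem~\ref{thm:maingroup} directly from the structural dichotomy of Theorem~\ref{thm:mainobstruction} by verifying that, under hypotheses~\ref{item:mainintro1} and~\ref{item:mainintro2}, none of the obstructions described in Definition~\ref{def:obstructions} can actually appear. Concretely, given the input~$k$, I would invoke Theorem~\ref{thm:mainobstruction} with that~$k$ and with~$\kappa$ chosen large enough that any candidate obstruction must contain an allowable cycle (for instance $\kappa = 3$). The conclusion yields either the desired packing of~$k$ cycles in~$\mathcal{O}$, the desired hitting set of size bounded by some $f_{m,\omega}(k)$ furnished by Theorem~\ref{thm:mainobstruction} (whose dependence on $m$ and $\omega$ is inherited from the half-integral Erd\H{o}s-P\'osa theorem of~\cite{GollinHKKO2021}), or a subgraph equivalent to one of the three kinds of obstruction; all of the work then consists in showing that this last alternative is incompatible with~\ref{item:mainintro1} and~\ref{item:mainintro2}.

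Recall from the informal description preceding Theorem~\ref{thm:mainobstruction} that every obstruction of Definition~\ref{def:obstructions} consists of a wall, whose cycles have $\gamma$-value~$0$ in every coordinate, together with several sets of boundary paths arranged as nested, crossing, or in series; a cycle traversing the wall is allowable precisely when it uses an odd number of paths from each nested or crossing set and at least one path from each series set, and the collection of sets is minimal with this property. I would treat each obstruction type by extracting explicit group elements from the $\gamma$-values of the boundary paths and showing that they witness the failure of one of the hypotheses. In the case where the number of crossing sets is odd (of which Figure~\ref{fig:obstructions-intro}(a) is the prototype), the crossing arrangement permits one to replace the odd number of paths of a chosen set inside an allowable cycle by an even number while still completing a cycle through the wall; since the resulting cycle must fail to be allowable, the two $\gamma$-values produced by this operation force some~${a \in A}$ with~${\langle 2a \rangle \cap A = \emptyset}$, contradicting~\ref{item:mainintro1}. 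The mixed case, where at least one but not all sets are in series, supports the same Escher-wall-style doubling inside its non-series portion and again contradicts~\ref{item:mainintro1}. Finally, in an obstruction with at least three series sets, picking a single path from each of three such sets yields elements $a, b, c \in \Gamma$ whose associated cycle value shows ${\langle a, b, c \rangle \cap A \neq \emptyset}$, while the minimality of the obstruction (no proper sub-collection of the series sets already suffices to force allowability) implies that ${(\langle a,b \rangle \cup \langle b,c \rangle \cup \langle a,c \rangle) \cap A = \emptyset}$, contradicting~\ref{item:mainintro2}.

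The main difficulty will be bookkeeping: translating the geometric ``minimally sufficient'' and ``odd parity'' conditions of Definition~\ref{def:obstructions} into clean algebraic statements about the subset $A \subseteq \Gamma$, and verifying simultaneously across all $m$ coordinates that the extracted element $a$ (respectively triple $a,b,c$) really does falsify~\ref{item:mainintro1} (respectively~\ref{item:mainintro2}). The fact that $A$ is defined coordinatewise by avoidance of sets of size at most~$\omega$ should be essential here, both to make sure the doubling and pair-projection arguments produce witnesses in the full product group and to keep the quantitative bound $f_{m,\omega}$ under control; beyond this case analysis, no further induction on~$k$ is needed because Theorem~\ref{thm:mainobstruction} already provides the correct quantitative bound.
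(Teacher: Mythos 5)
Your overall plan—invoke Theorem~\ref{thm:mainobstruction}, then show that under hypotheses~\ref{item:mainintro1} and~\ref{item:mainintro2} the third (obstruction) alternative is impossible—is exactly the paper's route; the paper just packages it through the slightly more general Theorem~\ref{thm:maingroup-involved}, of which Theorem~\ref{thm:maingroup} is the case~${m' = m}$. The paper's case analysis is, however, tighter than yours: rather than treating~\ref{subitem:obstructions-oddcrossing},~\ref{subitem:obstructions-seriesnonseries}, and~\ref{subitem:obstructions-3series} as three independent geometric situations, it first observes that hypothesis~\ref{item:mainintro1} together with property~\ref{item:obstructions-even} already forces \emph{every} $W$-handlebar~$\mathcal{P}_i$ to be in series. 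The algebra is simple and worth internalising: a transversal gives an element~${a = \sum_i \gamma(P_i) \in A}$ by~\ref{item:obstructions-allowabletransversals}; by~\ref{item:mainintro1} some~${2x a \in A}$; weighting each~$P_i$ by~$2x$ and all other paths by~$0$ produces an even total weight on every~$\mathcal{P}_i$, so~\ref{item:obstructions-even} says every~$\mathcal{P}_i$ is in series. This kills~\ref{subitem:obstructions-oddcrossing} and~\ref{subitem:obstructions-seriesnonseries} in one stroke, without any "replace odd by even paths and look at the two cycle values" construction, which as you have phrased it does not actually produce a witness to~${\gen{2a} \cap A = \emptyset}$ (the even-weight cycle has value~${a + 2\gamma(P)}$ or similar, not~$2a$; you need the contrapositive of~\ref{item:obstructions-even} applied to~$2xa$ itself, as above).

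There is a genuine gap in your treatment of~\ref{subitem:obstructions-3series}. You pick one path from each of three series handlebars and claim the transversal cycle value shows~${\gen{a,b,c} \cap A \neq \emptyset}$, but nothing in Definition~\ref{def:obstructions} bounds the total number~$t$ of handlebars by~$3$. If~${t > 3}$, the transversal value lies in~${\gen{a_1, \dots, a_t}}$, not in~${\gen{a,b,c}}$ for your chosen triple. The fix is to group: take~${a := \gamma(P_1)}$, ${b := \gamma(P_2)}$, ${c := \gamma(P_3) + \cdots + \gamma(P_t)}$; then~${\gen{a,b,c} \cap A \neq \emptyset}$ by~\ref{item:obstructions-allowabletransversals}, and each of~${\gen{a,b}}$, ${\gen{b,c}}$, ${\gen{a,c}}$ lies inside~${\gen{\gamma(P) \colon P \in \bigcup_{j \neq i} \mathcal{P}_j}}$ for~${i = 3}$, $1$, $2$ respectively, so~\ref{item:obstructions-minimality} forbids any of them from meeting~$A$, contradicting~\ref{item:mainintro2}. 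Finally, note that Theorem~\ref{thm:mainobstruction} hands you an obstruction labelled by the \emph{quotient} group~${\Gamma/\Gamma_J}$ with allowable set~${A + \Gamma_J}$; you should at least record that hypotheses~\ref{item:mainintro1} and~\ref{item:mainintro2} for~$A$ in~$\Gamma$ descend to the corresponding image in~${\Gamma/\Gamma_J}$ (this uses that~$A$ is a nonempty coordinate-wise product of sets~${\Gamma_j \setminus \Omega_j}$, so one can always adjust the~$J$-coordinates of lifts), before running the algebraic argument above.
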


Note that for fixed $m$ and $\omega$, Theorem~\ref{thm:maingroup} produces a single function $f_{m,\omega}$ that does not depend on the specific abelian groups considered. 
These theorems completely characterise when such a duality holds in the setting where allowable cycles are those whose values avoid a fixed finite subset of each abelian group. 
In particular, if~$\Gamma$ is finite, then this is a complete characterisation of the sets of allowable values satisfying this duality.

Considering additional restrictions on the structure of the group-labelled graphs,
we strengthen Theorem~\ref{thm:maingroup} by observing that when checking conditions~\ref{item:mainintro1} and~\ref{item:mainintro2}, we may ignore any group~$\Gamma_j$ for which every large subwall of~$G$ contains a cycle whose $\gamma$-value $g$ satisfies~$\pi_j(g)\neq0$ (see Theorem~\ref{thm:maingroup-involved} in Section~\ref{sec:applications}). 
This strengthening allows us to encode a wide variety of properties of cycles.
For example, for fixed integers~$p$,~$\ell$ and given a subgraph~$H$ of tree-width at most~$p$ in a graph~$G$, consider the cycles containing at least~$\ell$ edges not contained in~$H$.
Such cycles can be represented with the $\mathbb{Z}$-labelling which assigns value~$1$ to edges not in~$H$ and~$0$ to all edges in~$H$. 
If~$H$ has no edges, then these are exactly the cycles of length at least~$\ell$.

Theorem~\ref{thm:maingroup} does not hold if the size bound on~$\Omega_j$ is removed. 
To see this, we prove in Subsection~\ref{subsec:finiteA} that if~$\Gamma$ is infinite but the set~$A$ is finite, then a duality such as the one in Theorem~\ref{thm:maingroup} does not hold. 
In fact, no fractional version of the Erd\H{o}s-P\'osa theorem holds in this case.
 
\begin{theorem}
    Let~${A}$ be a finite nonempty subset of an infinite abelian group~$\Gamma$.
    For integers~${s \geq 2}$ and~${t \geq 1}$, 
    there is a graph~$G$ with a $\Gamma$-labelling~$\gamma$ 
    such that 
    \begin{itemize}
        \item for every set of~$s$ cycles of~$G$ whose $\gamma$-values are in~$A$, there is a vertex that belongs to all of the~$s$ cycles and 
        \item there is no hitting set of size at most~$t$ 
        for the set of all cycles of~$G$ whose $\gamma$-values are in~$A$.
    \end{itemize}
\end{theorem}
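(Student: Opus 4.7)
The plan is to construct, for each $s\ge 2$ and $t\ge 1$, an explicit graph $G$ with a $\Gamma$-labelling $\gamma$ whose set $\mathcal{O}$ of $A$-cycles has both required properties. The combinatorial skeleton will be the complete graph $K_n$ on $n = s(t+1)+1$ vertices, and the target family will be (at least) the cycles of $K_n$ of length $L = (s-1)t+s$. The arithmetic identities $sL = (s-1)n+1$ and $n-L+1 = t+2$ supply both required properties at once: if $C_1,\dots,C_s$ are cycles of length $L$ in $K_n$, their vertex-incidences total $sL > (s-1)n$, so by pigeonhole over the $n$ vertices some vertex lies in at least $s$, hence all, of them; and since every $L$-subset of $V(K_n)$ is the vertex set of some $L$-cycle, any hitting set for these cycles has size at least $n-L+1 = t+2 > t$.

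The main work will be constructing $\gamma$ so that the $A$-cycles of $G$ include a cycle on every $L$-subset of $V(K_n)$ but do not include any cycle whose length differs from $L$. I plan to work with a sufficiently long subdivision of $K_n$: subdividing each edge allows the total label on the subdivided path to be any freely chosen element $\sigma_e \in \Gamma$, and then the value of a cycle of $G$ is $\sum_{e\in C}\sigma_e$ summed over the corresponding edges of $K_n$. First I would fix $a\in A$ and pick one reference $L$-cycle per $L$-subset of $V(K_n)$; then I would assign $\sigma_e$ greedily, at each step avoiding the finitely many values of $\sigma_e$ that would either pull a wrong-length cycle into $A$ or push a reference $L$-cycle out of $A$. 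Because $A$ is finite and $\Gamma$ is infinite, a valid choice always exists.

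The hard part will be this labelling step, which must navigate algebraic dependencies among cycles of $K_n$; in particular, two reference $L$-cycles sharing all but one edge impose two simultaneous constraints on that edge, potentially in conflict. I expect to handle this by ordering the greedy process carefully (for instance, finalizing the edges of reference cycles first, or exploiting the subdivision's extra slack to spread constraints across internal edges), or by switching to a cocycle-style construction in which labels come from an auxiliary function $f\colon V(K_n)\to\Gamma$ together with an additive correction on a carefully chosen subset of edges. Throughout, the fundamental input is that $\Gamma$ is infinite while $A$ is finite, providing the room needed to evade the finitely many ``bad'' label values that would spoil either required condition.
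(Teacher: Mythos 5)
Your combinatorial skeleton and the arithmetic identities $sL=(s-1)n+1$ and $n-L+1=t+2$ are correct, and so are the pigeonhole and hitting-set deductions that follow from them. But the labelling step you flag as ``the hard part'' is not merely hard: for small parameters your plan runs into a concrete algebraic obstruction, and it exposes a structural problem that cannot be fixed by greedy ordering or subdivision alone.

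Take $s=2$, $t=1$, $\Gamma=\mathbb{Z}$, $A=\{1\}$, so $n=5$ and $L=3$. Each $3$-subset of $V(K_5)$ carries a unique triangle, so your plan forces $\sigma(C)=1$ for all $\binom{5}{3}=10$ triangles $C$. Summing over all ten triangles and noting each edge of $K_5$ lies in exactly three of them gives $3\sum_e\sigma_e = 10$, hence $\sum_e\sigma_e = 10/3\notin\mathbb{Z}$ --- a contradiction. Subdivision does not help: the value of the subdivided triangle is still $\sum_{e\in C}\sigma_e$ for the same totals $\sigma_e$. The cocycle trick $\sigma(uv)=f(u)+f(v)$ does not help either, since it makes every triangle value lie in $2\mathbb{Z}$, which does not contain $1$. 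More fundamentally, the conditions $\sigma(C_S)\in A$ are \emph{equalities}, not avoidance conditions, so there is no reason a greedy edge-by-edge assignment can satisfy them; and there are $\binom{n}{L}$ of them while $\sigma$ lives in a space of dimension $\binom{n}{2}$, which is strictly smaller once $t\ge2$, so the reference cycles are forced to be heavily linearly dependent, and, as the example shows, the resulting system can have no solution over $\Gamma$. You could weaken ``one per $L$-subset'' to a covering family (so that every $(L+1)$-subset of vertices contains the vertex set of some designated $A$-cycle), which leaves some edges free; but the designated $L$-cycles still overlap in $K_n$, and checking that no cycle of another length inherits a forced value in $A$ is a global problem with no evident greedy solution.

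The paper's construction sidesteps all of this by making the target cycles pairwise \emph{edge-disjoint}. It takes the complete bipartite graph whose parts are the $s$-subsets of $[s(t+1)]$ and $[s(t+1)]\times\bigl[\binom{s(t+1)-1}{s-1}\bigr]$, and for each $i\in[s(t+1)]$ defines a cycle $O_i$ on $\{i\}\times\bigl[\binom{s(t+1)-1}{s-1}\bigr]$ together with the $s$-subsets containing $i$. These $s(t+1)$ cycles are pairwise edge-disjoint, each vertex lies in at most $s$ of them, and every $s$ of them share a common vertex. Edge-disjointness means each $O_i$ has a private edge $e_i$ whose label can be set to force $\gamma(O_i)=\alpha\in A$, while all remaining edges receive distinct ``generic'' labels drawn from an infinite subset of $\Gamma$ engineered (using that $A$ is finite and $\Gamma$ is infinite) so that no other cycle has value in $A$. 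The equality constraints then decouple completely, which is exactly what fails for $L$-cycles in $K_n$: the density forced by $sL>(s-1)n$ makes an edge-disjoint covering family impossible, and, as shown above, even the $s=2$, $t=1$ case is already inconsistent.
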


\noindent 
Note that Theorem~\ref{thm:maingroup} applies to some cases where~$A$ and~${\Gamma \setminus A}$ are both infinite, for example if~${m = 2}$, ${\Gamma = \mathbb{Z} \times \mathbb{Z}}$ and~${\Omega_1 = \Omega_2 = \{0\}}$.
Extending our characterisation to general~$A\subseteq \Gamma$ is left as an open problem (see Subsection~\ref{subsec:openproblems}).

\subsection*{Corollaries to graphs embeddable in a fixed orientable surface}
One nice upside of our structural main theorem is its application to graphs of bounded orientable genus (for example planar graphs). 
Since a fixed compact orientable surface does not admit an embedding of an arbitrarily large Escher wall, condition~\ref{item:mainintro1-2} (which prevented the Escher wall obstruction) is no longer necessary when restricted to graphs embeddable on this surface.
In Subsection~\ref{subsec:orientable}, we give a characterisation analogous to Theorems~\ref{thm:maingroup2} and~\ref{thm:maingroup} for graphs that are embeddable in a fixed compact orientable surface.
Consequently, we obtain the following analogue of Theorem~\ref{cor:mainmod}. 

\begin{restatable}{theorem}{planarmod} \label{thm:planarmod}
    Let~$\ell$ and~$z$ be integers with~${z \geq 2}$, let~${p_1^{a_1} \cdots p_n^{a_n}}$ be the prime factorisation of~$z$ with~${p_{i} < p_{i+1}}$ for all~${i \in [n-1]}$, and let~$\mathbb{S}$ be a compact orientable surface.
    The following statements are equivalent. 
    \begin{itemize}
        \item There is a function~${f \colon \mathbb{N} \to \mathbb{N}}$ such that 
        for every integer~$k$, 
        every graph embeddable in~$\mathbb{S}$ contains~$k$ vertex-disjoint cycles of length~$\ell$ modulo~$z$ or a set of at most~${f(k)}$ vertices hitting all such cycles. 
    \item Both of the following conditions are satisfied. 
        \begin{enumerate}
            [label=(\arabic*)]
            \item \label{enum:planarmod1} 
            If~${p_1 = 2}$, then ${\ell \equiv 0 \pmod {p_1^{a_1}}}$ or~${\ell \equiv 0 \pmod {z/p_1^{a_1}}}$. 
            \item \label{enum:planarmod2} 
            There do not exist distinct~${i_1, i_2, i_3 \in [n]}$ such that~${\ell \not\equiv 0 \pmod {p_{i_j}^{a_{i_j}}}}$ for each~${j \in [3]}$.
        \end{enumerate}
    \end{itemize}
\end{restatable}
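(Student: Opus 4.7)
The plan is to derive Theorem~\ref{thm:planarmod} from the orientable-surface analogues of Theorems~\ref{thm:maingroup2} and~\ref{thm:maingroup} proved in Subsection~\ref{subsec:orientable}. By the Chinese Remainder Theorem, $\mathbb{Z}_z \cong \Gamma := \prod_{i \in [n]} \mathbb{Z}_{p_i^{a_i}}$, so I encode cycles of length~$\ell$ modulo~$z$ as allowable cycles in a $\Gamma$-labelled graph, where every edge carries the label $(1,\ldots,1)$ and the allowable set is the singleton $A = \{(\ell_1,\ldots,\ell_n)\}$, with $\ell_i$ the image of~$\ell$ in~$\mathbb{Z}_{p_i^{a_i}}$; equivalently, the forbidden set in each factor is $\Omega_i := \mathbb{Z}_{p_i^{a_i}} \setminus \{\ell_i\}$. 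This casts the problem into the setup of Theorem~\ref{thm:maingroup} restricted to graphs embeddable in~$\mathbb{S}$.

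\textbf{Applying the orientable analogue.} Since a fixed compact orientable surface does not admit embeddings of large Escher walls, the Figure~\ref{fig:obstructions-intro}(a) obstructions are unavailable, and accordingly the orientable analogue of Theorem~\ref{thm:maingroup} replaces condition~\ref{item:mainintro1} by a weaker condition preventing only the nested-plus-series obstruction of Figure~\ref{fig:obstructions-intro}(b), while retaining condition~\ref{item:mainintro2} to prevent Figure~\ref{fig:obstructions-intro}(c). Call these conditions (O1) and (O2) respectively. The task reduces to verifying that (O1)$\wedge$(O2) is equivalent to \ref{enum:planarmod1}$\wedge$\ref{enum:planarmod2}.

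\textbf{Translating the conditions.} For (O2), the argument is identical to the corresponding part of Theorem~\ref{cor:mainmod}: since $\Gamma$ is a product of $p$-groups for pairwise distinct primes, (O2) fails exactly when one can pick three elements each supported in a distinct factor $\mathbb{Z}_{p_{i_j}^{a_{i_j}}}$ with $\ell_{i_j} \neq 0$ so that no pair already generates~$\ell$; this is equivalent to \ref{enum:planarmod2}. For (O1), the key observation is that doubling affects only the $2$-part of~$\Gamma$, so (O1) can fail only when $p_1 = 2$. In that case, a nested-plus-series configuration requires an element $b_B$ whose doubling escapes a subgroup witnessing allowability of~$\ell_1$ (forcing $\ell \not\equiv 0 \pmod{2^{a_1}}$) and, simultaneously, a series element $b_R$ supplying the residual value in the odd coordinates (which requires a non-trivial allowable configuration there, i.e., $\ell \not\equiv 0 \pmod{z/2^{a_1}}$). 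A short case analysis in the CRT factors shows (O1) fails iff both $\ell \not\equiv 0 \pmod{2^{a_1}}$ and $\ell \not\equiv 0 \pmod{z/2^{a_1}}$, which is the negation of~\ref{enum:planarmod1}.

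\textbf{Main obstacle.} The main difficulty lies in the precise translation of (O1): unlike~\ref{item:mainintro1}, which constrains doubling of single elements, the Figure~\ref{fig:obstructions-intro}(b) obstruction involves an interaction between a nested element and a series element, so the case analysis in the CRT factors is more delicate and must handle both the $2$-part and the odd part jointly. In one direction, one explicitly builds the nested-plus-series obstruction in~$\mathbb{S}$ whenever~\ref{enum:planarmod1} fails, selecting compatible lengths for the wall, nested paths, and series paths via CRT; in the other direction, one rules out any putative (O1)-violating pair coordinatewise under the hypothesis of~\ref{enum:planarmod1}.
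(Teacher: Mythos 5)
Your overall plan — to derive the theorem from the surface-restricted Erd\H{o}s--P\'osa characterisation in Subsection~\ref{subsec:orientable} — is the right one, and the encoding choice (CRT factorisation $\mathbb{Z}_z\cong\prod_i\mathbb{Z}_{p_i^{a_i}}$ versus the paper's direct choice $\Gamma=\mathbb{Z}_z$, $A=\{\ell\}$) is an acceptable variation. However, the proposal has a genuine gap at its core.

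You write as though Subsection~\ref{subsec:orientable} provides an orientable analogue of Theorem~\ref{thm:maingroup} with two separate conditions (O1) and (O2), one preventing the Figure~\ref{fig:obstructions-intro}(b) obstruction and one preventing Figure~\ref{fig:obstructions-intro}(c). This is not what the paper establishes, and you never actually state (O1) or (O2), let alone prove that they characterise the property. Proposition~\ref{prop:planar} and Corollaries~\ref{cor:orientablesurface}--\ref{cor:orientablesurface2} give a \emph{single} unified condition~$(\ast)$: every finite subset~$X$ of~$\Gamma$ with~${\sum_{g\in X}g\in A}$ contains a subset~$Y$ of size~${y\le 2}$ with~${\gen{yY}\cap A\ne\emptyset}$. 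This one condition simultaneously rules out the $|X|\ge 3$ obstructions (three-series) and the $|X|=2$ obstructions (nested plus series); there is no clean factorisation into a ``doubling'' condition and a ``three-element'' condition that matches~\ref{item:mainintro1}--\ref{item:mainintro2}. In particular, your claim that ``(O2) retains condition~\ref{item:mainintro2}'' is incorrect: condition~\ref{item:mainintro2} asks that two of the three elements \emph{themselves} generate an allowable value, while the relevant $|X|=3$ instance of~$(\ast)$ only requires $\gen{a}$, $\gen{b}$, $\gen{c}$, or a \emph{doubled} pair $\gen{2a,2b}$ (or symmetric) to reach~$A$; these are not the same requirement. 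Similarly, your sketch of (O1) talks of ``an element $b_B$ whose doubling escapes a subgroup witnessing allowability'' and ``a series element $b_R$ supplying the residual value in the odd coordinates'', but no precise condition is stated and no calculation is given, so the crucial equivalence with~\ref{enum:planarmod1} is asserted rather than proved.

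For comparison, the paper's proof avoids this issue entirely: it works with~$\Gamma=\mathbb{Z}_z$ and~$A=\{\ell\}$, invokes Corollaries~\ref{cor:orientablesurface}--\ref{cor:orientablesurface2} to reduce to verifying~$(\ast)$, uses Theorem~\ref{cor:mainmod} to dispose of all cases except~$p_1=2$, $\ell\not\equiv 0\pmod{2^{a_1}}$, $\ell\equiv 0\pmod{z/2^{a_1}}$, and then in that remaining case finds a \emph{single} element~$g\in X$ with~$\gcd(g,z)\mid\ell$ (so a size-$1$ set~$Y$ suffices). The converse is a short CRT construction of a minimal counterexample~$X$. If you want your proof to go through, you must either work directly with condition~$(\ast)$, or first prove a lemma that, for the product form of~$A$ arising here, $(\ast)$ is equivalent to a precisely stated pair of conditions; neither step is present.
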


\noindent
For graphs embedded in a compact orientable surface, our results allow us to derive an Erd\H{o}s-P\'{o}sa type theorem for the cycles whose $\mathbb{Z}_2$-homology class is in a fixed set of allowable values. 
This result complements an analogous half-integral Erd\H{o}s-P\'{o}sa type theorem for graphs embedded in an arbitrary compact surface 
(see~\cite[Corollary~8.10]{GollinHKKO2021}). 
We discuss this in more detail in Subsection~\ref{subsec:orientable}. 
Previously, 
Kawarabayashi and Nakamoto~\cite{KenN2007} proved a similar result for odd cycles in graphs embedded in a fixed orientable surface, which Conforti, Fiorini, Huynh, Joret, and Weltge~\cite{CFHJW2020} extended to \emph{$2$-sided} odd cycles in graphs embedded in any fixed surface.

\subsection*{Related work}
It is worth taking a moment to highlight the differences between our results and the work of Huynh, Joos, and Wollan~\cite{HuynhJW2017}, who considered group labellings of orientations of edges in a graph, where the two orientations of each edge are assigned labels that are inverse to each other. 
For a graph imbued with two such labellings, they considered cycles with non-zero value in each coordinate and obtained a structural result analogous to our structural main theorem for these cycles.
There is no general translation between the labellings of edges which we use and the labellings of orientations of edges which they considered, but many interesting properties can be encoded in either setting. 
As an example, they apply their result to obtain canonical obstructions to an Erd\H{o}s-P\'{o}sa type result for odd cycles intersecting a prescribed set~$S$, and our structural theorem gives the same result. 
Whereas their result applies to arbitrary groups, dealing with non-abelian groups is more complicated in our setting, and it is unclear 
how to extend our result to non-abelian groups. 
However, modularity constraints with modulus greater than~$2$ cannot be encoded in their setting in general. 
Furthermore, we do not only consider the cycles that are non-zero in each coordinate, and we are able to consider any finite number of group labellings.

The results of this paper unify and generalise many of the previous results in this area, including all of the results we have mentioned so far. 
In particular, Theorem~\ref{thm:Sml} characterises when an Erd\H{o}s-P\'{o}sa type result holds for $\mathcal{S}$-cycles of length~$\ell$ modulo~$z$ and length at least~$L$, which yields Theorem~\ref{cor:mainmod} as a special case (which generalises the aforementioned results of Thomassen~\cite{Thomassen1988} and of Thomas and Yoo~\cite{YooR2020}), 
and also recovers the results for $S$-cycles of Kakimura, Kawarabayashi, and Marx~\cite{KakimuraKM2011}, for $S$-cycles of length at least~$L$ of Bruhn, Joos, and Schaudt~\cite{BruhnJS2017}, and for $(S_1,S_2)$-cycles of Huynh, Joos, and Wollan~\cite{HuynhJW2017}. 
Wollan~\cite{Wollan2011} proved that when an abelian group~$\Gamma$ has no element of order~$2$, an analogue of the Erd\H{o}s-P\'{o}sa theorem holds for non-zero cycles in $\Gamma$-labelled graphs. 
This result is recovered by taking~${\Gamma_1 = \Gamma}$ and~${\Omega_1 = \{0\}}$ in Theorem~\ref{thm:maingroup}.
Huynh, Joos, and Wollan~\cite{HuynhJW2017} used group labellings of orientations of edges to show that non-null-homologous (in the $\mathbb{Z}$-homology group) cycles in graphs embedded in a fixed compact orientable surface satisfy an Erd\H{o}s-P\'{o}sa type theorem. 
In Subsection~\ref{subsec:orientable} we recover this result. 

\subsection*{Vertex-labelled graphs}
Our results can also be applied to the setting where vertices instead of edges are labelled. 
Gollin et al.~discussed in~\cite{GollinHKKO2021} a method of converting between vertex-labelled graphs and edge-labelled graphs.
Unfortunately, the translation they described effects the structure of the abelian group, and is therefore not immediately sufficient for proving a vertex-labelling analogue of Theorem~\ref{thm:maingroup}.
However by carefully adapting the main structural theorem of this paper to the setting of vertex-labellings we can obtain analogues of all of our results which reference edge-labellings, as we demonstrate in Subsection~\ref{subsec:vertexlabellings}.

\subsection*{Overview
}
This paper is organised as follows. 
In Section~\ref{sec:prelim}, we introduce some preliminary concepts and notation. 
In Section~\ref{sec:structuralthm}, we state our main structural theorem and give a high-level overview of its proof.
In Section~\ref{sec:recycle}, we recall useful lemmas from the literature, especially from~\cite{GollinHKKO2021}. 
In Section~\ref{sec:handlebars}, we discuss how to find sets of paths arranged nicely around the boundary of the wall as in Figure~\ref{fig:obstructions-intro}, which we call \emph{handlebars}, and what to do with them once we have found them. 
In Section~\ref{sec:grouplemmas}, we present some useful lemmas about abelian groups. 
We complete the proof of our main structural result in Section~\ref{sec:proof}. 
Finally, in Section~\ref{sec:applications}, we demonstrate how to derive our other results and applications
and we also present some open problems.

\section{Preliminaries}
\label{sec:prelim}

All graphs in this paper are undirected simple graphs that have neither loops nor parallel edges. 
For an integer~$m$, we write~${[m]}$ for the set of positive integers~${z}$ with~${z \leq m}$. 

Let~$G$ be a graph. 
We denote by~${V(G)}$ and~${E(G)}$ the vertex set and the edge set of~$G$, respectively. 
For a vertex set~$A$ of~$G$, we denote by~${G - A}$ the graph obtained from~$G$ by deleting all the vertices in~$A$ and all edges incident with vertices in~$A$, 
and denote by~${G[A]}$ the subgraph of~$G$ induced by~$A$, which is~${G-(V(G)\setminus A)}$.
If~${A = \{v\}}$, then we write~${G - v}$ for~${G - A}$. 
For an edge~$e$ of~$G$, we denote by~${G - e}$ the graph obtained by deleting~$e$. 
For two graphs~$G$ and~$H$, let 
\[
    {G \cup H := (V(G) \cup V(H), E(G) \cup E(H))}
    \ \textnormal{ and } \ 
    {G \cap H := (V(G) \cap V(H), E(G) \cap E(H))}. 
\]
For a set~$\mathcal{G}$ of graphs, we denote by~${\bigcup \mathcal{G}}$ the union of the graphs in~$\mathcal{G}$. 
By slight abuse of notation, we say two sets~$\mathcal{G}_1$ and~$\mathcal{G}_2$ of graphs are \emph{vertex-disjoint} if the graphs~${\bigcup \mathcal{G}_1}$ and~${\bigcup \mathcal{G}_2}$ are vertex-disjoint. 

Let~$A$ and~$B$ be vertex sets of~$G$. 
An \emph{${(A, B)}$-path} is a path from a vertex in~$A$ to a vertex in~$B$ such that all internal vertices are not contained in~${A \cup B}$. 
An \emph{$A$-path} is a nontrivial~${(A,A)}$-path. 
For a subgraph~$H$ of~$G$, we refer to a ${V(H)}$-path as an \emph{${H}$-path} for brevity.

For a graph~$G$, let~$\branch(G)$ denote the set of vertices of~$G$ whose degree is not equal to~$2$. 

\emph{Subdividing} an edge~$uv$ in a graph~$G$ is an operation that yields a graph obtained by removing the edge~$uv$ and adding new edges~$uw$ and $vw$ for some vertex~$w$ not in~$G$.
A graph~$H$ is a \emph{subdivision} of a graph~$G$ if~$H$ can be obtained from~$G$ by subdividing edges repeatedly.

\subsection{Walls}

Let~${c,r \geq 3}$ be integers. 
The \emph{elementary $(c,r)$-wall $W_{c,r}$} is the graph obtained from the graph on the vertex set~${[2c] \times [r]}$ whose edge set is 
\begin{align*}
    \left\{ (i,j) (i+1,j) \, \colon \, i \in [2c-1],\, j \in [r] \right\} 
    \cup 
    \left\{ (i,j) (i,j+1) \, \colon \, i \in [2c],\, j \in [r-1],\, i+j \textnormal{ is odd} \right\}
\end{align*}
by deleting both degree-$1$ vertices.

\begin{figure}%

    \begin{subfigure}{0.45\linewidth}
    \centering
        \begin{tikzpicture}
            [scale=0.5]
            \tikzset{vx/.style = {circle, draw, fill=black!0, inner sep=0pt, minimum width=4pt}}
            \tikzset{vx2/.style = {circle, draw, fill=black!50, inner sep=0pt, minimum width=4pt}}
            
            \tikzwall{6}{7}{0}{0}{vx}{gray}
            \tikzwall{3}{7}{4}{0}{vx}{red, very thick, dashed}
            \tikzwall{6}{3}{0}{1}{vx}{blue, very thick, dashed, dash phase = 3pt}
            
            \node[vx2] at (5,0) {};
            \node[vx2] at (8,6) {};
            \foreach \i in {1,...,6} {
                \node[vx2] at (4,\i) {};
                \node[vx2] at (9,6-\i) {};
            }        
        \end{tikzpicture}
        \caption{A $(6,7)$-wall $W$ with a $3$-column-slice $W'$ highlighted in red and a $3$-row-slice highlighted in blue. 
        The column-boundary of the red $3$-column-slice is indicated by the solid vertices. }
    \end{subfigure}
    \hfill 
    \begin{subfigure}{0.45\linewidth}
        \centering
            \begin{tikzpicture}
                [scale=0.5]
                \tikzset{vx/.style = {circle, draw, fill=black!0, inner sep=0pt, minimum width=4pt}}
                \tikzset{vx2/.style = {circle, draw, fill=black!50, inner sep=0pt, minimum width=4pt}}
               \tikzset{c1/.style={brown, line width=4pt,opacity=0.8,rounded corners}}
                
                \tikzwall{3}{7}{4}{0}{vx}{red, very thick, dashed}
                \tikzwall{6}{7}{0}{0}{vx}{gray}
                
                \node[vx2] at (5,0) {};
                \node[vx2] at (8,6) {};
                \foreach \i in {1,...,6} {
                    \node[vx2] at (4,\i) {};
                    \node[vx2] at (9,6-\i) {};
                }
                \draw [blue,dotted,thick](v0-3) ..controls (-2,3) and (-2, 4) .. 
                (v0-5) ;    
                \draw [c1] (v0-3) -- (v1-3)--(v2-3)--(v3-3)--(v4-3);
                \draw [c1] (v0-5)--(v1-5)--(v2-5)--(v3-5) -- (v4-5);

                \draw [blue,dotted,thick,rounded corners=5pt ](v0-1) to ++(-1,0) to ++(0,-2) to ++(14,0) to ++(0,3) to (v11-2);
                \draw [c1] (v0-1) -- (v1-1)--(v2-1)--(v3-1)--(v4-1);
                \draw [c1] (v11-2)--(v10-2)--(v9-2);

            \end{tikzpicture}
            \caption{The blue dotted paths represent two $W$-handles 
            and the row-extension of each blue path to $W'$ is obtained by extending it by two brown paths.}
        \end{subfigure}
    \caption{Illustrations of $c$-column-slices, $r$-row-slices, column-boundaries, $W$-handles, and the row-extension of a $W$-handle to $W'$.}
    \label{fig:wall}
\end{figure}

For ${j \in [r]}$, the \emph{$j$-th row}~$R_j$ of~$W_{c,r}$ is the path~${W_{c,r} \big[ \big\{ (i, j)\in V(W_{c,r}) \, \colon \, i \in [2c] \big\} \big]}$. 
For ${i \in [c]}$, the \emph{$i$-th column}~$C_i$ of~$W_{c,r}$ is the path~${W_{c,r} \big[ \big\{ (i',j)\in V(W_{c,r}) \, \colon \, i' \in \{ 2i - 1, 2i \}, \, j \in [r] \big\} \big]}$. 

A \emph{$(c,r)$-wall} is a subdivision~$W$ of the elementary $(c,r)$-wall. 
If~$W$ is a~$(c,r)$-wall for some suitable integers~$c$ and~$r$, then we say~$W$ is a \emph{wall of order~$\min\{c,r\}$}. 
We call a vertex corresponding to the vertex~${(i,j)}$ of the elementary wall a \emph{nail} of~$W$, and denote by~$N^W$ the set of nails of~$W$. 

For a $(c,r)$-wall~$W$ and a subgraph~$H$ of the elementary $(c,r)$-wall~$W_{c,r}$, we denote by~$H^W$ the subgraph of~$W$ corresponding to a subdivision of~$H$.  
We call~$R_j^W$ or $C_i^W$ the \emph{$j$-th row} or \emph{$i$-th column} of~$W$, respectively. 
A subgraph~$W'$ of a wall~$W$ that is itself a wall is called a \emph{subwall of~$W$}. 
For a set~$S$ of vertices, we say a wall~$W$ is \emph{$S$-anchored} if~${N^W \subseteq S}$. 

For an integer~${c \geq 3}$, we call a subwall~$W'$ of a wall~$W$ a \emph{$c$-column-slice of~$W$} if 
\begin{itemize}
    \item the set of nails of~$W'$ is exactly~${N^W \cap V(W')}$,
    \item there is a column of~$W'$ that is a column of~$W$, 
and
    \item $W'$ has exactly~$c$ columns. 
\end{itemize}
See Figure~\ref{fig:wall} for an example. 
Similarly, for an integer~${r \geq 3}$, we call a subwall~$W'$ of a wall~$W$ an \emph{$r$-row-slice of~$W$} if 
\begin{itemize}
    \item the set of nails of~$W'$ is exactly~${N^W \cap V(W')}$,
    \item there is a row of~$W'$ that is a row of~$W$, 
    and
    \item $W'$ has exactly~$r$ rows. 
\end{itemize}
Note that in an $r$-row-slice~$W'$ of~$W$, depending on the location, the first column of~$W'$ may be in the last column of~$W$ by the definition of a wall.

Let~$W$ be a wall in a graph $G$. 
The \emph{column-boundary of~$W$} is the set of all endvertices of rows of~$W$. 
A \emph{$W$-handle} is a $W$-path in $G$ whose endvertices are in the column-boundary of~$W$. 

Let~$W$ be a ${(c,r)}$-wall and let~$W'$ be a $c'$-column-slice of~$W$ for some~${3 \leq c' \leq c}$. 
For a path~$P$ whose endvertices are nails of~$W$,
the \emph{row-extension of~$P$ to~$W'$ in~$W$}
is a $W'$-handle containing~$P$ that is contained in the union of~$P$ and the rows of~$W$. 
We can easily observe that if such a $W'$\nobreakdash-handle exists, then it is unique.
Note that the row-extension of a $W$-handle to~$W'$ always exists. 
For a set~$\mathcal{P}$ of pairwise vertex-disjoint $W$-handles, we define the \emph{row-extension of~$\mathcal{P}$ to~$W'$ in~$W$} to be the set of row-extensions of the paths in~$\mathcal{P}$ to~$W'$ in~$W$. 
Note that these $W'$-handles are also pairwise vertex-disjoint. 
See Figure~\ref{fig:wall} for an illustration.

\subsection{Groups}
For a non-empty set~$S=\{a_i:i\in[t]\}$ of elements in a group~$\Gamma$, we write $\langle S\rangle$ or $\langle a_i: i\in [t]\rangle$ for the subgroup of~$\Gamma$ generated by~$S$, which is the intersection of all subgroups of~$\Gamma$ containing~$S$.

The \emph{direct product} of groups $\Gamma_1$, $\Gamma_2$, $\ldots$, $\Gamma_m$ is denoted by $\prod_{i=1}^m \Gamma_i$. We write $\pi_j$ for the projection map from $\prod_{i=1}^m \Gamma_i$ to $\Gamma_j$ for each~${j \in [m]}$.  For an element $g\in \prod_{i=1}^m \Gamma_i$, we call $\pi_j(g)$ the \emph{$j$-th coordinate of~$g$}.

For a subgroup $\Lambda$ of an abelian group~$\Gamma$, we denote by $\Gamma/\Lambda:=\{a+\Lambda: a\in \Gamma\}$ the \emph{quotient group} of~$\Gamma$ by $\Lambda$, which is the set of cosets $a+\Lambda:=\{a+b:b\in \Lambda\}$ of $\Lambda$ in~$\Gamma$, where $(a+\Lambda)+(b+\Lambda)=(a+b)+\Lambda$ and $-(a+\Lambda)=-a+\Lambda$ for all $a,b\in \Gamma$.

\subsection{Group-labelled graphs}

Let~$\Gamma$ be an abelian group. 
A \emph{$\Gamma$-labelled graph} is a pair of a graph~$G$ and a function~${\gamma \colon E(G) \to \Gamma}$.
We say that~$\gamma$ is a \emph{$\Gamma$-labelling} of~$G$. 
A \emph{subgraph} of a $\Gamma$-labelled graph~${(G,\gamma)}$ is a $\Gamma$-labelled graph~$(H,\gamma')$ such that~$H$ is a subgraph of~$G$ and~$\gamma'$ is the restriction of~$\gamma$ to~$E(H)$. 
By a slight abuse of notation, we may refer to this $\Gamma$-labelled graph by~$(H,\gamma)$. 

For a $\Gamma$-labelled graph~${(G,\gamma)}$ and a subgraph~${H \subseteq G}$, we define~$\gamma(H)$ as~${\sum_{e \in E(H)} \gamma(e)}$, 
which we call the \emph{$\gamma$-value of~$H$}. 
Note that this definition implies that the $\gamma$-value of the empty subgraph is~$0$. 
We say that a subgraph~$H$ is \emph{$\gamma$-non-zero} if~${\gamma(H) \neq 0}$, and otherwise, we call it \emph{$\gamma$-zero}. 
A $\Gamma$-labelled graph~${(G,\gamma)}$ is \emph{$\gamma$-bipartite} if every cycle of~$G$ is $\gamma$-zero. 

We will often consider the special case where~$\Gamma$ is the product~${\prod_{j \in [m]} \Gamma_j}$ of~$m$ abelian groups for a positive integer~$m$. 
In this case, we denote by~$\gamma_j$ the composition of~$\gamma$ with the projection to~$\Gamma_j$. 
For a subset~${J \subseteq [m]}$ we denote by~$\Gamma_J$ be the subgroup of~$\Gamma$ of all~${g \in \Gamma}$ with~${\pi_j(g) = 0}$ for all~${j \in [m] \setminus J}$. 

We frequently take a subgroup~$\Lambda$ of~$\Gamma$ and consider a new labelling using the quotient group~${\Gamma/\Lambda}$. 
For a $\Gamma$-labelled graph~${(G, \gamma)}$ and a subgroup~$\Lambda$ of~$\Gamma$, 
the \emph{induced $(\Gamma/\Lambda)$-labelling} of~$(G,\gamma)$ is 
the~${\Gamma/\Lambda}$\nobreakdash-labelling~$\lambda$ defined by~${\lambda(e) := \gamma(e) + \Lambda}$ for all edges~${e \in E(G)}$.

Let~$x$ be a vertex of~$G$ and let~${\delta \in \Gamma}$ be an element of order~$2$. 
For each edge~$e$ of~$G$, let 
\[
    \gamma'(e)=
    \begin{cases}
        \gamma(e) + \delta & \text{if $e$ is incident with $x$,}\\
        \gamma(e) &\text{otherwise.}
    \end{cases}
\]
We say that~$\gamma'$ is obtained from~$\gamma$ by \emph{shifting by~$\delta$ at~$x$}. 
Observe that this shift does not change the value of a cycle because~${\delta + \delta = 0}$.
We say two $\Gamma$-labellings~$\gamma_1$ and~$\gamma_2$ of~$G$ are \emph{shifting-equivalent} 
if~$\gamma_1$ can be obtained from~$\gamma_2$ by a sequence of shifting operations.

\section{The structural main theorem}
\label{sec:structuralthm}

\subsection{Handlebars}
\label{subsec:handlebars}

Let~${(X,\prec)}$ be a linearly ordered set. 
We say two disjoint subsets~${\{x_1,x_2\}}$ and~${\{y_1,y_2\}}$ of~$X$ of size~$2$ with~${x_1 \prec x_2}$ and~${y_1 \prec y_2}$ are
\begin{itemize}
    \item \emph{in series} if either~${x_2 \prec y_1}$ or~${y_2 \prec x_1}$;
    \item \emph{nested} if either~${x_1 \prec y_1 \prec y_2 \prec x_2}$ or~${y_1 \prec x_1 \prec x_2 \prec y_2}$; and
    \item \emph{crossing} otherwise.
\end{itemize}
A set~${S \subseteq \binom{X}{2}}$ of pairwise disjoint sets is \emph{in series}, \emph{nested}, or \emph{crossing}, respectively, if its elements are pairwise in series, nested, or crossing, respectively, and~$S$ is called \emph{pure} if it is in series, nested, or crossing. 

A straightforward argument shows the following lemma (see also~\cite[Lemma 25]{HuynhJW2017}). 
\begin{lemma}
    \label{lem:pureset}
    Let~$t$ be a positive integer, let~${(X,\prec)}$ be a linearly ordered set, and let~${S \subseteq \binom{X}{2}}$ be a set of pairwise disjoint sets. 
    If~${\abs{S}>t^3}$, then~$S$ contains a pure subset of size greater than~$t$. 
\end{lemma}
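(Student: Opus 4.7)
The plan is to view each element of $S$ as an interval in $(X,\prec)$ with distinct endpoints (which holds since the pairs in $S$ are pairwise disjoint), and then to apply Mirsky's theorem twice with respect to two natural strict partial orders on $S$. First, I would define $s \prec_{\mathrm{ser}} s'$ to mean $\max s \prec \min s'$; this is exactly the relation ``$s$ is in series with $s'$ and comes first''. Transitivity is immediate from transitivity of $\prec$, so $\prec_{\mathrm{ser}}$ is a strict partial order and a chain in $\prec_{\mathrm{ser}}$ is precisely a subset of $S$ that is in series.

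If $S$ contains a chain of size greater than $t$ under $\prec_{\mathrm{ser}}$, we are done. Otherwise, the longest chain has length at most $t$, so by Mirsky's theorem $S$ can be partitioned into at most $t$ antichains, and hence some antichain $A \subseteq S$ has size at least $\lceil |S|/t \rceil \geq t^2 + 1$ (using $|S| \geq t^3 + 1$). Note that any two distinct elements of $A$ are incomparable under $\prec_{\mathrm{ser}}$, so they are not in series, and therefore, by the trichotomy, are either nested or crossing.

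Next, on $A$, define $s \prec_{\mathrm{nest}} s'$ to mean $\min s' \prec \min s \prec \max s \prec \max s'$; this is again a strict partial order (transitivity is immediate), and a chain in $\prec_{\mathrm{nest}}$ is precisely a nested subset of $A$. If $A$ contains a chain of size greater than $t$, we are done. Otherwise, Mirsky's theorem applied to $\prec_{\mathrm{nest}}$ on $A$ produces an antichain $A' \subseteq A$ of size at least $\lceil |A|/t \rceil \geq t + 1$. No two elements of $A'$ are in series (since $A' \subseteq A$) nor nested (since $A'$ is an antichain in $\prec_{\mathrm{nest}}$), so $A'$ is crossing and has size greater than $t$, as required.

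The only potential obstacle is the bookkeeping with the ceilings: one should verify that $|S| \geq t^3+1$ yields $\lceil |S|/t\rceil \geq t^2+1$, and then $|A| \geq t^2+1$ yields $\lceil|A|/t\rceil \geq t+1$, both of which follow at once from the integrality of the ceiling. No deeper input than Mirsky's theorem is needed.
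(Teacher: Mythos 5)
Your proof is correct and takes essentially the same approach as the paper: the paper applies Dilworth's theorem twice where you apply its dual, Mirsky's theorem, twice, with the same quantitative bookkeeping. The only cosmetic difference is that in the second application the paper takes the ``crossing'' relation as the partial order (so chains are crossing and antichains are nested), while you take ``nested'' as the partial order (so chains are nested and antichains are crossing), but both choices work identically.
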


\begin{proof}
    First consider the partial order~$\prec_1$ on~$S$ 
    such that for~${\{a,b\}, \{c,d\} \in S}$ with~${a \prec b}$ and~${c \prec d}$ we have~${\{a,b\} \prec_1 \{c,d\}}$ if~${b \prec c}$. 
    By Dilworth's Theorem~\cite{Dilworth1950}, $S$ contains a chain of size greater than~$t$ with respect to~$\prec_1$ or an antichain of size greater than~$t^2$ with respect to~$\prec_1$. 
    In the first case we have a subset that is in series, so suppose instead that there is some~${S' \subseteq S}$ of size greater than~$t^2$ that is an antichain with respect to~$\prec_1$. 
    Let~$\prec_2$ be the partial order on~$S'$ 
    such that for~${\{a,b\}, \{c,d\} \in S}$ with~${a \prec b}$ and~${c \prec d}$ we have~${\{a,b\} \prec_2 \{c,d\}}$ 
    if~${a \prec c}$ and~${b \prec d}$. 
    Again by Dilworth's Theorem, there is some~${S'' \subseteq S'}$ of size greater than~$t$ such that~$S''$ is a chain or an antichain with respect to~$\prec_2$, 
    and hence crossing or nested, respectively. 
\end{proof}

Let~$W$ be a ${(c,r)}$-wall. 
Let~$\prec_W$ be the linear order on the column-boundary of~$W$ such that~${v \prec_W w}$ if at least one of the following conditions holds. 
\begin{itemize}
    \item $v$ is in the first column and~$w$ is in the last column. 
    \item Both~$v$ and~$w$ are in the first column and the index of the row containing~$v$ is lower than the index of the row containing~$w$. 
    \item Both~$v$ and~$w$ are in the last column and the index of the row containing~$v$ is higher than the index of the row containing~$w$. 
\end{itemize}

A set~$\mathcal{P}$ of $W$-handles is \emph{pure}, \emph{nested}, \emph{in series}, or \emph{crossing}, respectively, if the set of sets of endvertices of all the paths in~$\mathcal{P}$ is pure, nested, in series, or crossing, respectively, with respect to~$\prec_W$. 
We call a set~$\mathcal{P}$ of pairwise vertex-disjoint $W$-handles a \emph{$W$-handlebar} if~$\mathcal{P}$ is pure and there are two paths~$A$ and~$B$ in~${C_1^W \cup C_c^W}$ such that each $W$-handle in~$\mathcal{P}$ is a ${(V(A),V(B))}$-path. 
Observe that if $\mathcal P$ is a $W$-handlebar in series having at least two $W$-handles, then 
all the endvertices of $W$-handles in $\mathcal P$ are in $C_i^W$ for some $i\in\{1,c\}$.

Two $W$-handlebars $\mathcal{P}_1$ and~$\mathcal{P}_2$ are \emph{non-mixing} if for each~${i \in [2]}$ there are (not necessarily disjoint) paths~$A_i$ and~$B_i$ in~${C_1^W \cup C_c^W}$ such that~$\mathcal{P}_i$ is a set of~${(V(A_i),V(B_i))}$-paths and~${A_1 \cup B_1}$ and~${A_2 \cup B_2}$ are vertex-disjoint.

\subsection{The main theorem}
\label{subsec:mainthm-statement}

We now define the possible obstructions for the Erd\H{o}s-P\'{o}sa property in graphs labelled with multiple abelian groups 
for cycles whose values avoid a bounded number of elements in each group. 

\begin{definition}
    \label{def:obstructions}
    For positive integers~$\kappa$ and~$\theta$, an abelian group~$\Gamma$, and~${A \subseteq \Gamma}$, 
    let~${\mathcal{C}(\kappa,\theta,\Gamma,A)}$ be the class of all $\Gamma$-labelled graphs~${(G,\gamma)}$ 
    having 
    a wall~$W$ of order at least~$\theta$ and 
    a nonempty family~${( \mathcal{P}_i \colon i \in [t] )}$ of pairwise vertex-disjoint non-mixing $W$-handlebars each of size at least~$\kappa$
    such that 
    \begin{enumerate}
        [label=(O\arabic*)]
        \item \label{item:obstructions-union} $G$ is the union of $W$ and  $\bigcup \{ \bigcup \mathcal{P}_i \colon i \in [t] \}$, 
        \item \label{item:obstructions-zerowall}
            every $N^W$-path in $W$ is $\gamma$-zero, 
        \item \label{item:obstructions-allowabletransversals}
            ${\sum_{i \in [t]} \gamma(P_i) \in A}$ for any family ${(P_i \colon i\in [t])}$ such that~${P_i \in \mathcal{P}_i}$ for all~${i \in [t]}$, 
        \item \label{item:obstructions-minimality}
            for each~${i \in [t]}$, we have~${\gen{\gamma(P) \colon P \in \bigcup_{j \in [t] \setminus \{i\}} \mathcal{P}_j} \cap A = \emptyset}$, 
        \item \label{item:obstructions-even}
            if ${\sum_{j \in [t]} \sum_{P \in \mathcal{P}_j} f(P) \gamma(P) \in A}$ for a function~${f \colon \bigcup_{j \in [t]} \mathcal{P}_j \to \mathbb{Z}}$, then 
            for each ${i \in [t]}$, 
            $\mathcal{P}_i$ is in series
            or~${\sum_{P \in \mathcal{P}_i} f(P)}$ is odd, and 
        \item \label{item:obstructions-handlebars}
            at least one of the following properties holds. 
            \begin{enumerate}
                [label=(O6\alph*)]
                \item \label{subitem:obstructions-oddcrossing} 
                The number of crossing $W$-handlebars in~${( \mathcal{P}_i \colon i \in [t] )}$ is odd. 
                \item \label{subitem:obstructions-seriesnonseries} At least one but not all $W$-handlebars in~${( \mathcal{P}_i \colon i \in [t] )}$ are in series. 
                \item \label{subitem:obstructions-3series} At least three $W$-handlebars in~${( \mathcal{P}_i \colon i \in [t] )}$ are in series. 
            \end{enumerate}
    \end{enumerate}
\end{definition}

Now we can state our main theorem. 
Recall that for a product~${\Gamma = \prod_{j \in [m]} \Gamma_j}$ of~$m$ abelian groups and for a subset~${J \subseteq [m]}$, we denote by~$\Gamma_J$ the subgroup consisting of all~${g \in \Gamma}$ with~${\pi_j(g) = 0}$ for all~${j \in [m] \setminus J}$. 

\begin{restatable}{theorem}{mainobstruction}
    \label{thm:mainobstruction}
    For all positive integers~$m$ and~$\omega$, there is a function~${\widehat f_{m,\omega} \colon \mathbb{N}^3 \to \mathbb{Z}}$ satisfying the following property. 
    Let~${\Gamma = \prod_{j \in [m]} \Gamma_j}$ be a product of~$m$ abelian groups, and for every~${j \in [m]}$, let~$\Omega_j$ be a subset of~$\Gamma_j$ with~${\abs{\Omega_j} \leq \omega}$. 
    For each~${j \in [m]}$, let~${A_j := \pi_j^{-1}(\Gamma_j\setminus \Omega_j)\subseteq \Gamma}$ 
    and ${A := \bigcap_{j \in [m]} A_j}$. 
    Let~$G$ be a graph with a $\Gamma$-labelling~$\gamma$ and let~$\mathcal{O}$ be the set of all cycles of~$G$ whose $\gamma$-values are in~$A$. 
    Then for every three positive integers~$k$, $\kappa$, and~$\theta$, there exists a $\Gamma$\nobreakdash-labelling~$\gamma'$ of~$G$ that is shifting equivalent to~$\gamma$ such that at least one of the following statements is true. 
    \begin{enumerate}
        [label=(\roman*)]
        \item \label{item:main-packing} There are~$k$ vertex-disjoint cycles in~$\mathcal{O}$. 
        \item \label{item:main-hittingset} There is a hitting set for~$\mathcal{O}$ of size at most~${\widehat f_{m,\omega}(k, \kappa, \theta)}$. 
        \item \label{item:main-obstruction} There is a subgraph~$H$ of~$G$ such that for some~${J \subseteq [m]}$ and for the $\left(\Gamma / \Gamma_J \right)$-labelling~$\gamma''$ induced by the restriction of~$\gamma'$ to~$H$, we have $(H,\gamma'') \in \mathcal{C}(\kappa, \theta, \Gamma / \Gamma_J, A + \Gamma_J )$ and~$H$ contains a half-integral packing of~$\kappa$ cycles in~$\mathcal{O}$.
    \end{enumerate}
\end{restatable}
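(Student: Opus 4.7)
The plan is to reduce the problem to the half-integral Erd\H{o}s-P\'{o}sa theorem proved in \cite{GollinHKKO2021} and then refine the resulting structure until it satisfies Definition~\ref{def:obstructions}. Applying that precursor result to~$\mathcal{O}$ gives, for a sufficiently large function of the target parameters, either outcome~\ref{item:main-packing}, outcome~\ref{item:main-hittingset}, or a large wall~$W$ in~$G$ equipped with a large set~$\mathcal{H}$ of pairwise vertex-disjoint $W$-handles whose combination with~$W$ supports the desired half-integral packing. From this configuration the task is to distil a sub-configuration~$(W', (\mathcal{P}_i \colon i\in [t]))$ and an index set~${J \subseteq [m]}$ such that the corresponding subgraph, viewed over~$\Gamma/\Gamma_J$, lies in~${\mathcal{C}(\kappa,\theta,\Gamma/\Gamma_J,A+\Gamma_J)}$.

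First I would perform a coordinate-by-coordinate \emph{bipartite reduction}. For each~${j \in [m]}$, either every sufficiently large subwall of~$W$ contains a $\gamma_j$-non-zero cycle, in which case~$j$ is placed into~$J$, or by passing to a subwall and applying the standard shifting argument one can make the restriction $\gamma_j$-bipartite on it. Iterating through~$[m]$ and refining~$W$ at each stage, I obtain a large subwall (still called~$W$) on which every $N^W$-path has $\gamma$-value in~$\Gamma_J$, so after projecting to~${\Gamma/\Gamma_J}$ condition~\ref{item:obstructions-zerowall} holds; coordinates outside~$J$ drop out exactly because their part of any wall-cycle is trivial. The definition of~$J$ together with the bounded-deficiency hypothesis~${|\Omega_j|\le \omega}$ ensures that if the final~$J$ is empty, the wall itself provides outcome~\ref{item:main-hittingset} or~\ref{item:main-packing} after a routine argument.

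Next I would organise the handles into handlebars. Replace each handle by its row-extension to a suitable column-slice so that all endpoints lie on the column-boundary, giving a linearly ordered set. By the pigeonhole principle on the image of~$\gamma$ modulo~$\Gamma_J$---controlled because~$A+\Gamma_J$ is the complement of at most~$\omega^{|J|}$ cosets---keep a large subset of handles that all share the same $\gamma$-value~$g_1 \in \Gamma/\Gamma_J$, then apply Lemma~\ref{lem:pureset} to extract a pure handlebar~$\mathcal{P}_1$ of controlled size. Peel off a column-strip containing the endpoints of~$\mathcal{P}_1$ and repeat on the remaining column-boundary to get a non-mixing pure handlebar~$\mathcal{P}_2$ with a common value~$g_2$, and so on. Stop at the first index~$t$ for which some integer combination $\sum c_i g_i$ lies in~$A+\Gamma_J$; by choosing the combination to use coefficient~$\pm 1$ (or by further pigeonholing among values) one obtains condition~\ref{item:obstructions-allowabletransversals}, and the stopping rule at the first~$t$ where this happens delivers the minimality condition~\ref{item:obstructions-minimality}. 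Condition~\ref{item:obstructions-even} is then a parity calculation in~$\mathbb{Z}_2$-homology: because the wall is $\gamma$-bipartite (after shifting), any cycle formed from wall pieces and handles in~$\mathcal{P}_i$ must traverse a non-series~$\mathcal{P}_i$ an odd number of times to close up around~$W$.

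The main obstacle is condition~\ref{item:obstructions-handlebars} and its interaction with outcome~\ref{item:main-packing}. If none of~\ref{subitem:obstructions-oddcrossing}--\ref{subitem:obstructions-3series} holds, then every handlebar is nested or crossing, no handlebar is in series, and the number of crossing handlebars is even. In that case I would show by a routing/cut argument inside the wall that the configuration can be re-threaded into~$k$ pairwise vertex-disjoint allowable cycles, returning us to outcome~\ref{item:main-packing}; this is essentially an extension of Reed's argument for odd cycles, where an even number of crossings together with purely nested handlebars can be undone to produce disjoint cycles. Establishing this dichotomy rigorously is the technically hardest part, and I expect it to require the bulk of the argument in Sections~\ref{sec:handlebars}--\ref{sec:proof} of the paper, with the group-theoretic input from Section~\ref{sec:grouplemmas} used to control the values and certify that the refined configuration indeed supports a half-integral packing of~$\kappa$ cycles in~$\mathcal{O}$ so that condition~\ref{item:main-obstruction} is realised.
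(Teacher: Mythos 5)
Your outline captures the broad skeleton (clean the wall coordinate-by-coordinate to define~$J$, collect handles, organise into pure non-mixing handlebars, and resolve the final dichotomy when no clause of~\ref{item:obstructions-handlebars} holds), but several of the intermediate steps would fail as stated, and the proposal misses some structure the paper relies on heavily.

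First, the proof is by induction on~$k$, and this is load-bearing: the tangle of Lemma~\ref{lem:welllinked} requires that~${\tau_\nu(H) \leq t/12}$ whenever~${\nu(H) < \nu(G)}$, which is supplied by the induction hypothesis, not by an appeal to the precursor half-integral theorem as a black box. Starting from the precursor result does not hand you a wall together with a controlled set of handles; in the paper the wall and handles are built from scratch, and the half-integral packing is deduced at the end (Claim~3) as a by-product of the refined structure, not assumed up front.

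Second, your pigeonhole step to obtain a handlebar of handles all sharing a single $\gamma$-value~$g_1$ does not work when~$\Gamma/\Gamma_J$ is infinite: there is no finite list of cosets to pigeonhole into, since~$A+\Gamma_J$ is the complement of at most~$\omega$ cosets \emph{per coordinate}, not a finite set overall. The paper instead applies Lemma~\ref{lem:ramsey} to split each coordinate~$j$ into an \emph{all-equal} or \emph{all-distinct} regime and then tracks, through the ``McGuffin'' bookkeeping of Claim~1, exactly which coordinates have been brought under control. Lemma~\ref{lem:addlinkage} is invoked iteratively precisely when the current collection does not yet generate an allowable value modulo the remaining coordinates; that loop has no analogue in your outline.

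Third, your treatment of conditions~\ref{item:obstructions-allowabletransversals} and~\ref{item:obstructions-minimality} is off. Stopping at the first~$t$ for which~${\gen{g_1,\dots,g_t}}$ meets~$A+\Gamma_J$ only certifies that omitting~$\mathcal{P}_t$ fails to generate; it does not show that omitting~$\mathcal{P}_1$ (say) fails, which is what~\ref{item:obstructions-minimality} requires. The paper takes a \emph{minimal} index set with the generation property, which is a different and stronger pruning. More importantly, \ref{item:obstructions-allowabletransversals} demands that \emph{every} transversal~$(P_i)_{i\in[t]}$ has $\gamma$-value in~$A$, which is far stronger than ``some integer combination of representatives lies in~$A$''. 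Achieving this requires Lemma~\ref{lem:omega-avoiding} to bound coefficients, Lemma~\ref{lem:combining-handles} to physically merge~$d_i$ handles into one through the outer columns, Lemma~\ref{lem:vectorsum} to pick one good representative, and then Lemma~\ref{lem:alltransversals} to upgrade from one good transversal to a large family of uniformly good ones. ``Further pigeonholing among values'' will not produce this. Likewise, \ref{item:obstructions-even} is obtained by maximising the set~$S$ of handlebars whose multiplicities are forced even, not by a free-standing $\mathbb{Z}_2$-homology computation; without the careful choice of~$S$ and the combining step, there is no reason a function~$f$ with an even sum on a non-series handlebar could not still produce an allowable value.

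Finally, the ``re-threading'' dichotomy in the case that~\ref{item:obstructions-handlebars} fails is real and is where Lemmas~\ref{lem:combining-handlebars} and~\ref{lem:almost-finding-cycles} do the work: an even number of crossing handlebars with no series ones can be merged pairwise into a single nested handlebar, and a bounded number of series handlebars can be routed into disjoint subwalls; you identify this as the hard step but supply only the hope that it resembles Reed's odd-cycle argument. So the overall architecture is in the right spirit, but the core technical steps are missing or incorrect as written.
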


Note that if~$G$ is a graph for which statement~\ref{item:main-obstruction} of Theorem~\ref{thm:mainobstruction} holds with~${H = G}$, then the half-integral packing of size~$\kappa$ is a witness that there is no hitting set for the cycles in~$\mathcal{O}$ of size less than~${\kappa/2}$. 
In Subsection~\ref{subsec:reallyobstructions}, we will establish 
that $H$ has no three vertex-disjoint cycles in~$\mathcal{O}$. 
In this sense, these graphs form obstructions for an Erd\H{o}s-P\'{o}sa type result.

\subsection{Proof Sketch}
\label{subsec:proofsketch}

The proof proceeds by induction on~$k$ and will mostly follow a well-established proof structure in this area, using new ideas developed here as well as in~\cite{GollinHKKO2021}. 
We assume that all three statements of Theorem~\ref{thm:mainobstruction} fail, and find a tangle that is `oriented towards' a minimum hitting set, see Lemma~\ref{lem:welllinked}. 
This tangle allows us to find a large wall in~$G$ (see Subsection~\ref{subsec:findingthewall}) which in particular has the property that it cannot be separated from cycles in~$\mathcal{O}$ by a small set of vertices. 
We then (see Subsection~\ref{subsec:cleaning}) take a sufficiently large subwall~$W$ 
which after possibly shifting the labelling satisfies for some set~${Z \subseteq [m]}$ of coordinates
\begin{itemize}
    \item every $N^W$-path is $\gamma_j$-zero for all~${j \in Z}$,
    \item every large subwall contains a $\gamma_j$-non-zero cycle for all~${j \in [m] \setminus Z}$. 
\end{itemize}
As in~\cite{GollinHKKO2021}, we adapt a theorem of Wollan (see Subsection~\ref{subsec:collectinghandles}) to obtain a collection~$\mathcal{P}$ of $W$-handles that is sufficient to generate a value that is `allowable' with respect to the coordinates in~$Z$. 
These handles will be partitioned into sets such that for each coordinate in~$Z$, all handles within a set either have the same values or distinct values. 
We restrict the parts of this partition to pairwise non-mixing handlebars and throw away any handlebar that is unnecessary to generate a value that is `allowable' with respect to the coordinates in~$Z$. 
Using the outer columns of the wall, we combine handles within each handlebar to form a set of handlebars for a subwall of~$W$ which now satisfies property~\ref{item:obstructions-allowabletransversals}. 
Since we already threw away all unnecessary handlebars, property~\ref{item:obstructions-minimality} is also satisfied. 
Each new handlebar whose handles contain an even number of handles in~$\mathcal{P}$ will now be in series, which allows us to do this in such a way that we additionally satisfy property~\ref{item:obstructions-even}.

Following the approach from~\cite{GollinHKKO2021}, we find a half-integral packing of cycles in~$\mathcal{O}$ of size~$\kappa$, and since we assumed that statement~\ref{item:main-obstruction} fails, we can conclude that property~\ref{item:obstructions-handlebars} fails. 
This means either that each handlebar is in series and there are at most two of them, or that no handlebar is in series and the number of crossing handlebars is even.
In the first case, it is not hard to find a packing of~$k$ cycles whose values are allowable with respect to the coordinates in~$Z$, and techniques from~\cite{GollinHKKO2021} enable us to deal with the coordinates in~${[m] \setminus Z}$ easily and obtain a packing of $k$~cycles in~$\mathcal{O}$ (see Subsection~\ref{subsec:cleaning}). 
In the second case, we iteratively combine pairs of `adjacent' handlebars to obtain one handlebar for a subwall of~$W$, where each new handle contains exactly one handle of each constituent handlebar. 
This will form a nested handlebar, enabling us once again to find a packing of $k$~cycles in~$\mathcal{O}$.
Thus, we have the desired contradiction in each case.

\section{Recycled tools}
\label{sec:recycle}

\subsection{Finding the wall}
\label{subsec:findingthewall}

A \emph{separation} of a graph~$G$ is a pair~${(A, B)}$ of subsets of~$V(G)$ such that~${G[A] \cup G[B] = G}$. 
Its \emph{order} is defined to be~$\abs{A \cap B}$. 
For a positive integer~$t$, 
a set~$\cT$ of separations of order less than~$t$ is a \emph{tangle of order~$t$} in~$G$ if it satisfies the following. 
\begin{enumerate}
    [label=(\arabic*)]
    \item If ${(A, B)}$ is a separation of~$G$ of order less than~$t$, then~$\cT$ contains exactly one of~${(A, B)}$ and~${(B, A)}$. 
    \item If~${(A_1, B_1), (A_2, B_2), (A_3, B_3) \in \cT}$, then~${G[A_1] \cup G[A_2] \cup G[A_3] \neq G}$. 
\end{enumerate}

For a~${(g,g)}$-wall~$W$, let~$\mathcal{T}_W$ be the set of all separations~${(A,B)}$ of~$G$ of order less than~$g$ such that~${G[B]}$ contains a row of~$W$. 
Kleitman and Saks (see~\cite[(7.3)]{RS1991}) showed that~$\cT_W$ is a tangle of order~$g$. 
A tangle~$\cT$ in~$G$ \emph{dominates} the wall~$W$ if~${\cT_W \subseteq \cT}$.
The following theorem of Robertson, Seymour, and Thomas~\cite{RST1994} shows that every tangle in a graph of sufficiently large order dominates a wall. A better bound can be obtained by combining the results of Chuzhoy and Tan~\cite{ChuzhoyT2019} and Kawarabayashi, Thomas, and Wollan~\cite{KTW2020}.
\begin{theorem}[Robertson, Seymour, and Thomas~\cite{RST1994}]
    \label{thm:wall}
    There exists a function~${f_{\ref{thm:wall}} \colon \mathbb{N} \to \mathbb{N}}$ 
    such that 
    if~${g \geq 3}$ is an integer and~$\cT$ is a tangle in a graph~$G$ of order at least~${f_{\ref{thm:wall}}(g)}$,
    then~$\cT$ dominates a ${(g, g)}$-wall~${W}$ in~${G}$. 
\end{theorem}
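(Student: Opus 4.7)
The plan is to derive Theorem~\ref{thm:wall} from the Excluded Grid Theorem by translating the tangle into a grid minor and then refining the grid to a wall, all while tracking tangle domination throughout the construction. First, I would extract from~$\cT$ a well-linked set~${X \subseteq V(G)}$ whose size is linear in the order of~$\cT$: a set such that for every partition~${X = X_1 \cup X_2}$ with~${\abs{X_1} \leq \abs{X_2}}$ there exist~$\abs{X_1}$ pairwise internally disjoint ${(X_1, X_2)}$-paths in~$G$. Such a set must exist, for otherwise one could iteratively extract order-bounded separations whose small sides together cover~$G$, contradicting the second tangle axiom.

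Using~$X$ as a seed, I would iteratively build a system of pairwise disjoint ``horizontal'' and ``vertical'' paths via repeated applications of Menger's theorem, each time routing any newly constructed path through the big side of every separation in~$\cT$ of order less than~$g$. The key technical step is a lemma showing that well-linkedness inside a tangle of sufficiently large order yields~$g$ pairwise disjoint row-paths and~$g$ pairwise disjoint column-paths that pairwise intersect, thereby producing a subdivision of the elementary $(g,g)$-grid whose rows and columns all lie on the big side of every small separation in~$\cT$.

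Since the elementary $(g,g)$-wall is a subdivision of a subgraph of the elementary $(cg,cg)$-grid for an absolute constant~$c$, the constructed grid contains a subdivision~$W$ of the elementary $(g,g)$-wall whose rows are concatenations of row-paths of the grid. By construction each row of~$W$ lies on the big side of every separation in~$\cT$ of order less than~$g$, so whenever~${(A,B) \in \cT_W}$ (that is, $G[B]$ contains a row of~$W$), the only orientation compatible with~$\cT$ places~$B$ on the big side, yielding~${(A,B) \in \cT}$. Hence~${\cT_W \subseteq \cT}$, i.e.~$\cT$ dominates~$W$.

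The main obstacle is the grid-minor extraction: simultaneously growing disjoint row- and column-paths while preserving tangle-respecting routing at every step is the technical heart of the Robertson-Seymour-Thomas argument and is what determines the bound~$f_{\ref{thm:wall}}$. For the qualitative statement needed in the present paper, however, the mere existence of such a function suffices, and the polynomial-bound refinements due to Chekuri-Chuzhoy and others are not required.
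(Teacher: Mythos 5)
The paper does not prove Theorem~\ref{thm:wall}; it is cited directly from Robertson, Seymour, and Thomas~\cite{RST1994}, so there is no in-paper argument to compare your proposal against. Taken on its own terms, your write-up is a reasonable bird's-eye sketch of the RST strategy, but it is a plan rather than a proof: you explicitly defer the central construction (growing tangle-respecting, pairwise intersecting row- and column-paths into a $(g,g)$-grid subdivision) and acknowledge that this is ``the technical heart of the Robertson-Seymour-Thomas argument.'' That deferred step \emph{is} the theorem; as written, the proposal establishes nothing beyond what is already cited.

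Two smaller issues are also worth flagging. First, your claim that a well-linked set of size linear in the order of~$\cT$ must exist, ``for otherwise one could iteratively extract order-bounded separations whose small sides together cover~$G$, contradicting the second tangle axiom,'' does not follow as stated: that axiom only forbids \emph{three} small sides from covering~$G$, not an arbitrary iterated collection. One needs the standard duality between tangles of order~$t$ and branch-width (or a cops-and-robber argument) to actually produce a well-linked set. Second, the domination step is stated too loosely. A row of~$W$ is a connected subgraph that can meet both sides of a separation, so ``lies on the big side'' must be sharpened to something like ``has a vertex in~$B \setminus A$ for every~$(A,B) \in \cT$ of order less than~$g$,'' and one should invoke the Kleitman and Saks observation, cited in the paper as~\cite[(7.3)]{RS1991}, that exactly one side of every separation of order less than~$g$ contains a full row of~$W$, before concluding~$\cT_W \subseteq \cT$.
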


We will also need the following lemma, stating that 
if a tangle dominates a wall~$W$, 
then it also dominates every $N^W$-anchored subwall of~$W$.

\begin{lemma}[Gollin et al.~{\cite[Lemma 2.8]{GollinHKKO2021}}]
    \label{lem:dominatedsubwall}
    Let~${w \geq t \geq 3}$ be integers, let~$W$ be a wall of order~$w$, and let~$\mathcal{T}$ be a tangle dominating~$W$. 
    If~$W'$ is a subwall of~$W$ of order~$t$ and 
    \[
        {\abs{N^{W'} \cap N^W} > (2t-1)(t-1)},
    \] 
    then~$\mathcal{T}$ dominates~$W'$. 
    
    In particular, if $W'$ is $N^W$-anchored, then~$\mathcal{T}$ dominates~$W'$.
\end{lemma}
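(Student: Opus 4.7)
The plan is to prove the contrapositive. Suppose for contradiction that $\mathcal{T}$ does not dominate $W'$, so there exists a separation $(A,B)$ of $G$ of order less than $t$ with $(A,B) \in \mathcal{T}_{W'}$ (so some row $R'$ of $W'$ satisfies $R' \subseteq G[B]$) but $(B,A) \in \mathcal{T}$. Since $\mathcal{T}$ dominates $W$ and the $r \geq w \geq t$ rows of $W$ are pairwise vertex-disjoint, at most $|A \cap B| < t$ of them contain a vertex of $A \cap B$, so at least one row of $W$ lies entirely in $A$ or entirely in $B$. Combining this with $\mathcal{T}_W \subseteq \mathcal{T}$ and the tangle uniqueness axiom forces $G[A]$ (and not $G[B]$) to contain a row of $W$, and in fact every row of $W$ has a vertex in $A \setminus B$.

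Next, I would perform a cleanliness analysis of $W$ relative to the separation. Call a row or column of $W$ \emph{clean} if it avoids $A \cap B$; since $|A \cap B| < t$ and rows (resp.~columns) of a wall are vertex-disjoint, at most $t-1$ rows and $t-1$ columns of $W$ are non-clean. A clean row is connected, disjoint from $A \cap B$, and contains a vertex in $A \setminus B$ by the previous step, so it lies entirely in $A \setminus B$; each clean column meets every clean row at a nail, and therefore also lies in $A \setminus B$. Consequently, every nail of $W$ in $B$ sits at the intersection of a non-clean row and a non-clean column. Since each such intersection contributes at most two nails and $A \cap B$ itself supplies at most $t-1$ nails, one obtains the key bound $|N^W \cap B| \leq (2t-1)(t-1)$.

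To conclude, I would combine the hypothesis $|N^{W'} \cap N^W| > (2t-1)(t-1)$ with $R' \subseteq G[B]$. The row $R'$ forces its $2c' \geq 2t$ nails of $W'$ into $B$; and applying the same cleanliness analysis inside $W'$ (which is itself a wall of order $t$) shows that the clean rows and columns of $W'$ all lie on one side of the separation. Together with $R' \subseteq G[B]$, this pins these clean rows and columns on the $B$-side and should thereby force $|N^{W'} \cap N^W \cap B| > (2t-1)(t-1)$, contradicting the bound above. The ``in particular'' clause follows because an $N^W$-anchored wall $W'$ of order $t$ satisfies $|N^{W'} \cap N^W| = |N^{W'}| = 2c'r' \geq 2t^2 > (2t-1)(t-1)$, so the main statement applies. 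The chief obstacle will be the last counting step: a single row $R'$ contributes only $2c' \geq 2t$ common nails by itself, well short of $(2t-1)(t-1)$ for large $t$, so one must carefully leverage the clean rows and columns of $W'$ to supply the remainder and verify that the nails extracted in this way really lie in $N^{W'} \cap N^W \cap B$.
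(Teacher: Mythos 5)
Your setup is correct and several of the key ideas are right: the reduction to the contrapositive (take a bad separation $(A,B)$ of order $<t$ with $R'\subseteq G[B]$ but $(B,A)\in\mathcal{T}$), the observation that $\mathcal{T}_W\subseteq\mathcal{T}$ forces every row of $W$ that avoids $A\cap B$ into $A\setminus B$, the resulting ``cleanliness'' analysis of $W$ giving $\abs{N^W\cap B}\le (2t-1)(t-1)$, and the mirror-image analysis of $W'$ (using the at-least-one clean column of $W'$ meeting $R'\subseteq B$) giving that clean rows/columns of $W'$ lie in $B\setminus A$ and hence $\abs{N^{W'}\cap A}\le (2t-1)(t-1)$.

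The problem is exactly the last step, which you flag yourself. The two estimates above control two disjoint pieces of $N^{W'}\cap N^W$: the nails in $A$ (at most $(2t-1)(t-1)$ by the $W'$-analysis) and the nails in $B\setminus A$ (at most $(2t-1)(t-1)$ by the $W$-analysis). Adding them only yields $\abs{N^{W'}\cap N^W}\le 2(2t-1)(t-1)$, which does not contradict the hypothesis $\abs{N^{W'}\cap N^W}>(2t-1)(t-1)$. Equivalently, your plan to deduce $\abs{N^{W'}\cap N^W\cap B}>(2t-1)(t-1)$ from $\abs{N^{W'}\cap N^W}>(2t-1)(t-1)$ together with $\abs{N^{W'}\cap A}\le(2t-1)(t-1)$ only gives $\abs{N^{W'}\cap N^W\cap B}>0$, not the strict excess over $(2t-1)(t-1)$ you need. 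This is a genuine factor-of-two gap, not just a loose constant: already in the cleanest instance of the ``in particular'' clause, where $W'$ is an $N^W$-anchored $(t,t)$-subwall, one has $\abs{N^{W'}\cap N^W}=\abs{N^{W'}}=2t^2-2$, while $2(2t-1)(t-1)=4t^2-6t+2>2t^2-2$ for every $t\ge 3$, so your two bounds are simply compatible with each other and no contradiction arises. To close the gap you would need a qualitatively stronger statement — for instance that essentially all of $N^{W'}\cap N^W$ lies in $B$, so that the $W$-side count alone suffices — but that does not follow from the two separate cleanliness analyses you outline; it requires an additional structural argument about how $W'$ sits inside $W-(A\cap B)$, which is where the real content of the cited lemma lives.
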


We now review packing functions, introduced by Gollin et al.~\cite{GollinHKKO2021}.
Let~$G$ be a graph and 
let~$\nu$ be a function from the set of subgraphs of~$G$ to the set of non-negative integers. 
For subgraphs~${H, H' \subseteq G}$, we say 
\begin{itemize}
    \item $\nu$ is \emph{monotone} if~$\nu(H) \leq \nu(H')$ whenever~$H$ is a subgraph of~$H'$, 
    \item $\nu$ is \emph{additive} if~$\nu(H \cup H') = \nu(H) + \nu(H')$ whenever~$H$ and~$H'$ are vertex-disjoint, and 
    \item $\nu$ is a \emph{packing function for~$G$} if it is monotone and additive.
\end{itemize}
Now let~${\nu}$ be a packing function for a graph~$G$.  
For a subgraph~${H \subseteq G}$, we say a set~${T \subseteq V(H)}$ is a \emph{$\nu$-hitting set for~$H$} if~${\nu(H - T) = 0}$. 
We define~$\tau_\nu(H)$ as the size of a smallest $\nu$-hitting set of~$H$. 
Note that in the traditional sense of the word, a $\nu$-hitting set of~$G$ is a hitting set for the minimal subgraphs~${H \subseteq G}$ for which~${\nu(H) \geq 1}$. 

The following lemma, which will be useful for the inductive step of our proof, says that if a graph~$G$ itself does not have a small $\nu$-hitting set but all subgraphs of $G$ with a smaller $\nu$-value do, then for any given minimum $\nu$-hitting set $T$, we can find a tangle of large order that is `oriented towards'~$T$. Similar arguments have been used in many Erd\H{o}s-P\'osa type results,  see~\cite{HuynhJW2017} and~\cite{Wollan2011} for instance.
\begin{lemma}[Gollin et al.~{\cite[Lemma 4.1]{GollinHKKO2021}}]
    \label{lem:welllinked}
    Let~$\nu$ be a packing function for a graph~$G$ 
    and let~${T \subseteq V(G)}$ be a minimum $\nu$-hitting set for~$G$ of size~${t}$. 
    Let~$\mathcal{T}_T$ be the set of all separations~${(A,B)}$ of~$G$
    of order less than~${t/6}$ such that~${\abs{B \cap T} > 5t/6}$. 
    If~${\tau_\nu(H) \leq t/12}$ whenever~$H$ is a subgraph of~$G$ with~${\nu(H) < \nu(G)}$, 
    then~$\mathcal{T}_T$ is a tangle of order~${\lceil t/6 \rceil}$. 
\end{lemma}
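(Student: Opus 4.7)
The plan is to verify that $\mathcal{T}_T$ satisfies the two defining axioms of a tangle of order $\lceil t/6 \rceil$: for every separation $(A,B)$ of $G$ of order less than $t/6$, exactly one of $(A,B)$ and $(B,A)$ lies in $\mathcal{T}_T$, and no three elements of $\mathcal{T}_T$ together induce all of $G$.

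For the first axiom, the ``at most one'' direction is a quick inclusion-exclusion: if both $|T \cap A| > 5t/6$ and $|T \cap B| > 5t/6$, then
\[
    t = |T| = |T \cap A| + |T \cap B| - |T \cap A \cap B| > 5t/6 + 5t/6 - t/6 = 3t/2,
\]
a contradiction (using $|T \cap A \cap B| \leq |A \cap B| < t/6$). The ``at least one'' direction is the main obstacle and is the only place where the hypothesis on $\tau_\nu$ is used. Assume for contradiction that both $|T \cap A| \leq 5t/6$ and $|T \cap B| \leq 5t/6$; I would produce a $\nu$-hitting set $T^*$ with $|T^*| < t$, contradicting the minimality of $T$. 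The key observation is that since the definition of a separation forbids edges of $G$ between $A \setminus B$ and $B \setminus A$, the graph $G - (A \cap B)$ is the vertex-disjoint union $G[A \setminus B] \cup G[B \setminus A]$, so by additivity of $\nu$,
\[
    \nu(G[A \setminus B]) + \nu(G[B \setminus A]) = \nu(G - (A \cap B)) \leq \nu(G).
\]
Hence either both summands are strictly less than $\nu(G)$, or exactly one equals $\nu(G)$ and the other vanishes.

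This gives two cases. In the case where both $\nu(G[A \setminus B])$ and $\nu(G[B \setminus A])$ are strictly less than $\nu(G)$, the hypothesis yields $\nu$-hitting sets $T_A, T_B$ for these subgraphs of size at most $t/12$ each, and then $T^* := T_A \cup T_B \cup (A \cap B)$ hits $G$ and has size less than $t/12 + t/12 + t/6 = t/3 < t$. In the remaining case, without loss of generality $\nu(G[B \setminus A]) = 0$, and I plan to set $T^* := (T \cap A) \cup (A \cap B)$. A direct analysis gives $G - T^* = (G[A \setminus B] - T) \cup G[B \setminus A]$, both of which have $\nu$-value zero (the first because $T$ hits $G$, the second by assumption), so $T^*$ is a $\nu$-hitting set; its size satisfies $|T^*| \leq |T \cap A| + |A \cap B| < 5t/6 + t/6 = t$, hence $|T^*| \leq t - 1$ by integrality. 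This case, where the packing is concentrated on one side of the separation and the hypothesis does not directly apply to either piece, is the crux of the argument.

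For the second axiom, given $(A_i, B_i) \in \mathcal{T}_T$ for $i \in [3]$, the bounds $|T \cap B_i| > 5t/6$ and $|A_i \cap B_i| < t/6$ combine via $|T \cap A_i| = t - |T \cap B_i| + |T \cap A_i \cap B_i|$ to give $|T \cap A_i| < t - 5t/6 + t/6 = t/3$. Summing over $i$, $|T \cap (A_1 \cup A_2 \cup A_3)| < t = |T|$, so $T \not\subseteq A_1 \cup A_2 \cup A_3$, which forces $V(G) \neq A_1 \cup A_2 \cup A_3$ and hence $G[A_1] \cup G[A_2] \cup G[A_3] \neq G$.
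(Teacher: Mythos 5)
Your proof is correct and follows what is essentially the standard argument for constructing a tangle from a minimum hitting set (the proof in the cited reference proceeds along the same lines). The decomposition into the two cases—both halves having $\nu$-value strictly below $\nu(G)$, versus one side carrying all of $\nu(G)$—is exactly the right case split, and your handling of the second case (taking $T^* = (T\cap A)\cup(A\cap B)$ and using $\nu(G[(A\setminus B)\setminus T])=0$ by monotonicity) is the crux and is done correctly. The verification of the second tangle axiom is also as expected.

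One very minor remark: the argument implicitly assumes $t>0$ (e.g.\ the contradiction $3t/2 < t$ and the final $|T^*|\leq t-1$ both need $t\geq 1$); but when $t=0$ there are no separations of order less than $t/6$, so $\mathcal{T}_T=\emptyset$ is vacuously a tangle of order $0$, and the statement holds trivially. It would be worth adding a sentence dispensing with this degenerate case, but it does not affect the substance of your argument.
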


\subsection{Cleaning the wall and finding allowable cycles}
\label{subsec:cleaning}

Let ${\Gamma = \prod_{j \in [m]} \Gamma_j}$ be a product of~$m$ abelian groups 
and let~$(G,\gamma)$ be a $\Gamma$-labelled graph. 
Let~$W$ be a wall in~$G$ and let $j\in[m]$. If $W$ is sufficiently large in terms of $\ell$ and no $(\ell,\ell)$-subwall of~$W$ is $\gamma_j$-bipartite, then there are many $\gamma_j$-non-zero cycles that we can use to help build our allowable cycles. Otherwise, there is an $(\ell,\ell)$-subwall of~$W$ that is $\gamma_j$-bipartite, and we can restrict to this subwall to utilize the $\gamma_j$-bipartiteness.
This leads to the following definition.
Given a subset~${Z \subseteq [m]}$ and an integer~$\ell$, 
we say that a wall~$W$ in~$G$ is \emph{$(\gamma,Z,\ell)$-clean} if 

\begin{enumerate}
    [label=(\arabic*)]
    \item\label{item:clean1} every $N^W$-path in~$W$ is $\gamma_j$-zero for all~${j \in Z}$ and
    \item\label{item:clean2} $W$ has no ${(\ell,\ell)}$-subwall that is $\gamma_j$-bipartite for all~${j \in [m] \setminus Z}$. 
\end{enumerate}
We write $\mathbb{N}_{\geq 3}$ to denote the set of integers greater than or equal to $3$.

The following lemma shows how to obtain a clean subwall from a wall.
\begin{lemma}[Gollin et al.~{\cite[Lemma 5.1]{GollinHKKO2021}}]
    \label{lem:cleansubwall}
    Let~${\Gamma = \prod_{j \in [m]} \Gamma_j}$ be a product of~$m$ abelian groups, 
    let~${(G,\gamma)}$ be a $\Gamma$-labelled graph, 
    let~${\psi \colon \{0\} \cup [m+1] \to \mathbb{N}_{\geq 3}}$ be a function, 
    and let~$W$ be a wall of order $\psi(0)+2$ in~$G$. 
    Then there exist a $\Gamma$-labelling~$\gamma'$ of~$G$ shifting-equivalent to~$\gamma$, 
    a subset~$Z$ of~${[m]}$, 
    and a ${(\gamma',Z,\psi(\abs{Z}+1)+2)}$-clean $\branch(W)$-anchored ${(\psi(\abs{Z}),\psi(\abs{Z}))}$-subwall of~$W$.
\end{lemma}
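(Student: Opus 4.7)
The plan is to build up $Z$ one coordinate at a time while progressively passing to smaller $\branch(W)$-anchored subwalls and shifting the labelling to maintain cleanness on the coordinates already added. Initialize $Z_0 := \emptyset$, $\gamma^{(0)} := \gamma$, and $W_0 := W$, which has order $\psi(0)+2$. The invariant to maintain is that $W_t$ is a $\branch(W)$-anchored subwall of $W$ of sufficiently large order, $\gamma^{(t)}$ is shifting-equivalent to $\gamma$, and every $N^{W_t}$-path in $W_t$ is $\gamma^{(t)}_j$-zero for all $j \in Z_t$.

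At step $t$, test whether there exist some $j \in [m] \setminus Z_t$ and a $(\psi(\abs{Z_t}+1)+2, \psi(\abs{Z_t}+1)+2)$-subwall of $W_t$ that is $\gamma^{(t)}_j$-bipartite. If no such $j$ exists, then $W_t$ is $(\gamma^{(t)}, Z_t, \psi(\abs{Z_t}+1)+2)$-clean, and extracting a $\branch(W)$-anchored $(\psi(\abs{Z_t}), \psi(\abs{Z_t}))$-subwall (which is possible because $W_t$ has been kept large enough) yields the desired output. If such a $j$ and bipartite subwall $W_\star$ do exist, then an auxiliary sub-lemma produces a labelling $\gamma^{(t+1)}$ obtained from $\gamma^{(t)}$ by shifts along elements $\delta \in \Gamma$ with $\pi_{j'}(\delta) = 0$ for all $j' \neq j$, together with a $\branch(W)$-anchored subwall $W_{t+1}$ of $W_\star$ of suitable order such that every $N^{W_{t+1}}$-path in $W_{t+1}$ is $\gamma^{(t+1)}_j$-zero. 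Since the shift is confined to the $j$-th coordinate, the zero conditions already established for coordinates in $Z_t$ are preserved on any subwall. Set $Z_{t+1} := Z_t \cup \{j\}$ and iterate. The process terminates in at most $m$ steps because $\abs{Z_t}$ strictly increases.

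The main obstacle is the auxiliary sub-lemma: given a $\gamma_j$-bipartite wall, produce a shifting-equivalent labelling (restricted to coordinate $j$) and a subwall on which every $N$-path is $\gamma_j$-zero. The idea is that $\gamma_j$-bipartiteness yields a well-defined potential $\varphi \colon V \to \Gamma_j$ (up to an additive constant) such that every $N$-path from $u$ to $v$ has $\gamma_j$-value $\varphi(v)-\varphi(u)$, and the goal reduces to making $\varphi$ constant on the nails of the subwall. Shifts by order-$2$ elements modify $\varphi$ at each nail only by an element of the $2$-torsion $\Gamma_j[2]$, so one extracts a subwall, via a pigeonhole/Ramsey-type selection on nail potentials modulo $\Gamma_j[2]$, on which the residual potential lies in $\Gamma_j[2]$, and then applies the appropriate order-$2$ shifts to equalize. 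The $\branch(W)$-anchored property is maintained throughout by standard subwall-extraction book-keeping, chosen so that the orders decay from $\psi(0)+2$ down to $\psi(\abs{Z})$ at the prescribed rate. The full technical argument is that of Lemma~5.1 of the precursor paper \cite{GollinHKKO2021}, from which this lemma is recycled.
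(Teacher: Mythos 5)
The iterative skeleton you describe --- grow $Z$ one coordinate at a time, at each stage either certifying cleanness or locating a large $\gamma_j$-bipartite subwall for some $j \notin Z$ and absorbing $j$ into $Z$, with bookkeeping about downward inheritance of cleanness and preservation of the earlier zero-corridor conditions --- is a reasonable reconstruction; the present paper only cites \cite[Lemma~5.1]{GollinHKKO2021} and does not reproduce a proof, so there is nothing in-paper to compare against beyond the statement itself.

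However, the auxiliary sub-lemma that carries all the content has two genuine gaps as you sketch it. First, there is no well-defined potential $\varphi \colon V \to \Gamma_j$ with $\gamma_j(P) = \varphi(v) - \varphi(u)$ for every path $P$ from $u$ to $v$ in an undirected $\gamma_j$-bipartite graph: if $P$ and $Q$ are internally disjoint paths from $u$ to $v$, then $P \cup Q$ is a $\gamma_j$-zero cycle, so $\gamma_j(Q) = -\gamma_j(P)$, which equals $\gamma_j(P)$ only when $2\gamma_j(P) = 0$; what is path-independent is merely the class of $\gamma_j(P)$ modulo $2\Gamma_j$. Second, and more seriously, the pigeonhole on ``nail potentials modulo $\Gamma_j[2]$'' has no a priori bound: $\Gamma_j/\Gamma_j[2]$ can be infinite, and when $\Gamma_j = \mathbb{Z}$ the $2$-torsion is trivial, so a shift (which must be by an element of order $2$) cannot change the $j$-th coordinate of any label at all. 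In that case your sub-lemma would require the chosen subwall to already have every corridor $\gamma_j$-zero with no help from shifting, and nothing in the sketch forces this. Vertex-potential equalization by order-$2$ shifts simply is not the right mechanism for groups with little or no $2$-torsion; the flattening step in the precursor's proof must work differently, so you should not treat it as a routine Ramsey/pigeonhole reduction.
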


A clean wall can help us to build allowable cycles, as the following lemma demonstrates. 
Roughly speaking, for a $(\gamma, Z, \ell)$-clean wall~$W$,
if $j\notin Z$, then 
we will use a cycle whose $\gamma_j$-value is non-zero,  
which can be found in every $(\ell,\ell)$-subwall of~$W$.
If $j\in Z$, then we will use handlebars to adjust the $\gamma_j$-value
of a cycle.
This process allows us to find a cycle whose $\gamma_j$-value is not in $\Omega_j$ for any~${j \in [m]}$.
\begin{lemma}[Gollin et al.~{\cite[Lemma 8.1]{GollinHKKO2021}}]
    \label{lem:omega-avoiding-cycle}
    There exist functions ${c_{\ref{lem:omega-avoiding-cycle}}, r_{\ref{lem:omega-avoiding-cycle}} \colon \mathbb{N}^4 \to \mathbb{N}}$ satisfying the following. 
    Let~$t$,~$\ell$,~$m$, and~$\omega$ be positive integers with~${\ell \geq 3}$, 
    let~${\Gamma = \prod_{j \in [m]} \Gamma_j}$ be a product of~$m$ abelian groups, 
    and for each~${j \in [m]}$, let~$\Omega_j$ be a subset of~$\Gamma_j$ of size at most~$\omega$. 
    Let~${(G,\gamma)}$ be a $\Gamma$\nobreakdash-labelled graph,
    let~$Z$ be a subset of~${[m]}$, and 
    for integers ${c \geq c_{\ref{lem:omega-avoiding-cycle}}(t, \ell, m, \omega)}$ 
    and~${r \geq r_{\ref{lem:omega-avoiding-cycle}}(t, \ell, m, \omega)}$,
    let~$W$ be a ${(\gamma,Z,\ell)}$-clean $(c,r)$-wall in~$G$. 
    Then for every set~$\mathcal{P}$ of at most~$t$ pairwise vertex-disjoint $W$\nobreakdash-handles such that ${\gamma_j\left( \bigcup \mathcal{P} \right) \notin \Omega_j}$ for all~${j \in Z}$,
    there is a cycle~$O$ in~${W \cup \bigcup \mathcal{P}}$ with~${\gamma_j(O) \notin \Omega_j}$ for all~${j \in [m]}$. 
\end{lemma}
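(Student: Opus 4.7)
The plan is to construct the cycle in three stages: thread the handles into a narrow strip of $W$, build a base cycle whose values in the coordinates of $Z$ are automatically correct, and then repair the coordinates of $[m] \setminus Z$ by detouring through disjoint auxiliary subwalls. I would set $c_{\ref{lem:omega-avoiding-cycle}}(t,\ell,m,\omega)$ large enough that after reserving $O(t)$ columns for a column-slice $W_0$ holding the row-extensions of all paths in $\mathcal{P}$, the remaining portion of $W$ still accommodates $(m-|Z|)(\omega+1)$ pairwise vertex-disjoint $(\ell,\ell)$-subwalls, each disjoint from $W_0 \cup \bigcup \mathcal{P}$, and correspondingly take $r_{\ref{lem:omega-avoiding-cycle}}(t,\ell,m,\omega)$.

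Next, assemble a base cycle $O_0 \subseteq W_0 \cup \bigcup \mathcal{P}$ that traverses each handle in $\mathcal{P}$ exactly once, closing up through paths along the boundary of $W_0$. Every cycle lying entirely inside $W$ is a union of $N^W$-paths, and cleanness condition~\ref{item:clean1} forces such a cycle to have $\gamma_j$-value zero for every $j \in Z$. Consequently $\gamma_j(O_0) = \gamma_j(\bigcup \mathcal{P}) \notin \Omega_j$ for each $j \in Z$ by the hypothesis on $\mathcal{P}$, and this property survives any later modification that only reroutes $O_0$ through $N^W$-paths of $W$.

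For the repair stage, allocate $\omega + 1$ of the reserved auxiliary subwalls to each coordinate $j \in [m] \setminus Z$. Cleanness condition~\ref{item:clean2} ensures that each such subwall contains a $\gamma_j$-non-zero cycle, and detouring $O_0$ through the subwall gives a binary choice that shifts $\gamma_j$ by a nonzero element of $\Gamma_j$ while leaving every coordinate in $Z$ untouched. The main obstacle is that a detour intended to adjust coordinate $j$ may simultaneously displace some coordinate $j' \in [m] \setminus Z$ with $j' \neq j$. I expect to handle this by a Ramsey-type refinement on the $(\omega+1)$-sized families: by passing to subfamilies whose non-zero cycles have identical (or otherwise controlled) values in the remaining coordinates of $[m] \setminus Z$, the detours become effectively independent. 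Combined with a pigeonhole over the $2^{\omega+1}$ subset-sum choices per coordinate, which is enough to evade an $\Omega_j$ of size at most $\omega$ whenever the subset sums realise at least $\omega+1$ distinct values, this yields the desired cycle $O \subseteq W \cup \bigcup \mathcal{P}$ with $\gamma_j(O) \notin \Omega_j$ for all $j \in [m]$. The bookkeeping in this last step is the technical heart of the argument and determines the growth rate of $c_{\ref{lem:omega-avoiding-cycle}}$ and $r_{\ref{lem:omega-avoiding-cycle}}$.
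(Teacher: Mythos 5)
The paper does not prove this lemma itself: it is imported verbatim from \cite[Lemma~8.1]{GollinHKKO2021} and placed in the ``Recycled tools'' section, so there is no in-paper proof to compare against. Evaluating your sketch on its own, the first two stages are sound: thread the row-extensions of the handles into a column-slice, close up a base cycle $O_0$, and observe that the portion of $O_0$ lying inside $W$ decomposes into $N^W$-paths, so cleanness condition~\ref{item:clean1} gives $\gamma_j(O_0)=\gamma_j(\bigcup\mathcal{P})\notin\Omega_j$ for $j\in Z$, and this survives any rerouting inside $W$.

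The repair stage, however, has a concrete gap. You read cleanness condition~\ref{item:clean2} as guaranteeing that a subwall you ``allocate to coordinate $j$'' contains a $\gamma_j$-non-zero cycle for that $j$. The condition actually says only that every $(\ell,\ell)$-subwall fails to be $\gamma_j$-bipartite for \emph{some} $j\in[m]\setminus Z$; you do not get to choose which one. So pre-allocating $\omega+1$ auxiliary subwalls to each of the $m-\abs{Z}$ coordinates does not get off the ground: after fixing the subwalls you can only pigeonhole on which coordinate each one happens to be non-bipartite in, and this gives no coverage guarantee across all of $[m]\setminus Z$. Combined with the cross-coordinate interference you flagged, this is not resolved by a Ramsey refinement followed by a raw subset-sum count. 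In particular, ``pigeonhole over the $2^{\omega+1}$ subset-sum choices\dots which is enough to evade $\Omega_j$ whenever the subset sums realise at least $\omega+1$ distinct values'' quietly assumes the hard part: detouring around a subwall cycle shifts $\gamma_j$ by a single fixed group element, and a family of such independent toggles may realise very few distinct $\pi_j$-values (if $\Gamma_j$ has exponent $2$, any number of toggles by the same element yields only two values, regardless of how many subwalls you reserve). Producing enough distinct realisable values in a prescribed coordinate is exactly the content that the group-theoretic machinery of the precursor paper (the analogues of Lemmas~\ref{lem:omega-avoiding}, \ref{lem:ramsey}, and \ref{lem:vectorsum} here) supplies, and the proof is organised around those lemmas rather than around a direct count of subsets. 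As written, the sketch does not close.
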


\subsection{Collecting handles}
\label{subsec:collectinghandles}

Wollan~\cite{Wollan2010} proved the following Erd\H{o}s-P\'osa  type result for $\gamma$-non-zero $A$-paths in groups-labelled graphs $(G, \gamma)$. 

\begin{theorem}[Wollan~\cite{Wollan2010}]
    \label{thm:tpath}
    Let~$k$ be a positive integer, let~$\Gamma$ be an abelian group, let~$(G,\gamma)$ be a $\Gamma$-labelled graph, and let~${A \subseteq V(G)}$. 
    Then~$G$ contains $k$ vertex-disjoint $\gamma$-non-zero $A$-paths or a vertex set of size at most~${f_{\ref{thm:tpath}}(k) := 50k^4}$ hitting all $\gamma$-non-zero $A$-paths. 
\end{theorem}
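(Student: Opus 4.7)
The plan is to proceed by induction on $k$. The base case $k=1$ is immediate: either $G$ contains a $\gamma$-non-zero $A$-path (and we are done), or the empty set is a hitting set of size $0 \leq 50$.

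For the inductive step, I would begin with the following reductions. First, delete any vertex that lies in no $\gamma$-non-zero $A$-path; this does not affect either alternative. Second, analyse small separations: if $(B,C)$ is a separation of $G$ of order at most some threshold $O(k^3)$ such that the part of $G$ on one side still fails the conclusion for $k'$ $A$-paths (with $A$ suitably augmented by the cut vertices), then recurse on that side and combine with the cut vertices to build a hitting set of the desired size. After these reductions, we may assume that $G$ is ``well-connected relative to $A$'' in a quantitative sense.

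The main step is then to take a maximum packing $\mathcal{P}$ of $\gamma$-non-zero $A$-paths and assume for contradiction that $\abs{\mathcal{P}} < k$ and no hitting set of size $50k^{4}$ exists. Let $X := \bigcup_{P \in \mathcal{P}} V(P)$; every $\gamma$-non-zero $A$-path meets $X$. I would use an augmenting-path argument in the spirit of Gallai-Edmonds: exploit the connectivity of $G - T$ for every small hitting-candidate $T \subseteq X$ to find an ``augmenting $A$-walk'' whose $\gamma$-value is non-zero and whose symmetric difference with $\bigcup \mathcal{P}$ contains $\abs{\mathcal{P}} + 1$ vertex-disjoint $\gamma$-non-zero $A$-paths, contradicting maximality. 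The shifting operation described in Section~\ref{sec:prelim} is a crucial tool here: shifting $\gamma$ by an element of order $2$ at a vertex $v$ preserves cycle values but toggles the $\gamma$-value of $A$-paths through $v$ by that element. By shifting at carefully chosen vertices we may normalise $\gamma$ so that the extremal packing $\mathcal{P}$ has a canonical form, and then augmenting paths become easier to detect.

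The hard part will be squeezing the bound down to $50k^{4}$ rather than something exponential in $k$. A naive iterated application of the inductive hypothesis produces a tower of constants, so one needs a global argument. My plan is to bound the ``complexity'' of obstructions at each step by an $O(k^{3})$ quantity: any configuration that blocks augmentation must either be a small cut (handled by the separation reduction) or a bounded-size gadget whose internal vertices must all lie in the hitting set. Iterating such a bound across the at most $k$ paths of $\mathcal{P}$ then yields an $O(k^{4})$ hitting set. I expect the principal technical obstacle to be the choice of invariant that decreases along recursion while controlling the accumulated hitting-set cost, so that the polynomial bound emerges cleanly rather than requiring ad hoc case analysis for each group $\Gamma$.
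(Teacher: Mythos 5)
The paper does not prove this theorem; it cites it to Wollan's 2010 paper, so there is no internal proof to compare against. On its own merits, your proposal has two concrete gaps that I would not expect to be patchable as written. First, the ``augmenting $A$-walk'' step is the heart of your argument and is not well-defined in this setting. For unlabelled $A$-paths (Gallai, Mader) an alternating/augmenting structure exists because there is an exact min-max theorem; for $\gamma$-non-zero $A$-paths in the \emph{undirected} model there is no exact min-max, only an Erd\H{o}s--P\'osa gap, which is strong evidence that no clean augmenting framework in the Gallai--Edmonds sense exists. You would need to say precisely what an augmenting walk is, why one must exist when no $50k^4$-size hitting set exists, and why its symmetric difference with $\bigcup\mathcal{P}$ decomposes into $\abs{\mathcal{P}}+1$ disjoint $\gamma$-non-zero $A$-paths; none of this is routine, and the ``blossom''-type obstructions to augmentation are exactly where the real work lies.

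Second, your reliance on the shifting operation is misplaced. Shifting is only available for elements $\delta\in\Gamma$ of order $2$; for groups such as $\mathbb{Z}$ or $\mathbb{Z}_p$ with $p$ odd, the only shift is by $0$ and your proposed normalisation of the extremal packing is vacuous. Wollan's theorem must hold for all abelian $\Gamma$, so any normalisation step must not depend on the existence of order-$2$ elements. Relatedly, the bound-control paragraph is currently a placeholder: ``bound the complexity of obstructions by $O(k^3)$ per step'' identifies the right target but does not say what the obstruction is, why it is small, or what invariant drops. The actual proof proceeds via connectivity reductions plus a direct extremal/structural argument on how group values distribute along a tree rooted at $A$ (using pigeonhole on partial sums), rather than via augmenting walks; that structure is what both produces the non-zero paths and yields the polynomial bound without the exponential blow-up you correctly worry about.
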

In this paper we do not use this theorem directly, but instead apply the following technical corollary of it proved in the previous paper of Gollin et al.~{\cite{GollinHKKO2021}}.

\begin{lemma}[Gollin et al.~{\cite[Lemma 4.3]{GollinHKKO2021}}]
    \label{lem:cover}
    Let~$u$, $k$ be positive integers such that~${f_{\ref{thm:tpath}}(k) < u - 2}$.
    Let~$\Lambda$ be an abelian group, 
    let~${(G, \lambda)}$ be a $\Lambda$-labelled graph, and let~$\nu$ be a packing function for~$G$ such that
    \begin{itemize}
        \item every minimal subgraph~${H}$ of~$G$ with~${\nu(H) \geq 1}$ is a $\lambda$-non-zero cycle,
        \item ${\tau_\nu(H) \leq 3 u}$ for every subgraph~${H}$ of $G$ with~${\nu(H) < \nu(G)}$, and 
        \item ${\tau_\nu(G) \geq u}$.
    \end{itemize} 
    Let~${T \subseteq V(G)}$ be a minimum $\nu$-hitting set for~$G$ and let~${N \subseteq V(G)}$ such that for every~${S \subseteq V(G)}$ of size less than~${u}$, there is a component of~${G-S}$ containing a vertex of~$N$ and at least~$4u$ vertices of~$T$. 
    Then~$G$ contains~$k$ vertex-disjoint $\lambda$-non-zero $N$-paths.
\end{lemma}

We will apply this lemma to a $\Gamma$-labelled graph $(G,\gamma)$, with $\nu(H)$ equal to the number of vertex-disjoint cycles in $(H,\gamma)$ with allowable $\gamma$-values and $N$ being the set of degree $3$ vertices of a carefully chosen wall in $G$.
However instead of taking $\lambda$ to be the original $\Gamma$-labelling, we instead consider some quotient group $\Lambda$ (specifically, $\Gamma/\Lambda$ in the proof) and the corresponding labelling $\lambda$, with the property that every cycle with an allowable value is $\lambda$-non-zero. 
By iterating this, we can avoid the scenario where the $\gamma$-values of all of the paths we obtain are in some subgroup which does not include any allowable value.

The other key ingredient for this process is the following lemma which enables us to use the output of Lemma~\ref{lem:cover} to extend a set of ($W$-)handles, at the cost of shrinking the wall which the handles attach to.

\begin{lemma}[Gollin et al.~{\cite[Lemma 6.1]{GollinHKKO2021}}]
    \label{lem:addlinkage}
    There exist functions~${w_{\ref{lem:addlinkage}}\colon \mathbb{N}^2 \to \mathbb{N}}$ and~$f_{\ref{lem:addlinkage}} \colon \mathbb{N} \to \mathbb{N}$ satisfying the following. 
    Let~$k$, $t$, and~$c$ be positive integers with~${c \geq 3}$, let~${\Gamma}$ be an abelian group, and let~${(G, \gamma)}$ be a $\Gamma$-labelled graph.
    Let~$W$ be a wall in~$G$ of order at least~${w_{\ref{lem:addlinkage}}(k,c)}$ such that all $\branch(W)$-paths of~$W$ are $\gamma$-zero. 
    For each~${i \in [t-1]}$, let~$\mathcal{P}_i$ be a set of~$4k$ $W$-handles in~$G$ such that the paths in~${\bigcup_{i \in [t-1]} \mathcal{P}_i}$
    are pairwise vertex-disjoint. 
    If~$G$ contains at least~${f_{\ref{lem:addlinkage}}(k)}$ vertex-disjoint $\gamma$-non-zero ${\branch(W)}$-paths, 
    then there exist a $c$-column-slice~$W'$ of~$W$ and a set~$\mathcal{Q}_i$ of~$k$ pairwise vertex-disjoint $W'$-handles for each~${i \in [t]}$ such that 
    \begin{enumerate}
        [label=(\roman*)]
        \item for each~${i \in [t-1]}$, the set~$\mathcal{Q}_i$ is a subset of the row-extension of~$\mathcal{P}_i$ to~$W'$ in~$W$, 
        \item the paths in~${\bigcup_{i \in [t]} \mathcal{Q}_i}$ are pairwise vertex-disjoint, and
        \item the paths in~$\mathcal{Q}_t$ are $\gamma$-non-zero. 
    \end{enumerate}
\end{lemma}

\section{Handling handlebars}\label{sec:handlebars}

We remind the readers that two sets~$\mathcal{G}_1$ and~$\mathcal{G}_2$ of graphs are said to be vertex-disjoint if~${\bigcup \mathcal{G}_1}$ and~${\bigcup \mathcal{G}_2}$ are vertex-disjoint. 
First, we show that given a family of pairwise vertex-disjoint sets of $W$-handles, we can throw away some $W$-handles from each set to obtain a family of pairwise vertex-disjoint non-mixing $W$-handlebars. 

\begin{lemma}
    \label{lem:handlebars}
    There is a function ${f_{\ref{lem:handlebars}} \colon \mathbb{N}^2 \to \mathbb{N}}$ satisfying the following property. 
    Let $t$, $\theta$, $c$, and~$r$ be positive integers with~${r \geq 3}$ and~${c \geq 3}$, let~$W$ be a ${(c,r)}$-wall, and 
    let~${(\mathcal{P}_i \colon i \in [t])}$ be a family of pairwise vertex-disjoint sets of~${f_{\ref{lem:handlebars}}(t,\theta)}$ $W$-handles. 
    If the $W$-handles in $\bigcup_{i=1}^t \mathcal{P}_i$ are pairwise vertex-disjoint, 
    then there exists a family~${( \mathcal{P}^\ast_i \colon i \in [t] )}$ of pairwise non-mixing $W$-handlebars such that~${\mathcal{P}^\ast_i \subseteq \mathcal{P}_i}$ and~${\abs{\mathcal{P}^\ast_i} \geq \theta}$ for all~${i \in [t]}$. 
\end{lemma}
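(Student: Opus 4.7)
The plan is to combine the purification of Lemma~\ref{lem:pureset} with a two-pass greedy interval-extraction, carried out independently in the two columns $C_1^W$ and $C_c^W$. I would set $f_{\ref{lem:handlebars}}(t,\theta) = (3t^2\theta)^3 + 1$ or a similar polynomial in $t$ and $\theta$.

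First I would identify each $W$-handle in $\mathcal{P}_i$ with the $2$-element subset of the column-boundary of $W$ given by its two endpoints; since all handles in $\bigcup_{i \in [t]} \mathcal{P}_i$ are pairwise vertex-disjoint, these subsets are pairwise disjoint and the linear order $\prec_W$ is available. Applying Lemma~\ref{lem:pureset} to each $\mathcal{P}_i$ extracts a pure sub-family $\mathcal{P}'_i$ of size at least $M := 3t^2\theta$. Next I would classify each handle by its \emph{type}---both endpoints in $C_1^W$, both in $C_c^W$, or one in each---and use pigeonhole to pass to a subfamily $\mathcal{P}''_i \subseteq \mathcal{P}'_i$ of size at least $M/3 = t^2 \theta$ all of the same type. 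This type restriction is important because a handlebar can consistently define paths $A_i, B_i \subseteq C_1^W \cup C_c^W$ only when its handles share a column-type: for handles entirely in a single column one must take $A_i = B_i$ in that column, and for handles with one endpoint in each column one takes $A_i \subseteq C_1^W$ and $B_i \subseteq C_c^W$; these two conventions cannot be reconciled across types since $C_1^W \cup C_c^W$ is disconnected.

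The main combinatorial step is the following extraction claim, which I would establish via a standard greedy scan: given a linearly ordered set $Y$ and pairwise disjoint subsets $U_1, \ldots, U_s \subseteq Y$ each of size at least $s\theta'$, there exist $U^*_i \subseteq U_i$ of size at least $\theta'$ so that the convex hulls $[\min U^*_i, \max U^*_i]$ in $Y$ are pairwise disjoint. To prove it, order the families by the position of their $\theta'$-th smallest element and process them in this order; for each family select its $\theta'$ smallest elements that exceed the running maximum of previously chosen elements. The hypothesis $|U_i| \geq s\theta'$ bounds the loss per step at $\theta' - 1$ elements of each remaining family, so enough elements survive at every step.

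I would apply this extraction claim twice. The first pass operates in $C_1^W$ on the column-$1$ footprints of the type-$(1,1)$ and type-$(1,c)$ sub-families, shrinking each by a factor of at most $t$ and producing sub-handlebars with pairwise disjoint column-$1$ footprints of size at least $t\theta$. The second pass operates in $C_c^W$ on the column-$c$ footprints of the surviving type-$(c,c)$ and type-$(1,c)$ sub-handlebars, shrinking by another factor of $t$ to reach size at least $\theta$. Because further shrinking a sub-handlebar's handle set can only shrink its column-$1$ footprint, the disjointness established in the first pass is preserved by the second. Since the endpoints of different families are distinct vertices of the path-columns, pairwise disjoint convex hulls of endpoint sets in each column translate directly into pairwise vertex-disjoint minimal subpaths $A_i$ and $B_i$, so the final sub-handlebars are non-mixing. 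The central obstacle---the coupling of the column-$1$ and column-$c$ footprints for type-$(1,c)$ handlebars---is precisely what forces the sequential two-pass structure and the quadratic-in-$t$ size budget.
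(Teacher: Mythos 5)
Your approach diverges substantially from the paper's, which is a direct induction on $t$: it extracts a $(2t-1)\theta$-size pure subfamily of $\mathcal{P}_t$, splits it into $2t-1$ consecutive blocks, shows by counting that some block's boundary paths $A_x\cup B_x$ contain at most a third of every other family's endpoints, restricts each $\mathcal{P}_i$ accordingly, reclassifies the survivors by the at most four components of $(C_1^W\cup C_c^W)-V(A_x\cup B_x)$, and recurses. Setting that aside, your sketch has two genuine gaps. First, the greedy you propose for the extraction claim does not work: with $\theta'=2$, $s=3$, $Y=[18]$, $U_1=\{1,4,11,12,13,14\}$, $U_2=\{2,5,15,16,17,18\}$, $U_3=\{3,6,7,8,9,10\}$, the second-smallest elements are $4,5,6$, so you process $U_1,U_2,U_3$; you pick $\{1,4\}$, then $\{5,15\}$ (running maximum $15$), and $U_3\subseteq[3,10]$ has nothing left. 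The ``loss per step at most $\theta'-1$'' estimate is false, because the running maximum can leapfrog an entire later family. (The extraction claim itself is true with the bound $s\theta'$ via a different greedy --- scan left to right and cut whenever some still-unassigned family first accumulates $\theta'$ elements --- but the argument you give does not establish it.)

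Second, and more fundamentally, ``pairwise disjoint convex hulls of endpoint sets'' does not yield non-mixing sub-handlebars for families whose handles have both endpoints in one column. For a pure crossing family of size $k$ with endpoints ordered $v_1\prec\cdots\prec v_k\prec w_1\prec\cdots\prec w_k$ (handle $j$ has endpoints $v_j,w_j$), any interval of the footprint containing $\theta'$ of these endpoints contains both endpoints of at most $\max(0,\theta'-k)$ handles, which is zero whenever $\theta'\le k$. A sub-handlebar of $\theta$ crossing handles with a small single convex hull does not exist: the paths $A$ and $B$ witnessing the handlebar structure and the non-mixing condition are inherently two separated subpaths, one spanning the $v_j$'s and one the $w_j$'s, and what must be made pairwise disjoint is this pair of intervals, not one hull. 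This coupling (not just the type-$(1,c)$ coupling you flag) is exactly what the paper's $A_x$, $B_x$ bookkeeping and the subsequent four-component pigeonhole handle, and it is not addressed by a single-interval extraction pass per column.
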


\begin{proof}
    Let 
    \[ 
        f_{\ref{lem:handlebars}}(t,\theta) :=
            \begin{cases}
                 \max\{3((2t-1)\theta-1)^3+1, 30f_{\ref{lem:handlebars}}(t-1,\theta)\}
                 &\text{if $t>1$,}\\
                 3(\theta-1)^3+1 & \text{if $t=1$.}
            \end{cases}
    \] 
    We proceed by induction on~$t$. 
    If~${t = 1}$, then there is a subset ${\mathcal{P}' \subseteq \mathcal{P}_1}$ of size~$(\theta-1)^3+1$ whose paths all have the same number of endvertices in~$C^W_1$. 
    The result then follows from Lemma~\ref{lem:pureset}. 
    
    Suppose~${t \geq 2}$. 
    By the above argument, there is a $W$-handlebar~${\mathcal{P}' = \{P'_j \colon j \in [(2t-1)\theta] \} \subseteq \mathcal{P}_t}$ of size~${(2t-1)\theta}$. 
    For each~${j \in [(2t-1)\theta]}$, let~$v_j$ and~$w_j$ be the endvertices of~$P'_j$ with~${v_j \prec_W w_j}$. 
    Without loss of generality, we may assume that for all~${j,j' \in [(2t-1)\theta]}$ with~${j < j'}$, we have that~${v_j \prec_W v_{j'}}$. 
    For each~${x \in [2t-1]}$, let~$A_x$ be the subpath of~${C^W_1 \cup C^W_c}$ from~$v_{1+(x-1)\theta}$ to~$v_{x\theta}$, and let~$B_x$ be the subpath of~${C^W_1 \cup C^W_c}$ from~$w_{1+(x-1)\theta}$ to~$w_{x\theta}$. 
    Note that for distinct~$x$ and~$y$ in~${[2t-1]}$, we have that~${A_x \cup B_x}$ and~${A_y \cup B_y}$ are vertex-disjoint. 
    Hence, for each~${i \in [t-1]}$, there are at most two integers~${x \in [2t-1]}$ such that~${A_x \cup B_x}$ contains more than a third of the endvertices of paths in~$\mathcal{P}_i$. 
    Hence, there exists~${x \in [2t-1]}$ such that~${A_x \cup B_x}$ contains at most a third of the endvertices of paths in~$\mathcal{P}_i$ for all~${i \in [t-1]}$.
    For every~${i \in [t-1]}$, let~${\mathcal{P}'_i \subseteq \mathcal{P}_i}$ of size at least~${10f_{\ref{lem:handlebars}}(t-1,\theta)}$ such that~${A_x \cup B_x}$ and~${\bigcup \mathcal{P}'_i}$ are vertex-disjoint. 
    
    Since each of $A_x$ and $B_x$ is a subpath of $C^W_1\cup C^W_c$, the graph ${H := C^W_1 \cup C^W_c - V(A_x \cup B_x)}$ has at most four components.
    Let $\{H_j:j\in [q]\}$ be the set of components of $H$, and for every path $P$ in~${\bigcup \mathcal{P}'_i}$, let $I(P)$ be the set of integers $i\in [q]$ where $H_i$ contains an endvertex of $P$. Then $I(P)$ is a set in $\{\{i\}:i\in [q] \}\cup \{\{i,j\}:i,j\in [q], i\neq j\}$, which consists of at most~${10 = \binom{4}{1}+\binom{4}{2}}$ sets.

    As~${\mathcal{P}'_i}$ has size at least~${10f_{\ref{lem:handlebars}}(t-1,\theta)}$, there is a subset~${\mathcal{P}''_i \subseteq \mathcal{P}'_i}$ of size  $f_{\ref{lem:handlebars}}(t-1,\theta)$ such that for every pair of $W$\nobreakdash-handles~${P,P' \in \mathcal{P}''_i}$, each component of~$H$ contains the same number of endvertices of~$P$ and~$P'$. 
    By the induction hypothesis, there is a 
    family~${(\mathcal{P}^\ast_i \colon i \in [t-1])}$
    of pairwise non-mixing $W$-handlebars such that ${\mathcal{P}^\ast_i \subseteq \mathcal{P}''_i}$ and ${\abs{\mathcal{P}^\ast_i} \geq \theta}$ for each $i\in [t-1]$. 
    Together with~${\mathcal{P}^\ast_t := \{ P'_j \colon j \in [x\theta] \setminus [(x-1)\theta] \}}$, these $W$-handlebars satisfy the lemma. 
\end{proof}

The paths of a $W$-handlebar~$\mathcal{P}$ can be pieced together through the outer columns of~$W$ to form a $W^\ast$-handlebar $\mathcal{P}^\ast$ for some column-slice~$W^\ast$ of~$W$ such that each path in~$\mathcal{P}^\ast$ contains exactly~$d$ paths of~$\mathcal{P}$ for any desired~$d$, provided that~$\mathcal{P}$ and~$W$ are large enough. 
The following lemma shows that this can be done simultaneously to a family of pairwise vertex-disjoint non-mixing $W$-handlebars so that the resulting family of $W^\ast$-handlebars is also pairwise vertex-disjoint and non-mixing. See Figure~\ref{fig:lemma-combining-handles} for an illustration.

\begin{figure}[tbp]
  \centering
        \begin{tikzpicture}[scale=0.3]
            \tikzset{vx/.style = {circle, draw, fill=black!0, inner sep=0pt, minimum width=4pt}}
            \tikzset{vxsub/.style = {circle, draw, fill=black!50, inner sep=0pt, minimum width=2pt}}
            
            \tikzset{c1/.style={green, line width=6pt,opacity=0.8,rounded corners}}
            \tikzset{c2/.style={brown, line width=6pt,opacity=0.8,rounded corners}}
             \tikzset{c3/.style={cyan, line width=6pt,opacity=0.8,rounded corners}}
            \tikzset{c4/.style={purple, line width=6pt,opacity=0.8,rounded corners}}
            \tikzset{vx1/.style = {circle, draw, solid, black, fill=red!30, inner sep=0pt, minimum width=4pt}}
            \tikzset{vx2/.style = {circle, draw, solid, black, fill=blue!30, inner sep=0pt, minimum width=4pt}}
            \tikzset{vx3/.style = {circle, black, solid, fill=orange, draw,
            inner sep=0pt, minimum width=4pt}}

           \draw [c1] (0,24) to (2,24);
           \draw [c1] (29,14) to (28,14) to (28,13) to (29,13);
           \draw [c1] (0,23) to (1,23) to (1,22) to (0,22);
           \draw [c1] (29,12) to (27,12);

           \draw [c2] (0,21) to (2,21);
           \draw [c2] (29,11) to (28,11) to (28,10) to (29,10);
           \draw [c2] (0,20) to (1,20) to (1,19) to (0,19);
           \draw [c2] (29,9) to (27,9);

           \draw [c3] (0,16) to (2,16);
           \draw [c3] (0,15) to (1,15) to (1,14) to (0,14);
           \draw [c3] (0,13) to (1,13) to (1,12) to (0,12);
           \draw [c3] (0,11) to (1,11) to (1,10) to (0,10);
           \draw [c3] (0,9) to (2,9);

           \draw [c4] (0,8) to (2,8);
           \draw [c4] (0,7) to (1,7) to (1,6) to (0,6);
           \draw [c4] (0,5) to (1,5) to (1,4) to (0,4);
           \draw [c4] (0,3) to (1,3) to (1,2) to (0,2);
           \draw [c4] (0,1) to (2,1);
            
             \tikzwall{13}{25}{2}{0}{vx}{red, very thick}
            \tikzwall{15}{25}{0}{0}{vx}{gray}
            \foreach \i in {1,2,3,4,5,6} {
                \pgfmathtruncatemacro{\yone}{9+\i}
                \pgfmathtruncatemacro{\ytwo}{25-\i}
                \pgfmathtruncatemacro{\iminus}{15-\i}
                \draw [blue, thick,dotted,rounded corners=5pt] (v0-\ytwo)
                to ++(-1-0.5*\i,0) to ++(0,1.5*\i) 
                to ++(8+2*\i,0) 
                to ++(23-\i,0) to ++(0,-0.5*\i-10-\i)                
                to (v29-\iminus);
            }   
            \foreach \i in {1,2,3,4,5,6,7,8}{
                \pgfmathtruncatemacro{\seriesone}{2*\i}
                \pgfmathtruncatemacro{\seriestwo}{2*\i-1}
                \draw [red, thick,dotted, out=180,in=-180] (v0-\seriesone) 
                to ++(-3,-.5) 
                to (v0-\seriestwo) ;
            }
        \end{tikzpicture}
    \caption{Combining paths in Lemma~\ref{lem:combining-handles} where $t=2$, $\theta=2$, $d_1=3$, and $d_2=4$. The red subwall $W^*$ is a $(c-2)$-column slice of a given $(c,r)$-wall $W$, and the paths in $W$-handlebars $\mathcal{P}_1$ and $\mathcal{P}_2$ are shown in blue and red, respectively.  Following the marked paths in $W$, we construct $W^*$-handlebars $\mathcal{P}_1^*$ and $\mathcal{P}_2^*$ of size $\theta$. As $d_1$ is odd, $\mathcal{P}_1^*$ is of the same type as $\mathcal{P}_1$.  
    }
    \label{fig:lemma-combining-handles}
\end{figure}

\begin{lemma}
    \label{lem:combining-handles} 
    Let~$t$, $c$, $r$, and $\theta$ be positive integers with ${c \geq 5}$ and ${r \geq 3}$, and let $d_i$ be a positive integer for each~${i \in [t]}$. 
    Let~$W$ be a ${(c,r)}$-wall in a graph~$G$ and let $W^\ast$ be a $(c-2)$-column-slice of~$W$ containing~$C^W_2$ and~$C^W_{c-1}$. 
    Let~${(\mathcal{P}_i \colon i \in [t])}$ be a family of pairwise vertex-disjoint non-mixing $W$-handlebars with~${\abs{\mathcal{P}_i} \geq \theta d_i}$ 
    for all~${i \in [t]}$. 
    Then
    there is a family~${(\mathcal{P}^\ast_i \colon i \in [t])}$ of pairwise vertex-disjoint non-mixing $W^\ast$-handlebars each of size~$\theta$ such that
    for each~${i \in [t]}$ and~${Q \in \mathcal{P}^\ast_i}$, there is a set~${\{ P_{j,Q} \in \mathcal{P}_i \colon j \in [d_i] \}}$ of size~$d_i$ such that 
    \[
        \bigcup_{j=1}^{d_i} P_{j,Q} \subseteq Q \subseteq W \cup \bigcup_{j=1}^{d_i} P_{j,Q} .
    \]
    Moreover, for each~${i \in [t]}$, if~$d_i$ is even, then~$\mathcal{P}^\ast_i$ is in series and if~$d_i$ is odd, 
    then~$\mathcal{P}^\ast_i$ is of the same type as~$\mathcal{P}_i$. 
\end{lemma}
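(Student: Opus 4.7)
The plan is to build each handle of $\mathcal{P}^\ast_i$ by concatenating $d_i$ consecutive handles of $\mathcal{P}_i$ along sub-paths of $C_1^W \cup C_c^W$, and then extending the two resulting endpoints inward along rows of $W$ until they reach the column-boundary of $W^\ast$. For each $i \in [t]$, fix sub-paths $A_i, B_i \subseteq C_1^W \cup C_c^W$ witnessing the non-mixing $W$-handlebar structure of $\mathcal{P}_i$. Order the handles of $\mathcal{P}_i$ by the $\prec_W$-order of their $A_i$-endpoints, and partition the first $\theta d_i$ of them into $\theta$ consecutive blocks $\mathcal{G}^i_1, \ldots, \mathcal{G}^i_\theta$ of size $d_i$.

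For a block $\mathcal{G}^i_s$, relabel its handles as $P_1, \ldots, P_{d_i}$ in $\prec_W$-order and let $a_k \prec_W b_k$ denote the endpoints of $P_k$. Starting at $a_1$ and traversing $P_1$ to $b_1$, I repeatedly splice in, between $P_k$ and $P_{k+1}$, the unique sub-path of $A_i$ or $B_i$ joining the current endpoint to an appropriate endpoint of $P_{k+1}$, and then traverse $P_{k+1}$. In the series case (where the observation after the handlebar definition forces both endpoints of every handle onto a single column), each splice is a sub-path of that column; in the nested or crossing case, the $\prec_W$-structure dictates an alternation between sub-paths of $A_i$ and of $B_i$. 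The resulting path $Q$ has its two endpoints both on a single column (either $C_1^W$ or $C_c^W$) when $d_i$ is even, and on opposite columns when $d_i$ is odd, except in the series case where both endpoints remain on a single column irrespective of parity. Finally, extend $Q$ at each endpoint along the row of $W$ containing that endpoint, stopping at the corresponding row-endpoint of $W^\ast$ on $C_2^W$ or $C_{c-1}^W$. Let $Q^i_s$ be this $W^\ast$-handle and set $\mathcal{P}^\ast_i := \{Q^i_s : s \in [\theta]\}$.

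It remains to verify the claimed properties, which is essentially bookkeeping. Pairwise vertex-disjointness within $\mathcal{P}^\ast_i$ follows because distinct blocks use disjoint consecutive intervals of $A_i$ and $B_i$, the rows used in the extensions are pairwise distinct, and the $\mathcal{P}_i$-handles are $W$-paths so they do not meet any extension row-segment internally. Disjointness across $i \neq i'$ follows from the disjointness of the sets $A_i \cup B_i$: even if an extension row happens to be shared between two handlebars, the two useful segments lie on opposite ends (near $C_2^W$ and near $C_{c-1}^W$) and are themselves vertex-disjoint. For the type of $\mathcal{P}^\ast_i$, a direct case analysis on the type of $\mathcal{P}_i$ and the parity of $d_i$ shows that when $d_i$ is odd the $\prec_{W^\ast}$-pattern of the new endpoints mirrors that of $\mathcal{P}_i$, while when $d_i$ is even the new endpoints of each $Q^i_s$ lie on a single column of $W^\ast$ with disjoint $\prec_{W^\ast}$-intervals across blocks, forcing the in-series conclusion. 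Taking $A^\ast_i$ and $B^\ast_i$ to be the sub-paths of $C_2^W \cup C_{c-1}^W$ at the row indices inherited from $A_i$ and $B_i$ witnesses that the $\mathcal{P}^\ast_i$'s are pairwise non-mixing.

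The main difficulty is not any single non-obvious step, but the simultaneous bookkeeping through the three types (series, nested, crossing) and the two parities of $d_i$. Once the concatenation scheme is specified and the $\prec_W$/$\prec_{W^\ast}$-orderings are tracked symbolically, the construction goes through uniformly in all six cases.
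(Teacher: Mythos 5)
Your proposal follows the same approach as the paper's proof: order the handles of $\mathcal{P}_i$ by the $\prec_W$-order of their endpoints, group them into $\theta$ consecutive blocks of size $d_i$, concatenate each block into a single path using subpaths of $C_1^W \cup C_c^W$, and row-extend to $W^\ast$. The paper states this in a single compressed paragraph treating the verifications as immediate, so your added detail is consistent with rather than a departure from the argument. One small factual slip: you claim that when $d_i$ is odd (and $\mathcal{P}_i$ is not in series) the endpoints of $Q$ land on opposite columns, but a nested or crossing $W$-handlebar can have all its endvertices on a single column of $W$ (with $A_i$ and $B_i$ both subpaths of, say, $C^W_1$), in which case the endpoints of $Q$ remain on that one column for either parity; fortunately, the type of $\mathcal{P}^\ast_i$ is determined by the $\prec_{W^\ast}$-pattern of the pairs of endpoints rather than by which column they lie on, and that pattern does come out series/nested/crossing exactly as claimed, so the slip does not affect the conclusion.
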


\begin{proof}
    For each~${i \in [t]}$, let~${\mathcal{P}_i =: \{ P_{i,x} \colon x \in [\theta d_i] \}}$ such that if~${x, y \in [\theta d_i]}$ with~${x < y}$, then some endvertex of~$P_{i,x}$ is $\prec_W$-smaller than both endvertices of~$P_{i,y}$. 
    For each~${i \in [t]}$ and~${y \in [\theta]}$, it is easy to verify that there is a unique path in~${C_1^W \cup C_c^W \cup \bigcup_{x=(y-1)d_i+1}^{yd_i} P_{i,x}}$ that contains~${\bigcup_{x=(y-1)d_i+1}^{yd_i} P_{i,x}}$ whose set of endvertices contains the $\prec_W$-smallest endvertex of $P_{i,(y-1)d_i+1}$ and some endvertex of~$P_{i,y d_i}$. 
    Let~$Q_{i,y}$ denote the row-extension of this path to~$W^\ast$. 
    Now with~$\mathcal{P}_i^\ast := \{ Q_{i, y} \colon y \in [\theta] \}$, we easily observe that~${(\mathcal{P}^\ast_i \colon i \in [t])}$ is as desired. 
\end{proof}

Next we show that if~${(\mathcal{P}_i \colon i \in [t])}$ is a family of pairwise vertex-disjoint non-mixing $W$-handlebars 
none of which is in series, then we can construct a $W'$-handlebar for some subwall~$W'$ of~$W$ such that each $W'$-handle contains exactly one path from each~$\mathcal{P}_i$. 

\begin{figure}[tbp]
  \centering
        \begin{tikzpicture}[scale=0.3]
            \tikzset{vx/.style = {circle, draw, fill=black!0, inner sep=0pt, minimum width=4pt}}
            \tikzset{vxsub/.style = {circle, draw, fill=black!50, inner sep=0pt, minimum width=2pt}}
            
            \tikzset{c1/.style={purple, line width=6pt,opacity=0.8,rounded corners}}
            \tikzset{c2/.style={brown, line width=6pt,opacity=0.8,rounded corners}}
             \tikzset{c3/.style={cyan, line width=6pt,opacity=0.8,rounded corners}}
            \tikzset{vx1/.style = {circle, draw, solid, black, fill=red!30, inner sep=0pt, minimum width=4pt}}
            \tikzset{vx2/.style = {circle, draw, solid, black, fill=blue!30, inner sep=0pt, minimum width=4pt}}
            \tikzset{vx3/.style = {circle, black, solid, fill=orange, draw,
            inner sep=0pt, minimum width=4pt}}

           \draw [c1] (0,16) to (0,15) to (1,15) to (1,14) to (0,14) to (0,13);
           \draw [c2] (0,17) to (3,17) to (3,16) to (2,16) to (2,15) to (3,15) to (3,14) to (2,14) to (2,13) to (3,13) to (3,12) to (0,12);
            \draw [c3] (0,18) to (4,18) to (4,17) to (5,17) to (5,16) to (4,16) to (4,15) to (5,15) to (5,14) to (4,14) to (4,13) to (5,13) to (5,12) to (4,12) to (4,11) to (0,11);

            \draw [c1] (29,4) to (29,5) to (28,5) to (28,6) to (29,6);
            \draw [c2] (29,3) to (26,3) to (26,4) to (27,4) to (27,5) to (26,5) to (26,6) to (27,6) to (27,7) to (29,7);
            \draw [c3] (29,2) to (25,2) to (25,3) to (24,3) to (24,4) to (25,4) to (25,5) to (24,5) to (24,6) to (25,6) to (25, 7) to (24,7) to (24,8) to (29,8);

            \draw[c3] (29,16) to (23,16);
            \draw[c2] (29,15) to (23,15);
            \draw[c1] (29,14) to (23,14);

            \draw[c3] (0,3) to (6,3);
            \draw[c2] (0,4) to (6,4);
            \draw[c1] (0,5) to (6,5);

           \tikzwall{9}{19}{6}{0}{vx}{red, very thick}
            \tikzwall{15}{19}{0}{0}{vx}{gray}
            \foreach \i in {1,2,3} {
                \pgfmathtruncatemacro{\yone}{9+\i}
                \pgfmathtruncatemacro{\ytwo}{19-\i}
                \pgfmathtruncatemacro{\iminus}{9-\i}
                \draw [blue, thick,dotted,rounded corners=5pt] (v0-\ytwo)
                to ++(-1-0.5*\i,0) to ++(0,1.5*\i) 
                to ++(8+2*\i,0) 
                to ++(23-\i,0) to ++(0,-0.5*\i-10-\i)                
                to (v29-\iminus);
            }   
            \foreach \i in {1,2,3}{
                \pgfmathtruncatemacro{\seriesone}{\i+2}
                \pgfmathtruncatemacro{\seriestwo}{10+\i}
                \draw [red, thick,dotted,rounded corners=5pt] (v0-\seriesone) 
                to ++(-4.5+0.5*\i,0) to  ++(0,8) 
                to (v0-\seriestwo) ;
            }
             \foreach \i in {1,2,3}{
                \pgfmathtruncatemacro{\seriesone}{\i+1}
                \pgfmathtruncatemacro{\seriestwo}{17-\i}
                \draw [purple, thick,dotted,rounded corners=5pt] (v29-\seriesone) 
                to ++(5-0.5*\i,0) to  ++(0,16-2*\i) 
                to (v29-\seriestwo) ;
            }
        \end{tikzpicture}
    \caption{Combining handlebars in Lemma~\ref{lem:combining-handlebars} where $k=t=3$. In this example, we can recursively find a path in the first column or the last column connecting two handlebars. As the number of crossing $W$-handlebars is odd, the resulting $W'$-handlebar is crossing.
    }
    \label{fig:lemma-combining-handlebars-1}
\end{figure}

\begin{lemma}
    \label{lem:combining-handlebars}
    Let~$t$, $k$, $c$, and~$r$ be positive integers 
    with~${k \geq 2}$ and~${c, r \geq 3}$. 
    Let~$W$ be a ${(c',r')}$-wall in a graph~$G$
    with ${c' \geq c_{\ref{lem:combining-handlebars}}(t,k,c) := c + kt}$ and~${r' \geq r_{\ref{lem:combining-handlebars}}(k, r) := r + k }$. 
    For each $i\in[t]$, let $\mathcal{P}_i$ be a $W$-handlebar of size~$k$ in~$G$, not in series, such that $\mathcal{P}_i$ and  $\mathcal{P}_j$ are vertex-disjoint and non-mixing for all $j\in [t]\setminus\{i\}$.
    Then there exist an $N^W$-anchored subwall~$W'$ of~$W$ having at least~$c$ columns and at least~$r$ rows
    and a $W'$-handlebar~$\mathcal{Q}$ in~$G$ of size~$k$ such that for each~${Q \in \mathcal{Q}}$, 
    there is a set~${\{ P_{i,Q} \in \mathcal{P}_i \colon i \in [t] \}}$ 
    such that
    \[
        {\bigcup_{i=1}^t P_{i,Q}  \subseteq Q \subseteq W \cup \bigcup_
        {i=1}^t P_{i,Q} }.
    \]
    Moreover, $\mathcal{Q}$ is crossing if and only if the number of crossing $W$-handlebars in~${(\mathcal{P}_i \colon i \in [t])}$ is odd. 
\end{lemma}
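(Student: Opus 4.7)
The plan is to proceed by induction on $t$. The base case $t = 1$ is immediate: take $W' := W$ (which has at least $c + k \geq c$ columns and $r + k \geq r$ rows) and $\mathcal{Q} := \mathcal{P}_1$, so the ``Moreover'' clause holds tautologically. The induction hypothesis reduces us to a single combining step that takes two non-mixing, non-in-series $W$-handlebars and merges them into one, while slicing off only $k$ columns from the wall and preserving all rows.

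For the inductive step with $t \geq 2$, I would combine $\mathcal{P}_{t-1}$ and $\mathcal{P}_t$ into a single non-in-series $W^\ast$-handlebar $\mathcal{P}^\ast$ of size $k$, where $W^\ast$ is an $N^W$-anchored subwall of $W$ with at least $c + k(t-1)$ columns and at least $r + k$ rows. The construction must guarantee that each path in $\mathcal{P}^\ast$ contains exactly one path from $\mathcal{P}_{t-1}$ and exactly one from $\mathcal{P}_t$, is contained in $W$ together with those two paths, and that $\mathcal{P}^\ast$ is crossing iff exactly one of $\mathcal{P}_{t-1}, \mathcal{P}_t$ is crossing; this is the key ``XOR rule''. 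Applying the induction hypothesis to the family $(\mathcal{P}_1, \ldots, \mathcal{P}_{t-2}, \mathcal{P}^\ast)$ of $t - 1$ pairwise vertex-disjoint non-mixing non-in-series $W^\ast$-handlebars then delivers a subwall $W'$ of $W^\ast$ (and hence of $W$) with at least $c$ columns and $r$ rows, and a $W'$-handlebar $\mathcal{Q}$ of size $k$. Since each $Q \in \mathcal{Q}$ contains one path from each of $\mathcal{P}_1, \ldots, \mathcal{P}_{t-2}, \mathcal{P}^\ast$ and each path in $\mathcal{P}^\ast$ contains one path from each of $\mathcal{P}_{t-1}, \mathcal{P}_t$, the required containment holds; iterating the XOR rule matches the parity of crossing handlebars.

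To implement the combining step, I would use that the zones $A_{t-1} \cup B_{t-1}$ and $A_t \cup B_t$ are disjoint subpaths of $C_1^W \cup C_c^W$. Up to symmetry I can assume the zones of $\mathcal{P}_t$ follow those of $\mathcal{P}_{t-1}$ in the $\prec_W$-order. For each $i \in [k]$ I pair the $i$-th handle of $\mathcal{P}_{t-1}$ with an appropriately indexed handle of $\mathcal{P}_t$ (the indexing is either preserved or reversed depending on the types of the two handlebars, so that the resulting pairs of endpoints of the combined handles obey the desired nested or crossing pattern), and I connect the chosen endpoints by a path that uses a strip of $k$ adjacent columns of $W$ lying between the relevant zones. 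The subwall $W^\ast$ is obtained by deleting this strip; after row-extending the remaining handlebars $\mathcal{P}_1, \ldots, \mathcal{P}_{t-2}$ to $W^\ast$ if their zones no longer lie on the column-boundary of $W^\ast$, they remain non-mixing $W^\ast$-handlebars. The type of $\mathcal{P}^\ast$ is then read off by inspecting the $\prec_W$-order of the endpoints of the combined handles.

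The main obstacle is the case analysis for the combining operation. The relevant configurations depend on: (a) whether each of $\mathcal{P}_{t-1}, \mathcal{P}_t$ has its zones on a single column or spans both columns of the column-boundary, and (b) whether each is nested or crossing. In each subcase the matching $\sigma \colon [k] \to [k]$ between handles of $\mathcal{P}_{t-1}$ and $\mathcal{P}_t$, and the choice of which pair of endpoints to connect, must be tuned so that the combined paths are pairwise vertex-disjoint, the resulting endpoints of the $k$ combined handles interleave in $\prec_W$ either in a purely nested or purely crossing pattern, and the ``XOR rule'' on types holds. One also has to verify that the row-extensions of $\mathcal{P}_1, \ldots, \mathcal{P}_{t-2}$ to $W^\ast$ remain non-mixing with $\mathcal{P}^\ast$, which follows because the strip of columns used for the combining sits inside the disjoint zone region of $\mathcal{P}_{t-1} \cup \mathcal{P}_t$ and the row-extensions only use edges in the rows of $W$.
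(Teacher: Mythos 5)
Your inductive structure, the XOR rule for the type of the combined handlebar, and the Case-1 construction (finding two handlebars whose zones are $\prec_W$-adjacent along a single outer column and routing a ``bracket'' through the $k$ columns nearest that outer column) match the paper's proof. However, you assert that the combining step always removes only $k$ columns and \emph{preserves all rows}, and your routing uses ``a strip of $k$ adjacent columns of~$W$ lying between the relevant zones.'' This breaks down in the remaining case, which the paper treats separately: when no path in $C_1^W \cup C_{c'}^W$ contains exactly one endvertex of each path of two handlebars, it follows (because no handlebar is in series, hence each handlebar's endvertex set is $\prec_W$-consecutive) that each of $C_1^W$ and $C_{c'}^W$ meets at most one handlebar, so $t=2$ with $\mathcal{P}_1$ entirely on $C_1^W$ and $\mathcal{P}_2$ entirely on $C_{c'}^W$. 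There is then no single-column path covering a zone of each, and no strip of columns ``between'' them exists: the connecting paths must be routed around the bottom of the wall, through rows. The paper's construction in this case takes both a $(c'-2k)$-column-slice and an $(r'-k)$-row-slice, consuming $k$ rows — exactly what the hypothesis $r' \geq r+k$ is there to pay for. Your claim that rows are never consumed leaves this case unhandled, and the lemma is not provable with a column-slice alone here, since the routing necessarily passes through interior rows of any column-slice of~$W$.

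A secondary imprecision: in the inductive step one must pick \emph{the right pair} of handlebars to combine — namely a pair admitting a covering path $Q$ in $C_1^W\cup C_{c'}^W$ — rather than always combining $\mathcal{P}_{t-1}$ with $\mathcal{P}_t$. ``Up to symmetry'' glosses over the fact that, after fixing $\prec_W$, only certain pairs have adjacent zones in the same outer column; you need to argue such a pair exists whenever $t \ge 3$, which is what makes the two-sided configuration occur only at $t=2$. With the Case-2 construction added and the pair-selection made explicit, the argument goes through and is essentially the paper's.
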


\begin{proof}
    We proceed by induction on~$t$. 
    This lemma is trivial if~${t = 1}$ and therefore we may assume that~${t > 1}$.
    First, suppose that for some distinct~${j', j'' \in [t]}$,
    there is a path~$Q$ in~${C_1^W \cup C_{c'}^W}$ that contains exactly one endvertex of each path in~${\mathcal{P}_{j'} \cup \mathcal{P}_{j''}}$ and no endvertex of any path in ${\bigcup \{ \mathcal{P}_x \colon x \in [t] \setminus \{j',j''\} \}}$.  We illustrate this case in Figure~\ref{fig:lemma-combining-handlebars-1}.
    Without loss of generality, we may assume that~${j' = t-1}$, that~${j'' = t}$, and that~${Q \subseteq C^W_1}$. 
    Let~${( a_j \colon j \in [2k] )}$ be a strictly increasing sequence of integers in~${[r']}$ such that~${R_{a_j}^W \cap Q}$ contains an endvertex of a path in~${\mathcal{P}_{t-1} \cup \mathcal{P}_t}$ for all~${j \in [2k]}$. 
    For~${j \in [k]}$, let~$Q_j$ be a subpath of~$C^W_{k+1-j}$ from a vertex in~$R^W_{a_j}$ to a vertex in~$R^W_{a_{2k+1-j}}$.
    Then it is easy to observe that for each~${j \in [k]}$,
    there is a unique path in
    \[
        {\bigcup \mathcal{P}_{t-1}  \cup R_{a_j}^W 
        \cup Q_j
        \cup R_{a_{2k+1-j}}^W 
        \cup \bigcup\mathcal{P}_t}
    \] 
    that contains exactly one path in~$\mathcal{P}_{t-1}$ and exactly one path in~$\mathcal{P}_t$. 
    Let~$W^\ast$ be a $(c'-k)$-column-slice of~$W$ containing $C^W_{k+1}$ and~$C^W_{c'}$.
    Then the row-extensions of all of these paths to~$W^\ast$ yield 
    a $W^\ast$-handlebar~$\mathcal{P}'_{t-1}$ that is vertex-disjoint and non-mixing with the row-extension of~$\mathcal{P}_i$ to~$W^\ast$ for each~${i \in [t-2]}$. 
    Note that~$\mathcal{P}'_{t-1}$ is crossing if and only if exactly one of~$\mathcal{P}_{t-1}$ and~$\mathcal{P}_{t}$ is crossing. 
    By applying the induction hypothesis to~$\mathcal{P}'_{t-1}$ and row-extensions of~$\mathcal{P}_i$ to~$W^\ast$ for all~${i \in [t-2]}$, we deduce the lemma in this case. 

\begin{figure}[tbp]
  \centering
        \begin{tikzpicture}[scale=0.3]
            \tikzset{vx/.style = {circle, draw, fill=black!0, inner sep=0pt, minimum width=4pt}}
            \tikzset{vxsub/.style = {circle, draw, fill=black!50, inner sep=0pt, minimum width=2pt}}
            
            \tikzset{c1/.style={purple, line width=6pt,opacity=0.8,rounded corners}}
            \tikzset{c2/.style={brown, line width=6pt,opacity=0.8,rounded corners}}
             \tikzset{c3/.style={cyan, line width=6pt,opacity=0.8,rounded corners}}
            \tikzset{vx1/.style = {circle, draw, solid, black, fill=red!30, inner sep=0pt, minimum width=4pt}}
            \tikzset{vx2/.style = {circle, draw, solid, black, fill=blue!30, inner sep=0pt, minimum width=4pt}}
            \tikzset{vx3/.style = {circle, black, solid, fill=orange, draw,
            inner sep=0pt, minimum width=4pt}}

           \draw [c1] (29,16) to (29,17) to (28,17) to (28,18) to (0,18) to (0,17) to (1,17) to (1,16) to (0,16) to (0,15) to (1,15) to (1,14) to (0,14) to (0,13);
           \draw [c2] (29,15) to (26,15) to (26,16) to (27, 16) to (27,17) to (3,17) to (3,16) to (2,16) to (2,15) to (3,15) to (3,14) to (2,14) to (2,13) to (3,13) to (3,12) to (0,12);
            \draw [c3] (29,14) to (25,14) to (25,15) to (24,15) to (24,16) to (4,16) to (4,15) to (5,15) to (5,14) to (4,14) to (4,13) to (5,13) to (5,12) to (4,12) to (4,11) to (0,11);

            \draw [c3] (29,4) to (23,4);
            \draw [c2] (29,3) to (23,3);
            \draw [c1] (29,2) to (23,2);

            \draw[c3] (0,3) to (6,3);
            \draw[c2] (0,4) to (6,4);
            \draw[c1] (0,5) to (6,5);

           \tikzwall{9}{16}{6}{0}{vx}{red, very thick}
            \tikzwall{15}{19}{0}{0}{vx}{gray}
            \foreach \i in {1,2,3}{
                \pgfmathtruncatemacro{\seriesone}{\i+2}
                \pgfmathtruncatemacro{\seriestwo}{10+\i}
                \draw [red, thick,dotted,rounded corners=5pt] (v0-\seriesone) 
                to ++(-4.5+0.5*\i,0) to  ++(0,8) 
                to (v0-\seriestwo) ;
            }
             \foreach \i in {1,2,3}{
                \pgfmathtruncatemacro{\seriesone}{\i+1}
                \pgfmathtruncatemacro{\seriestwo}{17-\i}
                \draw [purple, thick,dotted,rounded corners=5pt] (v29-\seriesone) 
                to ++(5-0.5*\i,0) to  ++(0,16-2*\i) 
                to (v29-\seriestwo) ;
            }
        \end{tikzpicture}
    \caption{Combining handlebars in Lemma~\ref{lem:combining-handlebars} where $k=3$, $t=2$, and there is no path in the first column or the last column connecting two handlebars. In this case, we use $k$ rows to combine two handlebars.
    }
    \label{fig:lemma-combining-handlebars-2}
\end{figure}
    
    Now suppose that there is no path~$Q$ as defined above for any pair of $W$-handlebars in~${(\mathcal{P}_i \colon i \in [t])}$.
    We illustrate this case in Figure~\ref{fig:lemma-combining-handlebars-2}.
    Since no $W$-handlebar in~$(\mathcal{P}_i \colon i \in [t])$ is in series, it follows that 
    each of~$C^W_1$ and~$C^W_{c'}$ meets at most one $W$-handlebar in~${(\mathcal{P}_i \colon i \in [t])}$
    and so~${t = 2}$. 
    Let~$W''$ be a ${(c'-2k)}$-column-slice of~$W$ containing~$C^W_{k+1}$ and~$C^W_{c'-k}$ and let~$W'$ be a ${(r'-k)}$-row-slice of~$W''$ containing~$R_{k+1}^{W''}$ and~$R_{r'}^{W''}$.     
    Without loss of generality, we may assume that the endvertices of~$\mathcal{P}_1$ are contained in~$C^W_1$ and the endvertices of~$\mathcal{P}_2$ are contained in~$C^W_{c'}$. 
    Let~${( a_j \colon j \in [k] )}$ be a strictly increasing sequence of integers in~${[r']}$ such that $R_{a_j}^W$ contains the endvertex of a path in~$\mathcal{P}_{1}$ that is $\prec_W$-smaller than its other endvertex for all~${j \in [k]}$, 
    and let~${( b_j \colon j \in [k] )}$ be a strictly increasing sequence of integers in~${[r']}$ such that~$R_{b_j}^W$ contains the endvertex of a path in~$\mathcal{P}_{2}$ that is~$\prec_W$-larger than its other endvertex for all~${j \in [k]}$. 
    Let~$W^0$ be the $k$-column-slice of~$W$ containing~$C^W_1$
    and let~$W^1$ be the $k$-column-slice of~$W$ containing~$C^W_{c'}$. 
    For~${j \in [k]}$, let~$P_j$ be a subpath of~$C^W_{j}$ from a vertex in~$R^W_{a_j}$ to a vertex in~$R^W_{j}$ 
    and let~$P_j'$ be a subpath of~$C^W_{c'+1-j}$ from a vertex in~$R^W_{j}$ to a vertex in~$R^W_{b_j}$. 
    Again, it is easy to observe that for each~${j \in [k]}$,
    there is a unique path in
    \[
        { \bigcup \mathcal{P}_{1} \cup R_{a_j}^{W^0} \cup P_j \cup R^W_{j}\cup P_j' \cup R_{b_j}^{W^1} \cup \bigcup\mathcal{P}_{2} }
    \] 
    that contains exactly one path in~$\mathcal{P}_{1}$ 
    and exactly one path in~$\mathcal{P}_{2}$. 
    Now the row-extensions of all of these paths to~$W'$ yield a $W'$-handlebar~$\mathcal{Q}$ as desired. 
    As before, note that~$\mathcal{Q}$ is crossing if and only if exactly one of~$\mathcal{P}_{1}$ and~$\mathcal{P}_{2}$ is crossing. 
    This completes the proof. 
\end{proof}

The final lemma of this section shows that if~${(\mathcal{P}_i \colon i \in [q])}$ is a family of pairwise vertex-disjoint non-mixing $W$-handlebars that does not satisfy any of the three properties of Definition~\ref{def:obstructions}\ref{item:obstructions-handlebars}, then 
we can find $k$ vertex-disjoint small subwalls, each equipped with $q$ vertex-disjoint handles such that for each $i\in[q]$, the $i$-th handle is a $W$-handle in~${\mathcal{P}_i}$ extended with paths in~$W$.
Moreover, the handles on each subwall are vertex-disjoint from every other subwall and their handles.
Later, we will find an allowable cycle in each subwall with its handles 
by applying Lemma~\ref{lem:omega-avoiding-cycle}, 
and therefore we will obtain $k$ pairwise vertex-disjoint allowable cycles. We illustrate the cases in Lemma~\ref{lem:almost-finding-cycles} in Figure~\ref{fig:lemma-almost-finding-cycles}.

\begin{figure}[t]
    \begin{subfigure}{0.45\linewidth}
    \centering
        \begin{tikzpicture}[scale=0.3]
            \tikzset{vx/.style = {circle, draw, fill=black!0, inner sep=0pt, minimum width=4pt}}
            \tikzset{vxsub/.style = {circle, draw, fill=black!50, inner sep=0pt, minimum width=2pt}}
            
            \tikzset{c1/.style={black, line width=1.5pt,opacity=0.8,rounded corners}}
            \tikzset{vx1/.style = {circle, draw, solid, black, fill=red!30, inner sep=0pt, minimum width=4pt}}
            \tikzset{vx2/.style = {circle, draw, solid, black, fill=blue!30, inner sep=0pt, minimum width=4pt}}
            \tikzset{vx3/.style = {circle, black, solid, fill=orange, draw,
            inner sep=0pt, minimum width=4pt}}

            \foreach \i in {1,3,5}{
                \pgfmathtruncatemacro{\seriesone}{\i}
                \pgfmathtruncatemacro{\seriestwo}{19-\i}
                \draw [red, thick,dotted,rounded corners=5pt] (v0-\seriesone) 
                to ++(-4.5+0.5*\i,0) to  ++(0,19-2*\i)  
                to (v0-\seriestwo) ;

            }
            \draw [thick, rounded corners=5pt] (0,1) -- (0,20) -- (18,20)--(18,0)--(0,0)--(0,1);

            \draw [red, thick, rounded corners=5pt] (8,1) -- (8,4.7) -- (14,4.7)--(14,0.3)--(14,0.3)--(8,0.3)--(8,1);
            \draw [red, thick, rounded corners=5pt] (8,6) -- (8,9.7) -- (14,9.7)--(14,5.3)--(14,5.3)--(8,5.3)--(8,6);
            \draw [red, thick, rounded corners=5pt] (8,11) -- (8,14.7) -- (14,14.7)--(14,10.3)--(14,10.3)--(8,10.3)--(8,11);

        \node[] at (11,2.5) {$W_1$};
        \node[] at (11,7.5) {$W_2$};
        \node[] at (11,12.5) {$W_3$};

         \draw [c1] (0,1) to (7.5,1) to (7.5,4) to (8,4);
         \draw [c1] (0,3) to (6.5,3) to (6.5,9) to (8,9);
         \draw [c1] (0,5) to (5.5,5) to (5.5,14) to (8,14);

         \draw [c1] (0,14) to (3.5,14) to (3.5, 16.2) to (15,16.2) to (15,14) to (14,14);
         \draw [c1] (0,16) to (2.5,16) to (2.5, 17.6) to (16,17.6) to (16,9) to (14,9);
         \draw [c1] (0,18) to (1.5,18) to (1.5, 19) to (17,19) to (17,4) to (14,4);

    \end{tikzpicture}
    \caption{The case when $q=1$ and $\mathcal{P}_1$ is nested.}
    \end{subfigure}
    \hfill    
    \begin{subfigure}{0.45\linewidth}
    \centering
           \begin{tikzpicture}[scale=0.3]
            \tikzset{vx/.style = {circle, draw, fill=black!0, inner sep=0pt, minimum width=4pt}}
            \tikzset{vxsub/.style = {circle, draw, fill=black!50, inner sep=0pt, minimum width=2pt}}
            
            \tikzset{c1/.style={black, line width=1.5pt,opacity=0.8,rounded corners}}
            \tikzset{vx1/.style = {circle, draw, solid, black, fill=red!30, inner sep=0pt, minimum width=4pt}}
            \tikzset{vx2/.style = {circle, draw, solid, black, fill=blue!30, inner sep=0pt, minimum width=4pt}}
            \tikzset{vx3/.style = {circle, black, solid, fill=orange, draw,
            inner sep=0pt, minimum width=4pt}}

            \foreach \i in {1,2,3}{
                \pgfmathtruncatemacro{\seriesone}{2*\i}
                \pgfmathtruncatemacro{\seriestwo}{2*\i-1}

    \pgfmathtruncatemacro{\seriesthree}{2*\i+10}
                \pgfmathtruncatemacro{\seriesfour}{2*\i+9}

             \draw [red, thick,dotted, out=180,in=-180] (v0-\seriesone) 
                to ++(-3,-.5) 
                to (v0-\seriestwo) ;

            \draw [red, thick,dotted, out=180,in=-180] (v0-\seriesthree) 
                to ++(-3,-.5) 
                to (v0-\seriesfour) ;
            }
            \draw [thick, rounded corners=5pt] (0,1) -- (0,20) -- (18,20)--(18,0)--(0,0)--(0,1);

            \draw [red, thick, rounded corners=5pt] (8,1) -- (8,4.7) -- (14,4.7)--(14,0.3)--(14,0.3)--(8,0.3)--(8,1);
            \draw [red, thick, rounded corners=5pt] (8,6) -- (8,9.7) -- (14,9.7)--(14,5.3)--(14,5.3)--(8,5.3)--(8,6);
            \draw [red, thick, rounded corners=5pt] (8,11) -- (8,14.7) -- (14,14.7)--(14,10.3)--(14,10.3)--(8,10.3)--(8,11);

        \node[] at (11,2.5) {$W_1$};
        \node[] at (11,7.5) {$W_2$};
        \node[] at (11,12.5) {$W_3$};

        \draw[dashed] (1, 9)--(-3,9);
         \draw [c1] (0,1) to (7.5,1) to (7.5,2.5) to (8,2.5);
         \draw [c1] (0,2) to (7,2) to (7,4) to (8,4);
         \draw [c1] (0,3) to (6.5,3) to (6.5,7.5) to (8,7.5);
         \draw [c1] (0,4) to (6,4) to (6,9) to (8,9);
         \draw [c1] (0,5) to (5.5,5) to (5.5,12.5) to (8,12.5);
        \draw [c1] (0,6) to (5,6) to (5,14) to (8,14);

         \draw [c1] (0,11) to (4.5,11) to (4.5, 15.5) to (15,15.5) to (15,14) to (14,14);
         \draw [c1] (0,12) to (4,12) to (4, 16.2) to (15.5,16.2) to (15.5,12.5) to (14,12.5);
        \draw [c1] (0,13) to (3.5,13) to (3.5, 16.9) to (16,16.9) to (16,9) to (14,9);
        \draw [c1] (0,14) to (3,14) to (3, 17.6) to (16.5,17.6) to (16.5,7.5) to (14,7.5);
        \draw [c1] (0,15) to (2.5,15) to (2.5, 18.3) to (17,18.3) to (17,4) to (14,4);
        \draw [c1] (0,16) to (2,16) to (2, 19) to (17.5,19) to (17.5,2.5) to (14,2.5);

    \end{tikzpicture}
    \caption{The case when $q=2$ and $\mathcal{P}_1$, $\mathcal{P}_2$ are in series.}
    \end{subfigure}
    \caption{Finding disjoint subwalls with handles in Lemma~\ref{lem:almost-finding-cycles}.
    }    
    \label{fig:lemma-almost-finding-cycles}
\end{figure}

\begin{lemma}
    \label{lem:almost-finding-cycles}
    Let~$k$,~$c$, and~$r$ be positive integers with~${k \geq 2}$ and~${c, r \geq 3}$.
    Let~${q \in \{0,1,2\}}$.
    Let~$W$ be a ${(c',r')}$-wall in a graph~$G$
    with ${c' \geq c_{\ref{lem:almost-finding-cycles}}(k,c) := c+6k}$ and ${r' \geq r_{\ref{lem:almost-finding-cycles}}(k,r) := k(r+2)}$. 
    Let~${(\mathcal{P}_i \colon i \in [q])}$ be a family of pairwise vertex-disjoint non-mixing $W$-handlebars in~$G$, each of size~$k$, such that one of the following conditions holds.
    \begin{enumerate}
        [label=(\arabic*)]
        \item ${q = 0}$.
        \item ${q = 1}$ and $\mathcal{P}_1$ is either nested or in series.
        \item ${q = 2}$ and~$\mathcal{P}_1$ and~$\mathcal{P}_2$ are both in series. 
    \end{enumerate}
    Then for each~${x \in [k]}$, there exist an $N^W$-anchored $(c,r)$-subwall~$W_x$, a set $\mathcal{H}_{x} = \{H_{x,i} \colon i \in [q]\}$ of~$q$ pairwise vertex-disjoint $W_x$-handles, 
    and a set ${\{ P_{x,i} \in \mathcal{P}_i \colon i \in [q] \}}$
    such that
    \begin{enumerate}
        [label=(\roman*)]
        \item for distinct~${x,x' \in [k]}$ the graphs ${W_x \cup \bigcup \mathcal{H}_{x}}$ and ${W_{x'} \cup \bigcup \mathcal{H}_{x'}}$ are vertex-disjoint and 
        \item ${P_{x,i} \subseteq H_{x,i} \subseteq W \cup P_{x,i}}$ for each~${x \in [k]}$ and each~${i \in [q]}$.
    \end{enumerate}
\end{lemma}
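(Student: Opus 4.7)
The plan is to proceed by a case analysis on $q \in \{0,1,2\}$. The common strategy is to carve out $k$ pairwise vertex-disjoint $(c,r)$-subwalls $W_x$ of $W$ as well-chosen row-slices (or more generally sub-grids), and then for each $x$ and each $i\in[q]$ choose one handle $P_{x,i}\in \mathcal{P}_i$ and extend it by a short path through rows of $W$ and a small segment of $C_1^W$ or $C_{c'}^W$ to form a $W_x$-handle $H_{x,i}$ containing $P_{x,i}$. The bounds $r'\geq k(r+2)$ and $c'\geq c+6k$ provide exactly the buffer rows and buffer columns needed to route these extensions without conflicts.

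For $q=0$, since $r'\geq k(r+2)\geq kr$, partition the rows of $W$ into $k$ disjoint intervals of at least $r$ rows each (allowing a small buffer between consecutive intervals), and take the corresponding row-slices as $W_1,\dots,W_k$. Each is $N^W$-anchored and has $c'\geq c$ columns; with $\mathcal H_x=\emptyset$ the conclusion is immediate.

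For $q=1$, in the in-series case we may assume all endpoints of $\mathcal P_1$ lie on $C_1^W$, labelled $v_x\prec_W w_x$ for $x\in[k]$ in the $\prec_W$-order; partition the rows into $k$ disjoint row-slices $W_1,\dots,W_k$, each of size at least $r$ with a 2-row buffer between consecutive slices, and define $H_{x,1}$ by extending $P_{1,x}$ from $v_x$ and $w_x$ along $C_1^W$ (and if necessary a row of $W$) to reach the column-boundary of~$W_x$. The in-series order guarantees that the vertex sets used by distinct extensions on $C_1^W$ are disjoint. The nested case with endpoints on both columns is handled by the same row-slice argument, since the rows of $v_x$ and $w_x$ are then both monotone in $x$; if instead all nested endpoints sit on one column, we switch to sub-grids $W_x$ of $W$ with disjoint column-ranges (using the $6k$ extra columns) and row-ranges that contain $\alpha_x,\beta_x$, which need not be disjoint across $x$'s but are disjoint from each $W_{x'}$ by the separation of columns. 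For $q=2$, both handlebars are in series and by non-mixing their supports are vertex-disjoint in $C_1^W\cup C_{c'}^W$; WLOG $\mathcal P_1$ is on $C_1^W$ and $\mathcal P_2$ on $C_{c'}^W$. Take the same $k$ row-slices as above, pair the $x$-th handle of $\mathcal P_1$ with the $x$-th of $\mathcal P_2$ in $\prec_W$-order, and extend each one through the appropriate outer column. (The subcase where both handlebars sit on the same column is a minor variant using the non-mixing separation on that column.)

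The main obstacle is ensuring that the sets $W_x\cup\bigcup\mathcal H_x$ are pairwise vertex-disjoint. The delicate point is that a handle's endpoints may be far from the row-range of its assigned row-slice, so the connecting segment along $C_1^W$ or $C_{c'}^W$ may want to traverse rows used by other~$W_{x'}$'s. The in-series or nested order produces a natural linear arrangement of handles that matches the order of the row-slices, and the 2-row gaps between consecutive slices (provided by $r'\geq k(r+2)$) together with the $6k$ columns of slack in $c'\geq c+6k$ are exactly what is needed so that distinct extensions can be routed through disjoint vertex sets, completing the construction in every case.
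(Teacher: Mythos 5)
Your plan follows the paper's high-level strategy (carve out $k$ disjoint $(c,r)$-subwalls $W_x$ and extend each chosen handle to a $W_x$-handle), but the routing you propose for ${q \geq 1}$ has a genuine gap. Taking the $W_x$'s as row-slices of $W$ and extending $P_{1,x}$ ``along $C_1^W$ (and if necessary a row of $W$)'' fails in two ways. First, a row-slice of $W$ spans all columns and therefore contains a portion of $C_1^W$, so any extension for $P_{1,x'}$ running along $C_1^W$ through the row-range of some other $W_x$ intersects $W_x$. Second, even setting that aside, if the $k$ handles of an in-series handlebar are bunched into a few consecutive rows (say the first $2k$ rows), the extensions from their endpoints down to the row-ranges $(x-1)r+1,\dots,xr$ of the assigned $W_x$'s are long overlapping segments of $C_1^W$; the in-series order only orders the $2k$ endpoints, it does not make $k$ such long segments along a single column pairwise disjoint. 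The same problem recurs for ${q=2}$.

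The paper resolves this by routing through an entire $4k$-column-slice $W^0$ containing $C_1^W$ (and symmetrically a column-slice $W^1$ containing $C_{c'}^W$), and confining the $W_x$ to a central $c$-column-slice $W^\ast$ vertex-disjoint from ${W^0 \cup W^1}$. Inside $W^0$, the $2k$ handle endpoints are connected by $2k$ pairwise disjoint paths to $2k$ prescribed nails on the inner boundary of $W^0$, each sitting at a row that meets the intended $W_x$; the $4k$ columns of $W^0$ give the room to splay these paths across distinct columns, which the single column $C_1^W$ cannot do. The paper also constructs around-the-wall paths $T_x$ through the last rows and last $2k$ columns of $W$, carrying endpoints from the inner boundary of $W^0$ to the inner boundary of $W^1$; these are essential when a nested handlebar, or both handlebars in the ${q=2}$ case, have all of their endpoints on the $W^0$ side, since then one endpoint of each $W_x$-handle must enter $W_x$ from the $W^1$ side. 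Your writeup gestures at the ${c' \geq c + 6k}$ slack being ``exactly what is needed'' but does not describe the multi-column routing in $W^0$ nor the $T_x$ detours, and without both of these ingredients the extensions cannot be made pairwise disjoint.
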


\begin{proof}
    Without loss of generality, we may assume that if~${q > 0}$, then the paths in~$\mathcal{P}_1$ have at least one endvertex in~$C_{1}^W$ 
    and if~${q = 2}$, then each endvertex of each path in~$\mathcal{P}_1$ is $\prec_W$-smaller than each endvertex of each path in~$\mathcal{P}_2$ (since $\mathcal{P}_1$ and $\mathcal{P}_2$ are both in series).  
    If~${q \geq 1}$, then let~${\{ P_{1,x} \colon x \in [k]\}}$ be an enumeration of~$\mathcal{P}_1$ such that for all~${x \in [k-1]}$, the $\prec_W$-smallest endvertex of~$P_{1,x}$ is $\prec_W$\nobreakdash-smaller than both endvertices of~$P_{1,x+1}$, 
    and additionally if~${q = 2}$, then let~${\{ P_{2,x} \colon x \in [k]\}}$ be an enumeration of~$\mathcal{P}_2$ such that for all~${x \in [k-1]}$, the $\prec_W$-smallest endvertex of~$P_{2,x}$ is $\prec_W$-larger than both endvertices of~$P_{2,x+1}$. 
    
    Let~$W^{0}$ be a $4k$-column-slice of~$W$ containing~$C^W_1$ and 
    let~$W^{1}$ be a ${(c'-c-4k)}$-column-slice of~$W$ containing~$C^W_{c'}$. 
    Let~$W^\ast$ be a~$c$-column-slice of~$W$ vertex-disjoint from~${W^{0} \cup W^{1}}$. 
    Let~${\{ W_x \colon x \in [k] \}}$ be a set of $k$ pairwise vertex-disjoint $N^W$-anchored $(c,r)$-subwalls of~$W^\ast$ such that~$W_x$ intersects both~$R^W_{(x-1)r+1}$ and~$R^W_{xr}$ for each~${x \in [k]}$. 
    For each~${x \in [2k]}$ and~${z \in \{0,1\}}$,
    \begin{itemize}
        \item let~$v^z_x$ be the unique nail in the column-boundary of~$W^{z}$ that is contained in both~${R^W_{\lceil xr / 2 \rceil}}$ and~${C^W_{z(c+1) + 4k}}$ and 
        \item let~$w^z_x$ be the unique nail in the column-boundary of~$W^{z}$ that is contained in both~${R^W_{( r' + 1 - x )}}$ and~${C^W_{z(c+1) + 4k}}$. 
    \end{itemize}
    
    Note that for each~${x \in [k]}$, the nails~$v_{2x - 1}^0$,  $v_{2x}^0$, $v_{2x-1}^1$, and~$v_{2x}^1$ are each contained in a row of~$W$ that intersects~$W_x$. 
    For each~${x \in [2k]}$, let~$T_x$ be the unique path in~${R^W_{r'+1-x} \cup C^W_{c'+1-x} \cup R^W_{\lceil xr / 2 \rceil}}$ from~$w^0_x$ to~$v^1_x$. 
    Note that~${\mathcal{T} := \{ T_x \colon x \in [2k] \}}$ is a set of~$2k$ pairwise vertex-disjoint paths that are internally disjoint from~${W^0 \cup \bigcup \{ W_j \colon j \in [k] \}}$. 
    
    If ${q = 0}$, then~$W_x$ with~${\mathcal{H}_x = \emptyset}$ for each~${x \in [k]}$ satisfies the condition and therefore we may assume~${q > 0}$.

    Suppose that ${q = 1}$ and $\mathcal{P}_1$ is in series. 
    As~${k \geq 2}$, each path in~$\mathcal{P}_1$ has both of its endvertices in~$W^{0}$. 
    Since~$W^{0}$ has at least~$2k$ columns, 
    there is a set~$\mathcal{Q}$ of~$2k$ pairwise vertex-disjoint paths from the endvertices of the paths in~${\mathcal{P}_1}$ to the set~${\{ v_x^0 \colon x \in [2k]\}}$ in~$W^{0}$. 
    By the planarity of~$W$, we conclude that for~${x \in [k]}$, the endvertices of~$P_{1,x}$ are linked by two paths~$Q_{x}^{\ast}$ and~$Q_{x}^{\ast\ast}$ in~$\mathcal{Q}$ to~${\{ v^0_{2x-1}, v^0_{2x}\}}$. 
    Moreover, for each~${x \in [k]}$, the path~${Q_{x}^{\ast} \cup Q_{x}^{\ast\ast} \cup P_{1,x}}$ can be easily extended to a $W_x$-handle~$H_{x,1}$ such that all desired properties are satisfied. 
    
    Now suppose that~${q = 1}$ and~$\mathcal{P}_1$ is nested. 
    If each path in~$\mathcal{P}_1$ has one endvertex in~$W^{0}$ and one endvertex in~$W^{1}$, then there is a set~$\mathcal{Q}$ of~$2k$ pairwise vertex-disjoint paths containing for each~${z \in \{0,1\}}$ 
    a subset of~$k$ paths from the endvertices in~$W^z$ of the paths in~$\mathcal{P}_1$ to~${\{ v^z_{2x} \colon x \in [k] \}}$ in~$W^z$. 
    If each path in~$\mathcal{P}_1$ has both of its endvertices in~$W^{0}$, then there are~$2k$ vertex-disjoint paths from the endvertices of the paths in~$\mathcal{P}_1$ to~${\{ v^0_{2x} \colon x \in [k] \} \cup \{ w^0_{2x} \colon x \in [k] \}}$ in~$W^{0}$, which together with the paths in~${\{ T_{2x} \colon x \in [k] \}}$ yield a set~$\mathcal{Q}$ of~$2k$ pairwise vertex-disjoint paths from the endvertices of~$\mathcal{P}_1$ to~${\{ v^z_{2x} \colon x \in [k], z \in \{0,1\} \}}$.  
    Hence, in both of these cases, the set~$\mathcal{Q}$ avoids~${\bigcup \{ W_j \colon j \in [k]\}}$. 
    By the planarity of~$W$, we conclude that for~${x \in [k]}$, the endvertices of the path~${P_{1,x} \in \mathcal{P}_1}$ are linked by two paths~$Q_{x}^{\ast}$ and~$Q_{x}^{\ast\ast}$ in~$\mathcal{Q}$ to~${\{ v^{0}_{2x} , v^{1}_{2x} \}}$. 
    As before, for each~${x \in [k]}$, the path~${Q_{x}^{\ast} \cup Q_{x}^{\ast\ast} \cup P_{1,x}}$ can be easily extended to a $W_x$-handle~$H_{x,1}$ such that all desired properties are satisfied. 
    
    Therefore, we may assume that ${q = 2}$. 
    Recall that the paths in~$\mathcal{P}_1$ have both of their endvertices in~$W^{0}$. 
    If each path in~$\mathcal{P}_2$ has both of its endvertices in~$W^{1}$, then 
    since each of~$W^0$ and~$W^1$ has at least~$2k$ columns, 
    there exist a set~$\mathcal{Q}_1$ of~$2k$ pairwise vertex-disjoint paths from the endvertices of the paths in~$\mathcal{P}_1$ to~${\{ v^0_{x} \colon x \in [2k] \}}$ in~$W^0$ and a set~$\mathcal{Q}_2$ of~$2k$ pairwise vertex-disjoint paths from the endvertices of the paths in~$\mathcal{P}_2$ to~${\{ v^1_{x} \colon x \in [2k] \}}$ in~$W^1$. 
    If each path in~$\mathcal{P}_2$ has both of its endvertices in~${W^0}$ as well, then 
    since~$W^0$ has~$4k$ columns, 
    there are~$4k$ vertex-disjoint paths from the set of endvertices of~${\mathcal{P}_1 \cup \mathcal{P}_2}$ to the set~${\{ v_{x}^0 \colon x \in [2k] \} \cup \{ w_{x}^0 \colon x \in [2k] \}}$ in~$W^0$. 
    In this case, let~$\mathcal{Q}_1$ be the subset of these paths with endvertices in~$\{ v_{x}^0 \colon x \in [2k] \}$ and let~$\mathcal{Q}_2$ be the concatenation of the subset of these paths with endvertices in~$\{ w_{x}^0 \colon x \in [2k] \}$ together with the paths in~$\{ T_x \colon x \in [2k] \}$. 
    Hence, in both of these cases, by the planarity of~$W$, for each~${i \in [2]}$, the endvertices of~$P_{i,x}$ are linked by two paths~$Q_{i,x}^{\ast}$ and~$Q_{i,x}^{\ast\ast}$ in~$\mathcal{Q}_i$ to ${\{v_{2x-1}^{i-1}, v_{2x}^{i-1}\}}$ and these paths avoid~${\bigcup \{ W_j \colon j \in [k]\}}$. 
    As before, for each~${i \in [2]}$ and~${x \in [k]}$, the path~${Q_{i,x}^{\ast} \cup Q_{i,x}^{\ast\ast} \cup P_{i,x}}$ can be easily extended to a $W_x$-handle~$H_{x,i}$ such that all desired properties are satisfied. 
\end{proof}

\section{Lemmas for products of abelian groups}
\label{sec:grouplemmas}

In this section, we present some additional lemmas from~\cite{GollinHKKO2021} and prove useful extensions on finding allowable values. 
The first lemma says that if a set of elements of~$\Gamma$ generates an allowable value, then it does so using each element a bounded number of times. 

\begin{lemma}[Gollin et al.~{\cite[Corollary 7.2]{GollinHKKO2021}}]
    \label{lem:omega-avoiding}
    Let~$m$, $t$, and~$\omega$ be positive integers, let~${\Gamma = \prod_{j \in [m]} \Gamma_j}$ be a product of~$m$ abelian groups and for all~${j \in [m]}$, let~$\Omega_j$ be a subset of~$\Gamma_j$ of size at most~$\omega$. 
    For all~${i \in [t]}$ and~${j \in [m]}$, let~${g_{i,j}}$ be an element of~$\Gamma_j$.
    If there are integers~${c_1, \dots, c_t}$ 
    such that~${\sum_{i=1}^t c_i g_{i,j} \notin \Omega_j}$ 
    for all~${j \in [m]}$, 
    then there are integers~${d_1, \dots, d_t}$ with~${d_i \in [2^{m\omega}]}$ for each~${i \in [t]}$ 
    such that~${\sum_{i=1}^t d_i g_{i,j} \notin \Omega_j}$ for all~${j \in [m]}$. 
\end{lemma}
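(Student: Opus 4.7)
The plan is to prove this by induction on the total number of forbidden pairs $s := \sum_{j \in [m]} \abs{\Omega_j}$, noting that $s \le m\omega$. In fact, I will establish the stronger claim that $d_i \in [2^s]$ suffices; the lemma follows since $[2^s] \subseteq [2^{m\omega}]$.

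First, a reformulation. For each forbidden pair $p = (j,\omega)$ with $\omega \in \Omega_j$, define the group homomorphism $\psi_p \colon \mathbb{Z}^t \to \Gamma_j$ by $\psi_p(d) := \sum_{i \in [t]} d_i g_{i,j}$, and let $H_p := \psi_p^{-1}(\omega) \subseteq \mathbb{Z}^t$. The hypothesis translates to $c \notin \bigcup_p H_p$, and the goal becomes finding $d \in [2^s]^t \setminus \bigcup_p H_p$. Each non-empty $H_p$ is a coset of $\ker \psi_p$, and since $c \notin H_p$, each such kernel is a proper subgroup of $\mathbb{Z}^t$.

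For the base case $s = 0$, the vector $d = (1, \ldots, 1) \in [2^0]^t$ trivially suffices. For the inductive step, I would pick any forbidden pair $p_0$ and apply the induction hypothesis to the problem with $p_0$ removed (keeping the same witness $c$), obtaining $d' \in [2^{s-1}]^t$ avoiding the remaining $s - 1$ pairs. If $d' \notin H_{p_0}$, we are done. Otherwise $d' \in H_{p_0}$, and the plan is to add a correction vector $v$ with $v_i \in \{0, 1, \ldots, 2^{s-1}\}$, so that $d := d' + v \in [2^s]^t$ satisfies $\psi_{p_0}(v) \ne 0$ (to escape $H_{p_0}$) and $\psi_p(v) \ne \omega - \psi_p(d')$ for each other forbidden pair $p = (j,\omega)$ (to remain outside $H_p$). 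Existence of such $v$ in $\mathbb{Z}^t$ is witnessed by $c - d'$, which by construction avoids all of these shifted forbidden values.

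The hard part will be showing that the correction $v$ can be chosen inside the small box $\{0, 1, \ldots, 2^{s-1}\}^t$ despite having $s$ constraints on it; a naive reapplication of the induction hypothesis to locate $v$ is circular because the number of constraints has not dropped. I would resolve this by establishing in parallel a stronger covering-type auxiliary statement: any union of $s$ cosets of proper subgroups of $\mathbb{Z}^t$ that misses some point of $\mathbb{Z}^t$ also misses some point of the cube $[2^s]^t$. This auxiliary statement admits a clean induction because any proper subgroup $K \subsetneq \mathbb{Z}^t$ must exclude some standard basis vector $e_i$, which provides an explicit ``escape direction'' and absorbs the factor-of-$2$ growth in the required box side per inductive step. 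The present lemma is then a direct consequence, obtained by packaging the forbidden pairs as cosets in $\mathbb{Z}^t$ and invoking the covering statement.
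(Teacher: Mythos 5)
Your reformulation of the lemma as a covering statement about cosets of proper subgroups of $\mathbb{Z}^t$ is correct, and you rightly observe that a naive induction on $s$ is circular. However, the ``auxiliary statement'' you then introduce is just a restatement of the goal (with $s$ in place of $m\omega$), so its proof is the entire content, and the mechanism you sketch for it does not close the gap. Moving $d'$ along a direction $e_i\notin K_s$ does escape $H_s$, but it may re-enter one of $H_1,\dots,H_{s-1}$; choosing a step length $k$ so that $d'+ke_i$ avoids all of $H_1,\dots,H_s$ is itself a one-dimensional instance of the covering problem with up to $s$ constraints (the constraints pull back to $s$ cosets of subgroups of $\mathbb{Z}$ along the line), so nothing has decreased and the circularity persists.

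Moreover, the condition $e_i\notin K_s$ alone does not identify a usable escape direction. Take $t=2$, $H_s=\{x\in\mathbb{Z}^2 : x_1+x_2\equiv 0\pmod 2\}$, $H_1=\{x : x_1\equiv 1\pmod 2\}$, and $d'=(2,2)\in H_s\setminus H_1$. Both $e_1$ and $e_2$ lie outside $K_s$, yet the entire line $d'+ke_1$ lies in $H_s\cup H_1$ (even $k$ lands in $H_s$, odd $k$ in $H_1$), so $e_1$ provides no escape, while $e_2$ does. A correct proof would have to show that some admissible direction always succeeds with a step bounded by $2^{s-1}$ while respecting all remaining constraints, and this is essentially the lemma itself; nothing in your sketch establishes it or exhibits a well-founded quantity that decreases. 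Since the paper cites this result from~\cite{GollinHKKO2021} without reproducing its proof, your argument must stand on its own, and as written the key combinatorial step is an unproved assertion.
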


The next lemma allows us to find large sets of elements of~$\Gamma$ such that for each~${j \in [m]}$, their $\gamma_j$-values are either all equal or all distinct. 

\begin{lemma}[Gollin et al.~{\cite[Lemma 7.6]{GollinHKKO2021}}]
    \label{lem:ramsey}
    There exists a function~$f_{\ref{lem:ramsey}} \colon \mathbb{N}^2 \to \mathbb{N}$ satisfying the following. 
    Let~$m$,~$t$, and~$N$ be positive integers 
    with~${N \geq f_{\ref{lem:ramsey}}(t,m)}$ 
    and let~${\Gamma = \prod_{j \in [m]} \Gamma_j}$ be a product of~$m$ abelian groups. 
    Then for every sequence~${(g_i \colon i \in [N])}$ over~$\Gamma$, 
    there exists a subset~$I$ of~$[N]$ with~${\abs{I} = t}$ such that for each~${j \in [m]}$, either
    \begin{itemize}
        \item ${\pi_j(g_{i}) = \pi_j(g_{i'})}$ for all ${i,i' \in I}$ or 
        \item ${\pi_j(g_{i}) \neq \pi_j(g_{i'})}$ for all distinct~${i,i' \in I}$.
    \end{itemize}
    Furthermore, if~$Z$ is a subset of~${[m]}$ such that for all distinct~$i$ and~$i'$ in~${[N]}$ there exists ${j \in Z}$ such that~${\pi_x(g_i) \neq \pi_x(g_{i'})}$, then the second condition holds for some~${j \in Z}$. 
\end{lemma}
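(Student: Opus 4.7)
The plan is to apply Ramsey's theorem once for each of the $m$ coordinates. For each $j \in [m]$, I colour a pair $\{i,i'\} \subseteq [N]$ according to whether $\pi_j(g_i)$ and $\pi_j(g_{i'})$ agree or disagree; a monochromatic clique in this colouring is then precisely a set of indices whose projections to the $j$-th coordinate either all coincide (by transitivity of equality) or are pairwise distinct. Iterating coordinate by coordinate and shrinking the index set at each step will yield the desired dichotomy simultaneously in every coordinate.

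Concretely, let $R(t)$ denote a $2$-colour Ramsey number, so that every $2$-edge-colouring of $K_{R(t)}$ contains a monochromatic $K_t$, and define $f_{\ref{lem:ramsey}}(t,0) := t$ and recursively $f_{\ref{lem:ramsey}}(t,m) := R(f_{\ref{lem:ramsey}}(t,m-1))$. Starting with $I_0 := [N]$, for $j = 1, \dots, m$ I apply Ramsey's theorem to the above colouring of $\binom{I_{j-1}}{2}$ to obtain a monochromatic subset $I_j \subseteq I_{j-1}$ with $\lvert I_j \rvert \geq f_{\ref{lem:ramsey}}(t, m-j)$. Any $t$-subset $I$ of $I_m$ then satisfies the first assertion, since passing to a subset of a monochromatic set preserves monochromaticity for every earlier coordinate.

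For the ``furthermore'' clause, suppose $Z \subseteq [m]$ is such that the map $i \mapsto (\pi_j(g_i))_{j \in Z}$ is injective on $[N]$. If the first alternative held for \emph{every} $j \in Z$ on the set $I$, then any two distinct $i, i' \in I$ would agree on every coordinate of $Z$, contradicting injectivity; hence the second alternative must hold for at least one $j \in Z$.

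No step is a genuine obstacle: restricting to a smaller index set automatically preserves the dichotomies established in previous rounds, and Ramsey's theorem applies to any $2$-colouring regardless of the (possibly infinite) cardinalities of the $\Gamma_j$, so nothing goes wrong when the groups are large. The resulting $f_{\ref{lem:ramsey}}$ is a tower of exponentials of height $m$, which is far more than sufficient for the intended applications.
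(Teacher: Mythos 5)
Your proof is correct, and the iterated Ramsey argument is the standard way to establish this kind of dichotomy. The key observation that makes the $2$-colouring work in each coordinate — namely that a monochromatic clique in the ``equal'' colour forces \emph{all} elements equal by transitivity, while a monochromatic clique in the ``different'' colour forces pairwise distinctness directly — is exactly what is needed, and passing to a subset preserves both kinds of monochromaticity. The recursion $f_{\ref{lem:ramsey}}(t,0)=t$, $f_{\ref{lem:ramsey}}(t,m)=R(f_{\ref{lem:ramsey}}(t,m-1))$ is valid, and the ``furthermore'' clause follows as you say: if the equality alternative held for every $j \in Z$ on the final set $I$, any two distinct indices in $I$ would project identically on all of $Z$, contradicting the injectivity hypothesis (this requires $t\ge 2$, but the statement is vacuous otherwise). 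Note that the present paper does not contain a proof of this lemma; it cites it from Gollin, Hendrey, Kawarabayashi, Kwon, and Oum. An alternative route is to colour each pair of indices with one of $2^m$ colours recording the exact set of coordinates on which they agree and apply multicolour Ramsey once; this is essentially the same argument packaged differently and yields comparable bounds. Your coordinate-by-coordinate version is a little cleaner to write down.
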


For the coordinates~$j$ for which the $\gamma_j$-values are all distinct, we have the following extension of Lemma~\ref{lem:ramsey}. 

\begin{lemma}
    \label{lem:sidon}
    Let $t$, $m$, and $n$ be positive integers, let ${\Gamma = \prod_{j\in[m]} \Gamma_j}$ be a product of $m$ abelian groups, and let~${(g_i\colon i \in [n]})$ be a family of elements of~$\Gamma$ such that~$\pi_j(g_i) \neq \pi_j(g_{i'})$ for all~${j \in [m]}$ and distinct~$i$ and~$i'$ in~${[n]}$. 
    If ${n \geq f_{\ref{lem:sidon}}(t,m) := m3^{t-1}+t}$, then 
    there is a subset~${I \subseteq [n]}$ of size~$t$ such that 
    \[ \pi_j\big(\sum_{i\in S}g_i\big) \neq \pi_j\big(\sum_{i\in T}g_i\big) \] 
    for every~${j \in [m]}$ and any pair of distinct subsets~$S$ and~$T$ of~$I$.
\end{lemma}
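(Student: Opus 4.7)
My plan is to construct the desired subset~$I$ greedily, growing it one element at a time while preserving the subset-sum injectivity property in each coordinate. Starting from~${I = \emptyset}$, I maintain the invariant that for every~${j \in [m]}$ and all distinct~${S, T \subseteq I}$, the elements~${\pi_j(\sum_{i \in S} g_i)}$ and~${\pi_j(\sum_{i \in T} g_i)}$ of~$\Gamma_j$ are different. Once $|I| = t$, this is exactly the required conclusion.

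The crux is the inductive step. Suppose~$I$ is valid with~${|I| = k < t}$. I want to choose an index~${i^* \in [n] \setminus I}$ such that~${I \cup \{i^*\}}$ remains valid. If~$S, T$ are distinct subsets of~${I \cup \{i^*\}}$, then either both avoid~$i^*$ or both contain~$i^*$ (in which case the invariant for~$I$ already handles them after cancelling~$g_{i^*}$), or exactly one of them contains~$i^*$. In the remaining case, writing (say) $i^* \in T$, $i^* \notin S$, and $T = T' \cup \{i^*\}$ with $T' \subseteq I$, the requirement reduces to
\[
    \pi_j(g_{i^*}) \neq \pi_j\Big(\sum_{i \in S} g_i - \sum_{i \in T'} g_i\Big) = \pi_j\Big(\sum_{i \in I} \epsilon_i g_i\Big)
\]
for every~${j \in [m]}$, where~${\epsilon_i \in \{-1, 0, 1\}}$. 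Hence for each fixed~$j$, the forbidden values of~$\pi_j(g_{i^*})$ form a subset of~$\Gamma_j$ of size at most~$3^k$.

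Because~${\pi_j(g_i) \neq \pi_j(g_{i'})}$ for distinct~$i, i'$ by hypothesis, each forbidden value in~$\Gamma_j$ excludes at most one candidate index~$i^*$. Summing across coordinates, at most~${m \cdot 3^k}$ indices in~${[n] \setminus I}$ are forbidden, so a valid choice of~$i^*$ exists whenever~${n - k > m \cdot 3^k}$. The strictest of these inequalities occurs at the final step~${k = t - 1}$, giving~${n > (t-1) + m \cdot 3^{t-1}}$, i.e., exactly the hypothesis~${n \geq m \cdot 3^{t-1} + t}$; for all smaller~$k$ the bound is strictly weaker. Hence the greedy procedure proceeds through all~$t$ steps and yields an~$I$ of size~$t$ with the required property.

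The only nontrivial point to get right is recognising that differences of subset sums range over~${\{-1, 0, 1\}}$-linear combinations of the~$g_i$ (not merely~${\{0, 1\}}$-combinations), which is where the base~$3$ in the bound comes from; once this is identified, the argument is a straightforward greedy/counting procedure, with no further obstacle.
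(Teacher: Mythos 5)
Your proposal is correct and takes essentially the same approach as the paper: the paper picks a maximal valid set~$I$ and derives a contradiction if~$\abs{I}<t$, whereas you build~$I$ greedily, but both arguments hinge on the identical count that differences of subset sums over~$I$ realize at most~$3^{\abs{I}}$ values per coordinate, combined with the hypothesis that~$\pi_j$ is injective on~$\{g_i\}$ so each forbidden value rules out at most one candidate index. The maximal-set-and-contradiction and greedy-growth phrasings are two presentations of one argument.
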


\begin{proof}
    Let~$I$ be a maximal subset of~${[n]}$ such that~${\pi_j(\sum_{i \in S} g_i) \neq \pi_j(\sum_{i \in T} g_i)}$
    for every~${j \in [m]}$ and every pair of distinct subsets~$S$ and~$T$ of~$I$.
    Suppose that~${\abs{I} < t}$. 
    By the maximality of~$I$, 
    for each~${a \in [n] \setminus I}$, there are disjoint subsets~$S'$ and~$T'$ of~$I$ such that~${\pi_j(g_a) = \pi_j(\sum_{i \in S'} g_i - \sum_{i \in T'} g_i)}$ for some~${j \in [m]}$. 
    Note that there are $3^{\abs{I}}$ ways to choose the disjoint subsets~$S'$ and~$T'$ of~$I$. 
    Since~${\pi_j(g_a) \neq \pi_j(g_{a'})}$ for every~${j \in [m]}$ and every pair of distinct elements~$a$ and~$a'$ in~${[n] \setminus I}$, we have that~${n-(t-1) \leq n - \abs{I} \leq m3^{\abs{I}}} \leq m3^{t-1}$, contradicting the assumption on~$n$. 
\end{proof}

We will apply Lemma~\ref{lem:ramsey} multiple times to obtain a family~${(S_i \colon i \in [t])}$ of large subsets of~$\Gamma$ each satisfying the conclusion of Lemma~\ref{lem:ramsey}. 
The following lemma says that there is a choice of an element from each~$S_i$ so that the sum of the chosen elements is allowable in each coordinate~$j$ for which at least one~$S_i$ has all distinct $\gamma_j$-values. 

\begin{lemma}[Gollin et al.~{\cite[Lemma 7.4]{GollinHKKO2021}}]
    \label{lem:vectorsum}
    Let~$m$,~$t$, and~$\omega$ be positive integers, let~${\Gamma = \prod_{j \in [m]} \Gamma_j}$ be a product of~$m$ abelian groups, and 
    for all~${j \in [m]}$, let~$\Omega_j$ be a subset of~$\Gamma_j$ of size at most~$\omega$.
    Let~${(S_i \colon i \in [t])}$ be a family of subsets of~$\Gamma$ 
    such that for each~${j \in [m]}$, there exists~${i \in [t]}$ such that~${\pi_j(g) \neq \pi_j(g')}$ 
    for all distinct~${g, g'}$ in~$S_i$.
    If~${\abs{S_i} > m\omega }$ for all~${i \in [t]}$,
    then for every~${h \in \Gamma}$, there is a sequence~${(g_i \colon i \in [t])}$ of elements of~$\Gamma$ such that 
    \begin{enumerate}
        [label=(\roman*)]
        \item ${g_i \in S_i}$ for each ${i \in [t]}$ and 
        \item ${\pi_j\bigl(h + \sum_{i \in [t]} g_i \bigr) \notin \Omega_j}$ for all~${j \in [m]}$.
    \end{enumerate}
\end{lemma}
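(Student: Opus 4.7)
The plan is a direct union-bound argument over the coordinates, exploiting the fact that injectivity of $\pi_j$ on some $S_{i_j}$ makes the bad set in each coordinate small. First I would use the hypothesis to fix, for each $j \in [m]$, an index $i_j \in [t]$ such that $\pi_j$ is injective on $S_{i_j}$. Calling a tuple $(g_1,\dots,g_t) \in \prod_{i \in [t]} S_i$ \emph{$j$-bad} when $\pi_j(h + \sum_i g_i) \in \Omega_j$, and writing $B_j$ for the set of $j$-bad tuples, the goal becomes
\[
    \Big|\bigcup_{j \in [m]} B_j\Big| < \prod_{i \in [t]} |S_i|,
\]
since any tuple in the complement witnesses the conclusion.

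The central estimate I would establish is that $|B_j| \leq \omega \prod_{i \neq i_j} |S_i|$ for each $j \in [m]$. To see this, I fix arbitrary $g_i \in S_i$ for $i \neq i_j$ and rewrite the defining condition of $B_j$ in the form $\pi_j(g_{i_j}) = \beta - \pi_j(h) - \sum_{i \neq i_j} \pi_j(g_i)$ for some $\beta \in \Omega_j$. This shows that there are at most $|\Omega_j| \leq \omega$ admissible values of $\pi_j(g_{i_j})$, and by injectivity of $\pi_j$ on $S_{i_j}$ each such value is realised by at most one element of $S_{i_j}$. Multiplying by the number $\prod_{i \neq i_j} |S_i|$ of choices for the remaining coordinates yields the bound.

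Finally, the union bound combined with the hypothesis $|S_i| > m\omega$ closes the argument:
\[
    \frac{\big|\bigcup_{j \in [m]} B_j\big|}{\prod_{i \in [t]} |S_i|} \;\leq\; \sum_{j \in [m]} \frac{\omega}{|S_{i_j}|} \;<\; \sum_{j \in [m]} \frac{1}{m} \;=\; 1,
\]
so some tuple lies outside every $B_j$, and its coordinates form the sequence $(g_i \colon i \in [t])$ required. There is no serious obstacle: the argument is a clean double-count, and the quantitative hypothesis $|S_i| > m\omega$ is exactly tight for forcing the final sum strictly below $1$. The only subtlety worth noting is that the same index $i$ may serve as $i_j$ for several coordinates $j$, but this causes no problem because the union bound is applied to the $B_j$'s individually rather than by grouping coordinates by their distinguished index.
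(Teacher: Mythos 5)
Your proof is correct. The paper does not reproduce a proof of this lemma; it is cited verbatim from the companion paper \cite[Lemma~7.4]{GollinHKKO2021}, so there is nothing in the present source to compare against, but the argument you give is a clean and valid union bound: for each $j$, fixing the coordinates $g_i$ with $i \neq i_j$ leaves at most $|\Omega_j| \leq \omega$ admissible values of $\pi_j(g_{i_j})$, and injectivity of $\pi_j$ on $S_{i_j}$ turns these into at most $\omega$ elements of $S_{i_j}$, so $|B_j| \leq \omega \prod_{i \neq i_j} |S_i|$, and $|S_{i_j}| > m\omega$ makes the sum of the $m$ ratios strictly less than $1$. One small point worth making explicit: the counting argument requires each $S_i$ to be finite so that $\prod_i |S_i|$ and the quotients are meaningful, whereas the hypothesis only says $|S_i| > m\omega$ and in principle allows infinite $S_i$. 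This is easy to repair by first replacing each $S_i$ with a finite subset of cardinality $m\omega + 1$: injectivity of $\pi_j$ on $S_{i_j}$ is inherited by subsets, so the hypotheses persist, and a good tuple for the restricted family is a good tuple for the original one.
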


The final lemma is an extension of Lemma~\ref{lem:vectorsum} that given a family~${(S_i \colon i \in [t])}$ of large subsets of~$\Gamma$ satisfying the conclusion of Lemma~\ref{lem:ramsey}, there are large subsets~$S_i'$ of~$S_i$ so that for \emph{every} choice of an element from each~$S_i'$, the sum of the chosen elements is allowable in each coordinate~$j$ for which at least one~$S_i$ has all distinct $\gamma_j$-values.

\begin{lemma}
    \label{lem:alltransversals}
    Let~$m$, $\omega$, $\kappa$, $t$, and~$s$ be positive integers with ${s \geq f_{\ref{lem:alltransversals}}(m,\omega,\kappa,t) := \kappa + m\omega\kappa^{t-1}}$, 
    let~${\Gamma = \prod_{j \in [m]} \Gamma_j}$ be a product of~$m$ abelian groups, 
    and for each~${j \in [m]}$, let~$\Omega_j$ be a subset of~$\Gamma_j$ of size at most~$\omega$. 
    Let~${(g_{i,x} \colon i \in [t],\, x \in[s])}$ be a family of elements of~$\Gamma$ 
    such that for each~${j \in [m]}$, we have
    \begin{enumerate}
        [label=\textnormal{(\alph*)}, series=alltransversals]
        \item \label{item:alltransversals1} ${\abs{\{ \pi_j(g_{i,x}) \colon x \in [s] \}} \in \{1,s\}}$ for each~${i \in [t]}$ and
        \item \label{item:alltransversals2} 
        ${\pi_j\bigl (\sum_{i \in [t]} g_{i,1}\bigr) \notin \Omega_j}$. 
    \end{enumerate}
    Then there are subsets~${I_i \subseteq [s]}$ for~${i \in [t]}$, each of size at least~$\kappa$, such that 
    \[
        {\pi_j\Bigl(\sum_{i \in [t]} g_{i,a_i}\Bigr) \notin \Omega_j}
    \] 
    for every~${j \in [m]}$ and every~${(a_i \in I_i \colon i \in [t])}$. 
\end{lemma}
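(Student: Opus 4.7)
The plan is to prove this by induction on $t$, with $f_{\ref{lem:alltransversals}}(m,\omega,\kappa,t) = \kappa + m\omega\kappa^{t-1}$. For the base case $t = 1$, condition~\ref{item:alltransversals1} gives that for each $j \in [m]$ the map $x \mapsto \pi_j(g_{1,x})$ on $[s]$ is either constant or injective. If it is constant, condition~\ref{item:alltransversals2} forces $\pi_j(g_{1,x}) = \pi_j(g_{1,1}) \notin \Omega_j$ for every $x$; if injective, then at most $\abs{\Omega_j} \leq \omega$ values of $x$ can be bad for coordinate $j$. Removing the at most $m\omega$ bad indices from $[s]$ leaves $I_1$ of size at least $s - m\omega \geq \kappa$.

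For the inductive step I would partition $[m]$ as
\begin{align*}
    J_0 &:= \{j \in [m] : x \mapsto \pi_j(g_{t,x}) \text{ is injective}\}, \\
    J_1 &:= \{j \in [m] \setminus J_0 : x \mapsto \pi_j(g_{i,x}) \text{ is injective for some } i < t\}, \\
    J_2 &:= [m] \setminus (J_0 \cup J_1).
\end{align*}
For $j \in J_2$, every projection $\pi_j(g_{i,\cdot})$ is constant, so $\pi_j(\sum_i g_{i,a_i}) = \pi_j(\sum_i g_{i,1}) \notin \Omega_j$ by~\ref{item:alltransversals2} and no work is needed. For $j \in J_1$ the sum is independent of $a_t$, while for $j \in J_0$ it depends on $a_t$ through an injective projection; these two classes are handled, respectively, by the inductive hypothesis and by a final pigeonhole on $I_t$.

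I would apply the inductive hypothesis to the family $(g_{i,x})_{i \in [t-1],\,x \in [s]}$ with the $|J_1| \leq m$ groups $(\Gamma_j)_{j \in J_1}$ and shifted forbidden sets $\widetilde{\Omega}_j := \Omega_j - \pi_j(g_{t,1})$, each of size at most $\omega$. Condition~\ref{item:alltransversals1} is inherited, and the analogue of~\ref{item:alltransversals2} is $\pi_j(\sum_{i < t} g_{i,1}) \notin \widetilde{\Omega}_j$, which is exactly $\pi_j(\sum_{i \leq t} g_{i,1}) \notin \Omega_j$. Since $s \geq \kappa + m\omega\kappa^{t-1} \geq \kappa + m\omega\kappa^{t-2}$, the hypothesis produces $I_1',\ldots,I_{t-1}' \subseteq [s]$ each of size at least $\kappa$ such that $\pi_j(\sum_{i \in [t]} g_{i,a_i}) \notin \Omega_j$ for every $j \in J_1$, every $(a_1,\ldots,a_{t-1}) \in \prod_{i < t} I_i'$, and every $a_t \in [s]$. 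Shrink each $I_i'$ to an arbitrary $I_i \subseteq I_i'$ of size exactly $\kappa$. For each $j \in J_0$, each fixed $(a_1,\ldots,a_{t-1}) \in \prod_{i < t} I_i$, and each $\omega' \in \Omega_j$, the injectivity of $\pi_j(g_{t,\cdot})$ yields at most one $a_t \in [s]$ with $\pi_j(\sum_i g_{i,a_i}) = \omega'$. The total number of forbidden $a_t$ is thus at most $\abs{J_0} \cdot \kappa^{t-1} \cdot \omega \leq m\omega\kappa^{t-1}$, so taking $I_t := [s]$ minus these bad values gives $\abs{I_t} \geq s - m\omega\kappa^{t-1} \geq \kappa$.

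The main difficulty is that across different coordinates $j$, different indices $i$ play the ``injective'' role, which rules out any naive index-by-index greedy construction: shrinking $I_i$ to kill $j$-constraints could create new ones elsewhere. The three-way partition resolves this: $J_2$ is automatically satisfied by~\ref{item:alltransversals2}, $J_1$ reduces cleanly to one fewer index by absorbing the fixed $t$-term into a shifted forbidden set, and $J_0$ is dealt with simultaneously at the very last step because the injectivity of $\pi_j(g_{t,\cdot})$ for $j \in J_0$ makes $I_t$ the correct pigeonhole bottleneck, with the $\kappa^{t-1}$-factor in $s$ precisely accounting for the product $\prod_{i < t} I_i$.
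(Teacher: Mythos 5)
Your proof is correct, but it takes a genuinely different route from the paper's. You proceed by induction on~$t$, partitioning the coordinates~$j$ into three classes according to which of $\pi_j(g_{t,\cdot})$ (and secondarily the $\pi_j(g_{i,\cdot})$ for $i<t$) is injective, handling $J_2$ trivially via~(b), $J_1$ by the inductive hypothesis with the shifted forbidden sets $\Omega_j - \pi_j(g_{t,1})$, and $J_0$ by a pigeonhole on $a_t$ over the $\kappa^{t-1}$ fixed choices of the other indices. The paper instead treats all $t$ indices simultaneously via an extremal argument: it picks sets $I_i$ each of size at most~$\kappa$, each containing~$1$, satisfying the conclusion, and maximising $\sum_i |I_i|$; if some $|I_t|<\kappa$, then each $y \in [s]\setminus I_t$ is blocked by some triple $(j,(a_i)_{i<t},\omega')$, a pigeonhole over the $m\kappa^{t-1}\omega < s - |I_t|$ such triples forces two indices $y\neq y'$ to collide, whence $\pi_j(g_{t,y})=\pi_j(g_{t,y'})$, so by~(a) that projection is constant and the block already ruled out the index~$1$, contradicting~(b). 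Both arguments exploit condition~(a) at exactly the same place — turning a coincidence $\pi_j(g_{t,y}) = \pi_j(g_{t,y'})$ into constancy — and both land on the same bound. Your version makes the role of the coordinate partition explicit and localizes the pigeonhole to the last index, which some readers may find easier to follow; the paper's version avoids the induction and the case split at the modest cost of carrying a maximality hypothesis. Both are sound.
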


\begin{proof}
    Let~${(I_i \subseteq [s] \colon i \in [t])}$ be a family satisfying 
    \begin{enumerate}
        [label=(\arabic*)]
        \item\label{item:alltransversalsproof-1} ${1 \in I_i}$ for all~${i \in [t]}$,
        \item\label{item:alltransversalsproof-2} ${\abs{I_i} \leq \kappa}$ for all~${i \in [t]}$,
        \item\label{item:alltransversalsproof-3} for every~${j \in [m]}$ and every~${(a_i\in I_i \colon i \in [t]) }$, we have~${\pi_j(\sum_{i \in [t]} g_{i,a_i}) \notin \Omega_j}$, and
        \item\label{item:alltransversalsproof-4} subject to the previous conditions, ${\sum_{i \in [t]} \abs{I_i}}$ is maximised. 
    \end{enumerate}
    By~\ref{item:alltransversals2}, such a family~${(I_i \colon i \in [t])}$ exists. 

    Suppose for contradiction that~${\abs{I_x} < \kappa}$ for some~${x \in [t]}$. 
    Without loss of generality we assume that~${x = t}$. 
    By properties~\ref{item:alltransversalsproof-3} and~\ref{item:alltransversalsproof-4}, for each~${y \in [s] \setminus I_t}$ there exist~${j \in [m]}$ and~${(a_i\in I_{i} \colon i \in [t-1])}$ such that~${\pi_j(g_{t,y} + \sum_{i \in[t-1]} g_{i,a_i}) \in \Omega_j}$. 
    Since $s\geq \kappa + m\omega \kappa^{t-1}$, we have 
    \[
        \frac{\abs{[s] \setminus I_t}}{m\prod_{i \in [t-1]}\abs{I_i}} \geq \frac{\abs{[s] \setminus I_t}}{m\kappa^{t-1}}>\omega \geq \max_{j\in [m]}\abs{\Omega_j}, 
    \]
    so by the pigeonhole principle, there exist an integer~${j \in [m]}$, a family~${(a_i \in I_i \colon i \in [t-1])}$, and distinct integers~${y,y' \in [s] \setminus I_t}$ such that~${\pi_j(g_{t,y} + \sum_{i \in [t-1]} g_{i,a_i}) = \pi_j(g_{t,y'} + \sum_{i \in [t-1]} g_{i,a_i}) \in \Omega_j}$. 
    This implies that~${\pi_j(g_{t,y}) = \pi_j(g_{t,y'})}$ and 
    by~\ref{item:alltransversals1}, we have ${\abs{\{ \pi_j(g_{t,x}) \colon x \in [s] \}} = 1}$. 
    Thus~${\pi_j(g_{t,y}) = \pi_j(g_{t,1})}$ and so~${\pi_j(g_{t,1} + \sum_{i \in[t-1]} g_{i,a_i}) \in \Omega_j}$, contradicting properties~\ref{item:alltransversalsproof-1} and~\ref{item:alltransversalsproof-3}.
\end{proof}

\section{Proof of the main theorem}
\label{sec:proof}

In this section, we prove the main theorem, which we will restate for the convenience of the reader. 

\mainobstruction*

\begin{proof}
    For fixed positive integers~$m$, $\omega$, $\kappa$, and~$\theta$, we will define~${\widehat f_{m,\omega}(k,\kappa,\theta)}$ by recursion on~$k$. 
    First, we set~${\widehat f_{m,\omega}(1,\kappa,\theta) := 0}$. 
    Assume that~${k > 1}$ and ${\widehat f_{m,\omega}(k-1,\kappa,\theta)}$ is already defined. 
    We define~${\maxk := \max \{ k, \kappa \}}$. 
    
    For integers~$p$ and~$z_0$ with~${p > 0}$ and ${0 \leq z_0 \leq m}$, let~${\alpha(p,z_0)}$ and~${\rho(z_0)}$ be recursively defined as follows. 
    For every positive integer~$p$, we define
    \begin{align*}
        \rho(0) &:= m, \\
        \alpha(p,0) &:= f_{\ref{lem:handlebars}}\Bigl(\rho(0),f_{\ref{lem:sidon}}\bigl(2^{m\omega + 1} f_{\ref{lem:alltransversals}}(m,\omega,\maxk,\rho(0)),m\bigr)\Bigr), 
    \end{align*}
    and for~${z_0 > 0}$, 
    we recursively define
    \begin{align*}
        \rho(z_0) &:= m + f_{\ref{lem:ramsey}}(\alpha(1,z_0-1),m), \\
        \alpha(p,z_0) &:= 
        \begin{cases}
            \alpha(1, z_0-1) & \textnormal{ if } p \geq \rho(z_0),\\
            \max\Bigl\{ 4f_{\ref{lem:ramsey}}(\alpha(p+1,z_0),m),\, f_{\ref{lem:handlebars}}\Bigl(p,f_{\ref{lem:sidon}}\bigl(2^{m\omega + 1} f_{\ref{lem:alltransversals}}(m,\omega,\maxk,p),m\bigr)\Bigr) \Bigr\} & \textnormal{ otherwise. } 
        \end{cases}
    \end{align*}
    Let~${\widehat{p} := \rho(m)}$. 
    Note that~${\alpha(x, z_0) \geq \alpha(\rho(z_0),z_0) = \alpha(1,z_0-1) \geq \alpha(x, z_0-1)}$ for~${x > 0}$ and~${z_0 > 0}$. 
    Thus, $\alpha$ is increasing in the second argument. 
    We may also assume that~$f_{\ref{lem:ramsey}}$ is increasing in its first argument. 
    These two properties imply that~${\rho(z_0) \leq \widehat{p}}$ for all~${z_0 \leq m}$. 
    Let 
    \[
        u := \max\{ \lceil  \widehat f_{m,\omega}(k-1,\kappa,\theta)/3\rceil, 
        f_{\ref{thm:tpath}} ( f_{\ref{lem:addlinkage}} ( f_{\ref{lem:ramsey}}(\alpha(1,m),m) ) ) + 3 \}.
    \]
    
    We recursively define~${\beta(p,z_0,z)}$ for integers~$p$, $z_0$, and~$z$ with~${0 \leq z_0 \leq z \leq m}$ and~${0 \leq p \leq \widehat{p}}$, 
    as well as~${\psi(z)}$ for an integer~$z$ with~${0 \leq z \leq m+1}$ and~${c_x(z)}$, ${r_x(z)}$ for~${x \in \{0,1,2\}}$ and a non-negative integer~${z \leq m}$ as follows. 
    We define 
    \begin{align*}
        \psi(m+1) &:= 3, 
        \intertext{and for~${z \leq m}$ we define} 
        c_0(z) &:= c_{\ref{lem:omega-avoiding-cycle}}(2,\psi(z+1)+2,m,\omega), \\ 
        r_0(z) &:= r_{\ref{lem:omega-avoiding-cycle}}(2,\psi(z+1)+2,m,\omega), \\
        c_1(z) &:= c_{\ref{lem:almost-finding-cycles}}(k,c_0(z)), \\
        r_1(z) &:= r_{\ref{lem:almost-finding-cycles}}(k,r_0(z)), \\
        c_2(z) &:= \max\big\{ \theta,
            c_{\ref{lem:combining-handlebars}}(\widehat{p},k,c_1(z)),\, 
            \maxk \cdot c_{\ref{lem:omega-avoiding-cycle}}( \widehat{p},\psi(z+1)+2,m,\omega)
            \big\}, \\ 
        r_2(z) &:= \max\big\{ \theta,
            r_{\ref{lem:combining-handlebars}}(k,r_1(z)),\,
            r_{\ref{lem:omega-avoiding-cycle}}( \widehat{p},\psi(z+1)+2,m,\omega)
            \big\}, \\
        \beta(p,z_0,z) &:=
        \begin{cases}
            \max \big\{ 
                u,\, 
                c_2(z)+2 \big\}
                & \text{if } z_0 = 0,\\
            \beta(1,z_0-1,z) 
                & \text{if } z_0 > 0 \text{ and } p = \widehat{p},\\
            w_{\ref{lem:addlinkage}}(f_{\ref{lem:ramsey}}(\alpha(p+1,z_0),m),\, \beta(p+1,z_0,z)) 
                & \text{if } z_0 > 0 \text{ and } p < \widehat{p}; 
        \end{cases}\\
        \psi(z) &:= \max\big\{ 
            \psi(z+1),\, 
            \beta(0,z,z),\, 
            r_2(z) 
            \big\}.
    \end{align*}
    Observe that~${\beta(p,z_0,z) \geq u}$. 
    Lastly, we define 
    \[
        \widehat f_{m,\omega}(k,\kappa,\theta) := \max\big\{
        6  f_{\ref{thm:wall}}(\psi(0)+2),\,  
        6  u, \,
        12 \widehat f_{m,\omega}(k-1, \kappa, \theta) 
        \big\}.
    \]
    
    \medskip

    We proceed by induction on~$k$. 
    The case~${k = 1}$ is clear. 
    Suppose that~${k > 1}$. 
    For every subgraph~$H$ of~$G$, let~${\nu(H)}$ denote the maximum number of vertex-disjoint cycles~$O$ in~$H$ with~${\gamma(O) \in A}$. 
    Observe that~$\nu$ is a packing function for~$G$. 
    
    Suppose for contradiction that~${\nu(G) < k}$, $\tau_\nu(G)>\widehat f_{m,\omega}(k,\kappa,\theta)$, and there is no $\Gamma$-labelling~$\gamma'$ of~$G$ that is shifting equivalent to~$\gamma$ such that 
    the statement~\ref{item:main-obstruction} holds. 
    Let~$T$ be a minimum $\nu$\nobreakdash-hitting set of size~${t := \tau_\nu(G)}$. 
    By assumption,~${t > \widehat f_{m,\omega}(k,\kappa,\theta) > \widehat f_{m,\omega}(k-1,\kappa,\theta)}$. 
    By the induction hypothesis, $G$ contains ${k-1}$ vertex-disjoint cycles in~$\mathcal{O}$ and therefore~${\nu(G) = k-1}$. 
    For each subgraph~$H$ of~$G$, if~${\nu(H) < \nu(G)}$, then 
    by the induction hypothesis, 
    \[
        \tau_\nu(H) \leq \widehat f_{m,\omega}(k-1,\kappa,\theta) \leq \widehat f_{m,\omega}(k,\kappa,\theta)/12 < t/12.
    \] 
    Let~$\mathcal{T}_T$ be the set of all separations~${(A,B)}$ of~$G$ of order less than~${t/6}$ with~${\abs{B \cap T} > 5t/6}$.
    By Lemma~\ref{lem:welllinked}, $\mathcal{T}_T$ is a tangle of order~${\lceil t/6 \rceil > f_{\ref{thm:wall}}(\psi(0)+2)}$. 
    By Theorem~\ref{thm:wall}, $G$ has a wall of order~${\psi(0)+2}$ dominated by~$\mathcal{T}_T$. 
    By Lemmas~\ref{lem:cleansubwall} and~\ref{lem:dominatedsubwall},
    this wall has a ${(\psi(\abs{Z}),\psi(\abs{Z}))}$-subwall~$W$ that is ${(\gamma',Z,\psi(\abs{Z}+1)+2)}$-clean
    for some subset~${Z \subseteq [m]}$ and some $\Gamma$-labelling~$\gamma'$ of~$G$ shifting-equivalent to~$\gamma$ 
    and dominated by~$\mathcal{T}_T$. 
    Since~${\gamma(O) = \gamma'(O)}$ for every cycle~$O$ in~$G$, we may assume without loss of generality that~${\gamma = \gamma'}$.

    The following claim simplifies the situation by choosing a good column-slice~$W'$ of~$W$ and a good set of $W'$-handlebars so that 
    the $\gamma_j$-values of $W'$-handles in each $W'$-handlebar are all distinct or all same for each $j\in Z$ 
    and further satisfies some properties so that we can later use them to find cycles with allowable $\gamma$-values.
    
    \setcounter{claim}{0}
    \begin{claim}
        \label{clm:manyhandles}
        There exist an integer~${c \geq \beta(1,0,\abs{Z})}$, 
        a set~${I \subseteq [\widehat{p}]}$, 
        a $c$-column-slice~$W'$ of~$W$, 
        a family~${( \mathcal{P}_i \colon i \in I)}$ of pairwise vertex-disjoint non-mixing 
        $W'$-handlebars, 
        a family~${( Z_i \colon i \in I )}$ of subsets of~$Z$, 
        and a family~${( g_i \colon i \in I )}$ of elements of~$\Gamma$
        such that 
        \begin{enumerate}
            [label=\textnormal{(\alph*)}, series=manyhandles]
            \item \label{item:manyhandles-a} 
                ${\abs{\mathcal{P}_i} \geq 2^{m\omega + 1} f_{\ref{lem:alltransversals}}(m,\omega,\maxk,\abs{I})}$ for each~${i \in I}$,  
            \item \label{item:manyhandles-b} 
                ${\abs{\pi_j(\gamma(\mathcal{P}_i))} = \abs{\mathcal{P}_i}}$ for all~${i \in I}$ and ${j \in Z_i}$, 
            \item \label{item:manyhandles-c} 
                ${\pi_j(\gamma(\mathcal{P}_i)) = \{ \pi_j(g_i) \}}$ for all~${i \in I}$ and ${j \in Z \setminus Z_i}$, 
            \item \label{item:manyhandles-d} 
                there is some~${g \in \gen{g_i \colon i \in I}}$ such that ${\pi_j(g) \notin \Omega_j}$ for all~${j \in Z \setminus \bigcup_{i \in I} Z_i}$, 
            \item \label{item:manyhandles-e} 
                for every~${i \in I}$ and every~${g \in \gen{g_{i'} \colon i' \in I \setminus\{i\}}}$, there is some~${j \in Z \setminus \bigcup_{i' \in I \setminus \{i\}} Z_y}$ such that~${\pi_j(g) \in \Omega_j}$, and 
            \item \label{item:manyhandles-f} 
                for each~${i \in I}$ and ${j \in Z_i}$, and every pair of distinct subsets~$\mathcal{S}$ and~$\mathcal{T}$ of~$\mathcal{P}_i$, we have 
                \[
                    \pi_j\Bigl(\sum_{P\in \mathcal{S}}\gamma(P)\Bigr) \neq 
                    \pi_j\Bigl(\sum_{P\in \mathcal{T}}\gamma(P)\Bigr).
                \]
        \end{enumerate} 
    \end{claim}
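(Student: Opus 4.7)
The plan is to build the required family via iterative augmentation followed by post-processing. At each step of the iteration, we apply Lemma~\ref{lem:addlinkage} to adjoin a new set of $\gamma$-non-zero $W'$-handles (with respect to an appropriate quotient labelling $\Gamma/\Lambda_p$, where $\Lambda_p$ is the subgroup generated by the values of the handles built so far) to a shrinking subwall, then apply Lemma~\ref{lem:ramsey} to impose the constant-or-injective structure on each coordinate in~$Z$. The hypothesis of Lemma~\ref{lem:addlinkage} — existence of many $\gamma$-non-zero $\branch(W)$-paths — is verified via Lemma~\ref{lem:cover} and Theorem~\ref{thm:tpath}, using the tangle-domination of $W$ by $\mathcal{T}_T$, the minimality of $T$ as a $\nu$-hitting set, and the inductive bound $\tau_\nu(H) \leq \hat f_{m,\omega}(k-1,\kappa,\theta) < t/12$ for proper subgraphs $H$ with $\nu(H) < \nu(G)$. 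Any failure of this hypothesis would yield either a hitting set contradicting $\tau_\nu(G) > \hat f_{m,\omega}(k,\kappa,\theta)$ or a packing of $k$ cycles in $\mathcal{O}$ contradicting $\nu(G)=k-1$.

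Concretely, we construct sets $\mathcal{H}_1, \ldots, \mathcal{H}_{\hat{p}}$ iteratively, each $\mathcal{H}_p$ living on a subwall of order at least $\beta(p+1, z_0, |Z|)$ and of size $\alpha(p+1, z_0)$ after Ramsey reduction. Each $\mathcal{H}_p$ is tagged with a subset $Z_p \subseteq Z$ of injective coordinates and an element $g_p$ giving the common value on $Z \setminus Z_p$. The index $z_0$ decrements across iterations whenever the accumulated handles resolve one more level of the coordinate structure (at multiples of $\rho(z_0)$). After all iterations, Lemma~\ref{lem:handlebars} extracts pairwise non-mixing $W'$-handlebars from the $\mathcal{H}_p$'s on a further column-slice $W'$ of $W$, each of size at least $f_{\ref{lem:sidon}}(2^{m\omega+1} f_{\ref{lem:alltransversals}}(m,\omega,\maxk,\hat{p}),m)$, and Lemma~\ref{lem:sidon} reduces each to a subfamily of size at least $2^{m\omega+1} f_{\ref{lem:alltransversals}}(m,\omega,\maxk,\hat{p})$ on which property~(f) holds. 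Property~(d) is enforced by the stopping criterion of the iteration; property~(e) is obtained by greedily discarding any handlebar whose removal preserves~(d), yielding the final index set $I \subseteq [\hat{p}]$ with $|I| \leq \hat{p}$, so the bound in~(a) is satisfied.

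The main obstacle is the parameter bookkeeping: the recursive definitions of $\alpha$, $\beta$, and $\rho$ must be orchestrated so that every application of Lemma~\ref{lem:addlinkage} fits into the shrinking subwall, every Ramsey reduction preserves enough handles, and the final handlebars after Sidon reduction and handlebar extraction still satisfy the bound in~(a), which depends on $|I|$ rather than~$\hat{p}$. A secondary subtlety is showing that each iteration can actually find a $\gamma$-non-zero handle modulo the current subgroup $\Lambda_p$ unless we are ready to stop; here Lemma~\ref{lem:omega-avoiding} is used to control the structure of the allowable values reachable from the existing $g_p$'s, so that failure of the iteration to continue really does give a certificate that a packing of $k$ allowable cycles exists (via Lemma~\ref{lem:omega-avoiding-cycle} applied to the clean wall $W$), contradicting our assumption.
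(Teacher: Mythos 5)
Your core blueprint matches the paper's: iteratively augment a structured collection of handle sets on a shrinking column-slice by applying Lemma~\ref{lem:addlinkage} to a quotient labelling $\Gamma/\Lambda$, impose constant-or-injective structure on the $Z$-coordinates via Lemma~\ref{lem:ramsey}, and post-process with Lemma~\ref{lem:handlebars}, Lemma~\ref{lem:sidon}, and a greedy removal to obtain property~\ref{item:manyhandles-e}. The hypothesis of Lemma~\ref{lem:addlinkage} is verified exactly as you describe, via Lemma~\ref{lem:cover}, the tangle $\mathcal{T}_T$, and the inductive bound on $\tau_\nu$. The paper packages the iteration as a lexicographically maximal ``McGuffin'' indexed by a pair $(q,p)$, where $q$ counts committed handle sets (each contributing a fresh injective coordinate) and $p-q$ counts tentative ones which are compressed by a Ramsey step whenever $p$ reaches $\rho(|Z_0|)$; your remark that $z_0$ decrements ``at multiples of $\rho(z_0)$'' gestures at this, but the explicit compression step from $[p]\setminus[q]$ back down to a single handle set is what makes the iteration terminate and is worth spelling out. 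Your concern about property~\ref{item:manyhandles-a} depending on $|I|$ rather than $\hat p$ is dispatched in the paper simply by assuming $f_{\ref{lem:alltransversals}}$ is increasing in its fourth argument, so passing from $I'=[p]$ to a subset $I$ only weakens the requirement.

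There is, however, a concrete misattribution in your final paragraph. Neither Lemma~\ref{lem:omega-avoiding} nor Lemma~\ref{lem:omega-avoiding-cycle} is used in the proof of Claim~\ref{clm:manyhandles}, and no packing of $k$ allowable cycles is constructed here. When property~\ref{item:manyhandles-d} (i.e.\ the paper's condition $(\ast)$) would otherwise fail, the paper defines $\Lambda$ as the subgroup of elements of $\Gamma$ indistinguishable on the $Z_0$-coordinates from $\gen{\bigcup_{i}\gamma(\mathcal{P}_i)}$, observes that every allowable cycle is then $\lambda$-non-zero in the quotient labelling, extracts many $\lambda$-non-zero $\branch(W')$-paths via Lemma~\ref{lem:cover}, and uses Lemma~\ref{lem:addlinkage} and Lemma~\ref{lem:ramsey} to strictly extend the McGuffin; the contradiction is to the lexicographic maximality of $(q,p)$, not to the nonexistence of a packing. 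Lemma~\ref{lem:omega-avoiding} enters only in Claim~2 (choosing the multiplicities $d_i$ for combining handles) and Lemma~\ref{lem:omega-avoiding-cycle} only in Claim~3 and at the very end of the proof of Theorem~\ref{thm:mainobstruction}, after property~\ref{item:obstructions-handlebars} is shown to fail. As written, your ``stopping criterion'' argument would not compile into a valid proof of this particular claim.
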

    
    \begin{proofofclaim}
        For non-negative integers~$c$,~$q$, and~$p$ with~${q \leq p}$, 
        we say that a triple ${(W',\mathfrak{P}, \mathcal{Z})}$ consisting of 
        a wall~$W'$, 
        a family ${\mathfrak{P} := (\mathcal{P}_i \colon i \in [p])}$ of pairwise vertex-disjoint sets of $W'$-handles, 
        and a family ${\mathcal{Z} := ( Z_i \colon i \in \{0\} \cup [p])}$ of subsets of~$Z$ 
        is a \emph{${(c,q,p)}$-McGuffin} 
        if~$W'$ is a $c$-column-slice of~$W$
        such that 
        \begin{enumerate}
            [label=(\arabic*)]
            \item \label{item:mcguffin-1}
               ${\abs{\mathcal{P}_i} \geq \alpha(p,\abs{Z_0})}$ for all~${i \in [p]}$, 
            \item \label{item:mcguffin-2}
                ${\abs{\pi_j(\gamma(\mathcal{P}_i))} = \abs{\mathcal{P}_i}}$  for all~${i \in [p]}$ and~${j \in Z_i}$, 
            \item \label{item:mcguffin-3}
                ${\abs{\pi_j(\gamma(\mathcal{P}_i))} = 1}$  for all~${i \in [p]}$ and~${j \in Z \setminus Z_i}$, 
            \item \label{item:mcguffin-4} 
                ${Z_0 = Z \setminus \bigcup_{i \in [q]} Z_{i}}$,
            \item \label{item:mcguffin-5} 
                ${Z_i \setminus \bigcup_{i' \in[i-1]} Z_{i'} \neq \emptyset}$ for all~${i \in [q]}$, and 
            \item \label{item:mcguffin-6} 
                for all distinct~${i, i' \in [p] \setminus [q]}$, there is ${j \in Z_{0}}$ such that~${\pi_j(\gamma(\mathcal{P}_{i})) \cap \pi_j(\gamma(\mathcal{P}_{i'})) = \emptyset}$. 
        \end{enumerate}
        Note that~${(W,\emptyset,(Z))}$ is a ${(\psi(\abs{Z}),0,0)}$-McGuffin
        and by the definition, $\psi(\abs{Z})\geq \beta(0,\abs{Z},\abs{Z})$.
        Furthermore, if ${(W',\mathfrak{P},\mathcal{Z})}$ is a ${(c,q,p)}$-McGuffin, then~${q \leq \abs{Z}}$ by~\ref{item:mcguffin-5}
        and~${\abs{Z_0} \leq m}$, which implies that~${\rho(\abs{Z_0}) \leq \widehat{p}}$. 
        Let~${(q,p)}$ be a lexicographically maximal pair of non-negative integers 
        with ${q \leq p \leq \widehat{p}}$ 
        for which there is a ${(c,q,p)}$-McGuffin~${(W',\mathfrak{P},\mathcal{Z})}$ for some~${c \geq \beta(p,\abs{Z_0},\abs{Z})}$. 
        
        First, we claim that~${p < \rho(\abs{Z_0})}$.
        Suppose that~${p \geq \rho(\abs{Z_0})}$. 
        Then 
        \[ 
            {p - q \geq \rho(\abs{Z_0}) - m \geq  f_{\ref{lem:ramsey}}(\alpha(q+1,\abs{Z_0}-1),m)}
        \] 
        since~${q \leq m}$ by~\ref{item:mcguffin-5} and~$\alpha$ is decreasing in its first argument. 
        Let~$\mathcal{P}''$ be a set of~${p-q}$ pairwise vertex-disjoint $W'$-handles containing exactly one element of~$\mathcal{P}_i$ for each~${i \in [p] \setminus [q]}$. 
        For~${i \in [q]}$, let~${\mathcal{P}'_i := \mathcal{P}_i}$ and~${Z'_i := Z_i}$. 
        Note that~${\abs{\mathcal{P}_i} \geq \alpha(p,\abs{Z_0}) \geq 4 f_{\ref{lem:ramsey}}(\alpha(p+1,\abs{Z_0}),m)
        \geq 4 \alpha(p+1,\abs{Z_0})}$ for each~${i \in [p]}$.
        Thus, by Lemma~\ref{lem:ramsey}, there is a subset~$\mathcal{P}'_{q+1}$ of~$\mathcal{P}''$ 
        with~${\abs{\mathcal{P}'_{q+1}} = \alpha(q+1,\abs{Z_0}-1)}$
        such that for each~${j \in [m]}$, either 
        \begin{itemize}
            \item ${\pi_j(\gamma(P)) = \pi_j(\gamma(Q))}$ for all~${P,Q \in \mathcal{P}'_{q+1}}$ or
            \item ${\pi_j(\gamma(P)) \neq \pi_j(\gamma(Q))}$ for all distinct~${P,Q \in \mathcal{P}'_{q+1}}$,
        \end{itemize}
        and the second condition holds for some~${j \in Z_0}$ 
        since by~\ref{item:mcguffin-6}, 
        for all distinct paths~$P$ and~$Q$ in~$\mathcal{P}''$, there exists ${j \in Z_0}$ such that~${\pi_j(\gamma(P)) \neq \pi_j(\gamma(Q))}$. 
        Let 
        \[
            Z'_{q+1} := 
            \{ j \in Z_0 \colon \pi_j(\gamma(P)) \neq \pi_j(\gamma(Q)) \text{ for all distinct } P, Q \in \mathcal{P}'_{q+1} \}
            \text{ and } 
            Z'_0 := Z_0 \setminus Z'_{q+1}. 
        \]
        Let~${\mathfrak{P}' := (\mathcal{P}_i' \colon i \in [q+1])}$ and~${\mathcal{Z}' := (Z_i' \colon i \in \{0\} \cup [q+1])}$. 
        Then~${(W',\mathfrak{P}', \mathcal{Z}')}$ is a ${(c,q+1,q+1)}$-McGuffin, since~\ref{item:mcguffin-1} follows from the fact that~${\abs{\mathcal{P}'_{q+1}} \geq \alpha(q+1,\abs{Z_0}-1) \geq \alpha(q+1,\abs{Z_0'})}$ and the remaining conditions are easy to check. 
        This contradicts the maximality of~${(q,p)}$ since~${q+1 \leq p \leq \widehat{p}}$. 
        Therefore,~${p < \rho(\abs{Z_0}) \leq \widehat{p}}$.
        
        Now let us show that~${(W',\mathfrak{P},\mathcal{Z})}$ satisfies the following statement:
        \begin{enumerate}
            [label=($\ast$)]
            \item \label{item:mcguffin-ast}
                There is some~${g \in \gen{\bigcup_{i \in [p]} \gamma(\mathcal{P}_{i})}}$ such that~${\pi_j(g) \notin \Omega_j}$ for all~${j \in Z_0}$. 
        \end{enumerate}
        Suppose to the contrary that such~$g$ does not exist.
        Then~$Z_0$ is nonempty. 
        Let~$\Lambda$ be the subgroup of~$\Gamma$ consisting of all~${g \in \Gamma}$ 
        for which there is ${g' \in \gen{\bigcup_{i \in [p]} \gamma(\mathcal{P}_i)}}$ such that ${\pi_j(g) = \pi_j(g')}$ for all~${j \in Z_{0}}$.
        Let~$\lambda$ be the induced ${\Gamma/\Lambda}$-labelling of~$G$. 
        Note that by the negation of~\ref{item:mcguffin-ast}, 
        neither ${\gen{\bigcup_{i \in [p]} \gamma(\mathcal{P}_i)}}$ nor~$\Lambda$ contains an element~$g$ such that~${\pi_j(g) \notin \Omega_j}$ for all~${j \in Z_0}$. 
        Therefore, 
        \begin{enumerate}
            [label=($\dagger$)]
            \item \label{item:mcguffin-dagger} 
                every cycle~$O$ of~$G$ for which~${\pi_j(\gamma(O)) \notin \Omega_j}$ for all~${j \in [m]}$ is $\lambda$-non-zero. 
        \end{enumerate}
        
        Note that~$W'$ is a subwall of~$W$ of order~${c \geq u}$. 
        For any~${S \subseteq V(G)}$ of size at most~${u-1}$, there is a component~$X$ of~${G-S}$ containing a row of~$W'$, 
        which contains a vertex in~${\branch(W')}$ because~${u \geq 3}$. 
        By Lemma~\ref{lem:dominatedsubwall},~$\mathcal{T}_T$ dominates~$W'$, so the separation~${(V(G) \setminus V(X), S \cup V(X))}$ is in~$\mathcal{T}_T$ and hence~$X$ contains a vertex of~$\branch(W')$ and at least 
        \[ 
            5t/6 - (u-1) > 5\widehat{f}_{m,\omega}(k,\kappa,\theta)/6-(u-1) > 4u 
        \] 
        vertices of~$T$. 
        By~\ref{item:mcguffin-dagger}, every minimal subgraph~$H$ with~${\nu(H) \geq 1}$ is a $\lambda$-non-zero cycle. 
        Moreover, if~$H$ is a subgraph of~$G$ with~${\nu(H) < \nu(G) = k - 1}$, then 
        by the induction hypothesis,
        \[ 
            \tau_\nu(H) \leq \widehat{f}_{m,\omega}(k-1,\kappa,\theta) \leq 3u. 
        \] 
        Hence, by Lemma~\ref{lem:cover},~$G$ has~${f_{\ref{lem:addlinkage}} ( f_{\ref{lem:ramsey}}(\alpha(1,m),m) )}$ vertex-disjoint $\lambda$-non-zero~$\branch(W')$-paths. 
        Note that we may assume that the function~$w_{\ref{lem:addlinkage}}$ is increasing in both of its arguments. 
        As~${\abs{Z_0} > 0}$ and~${p < \widehat{p}}$, we have
        \begin{align*}
            c \geq \beta(p, \abs{Z_0}, \abs{Z}) 
            &\geq w_{\ref{lem:addlinkage}}(
                f_{\ref{lem:ramsey}}(\alpha(p+1,\abs{Z_0}),m),\beta(p+1,\abs{Z_0},\abs{Z})).
        \end{align*}
        Recall that~${\abs{\mathcal{P}_i}\geq \alpha(p,\abs{Z_0})
        \geq 4  f_{\ref{lem:ramsey}}(\alpha(p+1,\abs{Z_0}),m)}$ for each~${i \in [p]}$. 
        Thus, by Lemma~\ref{lem:addlinkage} applied to~$W'$, there exists a $c'$-column-slice~$W''$ of~$W'$ 
        for some 
        \[
            c' \geq \beta(p+1,\abs{Z_0},\abs{Z}) \geq \beta(q+1,\abs{Z_0}-1,\abs{Z})
        \] 
        and for each~${i \in [p+1]}$ there exists a set~$\mathcal{P}'_i$ of~${f_{\ref{lem:ramsey}}(\alpha(p+1,\abs{Z_0}),m)}$ pairwise vertex-disjoint $W''$\nobreakdash-handles such that 
        \begin{itemize}
            \item for each~${i \in [p]}$, the set~$\mathcal{P}'_i$ is a subset of the row-extension of~$\mathcal{P}_i$ to~$W''$ in~$W'$, 
            \item the paths in~${\bigcup_{i \in [p+1]} \mathcal{P}'_i}$ are pairwise vertex-disjoint, and
            \item the paths in~$\mathcal{P}'_{p+1}$ are $\lambda$-non-zero. 
        \end{itemize}
        Note that since~$W$ is ${(\gamma',Z,\psi(\abs{Z}+1)+2)}$-clean, 
        every $N^W$-path in~$W$ is $(\pi_j\circ\gamma)$-zero for all~${j \in Z}$
        and therefore if~$P'$ is the row-extension of a $W'$-handle~$P$ to~$W''$ in~$W'$, 
        then~${\pi_j(\gamma(P')) = \pi_j(\gamma(P))}$ for all~${j \in Z}$.

        Since~${\abs{Z} \leq m}$, 
        by Lemma~\ref{lem:ramsey}, there exist a subset~${\mathcal{R}}$ of~${\mathcal{P}'_{p+1}}$ and a subset~$Z'$ of~$Z$ such that
        \begin{itemize}
            \item ${\abs{\pi_j(\gamma(\mathcal{R}))} = \abs{\mathcal{R}}}$ for all~${j \in Z'}$,
            \item ${\abs{\pi_j(\gamma(\mathcal{R}))} = 1}$ for all~${j \in Z \setminus Z'}$, and
            \item ${\abs{\mathcal{R}} = \alpha(p+1,\abs{Z_0}) \geq \alpha(q+1,\abs{Z_0}-1)}$. 
        \end{itemize}
        Let~${p'' := p+1}$ and~${q'' := q}$ if~${Z' \cap Z_0}$ is empty and let~${p'' := q+1}$ and~${q'' := q+1}$ if~${Z' \cap Z_0}$ is nonempty, and 
        for~${i \in \{0\} \cup [p'']}$, let 
        \[
            Z''_i := 
            \begin{cases}
                Z_0 \setminus Z' & \text{if } i = 0,\\
                Z_i & \text{if } i \in [p''-1],\\
                Z' & \text{if } i = p''.\\
            \end{cases}
        \]
        For~${i \in [p''-1]}$, let~${\mathcal{P}_i'' := \mathcal{P}_i'}$ 
        and let~${\mathcal{P}''_{p''} := \mathcal{R}}$. 

        We now show that~${\big( W'', (\mathcal{P}''_i \colon i \in [p'']), (Z''_i \colon i \in \{0\} \cup [p'']) \big)}$ is a ${(c',q'',p'')}$-McGuffin; 
        if true, then since~${p'' \leq \widehat{p}}$, it 
        contradicts the maximality of~${(q,p)}$. 

        To observe property~\ref{item:mcguffin-1}, 
        note that~${\alpha(p,\abs{Z_0}) \geq \alpha(p+1,\abs{Z_0})}$, and
        if~${Z' \cap Z_0}$ is nonempty, then ${\alpha(p+1,\abs{Z_0}) \geq \alpha(q+1,\abs{Z_0\setminus Z'})}$. 
        If~$P'$ is the row-extension of a $W'$-handle~$P$ to~$W''$ in~$W'$, 
        then~${\pi_j(\gamma(P')) = \pi_j(\gamma(P))}$ for all~${j \in Z}$, 
        implying 
        properties~\ref{item:mcguffin-2} and~\ref{item:mcguffin-3} 
        for~${i < p''}$. 
        By the definition of~$Z'$, properties~\ref{item:mcguffin-2} and~\ref{item:mcguffin-3} hold for~${i = p''}$. 
        Property~\ref{item:mcguffin-4} holds trivially.
        If~${Z' \cap Z_0 = \emptyset}$, then~${Z_i'' = Z_i}$ for all~${i \in \{0\} \cup [q]}$, and therefore property~\ref{item:mcguffin-5} holds.
        When~${Z' \cap Z_0 \neq \emptyset}$, property~\ref{item:mcguffin-5} holds
        because~${Z_0 \cap \bigcup_{i \in [q''-1]} Z_i = \emptyset}$ by property~\ref{item:mcguffin-4} and every element of~${Z' \cap Z_0}$ is not contained in~${\bigcup_{i \in [q''-1]} Z_i}$. 
        It remains to check property~\ref{item:mcguffin-6} when~${Z' \cap Z_0}$ is empty, ${q < i \leq p}$, and~${i' = p'' = p + 1}$. 
        This is implied by the property that 
        the paths in~$\mathcal{P}'_{p+1}$ are $\lambda$-non-zero.
        We conclude that~${(W',\mathfrak{P},\mathcal{Z})}$ satisfies~\ref{item:mcguffin-ast}.
        
        If~${p = 0}$, then 
        by property~\ref{item:mcguffin-ast}, property~\ref{item:manyhandles-d} holds with~$\mathcal{Z}$ and~${I := \emptyset}$, 
        and properties~\ref{item:manyhandles-a}, \ref{item:manyhandles-b}, \ref{item:manyhandles-c}, \ref{item:manyhandles-e}, and~\ref{item:manyhandles-f} hold vacuously. 

        Therefore, we may assume that~${0 < p < \rho(\abs{Z_0})}$. 
        Let~${I' := [p]}$. 
        Since 
        \[
            \abs{\mathcal{P}_i}\geq \alpha(p,\abs{Z_0})\geq f_{\ref{lem:handlebars}}\Bigl(p,f_{\ref{lem:sidon}}\bigl(2^{m\omega + 1} f_{\ref{lem:alltransversals}}(m,\omega,\maxk,p),m\bigr)\Bigr)
        \] 
        for each~${i \in [p]}$, 
        by Lemma~\ref{lem:handlebars} and property~\ref{item:mcguffin-1}, 
        there is a family~${(\mathcal{P}^\ast_i \subseteq \mathcal{P}_i \colon i \in [p])}$ of pairwise vertex-disjoint non-mixing $W'$-handlebars, each of size~${f_{\ref{lem:sidon}}(2^{m\omega + 1} \cdot f_{\ref{lem:alltransversals}}(m,\omega,\maxk,p),m)}$. 
        By applying Lemma~\ref{lem:sidon} to the restriction of~${\gamma(\mathcal{P}_i^\ast)}$ to~${\prod_{j \in Z_i} \Gamma_j}$ for each~${i \in [p]}$, 
        we deduce that there is a family of subsets~${(\mathcal{P}'_i \subseteq \mathcal{P}^{\ast}_i \colon i \in [p])}$, each of size~${2^{m\omega+1} f_{\ref{lem:alltransversals}}(m,\omega,\maxk,p)}$, 
        satisfying properties~\ref{item:manyhandles-a} and~\ref{item:manyhandles-f} with the set~${I' = [p]}$. 
        They also satisfy properties~\ref{item:manyhandles-b} and~\ref{item:manyhandles-c} with an arbitrary family~${(g_i \in \gamma(\mathcal{P}'_i) \colon i \in [p])}$, by~\ref{item:mcguffin-2} and~\ref{item:mcguffin-3}. 
        Observe that properties~\ref{item:manyhandles-a}, \ref{item:manyhandles-b}, \ref{item:manyhandles-c}, and~\ref{item:manyhandles-f} hold for any subset~$I$ of~$I'$ (and the corresponding subfamilies~${(\mathcal{P}'_i \colon i \in I)}$, ${(Z_i \colon i \in I)}$, and~${(g_i \colon i \in I)}$) because we may assume that~$f_{\ref{lem:alltransversals}}$ is increasing in its fourth argument. 
        Now property~\ref{item:manyhandles-d} holds for~$I'$ by property~\ref{item:mcguffin-ast}, so taking a minimal subset~$I$ of~$I'$ satisfying property~\ref{item:manyhandles-d}, we have that property~\ref{item:manyhandles-e} is also satisfied by~$I$. 
    \end{proofofclaim}
    
    The next claim is used to find an obstruction. 
    The first condition makes sure that for all choices of one path from each handlebar, the sum of their $\gamma$-values is allowable.
    The second condition makes sure that 
    the sum of $\gamma$-values of some paths in the handlebars is allowable only if each handlebar contributes at least one path to the sum.
    The third condition requires that the sum of $\gamma$-values of some paths in the handlebars is allowable only if each handlebar not in series 
    contributes an odd number of handles to the sum.
    \begin{claim}
        \label{clm:almostobstruction}
        Let~$W''$ be a ${(c-2)}$-column-slice of~$W'$ containing $C_2^{W'}$ and~${C_{c-1}^{W'}}$.
        Then there is 
        a family~${( \mathcal{P}''_i \colon i \in I )}$ of pairwise vertex-disjoint non-mixing $W''$-handlebars, each of size~$\maxk$, such that 
        \begin{enumerate}
            [label=(\alph*)]
            \item \label{item:almostobstruction-1} 
                for each~${j \in Z}$ and each~${(P_i \colon i \in I)}$ with ${P_i \in \mathcal{P}''_i}$ for all~${i \in I}$, we have~${\sum_{i \in I} \pi_j(\gamma(P_i)) \notin \Omega_j}$, 
            \item \label{item:almostobstruction-2} 
                for each~${i \in I}$ and each~${g \in \gen{ \gamma(P) \colon P \in \bigcup_{i' \in I \setminus \{i\}} \mathcal{P}''_{i'}}}$, there is~${j \in Z}$ such that$~{\pi_j(g) \in \Omega_j}$,
            \item \label{item:almostobstruction-3} 
                for each ${y \in I}$ such that~$\mathcal{P}''_{y}$ is not in series and every function~${f \colon \bigcup_{i \in I} \mathcal{P}''_{i} \to \mathbb{Z}}$ for which ${\sum_{P \in \mathcal{P}''_{y}} f(P)}$ is even, there is some~${j \in Z}$ such that~${\sum_{i \in I} \sum_{P \in \mathcal{P}''_{i}} f(P) \pi_j(\gamma(P)) \in \Omega_j}$.
        \end{enumerate}
    \end{claim}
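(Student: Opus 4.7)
The plan is to combine the paths of each $\mathcal{P}_i$ in groups of a suitably chosen size $d_i$, forming new $W''$-handlebars whose transversal sums are controlled. I first apply property~\ref{item:manyhandles-d} of Claim~\ref{clm:manyhandles} together with Lemma~\ref{lem:omega-avoiding} to choose integers $d_i \in [2^{m\omega}]$ for every $i \in I$ so that $\pi_j\big(\sum_{i \in I} d_i g_i\big) \notin \Omega_j$ for all $j \in Z \setminus \bigcup_{i \in I} Z_i$. I then invoke Lemma~\ref{lem:combining-handles} with parameter $\theta := f_{\ref{lem:alltransversals}}(m,\omega,\maxk,|I|)$; the size hypothesis is satisfied because $|\mathcal{P}_i| \geq 2^{m\omega + 1}\theta \geq d_i\theta$. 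This produces a family $(\mathcal{P}^\sharp_i : i \in I)$ of pairwise vertex-disjoint non-mixing $W''$-handlebars of size $\theta$ each, in which every handle is obtained by stitching $d_i$ constituent handles of $\mathcal{P}_i$ together through $C^{W'}_1 \cup C^{W'}_{c}$; moreover $\mathcal{P}^\sharp_i$ is in series when $d_i$ is even and retains the type of $\mathcal{P}_i$ when $d_i$ is odd. Since $W'$ is clean, the stitching contributes $0$ to $\gamma_j$ for every $j \in Z$, so $\pi_j(\gamma(P))$ for $P \in \mathcal{P}^\sharp_i$ equals the sum of $\pi_j(\gamma(Q))$ over its constituents $Q \in \mathcal{P}_i$; hence $\pi_j(\gamma(P)) = d_i\pi_j(g_i)$ for all $j \in Z \setminus Z_i$, while property~\ref{item:manyhandles-f} of Claim~\ref{clm:manyhandles} makes these values pairwise distinct across $\mathcal{P}^\sharp_i$ for $j \in Z_i$.

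Next I apply Lemma~\ref{lem:vectorsum} to the projection onto $\prod_{j \in \bigcup_i Z_i}\Gamma_j$ (with $h = 0$) to obtain one transversal whose sum is allowable in every coordinate of $\bigcup_i Z_i$; in the complementary coordinates $Z \setminus \bigcup_i Z_i$ the sum is already the fixed element $\sum_i d_i \pi_j(g_i) \notin \Omega_j$, so the transversal is allowable throughout $Z$. Relabelling this as the first handle of each $\mathcal{P}^\sharp_i$, the hypotheses of Lemma~\ref{lem:alltransversals} are met (taking $|Z|$ in place of $m$), supplying subsets $\mathcal{P}''_i \subseteq \mathcal{P}^\sharp_i$ of size $\maxk$ for which every transversal has allowable $\Gamma$-sum in all of $Z$; this proves~\ref{item:almostobstruction-1}. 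For~\ref{item:almostobstruction-2}, any $g \in \gen{\gamma(P) \colon P \in \bigcup_{i' \in I \setminus \{i\}}\mathcal{P}''_{i'}}$ projects in every coordinate $j \in Z \setminus \bigcup_{i' \neq i} Z_{i'}$ onto $\pi_j\big(\sum_{i' \neq i} M_{i'}d_{i'}g_{i'}\big)$ for appropriate integers $M_{i'}$, so property~\ref{item:manyhandles-e} of Claim~\ref{clm:manyhandles} immediately supplies a witness coordinate in $\Omega_j$.

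The main obstacle will be~\ref{item:almostobstruction-3}. Suppose $\mathcal{P}''_y$ is not in series; then $d_y$ is odd and $\mathcal{P}_y$ is not in series. Given $f$ with $s_y := \sum_{P \in \mathcal{P}''_y}f(P)$ even, set $s_i := \sum_{P \in \mathcal{P}''_i}f(P)$ and $c_i := d_i s_i$, so $c_y$ is even and the total combination reduces in every coordinate $j \in Z \setminus \bigcup_i Z_i$ to $\pi_j(\sum_i c_i g_i)$. The plan is to strengthen the selection of $(d_i)$ so that, additionally, no integer tuple $(c_i)$ with $c_y$ even can produce an element of the allowable set in every $j \in Z \setminus \bigcup_{i' \neq y} Z_{i'}$; such a parity-aware minimality should follow from applying Lemma~\ref{lem:omega-avoiding} to an enlarged family of generators that includes $2g_y$ alongside each $g_y$, together with property~\ref{item:manyhandles-e} of Claim~\ref{clm:manyhandles} applied at $i = y$ to rule out any competing allowable combination with even $y$-coordinate. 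Transferring this coordinate-level contradiction back to the function $f$ requires controlling the contributions of $\mathcal{P}''_{i'}$ in coordinates $j \in Z_{i'}$ via the same Sidon-based argument as in~\ref{item:almostobstruction-1}, and will split into cases according to whether the witness coordinate $j$ lies in $Z_y$, in $Z_{i'}$ for some $i' \neq y$, or in $Z \setminus \bigcup_i Z_i$; this case analysis is what I expect to require the most delicate bookkeeping.
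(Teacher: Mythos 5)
Your treatment of parts~\ref{item:almostobstruction-1} and~\ref{item:almostobstruction-2} matches the paper's argument: combine handles in groups of size $d_i$ via Lemma~\ref{lem:combining-handles} (using the Sidon-type distinctness from Claim~\ref{clm:manyhandles}\ref{item:manyhandles-f} and the cleanness of the wall), then apply Lemma~\ref{lem:vectorsum} and Lemma~\ref{lem:alltransversals}. However, there is a genuine gap in your proof of~\ref{item:almostobstruction-3}, and you implicitly acknowledge it in your final sentence.

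The problem is that your choice of $(d_i)$ carries no parity structure — you just take $d_i \in [2^{m\omega}]$ from Lemma~\ref{lem:omega-avoiding}. But the parity of $d_i$ is precisely what decides (via the ``moreover'' clause of Lemma~\ref{lem:combining-handles}) which $\mathcal{P}''_i$ end up in series, and property~\ref{item:almostobstruction-3} is a statement about a non-series $\mathcal{P}''_y$. If you do not control this at the moment of choosing $(d_i)$, you have no leverage later. Your sketch for~\ref{item:almostobstruction-3} (``strengthen the selection of $(d_i)$ \ldots\ applying Lemma~\ref{lem:omega-avoiding} to an enlarged family of generators that includes $2g_y$'') is pointed in roughly the right direction, but as stated it does not close the loop, because you would need this strengthening to hold simultaneously for every $y$ that could later become non-series, and you would need property~\ref{item:manyhandles-e} to rule out something it does not literally say (it concerns the subgroup generated by the $g_{i'}$, $i' \ne i$, not parity-restricted combinations involving $g_i$ itself). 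The paper's key device, which your proposal lacks, is to first choose a \emph{maximal} subset $S \subseteq I$ for which
\[
\bigl\langle \{2g_i : i \in S\} \cup \{g_i : i \in I\setminus S\}\bigr\rangle
\cap
\bigcap_{j \in Z\setminus\bigcup_{i}Z_i}\pi_j^{-1}(\Gamma_j\setminus\Omega_j) \neq \emptyset,
\]
and only then invoke Lemma~\ref{lem:omega-avoiding} with the constraint that $d_i$ is even for $i \in S$; maximality of $S$ forces $d_i$ odd for $i \in I\setminus S$, so the non-series handlebars are exactly those indexed by $I\setminus S$. Then if $\mathcal{P}''_y$ is not in series one has $y \in I\setminus S$, and any $f$ with $\sum_{P\in\mathcal{P}''_y}f(P)$ even produces, in the coordinates $Z\setminus\bigcup_i Z_i$, an element of $\gen{\{2g_i : i\in S\cup\{y\}\}\cup\{g_i : i\in I\setminus(S\cup\{y\})\}}$, which by maximality of $S$ cannot avoid all $\Omega_j$. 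Without setting up $S$ and its maximality before choosing $(d_i)$, the ``parity-aware minimality'' you allude to has no formal backing, and the case analysis you defer does not obviously terminate.
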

    
    \begin{proofofclaim}
        If~${I = \emptyset}$, then~${0 \notin \Omega_j}$ for all~${j \in Z}$ by Claim~\ref{clm:manyhandles}\ref{item:manyhandles-d} and therefore this claim is trivially true. 
        Thus we may assume that~${I \neq \emptyset}$. 
        Let~$S$ be a maximal subset of~$I$ such that 
        \[ 
            \bigl\langle\{ 2g_i \colon i \in S\} \cup \{g_i \colon i \in I \setminus S\}\bigr\rangle 
            \cap 
            \bigcap_{j \in Z \setminus \bigcup_{i \in I} Z_i}\
            \pi_j^{-1}(\Gamma_j\setminus \Omega_j)\neq\emptyset.
        \]
        Note that such a set~$S$ exists, since  Claim~\ref{clm:manyhandles}\ref{item:manyhandles-d} implies that the empty set satisfies this condition. 
        By Lemma~\ref{lem:omega-avoiding}, there exist a family of integers~${(d_i \colon i \in I)}$ such that~${d_i \in [2^{m\omega+1}]}$ is even for each~${i \in S}$, that~${d_i \in [2^{m\omega}]}$ for each~${i \in I \setminus S}$, 
        and that~${\pi_j\left(\sum_{i \in I} d_i g_i \right) \notin \Omega_j}$ for all~${j \in Z \setminus \bigcup_{i \in I} Z_i}$. 
        Now~$d_i$ is odd for all~${i \in I \setminus S}$ by the choice of~$S$.
        By Lemma~\ref{lem:combining-handles}, there is a family ${(\mathcal{P}^\ast_i \colon i \in I)}$ of pairwise vertex-disjoint non-mixing $W''$-handlebars each of size~${f_{\ref{lem:alltransversals}}(m,\omega,\maxk,\abs{I})}$ such that for each~${i \in I}$ and~${Q \in \mathcal{P}^\ast_i}$, there is a set~${\{ P_{\ell,Q} \in \mathcal{P}_i \colon \ell \in [d_i] \}}$ of size~$d_i$ satisfying the following three properties:
        \begin{itemize}
            \item ${\bigcup_{\ell=1}^{d_i}  P_{\ell,Q} \subseteq Q \subseteq W' \cup \bigcup_{\ell=1}^{d_i} P_{\ell,Q}}$.
            \item $\mathcal{P}^\ast_i$ is in series for each~${i \in S}$.
            \item $\mathcal{P}^\ast_i$ is of the same type as~$\mathcal{P}'_i$ for each~${i \in I \setminus S}$.
        \end{itemize}
        Note that ${\pi_j(\gamma(\mathcal{P}^\ast_i)) = \{d_i \pi_j(g_i)\}}$ for all~${i \in I}$ and ${j \in Z \setminus Z_i}$
        by Claim~\ref{clm:manyhandles}\ref{item:manyhandles-c},
        as well as that ${\abs{\pi_j(\gamma(\mathcal{P}^\ast_i))} = \abs{\mathcal{P}^\ast_i}}$ for all~${i \in I}$ and~${j \in Z_i}$ by Claim~\ref{clm:manyhandles}\ref{item:manyhandles-f}. 

        Since~${\abs{\mathcal{P}^\ast_i} = f_{\ref{lem:alltransversals}}(m,\omega,\maxk,\abs{I}) > m\omega\geq \abs{\bigcup_{i\in I}Z_i}\,\omega}$ for each~${i \in I}$, 
        by Lemma~\ref{lem:vectorsum}, there is a family ${(g'_i \colon i \in I)}$ of elements of~$\Gamma$ such that 
        \begin{enumerate}
            \item ${g'_i \in \gamma(\mathcal{P}^\ast_i)}$ for each~${i \in I}$ and 
            \item ${\pi_j \left( \sum_{i \in I} g'_i \right) \notin \Omega_j}$ for all~${j \in \bigcup_{i \in I} Z_i}$. 
        \end{enumerate}
        By Lemma~\ref{lem:alltransversals}, for each~${i \in I}$ there is a subset~$\mathcal{P}''_i$ of~$\mathcal{P}^\ast_i$ of size~$\maxk$ such that~${(\mathcal{P}''_i \colon i \in I)}$ satisfies property~\ref{item:almostobstruction-1}. 
        Now ${(\mathcal{P}''_i \colon i \in I)}$ satisfies property~\ref{item:almostobstruction-2} by Claim~\ref{clm:manyhandles}\ref{item:manyhandles-e}.
        
        To prove property~\ref{item:almostobstruction-3}, suppose that~$\mathcal{P}''_y$ is not in series for some~${y \in I}$, and~${f \colon \bigcup_{i \in I} \mathcal{P}''_i \to \mathbb{Z}}$ is a function such that~${\sum_{P \in \mathcal{P}''_y} f(P)}$ is even. 
        Since~$\mathcal{P}''_y$ is not in series, we have~${y \in I \setminus S}$. 
        By Claim~\ref{clm:manyhandles}\ref{item:manyhandles-c}, 
        \[
            \pi_j(\gamma(P))=d_i\pi_j(g_i) 
            \text{ for all~${i \in I}$, all~${P \in \mathcal{P}''_i}$, and all~${j \in Z \setminus \bigcup_{i' \in I} Z_{i'}}$.}
        \]
        In particular, if~$d_i$ is even or~${\sum_{P \in \mathcal P_i''} f(P)}$ is even, then~${\sum_{P \in \mathcal{P}''_i} f(P) \pi_j(\gamma(P)) \in \pi_j(\gen{2g_i})}$. 
        
        Let~${S' = S \cup \{y\}}$. 
        Then for all~${i \in S'}$, either~$d_i$ or~${\sum_{P \in \mathcal P_i''} f(P)}$ is even. 
        Let 
        \[
            {g = \sum_{i \in I} \sum_{P \in \mathcal{P}''_i} f(P) \gamma(P)}.
        \]
        Then there exists~${g' \in \Gamma}$ such that~${\pi_j(g) = \pi_j(g')}$ for all~${j \in Z \setminus \bigcup_{i' \in I} Z_{i'}}$ and
        \[
            {g' \in \gen{\{ 2g_i \colon i \in S'\} \cup \{g_i \colon i \in I \setminus S'\}}}.
        \]
        By the maximality of~$S$, we have that~${g' \notin \bigcap_{j \in Z \setminus \bigcup_{i' \in I} Z_{i'}} \pi_j^{-1}(\Gamma_j \setminus \Omega_j)}$. 
        Therefore, there exists some ${j \in Z \setminus \bigcup_{i' \in I} Z_{i'}}$ such that~${\pi_j(g') = \pi_j(g) \in \Omega_j}$. 
        This proves property~\ref{item:almostobstruction-3}. 
    \end{proofofclaim}
    
    Let~$H$ be the union of~$W''$ and~${\bigcup \{ \bigcup \mathcal{P}''_i \colon i \in I\}}$. 
    Note that~$W''$ has at least~${c_2(\abs{Z})}$ columns and at least~${r_2(\abs{Z})}$ rows and therefore the order of~$W''$ is greater than or equal to~$\theta$. 
    
    We now find a half-integral packing in a similar manner as in the proof of~\cite[Theorem~1]{GollinHKKO2021}. 
    
    \begin{claim}
        \label{clm:halfintegral}
        $H$ contains a half-integral packing of~$\maxk$ cycles in~$\mathcal{O}$. 
        Moreover, if~${I = \emptyset}$, then~$H$ contains a packing of~$\maxk$ cycles in~$\mathcal{O}$. 
    \end{claim}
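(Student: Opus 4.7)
The plan is to produce, for each $x \in [\maxk]$, a cycle $O_x \in \mathcal{O}$ by combining a distinct transversal of the handlebars $(\mathcal{P}''_i : i \in I)$ with paths through $W''$, and then to argue that the resulting family is a half-integral packing, strengthening to a genuine vertex-disjoint packing when $I=\emptyset$.

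First I would dispatch the degenerate case $I=\emptyset$. Claim~\ref{clm:manyhandles}\ref{item:manyhandles-d} then specialises to the witness $g=0$, forcing $0 \notin \Omega_j$ for every $j \in Z$, and the cleanness property~\ref{item:clean1} of $W''$ implies that every cycle contained in $W''$ automatically avoids $\Omega_j$ for every $j \in Z$. Since $W''$ has at least $c_2(\abs{Z}) \geq \maxk \cdot c_{\ref{lem:omega-avoiding-cycle}}(\hat p,\psi(\abs{Z}+1)+2,m,\omega)$ columns and at least $r_2(\abs{Z}) \geq r_{\ref{lem:omega-avoiding-cycle}}(\hat p,\psi(\abs{Z}+1)+2,m,\omega)$ rows, I would carve out $\maxk$ pairwise vertex-disjoint column-slices of $W''$, each of which inherits $(\gamma,Z,\psi(\abs{Z}+1)+2)$-cleanness, and apply Lemma~\ref{lem:omega-avoiding-cycle} with $\mathcal{P}=\emptyset$ to each to extract a vertex-disjoint packing of $\maxk$ cycles in $\mathcal{O}$.

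For the main case $I\neq\emptyset$, since $\abs{\mathcal{P}''_i}=\maxk$ I would enumerate $\mathcal{P}''_i = \{P_{i,1},\dots,P_{i,\maxk}\}$ for every $i \in I$ and form transversals $\mathcal{R}_x := \{P_{i,x} : i \in I\}$ for $x \in [\maxk]$; the handles in $\bigcup_x \mathcal{R}_x$ are pairwise vertex-disjoint because the $\mathcal{P}''_i$ are. By Claim~\ref{clm:almostobstruction}\ref{item:almostobstruction-1}, $\gamma_j(\bigcup \mathcal{R}_x) \notin \Omega_j$ for every $j \in Z$ and every $x \in [\maxk]$, so Lemma~\ref{lem:omega-avoiding-cycle} applied to $W''$ and $\mathcal{R}_x$ yields a cycle $O_x \in \mathcal{O}$ contained in $W''\cup\bigcup\mathcal{R}_x$.

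The hard part will be verifying that $(O_x : x \in [\maxk])$ is actually a half-integral packing of $H$. Since the handles from distinct transversals are vertex-disjoint, only vertices of $W''$ can appear in more than one $O_x$. Here I would follow the approach of~\cite{GollinHKKO2021}: each cycle produced by Lemma~\ref{lem:omega-avoiding-cycle} can be routed inside a controlled portion of $W''$ (essentially a small subwall traversed together with the prescribed handles), and the ample size of $W''$ encoded by the bounds $c_2(\abs{Z})$ and $r_2(\abs{Z})$ allows these $\maxk$ portions, one per transversal, to be arranged so that each vertex of $W''$ lies in at most two of them, yielding the desired half-integral packing. The parity information supplied by Claim~\ref{clm:almostobstruction}\ref{item:almostobstruction-3} plays no role in this claim itself, but it will be essential in the subsequent step when the half-integral packing is combined with the failure of property~\ref{item:obstructions-handlebars} to force the final contradiction.
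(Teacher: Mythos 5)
Your overall strategy is aligned with the paper's, but there is a genuine gap in the step that establishes half-integrality, and it lies precisely in the mechanism you treat as a black box. You apply Lemma~\ref{lem:omega-avoiding-cycle} to $W''$ together with the transversal~$\mathcal{R}_x$, obtaining a cycle~$O_x$ somewhere in~${W'' \cup \bigcup\mathcal{R}_x}$, and then assert that such a cycle ``can be routed inside a controlled portion of~$W''$''. That is not how the lemma works: it produces a single cycle with no location control, and there is no post-processing step that relocates it. Two cycles $O_x$ and $O_{x'}$ obtained this way from the same ambient wall~$W''$ could overlap arbitrarily badly. The paper's actual device is to control the location \emph{before} invoking the lemma: since $c_2(|Z|) \geq \maxk\cdot c_{\ref{lem:omega-avoiding-cycle}}(\hat p,\psi(|Z|+1)+2,m,\omega)$, one first extracts $\maxk$ pairwise vertex-disjoint column-slices $W_x$ of~$W''$, then forms the row-extension $\mathcal{Q}^\ast_x$ of the transversal $\mathcal{Q}_x$ to~$W_x$ (this is necessary because Lemma~\ref{lem:omega-avoiding-cycle} requires $W_x$-handles, not $W''$-handles), and only then applies the lemma to the \emph{small} wall $W_x$ with the extended handles $\mathcal{Q}^\ast_x$. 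The resulting $O_x$ is confined to $W_x \cup \bigcup\mathcal{Q}^\ast_x$ by construction, and the half-integrality then follows from the observation that no vertex lies in more than two of the subgraphs $W_x \cup \bigcup\mathcal{Q}^\ast_x$, since the $W_x$ are pairwise disjoint and the only shared material comes from the row segments traversed by the extensions.

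The $I=\emptyset$ branch and the hypothesis-checking via Claim~\ref{clm:almostobstruction}\ref{item:almostobstruction-1} are correct and essentially match the paper (the paper in fact handles $I=\emptyset$ uniformly rather than as a separate case, by noting $\mathcal{Q}_x=\mathcal{Q}^\ast_x=\emptyset$ and that $O_x\subseteq W_x$, so the $O_x$ are pairwise disjoint). Your remark that Claim~\ref{clm:almostobstruction}\ref{item:almostobstruction-3} plays no role here is also correct. But for the main case you should replace ``apply the lemma to $W''$ and $\mathcal{R}_x$, then route'' by the explicit two-step construction: carve out disjoint column-slices $W_x$ and row-extend $\mathcal{Q}_x$ to $W_x$-handles before applying Lemma~\ref{lem:omega-avoiding-cycle} to~$W_x$.
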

    
    \begin{proofofclaim}
        Since~${\abs{\mathcal{P}''_i} = \maxk}$ for each~${i \in I}$, 
        there exists a family~${( \mathcal{Q}_x \subseteq \bigcup_{i \in I} \mathcal{P}''_i \colon x \in [\maxk] )}$ of pairwise disjoint sets such that~${\abs{\mathcal{Q}_x \cap \mathcal{P}''_i} = 1}$ for all~${i \in I}$ and~${x \in [\maxk]}$. 
        Note that if~${I = \emptyset}$, then~${\mathcal Q_x = \emptyset}$ for all~${x \in [\maxk]}$. 
        By Claim~\ref{clm:almostobstruction}\ref{item:almostobstruction-1}, for each~${x \in [\maxk]}$ and~${j \in Z}$, we have~${\sum_{P \in \mathcal{Q}_x} \gamma_j(P) \notin \Omega_j}$. 
        We remark that if~${I = \emptyset}$, then~${0 \notin \Omega_j}$ for all~${j \in Z}$. 
        
        Since~$W''$ has at least~$c_2(\abs{Z})$ columns and~${c_2(\abs{Z}) \geq \maxk c_{\ref{lem:omega-avoiding-cycle}}(\widehat{p}, \psi(\abs{Z}+1)+2, m, \omega)}$, 
        there exists a set~${\{ W_x \colon x \in [\maxk] \}}$ of~$\maxk$ pairwise vertex-disjoint 
        ${c_{\ref{lem:omega-avoiding-cycle}}(\widehat{p}, \psi(\abs{Z}+1)+2, m, \omega)}$-column-slices of~$W''$. 
        Note that~$W''$ has at least~${r_{\ref{lem:omega-avoiding-cycle}}(\widehat{p}, \psi(\abs{Z}+1)+2, m, \omega)}$ rows.  
        For each~${x \in [\maxk]}$, let~$\mathcal{Q}^\ast_x$ be the row-extension of~$\mathcal{Q}_x$ to~$W_x$. 
        Note that if~${I = \emptyset}$, then~$\mathcal{Q}^\ast_x$ is also empty for each~${x \in [\maxk]}$.
        Since~${\abs{I} \leq \widehat{p}}$, 
        by Lemma~\ref{lem:omega-avoiding-cycle}, for each~${x \in [\maxk]}$, there is a cycle~$O_x$ in~${W_x \cup \bigcup \mathcal{Q}^\ast_x}$ such that~${\gamma_j(O_x) \notin \Omega_j}$ for all~${j \in [m]}$. 
        Observe that no vertex is in more than two of the subgraphs in $\{W_x \cup \bigcup \mathcal{Q}^\ast_x \colon x\in [\maxk]\}$ and therefore no vertex is in more than two of the cycles in $\{O_x \colon x\in [\maxk]\}$.
        Moreover, if~${I = \emptyset}$, then~$O_x$ is contained in~$W_x$ for each~${x \in [\maxk]}$, and therefore~$H$ contains a packing of~$\maxk$ cycles in~$\mathcal{O}$.
    \end{proofofclaim}
    
    By Claim~\ref{clm:halfintegral}, 
    $I$ is nonempty because we assumed that~${\nu(G) < k \leq \maxk}$. 
    Let~${J := [m] \setminus Z}$ and let~$\gamma''$ be the $\left(\Gamma / \Gamma_{J} \right)$-labelling induced by the restriction of~$\gamma$ to~$H$. 
    Since we assumed that statement~\ref{item:main-obstruction} fails
    and~$H$ has a half-integral packing of~$\kappa$ cycles in~$\mathcal{O}$ by Claim~\ref{clm:halfintegral}, 
    we have that~${(H,\gamma'') \notin \mathcal{C}(\kappa, \theta, \Gamma / \Gamma_{J}, A + \Gamma_{J} )}$. 
    We will find the desired contradiction by constructing in~$H$ a packing of~$k$ cycles in~$\mathcal{O}$. 
    
    Recall the properties~\ref{item:obstructions-union}--\ref{item:obstructions-handlebars} of~${\mathcal{C}(\kappa, \theta, \Gamma / \Gamma_{J}, A + \Gamma_{J} )}$ in Definition \ref{def:obstructions}. 
    By definition,~$(H,\gamma'')$ satisfies~\ref{item:obstructions-union}, \ref{item:obstructions-zerowall}, \ref{item:obstructions-allowabletransversals}, \ref{item:obstructions-minimality}, and~\ref{item:obstructions-even} 
    by Claim~\ref{clm:almostobstruction}.
    Thus, \ref{item:obstructions-handlebars} fails to hold\footnotemark.
    \footnotetext{Note that the number of handlebars in~$H$ is at most~${\widehat{p}}$, which depends only on~$m$ and~$\omega$. 
    We could strengthen Theorem~\ref{thm:mainobstruction} by imposing this additional restriction on the class~${\mathcal{C}(\kappa,\theta,\Gamma,A)}$, 
    which would give us that there are only finitely many types of obstructions to consider in terms of the arrangement of the handlebars around the wall.}

    First consider the case that~${( \mathcal{P}''_i \colon i \in I)}$ contains a $W''$-handlebar that is not in series.
    Then ${( \mathcal{P}''_i \colon i \in I)}$ contains an even number of crossing $W''$-handlebars and no $W''$-handlebar that is in series
    because~\ref{subitem:obstructions-oddcrossing} and~\ref{subitem:obstructions-seriesnonseries} fail to hold respectively. 
    By Lemma~\ref{lem:combining-handlebars}, there exist an $N^W$\nobreakdash-anchored subwall~$W^{\ast}$ of~$W''$ 
    with at least~${c_1(\abs{Z})}$ columns and at least~${r_1(\abs{Z})}$ rows 
    as well as a nested $W^{\ast}$\nobreakdash-handlebar~$\mathcal{Q}_1$ of size~$k$ such that~${\gamma_j(Q) \notin \Omega_j}$ for all~${Q \in \mathcal{Q}_1}$ and ${j \in Z}$. 
    Let us define~${q := 1}$. 
    
    If the first case does not hold, then all $W''$-handlebars in~${(\mathcal{P}''_i \colon i \in I)}$ are in series. 
    Let ${q := \abs{I}}$ and observe that~${q \in \{0,1,2\}}$ because~\ref{subitem:obstructions-3series} fails to hold. 
    For each~${i \in [q]}$, let us define~$\mathcal{Q}_i$ to be~$\mathcal{P}''_j$ for the $i$-th element~$j$ of~$I$. 
    Let~${W^{\ast} := W''}$. 
    Note that~${c_2(\abs{Z}) \geq c_1(\abs{Z})}$ and~${r_2(\abs{Z}) \geq r_1(\abs{Z})}$. 
    
    In either case, we can apply Lemma~\ref{lem:almost-finding-cycles} to obtain, for each~${x \in [k]}$, an $N^W$-anchored \linebreak ${(c_0(\abs{Z}),r_0(\abs{Z}))}$\nobreakdash-subwall~$W_x$ of~$W^{\ast}$ 
    and a set~${\mathcal{H}_{x} = \{H_{x,i} \colon i \in [q]\}}$ of~$q$ pairwise vertex-disjoint $W_x$-handles 
    such that 
    \begin{itemize}
        \item for distinct~${x,x' \in [k]}$, the graphs ${W_x \cup \bigcup \mathcal{H}_{x}}$ and ${W_{x'} \cup \bigcup \mathcal{H}_{x'}}$ are vertex-disjoint and 
        \item $\sum_{i \in [q]} \gamma_j(H_{x,i}) \notin \Omega_j$ for each~${x \in [k]}$ and~${j \in Z}$. 
    \end{itemize}
    Finally, we apply Lemma~\ref{lem:omega-avoiding-cycle} to obtain a packing of~$k$ cycles in~$\mathcal{O}$. 
    This contradiction completes the proof. 
\end{proof}

\section{Applications and Discussions}
\label{sec:applications}

\subsection{The obstructions have no packing of three allowable cycles}
\label{subsec:reallyobstructions}

We now demonstrate that the graphs described in Definition~\ref{def:obstructions} do not contain three vertex-disjoint cycles with values in~$A$. 
Recall that in the third statement of Theorem~\ref{thm:mainobstruction}, we find obstructions with an additional property of containing a large half-integral packing of allowable cycles, and therefore they admit no small hitting set for the allowable cycles. 
Thus, these obstructions really do form counterexamples to Erd\H{o}s-P\'{o}sa type duality claims.

\begin{proposition}
    \label{prop:obstructions}
    Let~$\kappa$ and~$\theta$ be positive integers, let~$\Gamma$ be an abelian group, and let~${A \subseteq \Gamma}$. 
    If ${(G,\gamma) \in \mathcal{C}(\kappa,\theta,\Gamma,A)}$,
    then $G$ has no three vertex-disjoint cycles whose $\gamma$-values are in~$A$, and
    if~$G$ has two vertex-disjoint cycles whose $\gamma$-values are in~$A$, then~${(G,\gamma)}$ satisfies property~\ref{subitem:obstructions-seriesnonseries} of Definition~\ref{def:obstructions}. 
\end{proposition}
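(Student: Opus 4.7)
First, I would extract the algebraic consequences of conditions~\ref{item:obstructions-zerowall}--\ref{item:obstructions-even} for cycles with values in~$A$. For a cycle $C$ in $G$, let $f_C \colon \bigcup_i \mathcal{P}_i \to \{0,1\}$ be the indicator $f_C(P) = 1$ iff $P \subseteq C$. Since $C \cap W$ decomposes into $N^W$-paths in~$W$, condition~\ref{item:obstructions-zerowall} gives $\gamma(C\cap W) = 0$, so $\gamma(C) = \sum_P f_C(P)\gamma(P)$. If $\gamma(C) \in A$, then by~\ref{item:obstructions-minimality} $C$ must contain at least one handle from every $\mathcal{P}_i$ (otherwise $\gamma(C)$ would lie in $\langle\gamma(P) \colon P \in \bigcup_{j\ne i}\mathcal{P}_j\rangle$, which is disjoint from $A$), and applying~\ref{item:obstructions-even} with $f = f_C$ shows that for every non-series $\mathcal{P}_i$, the number of handles from $\mathcal{P}_i$ contained in $C$ is odd.

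Next, I would exploit the planarity of $W$. The wall $W$ admits a planar drawing in which the cyclic completion of the linear order $\prec_W$ agrees with the cyclic order on the outer face restricted to the column-boundary. For vertex-disjoint cycles $C_1,\dots,C_\ell$ with values in $A$, the subpaths in $\bigcup_j (C_j\cap W)$ are pairwise vertex-disjoint $N^W$-paths with all endpoints on the column-boundary; hence these paths are realised as a family of pairwise non-crossing chords of the outer face. Together with the non-mixing property---which places the endpoints of each $\mathcal{P}_i$ inside a dedicated arc of the column-boundary disjoint from the arcs of the other handlebars---this reduces the question to a purely combinatorial statement about arrangements of handles (series, nested, or crossing) together with non-crossing wall-chords.

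The rest of the proof is a case analysis of condition~\ref{item:obstructions-handlebars}, keeping track, handlebar by handlebar, of the parity of mutual crossings contributed by the wall-chords of the~$C_j$. For each non-series $\mathcal{P}_i$, the odd-count conclusion from the first step forces each $C_j$ to contribute an odd number of wall-chord endpoints in the arc of $\mathcal{P}_i$, and the nested versus crossing type of $\mathcal{P}_i$ fixes whether these endpoints must interleave with those of another $C_{j'}$. Summing these contributions across all handlebars, each of the three sub-conditions~\ref{subitem:obstructions-oddcrossing}--\ref{subitem:obstructions-3series} translates into a statement about the total mod-$2$ linking of pairs (or triples) of cycles. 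Whenever \ref{subitem:obstructions-seriesnonseries} fails---so only \ref{subitem:obstructions-oddcrossing} or only~\ref{subitem:obstructions-3series} can be invoked---the parities align to force two of the $C_j$ to interleave on the column-boundary, which by the previous paragraph is incompatible with planarity and so forces them to share a vertex; this establishes the second claim. An analogous bookkeeping with three cycles, using a handlebar-by-handlebar parity count on pairs drawn from $\{C_1,C_2,C_3\}$, rules out $\ell \geq 3$ in every case.

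The main obstacle is making the parity-of-crossings bookkeeping precise: for each type (series, nested, crossing) of handlebar, one must describe exactly how pairs of wall-chords of distinct $C_j$ contribute to mutual crossings inside its arc, and verify that the three-way partition of condition~\ref{item:obstructions-handlebars} is exactly calibrated so that in the residual cases the parities fail to cancel. This is the same topological core as in Reed's analysis of the Escher wall, and I expect that the cleanest formalisation uses a mod-$2$ linking number for pairs of cycles in the planar wall together with the external handle arcs.
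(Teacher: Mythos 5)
Your opening step is correct and matches the paper: condition~\ref{item:obstructions-zerowall} forces $\gamma(C) = \sum_P f_C(P)\gamma(P)$ for any cycle $C$, condition~\ref{item:obstructions-minimality} forces $C$ to pick up at least one handle from each handlebar, and condition~\ref{item:obstructions-even} forces the count from each non-series handlebar to be odd. Your parity / mod-$2$ linking idea also captures, in spirit, the paper's argument that two vertex-disjoint allowable cycles rule out property~\ref{subitem:obstructions-oddcrossing}: the paper constructs a surface $\mathbb{S}$ from the disc by identifying boundary arcs according to whether each handlebar is nested or crossing, embeds $G$ there, and counts intersection points on arcs to conclude that the number of crossing handlebars is even.

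The gap is in the sentence ``an analogous bookkeeping with three cycles \ldots rules out $\ell \geq 3$ in every case'' and in the implicit claim that parity also dispatches the all-series case. When only~\ref{subitem:obstructions-3series} can be invoked, every handlebar is in series, so there are no crossings at all: two disjoint closed curves in the plane have linking number $0$, and no parity contradiction arises. The paper handles this case with a genuinely different argument: add an apex vertex $w_i$ for each series handlebar (the resulting auxiliary graph is still planar because a series handlebar occupies a contiguous boundary arc), observe that two vertex-disjoint allowable cycles $O_1,O_2$ each hit a handle from each $\mathcal{P}_i$, and contract $O_1,O_2$ and a connecting path in $W$ to exhibit a $K_5$ minor, contradicting planarity. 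This is a Kuratowski-type obstruction, not a crossing-parity obstruction. Similarly, for~\ref{subitem:obstructions-seriesnonseries} with three cycles, the paper does not just sum parities: it introduces the multigraph $H(\mathcal{O}')$ on components of $\mathbb{S}\setminus\phi(\bigcup\mathcal{O})$, proves it is loopless via a $2$-colouring argument, deduces it must be $K_{1,3}$, and then refutes that from the distribution of boundary points. Neither step is ``analogous bookkeeping,'' and your proposal would need to supply both of these distinct arguments before it is complete.
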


\begin{proof}
    Let~$W$ be the wall and let~${\mathfrak{P} = (\mathcal{P}_i \colon i\in [t])}$ be the family of $W$-handlebars described in Definition~\ref{def:obstructions}, and let~${o := 3}$ if property~\ref{subitem:obstructions-seriesnonseries} holds and let~${o := 2}$ otherwise.
    Suppose that~$G$ has a set~${\mathcal{O} = \{O_i \colon i \in [o]\}}$ of~$o$ pairwise vertex-disjoint cycles whose $\gamma$-values are in~$A$. 
    
    Suppose first that property~\ref{subitem:obstructions-oddcrossing} or~\ref{subitem:obstructions-seriesnonseries} holds. 
    Let~$n$ be the number of nested $W$-handlebars in~$\mathfrak{P}$, 
    let~$x$ be the number of crossing $W$-handlebars in~$\mathfrak{P}$, 
    and let~$s$ be the number of $W$\nobreakdash-handlebars in~$\mathfrak{P}$ that are in series. 
    Note that~${n + x \geq 1}$ because property~\ref{subitem:obstructions-oddcrossing} or~\ref{subitem:obstructions-seriesnonseries} holds. 
    By rearranging indices, we may assume that~$\mathcal{P}_i$ is nested for all~${i \in [n]}$, crossing for all~${i \in [n+x] \setminus [n]}$, and in series for all~${i \in [n+x+s] \setminus [n+x]}$. 
    We remark that the ordering of $(\mathcal P_i:i\in [t])$ is not related to their locations relative to the wall and our choice of indices after rearrangement is purely for the convenience.

\begin{figure}%
    \centering
    \begin{tikzpicture}

  \draw[thick] (0,0) circle (2.5cm);
  \draw[thick, blue] (60:2.6cm) arc[start angle=60, end angle=120, radius=2.6cm];
\node at (0,2.8) {$\arc(\alpha, \beta)$};

  \draw[dashed] (0,0) -- (0:2.5cm);
  \draw[dashed] (0,0) -- (60:2.5cm);
  \draw[dashed] (0,0) -- (120:2.5cm);

  \filldraw[black] (0,0) circle (2pt) node[below left] {O};

  \draw[blue] (60:2.6cm) circle (2pt);
  \draw[blue] (120:2.6cm) circle (2pt);
  
  \draw (1.25,0) arc[start angle=0, end angle=60, radius=1.25cm];
  \draw (1.5,0) arc[start angle=0, end angle=120, radius=1.5cm];
  \node at (0.7,0.3) {$\frac{\alpha}{n+x}\pi$};
\node at (0,1.1) {$\frac{\beta}{n+x}\pi$};

\end{tikzpicture}
    \caption{An illustration of $\arc(\alpha, \beta)$. }
    \label{fig:arc}
\end{figure}

    Consider the complex closed unit disc~${D := \{ z \in \mathbb{C} \colon \abs{z} \leq 1\}}$ and let~$S$ be the complex unit circle ${\{ z \in \mathbb{C} \colon \abs{z} = 1 \}}$. 
    Let ${\xi := e^{i\pi/(n+x)}}$ and for~${\alpha, \beta \in \mathbb{R}}$,
    let~${\arc(\alpha,\beta)}$ be the open arc~${\{ \xi^\gamma  \colon \alpha < \gamma < \beta \}}$ in~$S$, see Figure~\ref{fig:arc}.
    We now form a surface in which~$G$ embeds by identifying a pair of closed arcs in~$S$ for each nested or crossing handlebar in $\mathfrak{P}$.
    For each~${j \in [n+x]}$, let~$P_j$ be a $W$-handle in~$\mathcal{P}_j$ and let~${\{v_{\ell} \colon \ell \in [2n+2x]\}}$ be the set of endvertices of paths in~${\{P_j \colon j \in [n+x]\}}$, where~${v_{\ell} \prec_W v_k}$ if and only if~${k < \ell}$. 
    Let~$f$ and~$g$ be injective maps from~${[n+x]}$ to~${[2n+2x]}$ such that for all~${j \in [n+x]}$, the endvertices of~$P_j$ are~$v_{f(j)}$ and~$v_{g(j)}$, and~${f(j) < g(j)}$. 
    
    Let~$\sim$ be the equivalence relation on~$S$ obtained by taking the transitive closure with respect to the following properties; 
    \begin{itemize}
        \item \label{item:eqrel1} ${\xi^{f(j)+\alpha} \sim \xi^{g(j)+1-\alpha}}$ for each ${j \in [n]}$ and each~${\alpha \in [0,1]}$, 
        \item \label{item:eqrel2} ${\xi^{f(j)+\alpha} \sim \xi^{g(j)+\alpha}}$ for each~${j \in [n+x] \setminus [n]}$ and each~${\alpha \in [0,1]}$. 
    \end{itemize}
    Finally, let~$\mathbb{S}$ be the surface~${D / \sim}$.
    If~${\abs{z} < 1}$, then in~$\mathbb{S}$, we have that~$z$ is not identified with any other point of~$D$ and therefore we write~$z$ to denote the equivalence class~${\{z\}}$ in~$\mathbb{S}$ when~${\abs{z} < 1}$ for convenience. 
    Let~${D^\ast := \{ z \in \mathbb{C} \colon  \abs{z} < 1 \}\subseteq \mathbb{S}}$, and let~$S^\ast$ be the complement of~$D^\ast$ in~$\mathbb{S}$. 
    
    There is an embedding~$\phi$ of~$G$ in~$\mathbb{S}$ such that 
    \begin{enumerate}[label=(\roman*)]
        \item \label{item:embedding1} ${W \cup \bigcup \{ \bigcup \mathcal{P}_j \colon j \in [n + x + s] \setminus [n + x] \}}$ is embedded in~$D^\ast$,
        \item \label{item:embedding2} for each~${j \in [n+x]}$ and each~${P \in \mathcal{P}_j}$, the subset of~$D$ corresponding to~${\phi(P)}$ is the union of two curves of positive length, each of which intersects~$S$ exactly once, at equivalent points in the arcs~${\arc(f(j),f(j)+1)}$ and~${\arc(g(j),g(j)+1)}$, and
        \item \label{item:embedding3} for each~${j \in [n+x+s] \setminus [n+x]}$, there is a component of~${\mathbb{S} \setminus \phi(G)}$ whose boundary in~$\mathbb{S}$ contains~$\phi(\bigcup\mathcal{P}_j)$. 
    \end{enumerate}
    
    For each~${j \in [o]}$ and~${k \in [2n+2x]}$, let~$X_{j,k}$ be the set of points in~${\arc(k,k+1)}$ corresponding to points in~${\phi(O_j) \cap S^\ast}$ and let~${X_j := \bigcup_{k \in [2n+2x]} X_{j,k}}$. 
    By~\ref{item:embedding2}, 
    $X_{j,k}$ is a finite set.
    Note that the elements of~${\{ X_i \colon i \in [o] \}}$ are pairwise disjoint since~$\mathcal{O}$ is a set of pairwise vertex-disjoint cycles.
    Also, property~\ref{item:obstructions-even} implies that~$\abs{X_{j,k}}$ is odd for each~${j \in [o]}$ and~${k \in [2n+2x]}$. 
    This implies that for all~${k \in [2n+2x]}$, we have~${\abs{X_{1,k} \cup X_{2,k}}}$ (and hence~${\abs{X_1 \cup X_2}}$) is even.
    
    Let~${\{ z_j \colon j \in [\abs{X_1 \cup X_2}]\}}$ be the enumeration of~${X_1 \cup X_2}$ such that 
    if ${z_j = \xi^{\alpha}}$ and~${z_k = \xi^{\beta}}$ for some $j,k\in [\abs{X_1 \cup X_2}]$ and ${\alpha, \beta \in \mathbb{R}}$ with~${0 < \alpha < \beta < 2n+2x}$, then ${j < k}$. 
    For~${j \in [2]}$, let~$M_j$ be the subset of~$D$ corresponding to~${\phi(O_j)}$. 
    Each component of~${M_1 \cup M_2}$ is a curve~$C$ which separates~$D$ and contains exactly two points in~${X_1 \cup X_2}$. 
    It follows that each component of~${D \setminus C}$ contains an even number of points in~${X_1 \cup X_2}$, and hence~$C$ contains exactly one point in each of~${Z_1 := \{z_{2j-1}\colon j\in [\frac{1}{2}\abs{X_1 \cup X_2}]\}}$ and~${Z_2 := \{z_{2j}\colon j\in [\frac{1}{2}\abs{X_1 \cup X_2}]\}}$. 
    Thus,~${\abs{X_1 \cap Z_1} = \abs{X_1 \cap Z_2}}$. 
    
    Let~${j \in [n+x]}$ and~${k,\ell \in [\abs{X_1 \cup X_2}]}$ be such that~$z_k$ and~$z_\ell$ are equivalent and are contained in~${\arc(f(j),f(j)+1)}$ and~${\arc(g(j),g(j)+1)}$ respectively. 
    Recall that ${\abs{X_{1,a} \cup X_{2,a}}}$ is even for every $a\in [2n+2x]$.
    For each~${a,b \in [\abs{X_1 \cup X_2}]}$ such that ${z_a \in X_{1,f(j)} \cup X_{2,f(j)} \setminus \{z_k\}}$ and~$z_b$ is the point in ${X_{1,g(j)} \cup X_{2,g(j)}}$ equivalent to~$z_a$, we have that ${\abs{\{z_a,z_b\} \cap \arc(z_k,z_{\ell})}}$ is even if and only if~${j \in [n]}$. 
    Thus, if~${j \in [n]}$, then~${\ell-k}$ is odd and ${\abs{(X_{1,f(j)} \cup X_{1,g(j)}) \cap Z_1} = \abs{(X_{1,f(j)} \cup X_{1,g(j)}) \cap Z_2}}$, 
    and if~${j \in [n+x] \setminus [n]}$, then~${\ell-k}$ is even and ${\abs{(X_{1,f(j)} \cup X_{1,g(j)}) \cap Z_1} - \abs{(X_{1,f(j)} \cup X_{1,g(j)}) \cap Z_2}}$ is congruent to~$2$ modulo~$4$. 
    Now, 
    \begin{align*}
        0 &= \abs{X_1\cap Z_1} - \abs{X_1\cap Z_2}\\
        &= \sum_{j=1}^{n+x} \Big( \abs{X_{1,f(j)}\cup X_{1,g(j)}\cap Z_1}-\abs{X_{1,f(j)}\cup X_{1,g(j)}\cap Z_2} \Big)\\
        &= \sum_{j=n+1}^{n+x} \Big( \abs{X_{1,f(j)}\cup X_{1,g(j)}\cap Z_1}-\abs{X_{1,f(j)}\cup X_{1,g(j)}\cap Z_2} \Big)
    \end{align*}
    and therefore~$x$ is even. 
    Hence, we may assume that property~\ref{subitem:obstructions-seriesnonseries} holds, and so ${o = 3}$. 
    
    For a set~$\mathcal{O}'$ of pairwise vertex-disjoint cycles in~$G$, we define an auxiliary multigraph~${H(\mathcal{O}')}$ whose vertex set is the set of all components of~${\mathbb{S} \setminus \bigcup \{ \phi(O) \colon O \in \mathcal{O}' \}}$ 
    where for each~${O \in \mathcal{O}'}$, there is an edge~$e_O$ between the components that contain~$O$ in their boundary. 
    We remark that if there is only one component whose boundary contains~$O$, then~$e_O$ is a loop. 
    
    \begin{claim*}
        \label{clm:noloops}
        If~$\mathcal{O}'$ is a subset of~$\mathcal{O}$ of size at least~$2$, 
        then the graph~${H(\mathcal{O}')}$ has no loop. 
    \end{claim*}
    
    \begin{proof}
        Without loss of generality, we may assume that~${\{O_1,O_2\} \subseteq \mathcal{O}'}$, and it is sufficient to prove the claim when~${\{O_1,O_2\} = \mathcal{O}'}$. 
        Recall that~$M_j$ is the subset of~$D$ corresponding to~$\phi(O_j)$ for~$j\in[2]$.
        
        Let us $2$-colour the components of~$D\setminus (M_1\cup M_2)$ so that every point in~$M_1\cup M_2$ is on the boundary of two components of different colours (this is equivalent to 2-colouring the bounded faces of~$S\cup M_1\cup M_2$ seen as an outerplanar graph).
        We claim that the components of~$D\setminus (M_1\cup M_2)$ corresponding to one component of $\mathbb{S}\setminus \phi(O_1\cup O_2)$ receive the same colour.
        It suffices to show that any two points on $S\setminus (M_1\cup M_2)$ that are identified in $\mathbb{S}$ receive the same colour; that is, there is an even number of points of~$X_1\cup X_2$ contained in an arc of~$S$ between the two identified points.
        
        Recall that~$\abs{X_{j,k}}$ is odd for each~${j \in [2]}$ and~${k \in [2n+2x]}$, hence ${\abs{X_{1,k} \cup X_{2,k}}}$ is even for all~${k\in[2n+2x]}$. 
        Note that two identified points on $S\setminus (M_1\cup M_2)$ can be written either as $\xi^{f(j)+\alpha}, \xi^{g(j)+1-\alpha}$ for some $j\in[n]$ and $\alpha\in[0,1]$, or as $\xi^{f(j)+\alpha},\xi^{g(j)+\alpha}$ for some $j\in[n+x]\setminus[n]$ and $\alpha\in[0,1]$.
        
        In the first case, let $A_{j,\alpha} := {\arc(f(j)+\alpha, g(j)+1-\alpha)}$ and note that~${A_{j,\alpha} \setminus \{\xi^{k} \colon k \in [2(n+x)] \}}$ is the disjoint union 
        \[
            {\arc(f(j)+\alpha,f(j)+1)} 
            \cup 
            {\arc(g(j),g(j)+1-\alpha)} 
            \cup 
            \bigcup \{ \arc(k,k+1) \colon k \in [g(j)-1] \setminus [f(j)] \}.
        \]
        Observe that~${\abs{(X_1\cup X_2) \cap A_{j,\alpha}}}$ is even, because
        \[
            {\abs{(X_1 \cup X_2) \cap \arc(f(j)+\alpha,f(j)+1)} = \abs{(X_1 \cup X_2)\cap \arc(g(j),g(j)+1-\alpha)}}. 
        \]
        Similarly, in the second case, we have that $\abs{(X_1 \cup X_2) \cap \arc(f(j)+\alpha, g(j)+\alpha)}$ is even because the assumption that ${\xi^{f(j)+\alpha} \notin X_1 \cup X_2}$ implies
        \[
            \abs{X_k \cap (\arc(f(j)+\alpha,f(j)+1) \cup \arc(g(j),g(j)+\alpha))}=\abs{X_{k,f(j)}}
        \]
        for each~${k \in [2]}$.

        \ifx
        It follows that we can $2$-colour the components of~${S^\ast \setminus \phi(O_1\cup O_2)}$ such that every point in $\phi(O_1\cup O_2)\cap S^\ast$ is on the boundary of two components of different colours. 
        Now consider a curve~$C$ in~${\mathbb{S}\setminus \phi(O_1\cup O_2)}$ between two points in~$S^\ast$ whose interior is entirely in~$D^\ast$. 
        Let~$X$ be the set of points in~$D$ corresponding to points in~${\phi(O_1 \cup O_2)}$ and let~$Y$ be the set of points in~$D$ corresponding to points in~$C$. 
        Now each component of~$X$ contains exactly two points in~${X_1 \cup X_2}$, so each component of~${S \setminus Y}$ contains an even number of points in~${X_1 \cup X_2}$. 
        It follows that the endpoints of~$C$ are in components of~${S^\ast \setminus \phi(O_1\cup O_2)}$ with the same colour. 
        Now for every component~$Z$ of~${\mathbb{S} \setminus \phi(O_1\cup O_2)}$, the components of~${S^\ast \setminus \phi(O_1\cup O_2)}$ contained in~$Z$ all have the same colour.
        \fi
        
        It follows that the components of~$D\setminus(M_1\cup M_2)$ corresponding to one component of~$\mathbb{S}\setminus\phi(O_1\cup O_2)$ have the same colour.
        Therefore, ${H(\mathcal{O}')}$ is $2$-colourable, and hence contains no loop. 
    \end{proof}
    
    By property~\ref{subitem:obstructions-seriesnonseries}, we have that~${s \geq 1}$. 
    By property~\ref{item:obstructions-minimality} and~\ref{item:embedding3}, there is a component of~${\mathbb{S} \setminus \phi(\bigcup \mathcal{O})}$ whose boundary intersects each of~$\phi(O_1)$, $\phi(O_2)$, and~$\phi(O_3)$, which means that some vertex of~${H(\mathcal{O})}$ is incident with all three edges. 
    If two edges of~${H(\mathcal{O})}$ are parallel, say~$e_{O_1}$ and~$e_{O_2}$, then~${H(\{O_1,O_3\})}$ has a loop, contradicting 
    the claim. 
    It follows that~$H(\mathcal{O})$ is isomorphic to the star~$K_{1,3}$. 
    
    Without loss of generality, there are points~${z_1 \in X_{1,1}}$ and~${z_2 \in X_{2,1}}$ such that~${X_{3,1} \subseteq \arc(z_1,z_2)}$. 
    Since~$\abs{X_{3,1}}$ is odd, there are points~${z_a, z_b \in X_{1,1} \cup X_{2,1}}$ such that~${\arc(z_a,z_b) \cap (X_1 \cup X_2)}$ is empty and~${\arc(z_a,z_b) \cap X_3}$ is odd. 
    It follows that each of the two components of~${\mathbb{S} \setminus \phi(\bigcup \mathcal{O})}$ whose boundary contains~$\phi(O_3)$ also contains either~$\phi(O_1)$ or~$\phi(O_2)$ in its boundary, contradicting that~${H(\mathcal{O})}$ is isomorphic to~$K_{1,3}$.
    
    We conclude that neither property~\ref{subitem:obstructions-oddcrossing} nor property~\ref{subitem:obstructions-seriesnonseries} 
    holds. 
    Hence, property~\ref{subitem:obstructions-3series} holds and~${n = x = 0}$. 
    Let~$G'$ be a graph obtained from~$G$ by adding for each~${i \in [3]}$ a vertex~$w_i$ with neighbourhood~${V(\bigcup\mathcal{P}_i) \cup \{ w_j \colon j \in [3] \setminus \{i\}\}}$. Note that~$G'$ is a planar graph. 
    Since~$W$ is connected, there is a ${(V(O_1),V(O_2))}$\nobreakdash-path~$P$ in~$W$ that contains some edge~$e$. 
    Note also that for each~${i \in [2]}$ and~${j \in [3]}$, the cycle~$O_i$ contains a path in~$\mathcal{P}_j$ by property~\ref{item:obstructions-minimality}. 
    Now the graph obtained from~${G'[\{w_1,w_2,w_3\} \cup V(O_1 \cup O_2 \cup P)]}$ 
    by contracting all edges in~${E(O_1 \cup O_2 \cup P)\setminus \{e\}}$
    is isomorphic to~$K_{5}$, which contradicts the fact that 
    every minor of a planar graph is planar.
\end{proof}

\subsection{Deriving Theorems~\ref{thm:maingroup2} and~\ref{thm:maingroup}}

As a consequence of Proposition~\ref{prop:obstructions}, we now straightforwardly obtain Theorem~\ref{thm:maingroup2}. 

\groupnecessary*

\begin{proof}
    First, suppose that ${\gen{2a} \cap A = \emptyset}$ for some~${a \in A}$.
    Let~$G_{\Gamma,A,t}$ be a graph consisting of a wall~$W$ of order at least~${t+1}$ and a crossing $W$-handlebar~$\mathcal{P}$ of size~${t+1}$ such that each row of~$W$ contains at most one vertex of~${\bigcup \mathcal{P}}$. 
    Defining~$\gamma$ such that~${\gamma(e) = 0}$ for all~${e \in E(W)}$ and~${\gamma(P) = a}$ for all~${P \in \mathcal{P}}$ yields that~${(G_{\Gamma,A,t}, \gamma) \in \mathcal{C}(t+1,t+1,\Gamma,A)}$. 
    It now follows from Proposition~\ref{prop:obstructions} that $G_{\Gamma,A,t}$ has no two vertex-disjoint cycles in~$\mathcal{O}$. 
    Now consider a set~${T \subseteq V(G_{\Gamma,A,t})}$ of size at most~$t$. 
    Observe that there exist some column~$C^W_k$ and some $W$-handle~${P \in \mathcal{P}}$ intersecting two rows~$R^W_i$ and~$R^W_j$ 
    such that~${P \cup R^W_i \cup R^W_j \cup C^W_k}$ is disjoint from~$T$ and contains a cycle in~$\mathcal{O}$. 
    Hence,~$T$ is not a hitting set for~$\mathcal{O}$ as desired.
    
    Now suppose that condition~\ref{item:mainintro2-2} fails and thus there are~${a_1,a_2,a_3 \in \Gamma}$ such that ${\gen{a_1,a_2,a_3} \cap A \neq \emptyset}$ and~${(\gen{a_1,a_2} \cup \gen{a_2,a_3} \cup \gen{a_1,a_3}) \cap A = \emptyset}$. 
    By possibly replacing~$a_i$ with another element of~${\gen{a_i}}$ for each~${i \in [3]}$, we may assume that~${(a_1 + a_2 + a_3) \in A}$. 
    Let~$G_{\Gamma,A,t}$ be a graph consisting of a wall~$W$ of order at least~${t+2}$ and three pairwise vertex-disjoint non-mixing $W$\nobreakdash-handlebars 
    $\mathcal P_1$, $\mathcal P_2$, and $\mathcal P_3$, 
    each of size~${t+1}$ and each in series. 
    Defining~$\gamma$ such that~${\gamma(e) = 0}$ for all~${e \in E(W)}$ and~${\gamma(P) = a_i}$ for all~${i \in [3]}$ and~${P \in \mathcal{P}_i}$ yields~${(G_{\Gamma,A,t}, \gamma) \in \mathcal{C}(t+1,t+2,\Gamma,A)}$. 
    It now follows from Proposition~\ref{prop:obstructions} that~$G_{\Gamma,A,t}$ has no two vertex-disjoint cycles in~$\mathcal{O}$. 
    Now let~$T$ be a set of at most~$t$ vertices of~$G_{\Gamma,A,t}$.
    Note that there are two columns~$C^W_{\ell_1}$ and~$C^W_{\ell_2}$ of~$W$
    and three $W$-handles~${P_1 \in \mathcal{P}_1}$, ${P_2 \in \mathcal{P}_2}$, and~${P_3 \in \mathcal{P}_3}$ 
    such that for the two rows~$R^W_{j_i}$ and~$R^W_{k_i}$ that intersect~$P_i$ for each~${i \in [3]}$,  
    we have that~${\bigcup \{ P_i \cup R^W_{j_i} \cup R^W_{k_i} \colon i \in [3] \} \cup C^W_{\ell_1} \cup C^W_{\ell_2}}$ is disjoint from~$T$ and contains a cycle in~$\mathcal{O}$. 
    Hence,~$T$ is not a hitting set for~$\mathcal{O}$ as desired.
\end{proof}

The following theorem is a strengthening of Theorem~\ref{thm:maingroup}. 
The main point is that for any $\Gamma_j$ with the property that every large subwall contains a $\gamma_j$-non-zero cycle, we do not have to check conditions \ref{item:mainintro1} and \ref{item:mainintro2} of Theorem~\ref{thm:maingroup} for this coordinate.
Note that in the following theorem, $m-m'$ is the number of such coordinates.
Theorem~\ref{thm:maingroup} is immediately obtained by taking $m'=m$ and $\theta=1$.
\begin{theorem}
    \label{thm:maingroup-involved}
    For every three positive integers~$m$,~$\omega$, and~$\theta$, there is a function~${f_{m,\omega,\theta} \colon \mathbb{N} \to \mathbb{N}}$ satisfying the following property. 
    Let~${\Gamma = \prod_{j \in [m]} \Gamma_j}$ be a product of~$m$ abelian groups and let ${m' \in \{0\} \cup [m]}$. 
    For every~${j \in [m]}$, let~$\Omega_j$ be a subset of~$\Gamma_j$ with~${\abs{\Omega_j} \leq \omega}$
    and let~$A_j$ be the set of all elements~${g \in \Gamma}$ such that~${\pi_j(g) \in \Gamma_j \setminus \Omega_j}$.
    Let~${A := \bigcap_{j \in [m]} A_j}$ and let~${A' := \bigcap_{j \in [m']} A_j}$. 
    Suppose that  
    \begin{enumerate}
        [label=(\arabic*)]
        \item \label{item:maingroup1} ${\gen{2a} \cap A' \neq \emptyset}$ for all ${a \in A'}$ and 
        \item \label{item:maingroup2} if ${a,b,c \in \Gamma}$ and ${\gen{a,b,c} \cap A' \neq \emptyset}$, then ${(\gen{a,b} \cup \gen{b,c} \cup \gen{a,c}) \cap A' \neq \emptyset}$.
    \end{enumerate}
    Let~$G$ be a graph with a $\Gamma$-labelling~$\gamma$ such that for each~${j \in [m] \setminus [m']}$, 
    every wall in~$G$ of order at least~$\theta$ 
    contains a cycle whose $\gamma_j$-value is non-zero.\footnotemark 
    \footnotetext{Instead, we could restrict to $\Gamma$-labelled graphs such that for each~${j \in [m] \setminus[m']}$, every subgraph of~$G$ of tree-width at least~$\theta$ contains a cycle whose $\gamma_j$-value is non-zero.}
    Let~${\mathcal{O}}$ be the set of all cycles of~$G$ whose $\gamma$-values are in~$A$.
    Then for all~${k \in \mathbb{N}}$, there exists 
    a set of~$k$ pairwise vertex-disjoint cycles in~$\mathcal{O}$ 
    or a hitting set for~$\mathcal{O}$ of size at most~${f_{m,\omega,\theta}(k)}$. 
\end{theorem}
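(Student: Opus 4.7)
The plan is to invoke Theorem~\ref{thm:mainobstruction} and derive a contradiction from outcome~\ref{item:main-obstruction} under the given hypotheses. Set $\kappa := 3$ and apply Theorem~\ref{thm:mainobstruction} to $G$ with parameters $k$, $\kappa$, and $\theta$, defining $f_{m,\omega,\theta}(k) := \hat f_{m,\omega}(k, \kappa, \theta)$. Outcomes~\ref{item:main-packing} and~\ref{item:main-hittingset} directly yield the desired conclusion, so assume outcome~\ref{item:main-obstruction} holds with some $J \subseteq [m]$, subgraph $H$ of $G$, and $(H, \gamma'') \in \mathcal{C}(\kappa, \theta, \Gamma / \Gamma_J, A + \Gamma_J)$, witnessed by a wall $W$ of order at least $\theta$ and handlebars $(\mathcal{P}_i \colon i \in [t])$.

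The first step is to establish $[m] \setminus [m'] \subseteq J$. By property~\ref{item:obstructions-zerowall}, every $N^W$-path has $\gamma$-value in $\Gamma_J$; since every cycle of $W$ decomposes at its nails into $N^W$-paths, every cycle of $W$ is $\gamma_j$-zero for each $j \notin J$. The hypothesis that every wall of $G$ of order at least $\theta$ contains a $\gamma_j$-non-zero cycle for every $j \in [m] \setminus [m']$ then forces $j \in [m']$ whenever $j \notin J$, so $[m] \setminus J \subseteq [m']$. Moreover, property~\ref{item:obstructions-minimality} implies $0 \notin A + \Gamma_J$, since the identity $0$ lies in every subgroup.

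Now pick representatives $P_i \in \mathcal{P}_i$ for each $i \in [t]$ and set $g_i := \gamma(P_i)$ and $a := \sum_{i \in [t]} g_i$, so $a \in A + \Gamma_J$ by~\ref{item:obstructions-allowabletransversals}. First suppose some $\mathcal{P}_i$ is not in series (i.e., subcase~\ref{subitem:obstructions-oddcrossing} or~\ref{subitem:obstructions-seriesnonseries} holds). Applying~\ref{item:obstructions-even} with the function assigning coefficient $2n$ to each chosen $P_i$ and $0$ elsewhere shows that if $2na \in A + \Gamma_J$ for some $n \neq 0$, then each non-series $\mathcal{P}_i$ contributes an even coefficient-sum, a contradiction; hence $2na \notin A + \Gamma_J$ for every $n \neq 0$, and combined with $0 \notin A + \Gamma_J$ this gives $\langle 2a \rangle \cap (A + \Gamma_J) = \emptyset$. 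Lifting $a$ to $\tilde a := a + \delta$ with $\delta \in \Gamma_J$ chosen so that $\pi_\ell(\tilde a) \in \Gamma_\ell \setminus \Omega_\ell$ for every $\ell \in [m'] \cap J$ (possible since $\Omega_\ell \subsetneq \Gamma_\ell$), we obtain $\tilde a \in A'$, and since $2n\tilde a \equiv 2na \pmod{\Gamma_J}$ and $A' \subseteq A + \Gamma_J$, we have $\langle 2\tilde a \rangle \cap A' = \emptyset$, contradicting condition~\ref{item:maingroup1} applied to $\tilde a$. Otherwise all handlebars are in series, so~\ref{subitem:obstructions-3series} must hold with $t \geq 3$; set $x_1 := g_1$, $x_2 := g_2$, and $x_3 := g_3 + \sum_{i \geq 4} g_i$. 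Each pairwise span $\langle x_p, x_q \rangle$ for distinct $p, q \in [3]$ lies in $\langle \gamma(P) \colon P \in \bigcup_{i \neq r} \mathcal{P}_i \rangle$ for the unique $r \in [3] \setminus \{p, q\}$, and is thus disjoint from $A + \Gamma_J$ by~\ref{item:obstructions-minimality}. Lifting to $\tilde x_1 := x_1 + \delta$, $\tilde x_2 := x_2$, and $\tilde x_3 := x_3$ with $\delta \in \Gamma_J$ chosen analogously so that $\tilde x_1 + \tilde x_2 + \tilde x_3 = a + \delta \in A'$, each pairwise span $\langle \tilde x_p, \tilde x_q \rangle$ reduces modulo $\Gamma_J$ to $\langle x_p, x_q \rangle$ and hence remains disjoint from $A' \subseteq A + \Gamma_J$; this contradicts condition~\ref{item:maingroup2}.

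The main technical point is the lifting step, which transfers the violation of condition~\ref{item:maingroup1} or~\ref{item:maingroup2} from within $A + \Gamma_J$ in $\Gamma / \Gamma_J$ to a violation within $A'$ in $\Gamma$. This is achievable precisely because $[m] \setminus J \subseteq [m']$ grants freedom to adjust elements in the $[m'] \cap J$ coordinates without altering their residues modulo $\Gamma_J$, while the properness $\Omega_\ell \subsetneq \Gamma_\ell$ ensures such adjustments exist; disjointness of the pairwise spans from $A'$ is preserved automatically, since any element of $A'$ lies in $A + \Gamma_J$ and so its image in $\Gamma / \Gamma_J$ lies in $(A + \Gamma_J)/\Gamma_J$.
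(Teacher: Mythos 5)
Your proof is correct and follows essentially the same route as the paper: apply Theorem~\ref{thm:mainobstruction} and show that outcome~(iii) is incompatible with conditions~(1) and~(2), first deducing $[m]\setminus[m']\subseteq J$ from~\ref{item:obstructions-zerowall}, then ruling out any non-series handlebar via~\ref{item:obstructions-even} together with condition~(1), and finally ruling out~\ref{subitem:obstructions-3series} via~\ref{item:obstructions-allowabletransversals},~\ref{item:obstructions-minimality} and condition~(2). What you add is the explicit lifting from $\Gamma/\Gamma_J$ back to $A'\subseteq\Gamma$, which the paper's terse proof leaves tacit; two small points are that the $g_i$ should be read off from $\gamma'$ (the shifting-equivalent labelling) rather than $\gamma$ since shifts can change $W$-handle values, and the appeal to $\Omega_\ell\subsetneq\Gamma_\ell$ presumes $A\neq\emptyset$ (if $A=\emptyset$ then $\mathcal{O}=\emptyset$ and the theorem is trivial).
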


\begin{proof}
    We set~${f_{m,\omega,\theta}(k) := \widehat f_{m,\omega}(k,k,\theta)}$ for the function~$\widehat f_{m,\omega}$ as in Theorem~\ref{thm:mainobstruction}. 
    
    Suppose that~$G$ has 
    neither a set of~$k$ pairwise vertex-disjoint cycles in~$\mathcal{O}$
    nor a hitting set for~$\mathcal{O}$ of size at most~${f_{m,\omega,\theta}(k)}$. 
    Then by Theorem~\ref{thm:mainobstruction}, there exist 
    a $\Gamma$-labelling~$\gamma'$ of~$G$ that is shifting equivalent to~$\gamma$ 
    and a subgraph~$H$ of~$G$ such that~${(H,\gamma'') \in \mathcal{C}(\kappa, \theta, \Gamma / \Gamma_J, A + \Gamma_J )}$ 
    for some~${J \subseteq [m]}$ and the ${\left(\Gamma / \Gamma_J \right)}$-labelling~$\gamma''$ induced by the restriction of~$\gamma'$ to~$H$. 
    Let~$W$ be a wall of order~$\theta$, let~$t$ be a positive integer, 
    and let~${( \mathcal{P}_i \colon i \in [t] )}$ be a family of pairwise vertex-disjoint non-mixing $W$-handlebars with~${H = W \cup \bigcup \{ \bigcup \mathcal{P}_i \colon i \in [t] \}}$ as in Definition~\ref{def:obstructions}. 
    
    By property~\ref{item:obstructions-zerowall}, every cycle in~$W$ is $\gamma''$-zero and therefore~${[m] \setminus [m'] \subseteq J}$. 
    By property~\ref{item:maingroup1} and property~\ref{item:obstructions-even},
    we deduce that~$\mathcal{P}_i$ is in series for each~${i \in [t]}$. 
    In particular, neither property~\ref{subitem:obstructions-oddcrossing} nor~\ref{subitem:obstructions-seriesnonseries}  
    holds. 
    By property~\ref{item:maingroup2} and properties~\ref{item:obstructions-allowabletransversals} and~\ref{item:obstructions-minimality}, 
    we conclude that property~\ref{subitem:obstructions-3series} does not hold either, contradicting the assumption that~${(H,\gamma'') \in \mathcal{C}(\kappa, \theta, \Gamma / \Gamma_J, A + \Gamma_J )}$. 
\end{proof}

\subsection{%
\texorpdfstring{$\mathcal{S}$-cycles of length~$\ell$ modulo~$z$}%
{S-cycles of length l modulo m}}
\label{subsec:lmodm}

We now prove a generalisation of Theorem~\ref{cor:mainmod}, which additionally allows us to recover many known Erd\H{o}s-P\'{o}sa type results as discussed in Section~\ref{sec:intro}. 
Recall that 
for a family~$\mathcal{S}$ of sets of vertices, an \emph{$\mathcal{S}$-cycle} is a cycle containing at least one vertex from each member of~$\mathcal{S}$. 
Theorem~\ref{cor:mainmod} is an immediate consequence of the following theorem by taking $L=1$, $t=0$, and $\mathcal S=\emptyset$.

\begin{theorem}
    \label{thm:Sml}
    Let~$\ell$,~$z$,~$t$, and~$L$ be integers with~${z \geq 1}$ and~${t \geq 0}$. Let~${p_1^{a_1} \cdots p_n^{a_n}}$ be the prime factorisation of~$z$ with~${p_{i} < p_{i+1}}$ for all~${i \in [n-1]}$.
    The following statements are equivalent. 
    \begin{itemize}
        \item There is a function~${f \colon \mathbb{N} \to \mathbb{N}}$ such that 
        for every graph~$G$ with a family~$\mathcal{S}$ of~$t$ subsets of~${V(G)}$ and every positive integer~$k$, 
        either~$G$ contains~$k$ vertex-disjoint $\mathcal{S}$-cycles of length~$\ell$ modulo~$z$ and of length at least~$L$, or 
        there is a set of at most~${f(k)}$ vertices hitting all such cycles. 
    \item All of the following conditions hold.
        \begin{enumerate}
            [label=(\arabic*)]
            \item \label{item:Sml1} ${t \leq 2}$.
            \item \label{item:Sml2} If~${p_1 = 2}$, then~${\ell \equiv 0 \pmod {p_1^{a_1}}}$.
            \item \label{item:Sml3} There do not exist~${3 - t}$ distinct~${i \in [n]}$ for which~${\ell \not\equiv 0 \pmod {p_{i}^{a_{i}}}}$.
        \end{enumerate}
    \end{itemize}
\end{theorem}

Before presenting the proof, we present a simple lemma on integers to be used in the proof.
\begin{lemma}
    \label{lem:nonzero}
    Let~$K$ be a non-zero integer.
    Let~$n$ be a positive integer and for each~${i \in [n]}$, let~$a_i$ be an integer such that~${\abs{a_i} < \abs{K}}$.
    Then~${\sum_{i=1}^n a_iK^{i-1} = 0}$ if and only if~${a_i = 0}$ for all~${i \in [n]}$. 
\end{lemma}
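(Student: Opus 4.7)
The ``if'' direction is immediate. For the ``only if'' direction, the plan is to argue by contradiction, exploiting divisibility. Suppose that $\sum_{i=1}^n a_i K^{i-1} = 0$ but not all $a_i$ are zero, and let $j \in [n]$ be the smallest index with $a_j \neq 0$. Rearranging, we obtain
\[
    a_j K^{j-1} = -\sum_{i=j+1}^{n} a_i K^{i-1}.
\]
The right-hand side is divisible by $K^j$, so $K^j$ divides $a_j K^{j-1}$. Since $K \neq 0$, we may cancel $K^{j-1}$ to conclude that $K$ divides $a_j$. Combined with $0 < \abs{a_j} < \abs{K}$, this is a contradiction.

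An equivalent approach would be a short induction on $n$. Reducing $\sum_{i=1}^n a_i K^{i-1} = 0$ modulo $K$ gives $a_1 \equiv 0 \pmod{K}$, and the bound $\abs{a_1} < \abs{K}$ forces $a_1 = 0$. The remaining equation is then $K \sum_{i=2}^n a_i K^{i-2} = 0$, and dividing by $K$ leaves a sum of length $n-1$ with coefficients satisfying the same bound, to which the inductive hypothesis applies.

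There is no real obstacle here: the statement is essentially uniqueness of a ``balanced'' base-$\abs{K}$ representation of zero, and either of the two arguments above is only a few lines. The only subtlety worth flagging is that the coefficients~$a_i$ are allowed to be negative, which is why the argument is phrased via divisibility rather than by comparing leading terms in size; both arguments above handle signed coefficients without modification.
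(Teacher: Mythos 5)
Your proof is correct, and your second argument (induction on $n$, reducing modulo $K$ to force $a_1=0$, then dividing by $K$) is precisely the paper's proof. Your first argument, isolating the least index $j$ with $a_j\neq 0$ and extracting a factor of $K^j$, is a minor variant of the same idea and is equally valid.
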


\begin{proof}
    It is enough to prove the forward direction. 
    We proceed by induction on~$n$. 
    We may assume that~${n > 1}$. 
    Since~${0 = \sum_{i=1}^n a_i K^{i-1} \equiv a_1 \pmod K}$, we deduce that~${a_1 = 0}$
    and thus~${\sum_{i=2}^n a_i K^{i-2} = 0}$. 
    By the induction hypothesis,~${a_i = 0}$ for all~${i \in [n]}$.
\end{proof}

\begin{proof}[Proof of Theorem~\ref{thm:Sml}]
    Let~${m := t+2}$, let~${\omega := \max\{L,z\}}$, and let~${m' := t+1}$. 
    For all~${j \in [t]}$, let~${\Gamma_j := \mathbb{Z}}$, let~${\Gamma_{t+1} := \mathbb{Z}_{z}}$, and let~${\Gamma_{t+2}:=\mathbb Z}$.
    Let~${\Gamma := \prod_{j \in [m]} \Gamma_j}$. 
    For all~${j \in [t]}$, let~${\Omega_j := \{0\}}$, let~${\Omega_{t+1} = \mathbb{Z}_z\setminus \{\ell\}}$, 
    and let~${\Omega_{t+2} := [L-1]}$. 
    For each~${j \in [m]}$, let~$A_j$ be the set of all~${g \in \Gamma}$ such that~${\pi_j(g) \in \Gamma_j \setminus \Omega_j}$. 
    Let~${A := \bigcap_{j \in [m]} A_j}$ and~${A' := \bigcap_{j \in [m']} A_j}$. 
    For any graph~$G$ together with a family~${\mathcal{S} = ( S_j \colon j \in [t] )}$ of subsets of~${V(G)}$, we define a $\Gamma$-labelling~$\gamma_{G,\mathcal{S}}$ as follows. 
    For each~${j \in [t]}$, let~${\gamma_{j}(e) = 1}$ if~${e \in E(H)}$ is incident with a vertex in~$S_{j}$ and~${\gamma_{j}(e) = 0}$ otherwise.
    For each~${j \in [m] \setminus [t]}$ and~${e \in E(H)}$, let~${\gamma_j(e) := 1}$. 
    Let~$\gamma_{G,\mathcal{S}}$ be the $\Gamma$-labelling of~$G$ with~${\pi_j \circ \gamma_{G,\mathcal{S}} = \gamma_j}$ for all~${j \in [m]}$. 
    Let~$\mathcal{O}_{G,\mathcal{S}}$ be the set of all $\mathcal{S}$-cycles in~$G$ of length~$\ell$ modulo~$z$ and of length at least~$L$. 
    Then the $\gamma_{G,\mathcal{S}}$-value of a cycle of~$G$ is in~$A$ if and only if it is in~${\mathcal{O}_{G,\mathcal{S}}}$. 
    
    Suppose that conditions~\ref{item:Sml1}, \ref{item:Sml2}, and~\ref{item:Sml3} hold and let~${f := f_{m,\omega,3}}$ of Theorem~\ref{thm:maingroup-involved}. 
    To apply Theorem~\ref{thm:maingroup-involved}, we verify that the two conditions in Theorem~\ref{thm:maingroup-involved} are satisfied for the subset~$A'$ of~$\Gamma$. 
    
    To check the first condition, let~${g \in A'}$.
    Then~${\pi_j(g) \neq 0}$ for each~${j \in [t]}$ and~${\pi_{t+1}(g) \equiv \ell \pmod z}$. 
    By condition~\ref{item:Sml2}, ${\gcd(2\ell, z) = \gcd(\ell, z)}$, which implies that~${\gen{2\ell} = \gen{\ell}}$ in~$\mathbb{Z}_z$. 
    Let~$x$ be a non-zero integer such that~${\ell \equiv x(2\ell)}\pmod{z}$. 
    Then,~${x \pi_{t+1}(2g) \equiv 2x \ell \equiv \ell \pmod z}$. 
    Now for all~${j \in [t]}$, we have that~${x \pi_j(2g) \neq 0}$ because~${2x \neq 0}$ and~${\pi_j(g) \neq 0}$.
    We conclude that~${x(2g) \in {\gen{2g} \cap A'}}$ and so~${\gen{2g} \cap A' \neq \emptyset}$. 
    
    Now let us check the second condition. 
    Let~${g_1,g_2,g_3 \in \Gamma}$ be such that~${\gen{g_1,g_2,g_3} \cap A' \neq \emptyset}$. 
    Then there exist integers~${x_1,x_2,x_3}$ such that 
    \begin{itemize}
        \item ${x_1 \pi_j(g_1) + x_2 \pi_j(g_2) + x_3 \pi_j(g_3) 
        \neq 0}$ for all~${j \in [t]}$ and
        \item ${x_1 \pi_{t+1}(g_1) + x_2 \pi_{t+1}(g_2) + x_3 \pi_{t+1}(g_3) \equiv \ell \pmod z}$.
    \end{itemize}
    Let~${I = \{i \in [n] \colon \ell \not\equiv 0 \pmod{p_i^{a_i}}\}}$. 
    For each~${i \in I}$, let~${q_i := \prod_{j \in [n] \setminus \{i\}} p_j^{a_j}}$.
    For each~${i \in I}$, let ${d_i \in \{g_1,g_2,g_3\}}$
    such that~${\gcd(\pi_{t+1}(d_i),p_i^{a_i})}$ is minimum. 
    Then,~$\ell$ is divisible by~${\gcd(\pi_{t+1}(d_i),p_i^{a_i})}$, which is equal to~${\gcd(q_i\pi_{t+1}(d_i),p_i^{a_i})}$. 
    Hence, there exists an integer~$y_i$ such that 
    \[
        {y_i q_i \pi_{t+1}(d_i) \equiv \ell \pmod{p_i^{a_i}}}.
    \]
    Observe that~${y_i q_i \pi_{t+1}(d_i) \equiv 0 \pmod{p_j^{a_j}}}$ for all~${j \in [n] \setminus \{i\}}$. 
    Let 
    \[
        \widehat{g} := \sum_{i \in I} (y_i q_i) d_i\in \Gamma.
    \] 
    Then,~${\pi_{t+1}(\widehat{g}) \equiv \ell \pmod z}$ and therefore, 
    for every~${g \in \Gamma}$, we have that~${\pi_{t+1}(\widehat{g}+zg) \equiv \ell\pmod z}$. 
    Let~$K$ be an integer such that~${K > \max\{ \abs{\pi_j(g_1)}, \abs{\pi_j(g_2)}, \abs{\pi_j(g_3)}, \abs{\pi_j(\widehat g)}\}}$ for all~${j \in [t]}$. 
    For each ${j \in [t]}$, there exists ${c_j \in \{g_1,g_2,g_3\}}$ such that~${\pi_j(c_j) \neq 0}$, 
    since~${x_1\pi_j(g_1) + x_2\pi_j(g_2) + x_3\pi_j(g_3) \neq 0}$. 
    By conditions~\ref{item:Sml1} and~\ref{item:Sml3}, $t+\abs{I}\leq 2$ and therefore ${\{ c_j \colon j \in [t] \} \cup \{ d_i \colon i \in I \}}$ is a proper subset of~${\{g_1,g_2,g_3\}}$, and by construction and Lemma~\ref{lem:nonzero}, we deduce that  
    \[
        \widehat{g} + \sum_{j=1}^t(K^j z) c_j  
        \in \gen{\{ c_j \colon j \in [t] \} \cup \{ d_i \colon i \in I \}} \cap A'.
    \]
    Therefore, both properties of Theorem~\ref{thm:maingroup-involved} are satisfied. 
    Let~$G$ be a graph and let~${\mathcal{S} = ( S_j \colon j \in [t] )}$ be a family of subsets of~${V(G)}$. 
    Let~${\gamma := \gamma_{G,\mathcal{S}}}$ and~${\mathcal{O} := \mathcal{O}_{G,\mathcal{S}}}$ as defined above. 
    Note that since the $\gamma_m$-value of every edge of~$G$ is positive, every wall in~$G$ contains a cycle whose $\gamma_{m}$-value is non-zero.
    Now applying Theorem~\ref{thm:maingroup-involved}, we conclude that~$G$ contains~$k$ vertex-disjoint cycles in~$\mathcal{O}$ or a hitting set for~$\mathcal{O}$ of size at most~${f(k)}$. 
    
    Now we prove the converse.
    Suppose that the first statement holds for some function~$f$.
    Let~${I := \{i \in [n]  \colon \ell \not\equiv 0 \pmod{p_i^{a_i}}\}}$. 
    Let~$G$ be a graph consisting of a wall~$W$ of order at \linebreak least~${f(3) + 2}$ together with a set~${\mathfrak{P} = \{ \mathcal{P}_i \colon i \in I \} \cup \{ \mathcal{Q}_j \colon j \in [t] \}}$ of size~${\abs{I} + t}$ 
    of pairwise vertex-disjoint non-mixing $W$-handlebars, each of size at least~${f(3) + 1}$ such that
    \begin{enumerate}
        \item every~$N^W$-path in~$W$ has length~${(\abs{L}+1)z}$,
        \item for all~${j \in [t]}$, every $W$-handle in~$\mathcal{Q}_j$ has length~${2z}$,
        \item for all~${i \in I}$, every $W$-handle in~$\mathcal{P}_i$ has length congruent to~$\ell$ modulo~$p_i^{a_i}$ and congruent to~$0$ modulo~$p_j^{a_j}$ for all~${j \in [n] \setminus \{i\}}$, 
        \item if~${1 \in I}$ and~${p_1 = 2}$, then~$\mathcal{P}_1$ is crossing and has no vertex of the first column of~$W$, and 
        \item every~${\mathcal{P} \in \mathfrak{P}}$ is in series and has no vertex of the last column of~$W$, unless ${1 \in I}$, ${p_1 = 2}$, and ${\mathcal{P} = \mathcal{P}_1}$.
    \end{enumerate}
    For each~${j \in [t]}$, let~${S_j := V(\bigcup \mathcal{Q}_j) \setminus V(W)}$ and  let~${\mathcal{S} := (S_j \colon j \in [t])}$. 
    Recall that~${\Gamma_{\{m\}}}$ denotes the subgroup~${\{ g \in \Gamma \colon \pi_j(g) = 0\text{ for all } j \in [m-1]\}}$. 
    Let~$\gamma$ be the ${(\Gamma/\Gamma_{\{m\}})}$\nobreakdash-labelling induced by~$\gamma_{G,S}$ and let~$\mathcal{O} := \mathcal{O}_{G,\mathcal{S}}$ as defined above. 
    Note that a cycle of~$G$ is in~$\mathcal{O}$ if and only if its $\gamma$-value is in~${A + \Gamma_{\{m\}}}$. 
    Note that for~${\mathcal{C}(f(3)+1,f(3)+2,\Gamma/\Gamma_{\{m\}},A+\Gamma_{\{m\}})}$, we have that~${(G,\gamma)}$ satisfies  properties~\ref{item:obstructions-union}--\ref{item:obstructions-even}. 
    In particular, every cycle of~$G$ that contains exactly one $W$-handle from each~${\mathcal{P} \in \mathfrak{P}}$ is in~$\mathcal{O}$.

    Now we claim that there is no set of vertices of size at most~${f(3)}$ hitting all cycles in~$\mathcal{O}$. 
    For a set~$T$ of vertices with~${\abs{T} \leq f(3)}$, 
    there are two columns~$C$ and~$C'$ of~$W$ containing no vertex of~$T$,
    because~$W$ has more than~${\abs{T} + 2}$ columns. 
    Similarly, for each~${\mathcal{P} \in \mathfrak{P}}$, there is a $W$\nobreakdash-handle~${P_{\mathcal{P}} \in \mathcal{P}}$ such that 
    no vertex of~$T$ is contained in~$P_{\mathcal{P}}$ or 
    the two rows~$R_{\mathcal{P}}$ and~$R'_{\mathcal{P}}$ that intersect~$P_{\mathcal{P}}$.
    Then,~${\bigcup \{ P_{\mathcal{P}} \cup R_{\mathcal{P}} \cup R'_{\mathcal{P}} \colon \mathcal{P} \in \mathfrak{P} \} \cup C \cup C'}$ is disjoint from~$T$ and contains a cycle in~$\mathcal{O}$. 
    Therefore,~$T$ does not hit all cycles in~$\mathcal{O}$.

    We deduce from the definition of~$f$ that~$G$ contains three vertex-disjoint cycles in~$\mathcal{O}$. 
    Hence, ${(G,\gamma) \notin \mathcal{C}(f(3)+1, f(3)+2, \Gamma/\Gamma_{\{m\}}, A+\Gamma_{\{m\}})}$ by Proposition~\ref{prop:obstructions}.
    Now if~${p_1 = 2}$, then since property~\ref{subitem:obstructions-oddcrossing} is not satisfied, we have that~${1 \notin I}$. 
    Therefore, condition~\ref{item:Sml2} holds. 
    Now every $W$-handlebar in~$\mathfrak{P}$ is in series. 
    As property~\ref{subitem:obstructions-3series} is not satisfied,  conditions~\ref{item:Sml1} and~\ref{item:Sml3} hold. 
\end{proof}

\subsection{Restriction to graphs embeddable in orientable surfaces}
\label{subsec:orientable}

In this subsection, we study the implications of Theorem~\ref{thm:mainobstruction} when restricting to the class of graphs embeddable in a fixed orientable surface. 

It is known that large Escher walls are not embeddable on any fixed compact orientable surface (see for example~\cite{ABY1963}). 
The following proposition provides a condition under which no graph in~${\mathcal{C}(\kappa,\theta,\Gamma,A)}$ is embeddable on a fixed orientable surface.
\begin{proposition}
    \label{prop:planar}
    Let~$\mathbb{S}$ be a compact orientable surface
    and let~$\kappa$ be an integer such that every wall of order at least~$\kappa$ with a crossing handlebar of size at least~$\kappa$ is not embeddable on~$\mathbb{S}$. 
    (If~$\mathbb{S}$ is the sphere, then~${\kappa \geq 3}$.)
    Let~${\theta := \kappa}$.
    Let~$A$ be a subset of an abelian group~$\Gamma$. 
    The following statements are equivalent. 
    \begin{enumerate}
        [label=(\roman*)]
        \item\label{item:planar-1} Every graph in~${\mathcal{C}(\kappa,\theta,\Gamma,A)}$ satisfies property~\ref{subitem:obstructions-oddcrossing}.
        \item\label{item:planar-1.5} No graph in~${\mathcal{C}(\kappa,\theta,\Gamma,A)}$ is embeddable on~$\mathbb{S}$.
        \item \label{item:planar-2} No graph in~${\mathcal{C}(\kappa,\theta,\Gamma,A)}$ is planar.
        \item\label{item:planar-3} Every finite subset~$X$ of~$\Gamma$ with~${\sum_{g \in X} g \in A}$ contains a subset~$Y$ of size~${y \leq 2}$ such \linebreak that~${\gen{yY} \cap A \neq \emptyset}$. 
    \end{enumerate} 
\end{proposition}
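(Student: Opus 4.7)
The plan is to prove the cycle $\text{(i)}\Rightarrow\text{(ii)}\Rightarrow\text{(iii)}\Rightarrow\text{(i)}$ together with $\text{(i)}\Leftrightarrow\text{(iv)}$. For $\text{(i)}\Rightarrow\text{(ii)}$, any $(G,\gamma)\in\mathcal{C}(\kappa,\theta,\Gamma,A)$ contains, by (i), a crossing $W$-handlebar of size at least $\kappa$ on a wall of order at least $\kappa=\theta$; by the hypothesis on $\kappa$ this configuration does not embed in $\mathbb{S}$, so neither does $G$. Then $\text{(ii)}\Rightarrow\text{(iii)}$ is immediate, since every planar graph embeds in any compact orientable surface (after removing a point from the sphere).

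For $\text{(iii)}\Rightarrow\text{(i)}$ (contrapositive), given an obstruction $(G,\gamma)\in\mathcal{C}$ with an even number $x$ of crossing handlebars, I would construct a planar $G'\in\mathcal{C}$. If $x=0$, take $G'=G$. Otherwise, build $G'$ from scratch: a wall of order $\theta$, the originally nested and in-series handlebars retained with their labels $v_i$, and one additional nested handlebar of size $\kappa$ carrying the label $\sum_{i\in C}v_i$ (where $C$ indexes the originally crossing handlebars). Properties (O1)--(O5) are inherited from $G$; since in $G$ we must have $s\geq 1$ (else (O6) fails with $x$ even), $G'$ satisfies (O6b) with the combined nested handlebar and $s\geq 1$ in-series handlebars.

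For $\text{(iv)}\Rightarrow\text{(i)}$ (contrapositive), pick one handle $P_i\in\mathcal{P}_i$ from each handlebar of an obstruction $G\in\mathcal{C}$ violating (O6a), let $v_i:=\gamma(P_i)$ and $X:=\{v_i:i\in[t]\}$. Then $\sum X\in A$ by (O3), and (O6) holding with even crossings forces $t\geq 2$. Since $\langle v_i\rangle\subseteq\langle v_j:j\neq\ell\rangle$ for any $\ell\neq i$, (O4) gives $\langle v_i\rangle\cap A=\emptyset$. For each pair $\{i,j\}$: if $t\geq 3$, $\langle 2v_i,2v_j\rangle\subseteq\langle v_k:k\neq\ell\rangle$ for some $\ell\neq i,j$ and hence misses $A$; if $t=2$, (O6) forces a non-series handlebar, and applying (O5) to coefficients $c_i=c_j=2$ (and more generally $c_i=2\alpha$, $c_j=2\beta$) shows $\langle 2v_i,2v_j\rangle\cap A=\emptyset$. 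Thus (iv) fails for $X$.

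For $\text{(i)}\Rightarrow\text{(iv)}$ (contrapositive), take $X\subseteq\Gamma$ of minimum cardinality $t\geq 2$ witnessing the failure of (iv). A standard rescaling argument (replacing $X$ by $\{c_ja_j:j\neq i\}$ whenever $\sum_{j\neq i}c_ja_j\in A$) combined with minimality ensures $\langle X\setminus\{a\}\rangle\cap A=\emptyset$ for each $a\in X$. For $t\geq 3$, an obstruction with a wall of order $\theta$ and $t$ all-in-series handlebars of size $\kappa$ labelled $a_i$ satisfies (O1)--(O5) and (O6c). For $t=2$ with $X=\{a,b\}$, if $\langle a,2b\rangle\cap A=\emptyset$ or $\langle 2a,b\rangle\cap A=\emptyset$, build an obstruction with one nested and one in-series handlebar (Figure~\ref{fig:obstructions-intro}(b)); otherwise, replace $X$ by $X'=\{a,g-a\}$ for some $g\in A\cap(a+\langle 2a,2b\rangle)$ chosen so that $2(g-a)\in\langle a\rangle$, ensuring $\langle a,2(g-a)\rangle=\langle a\rangle$ misses $A$ and placing us in the good sub-case. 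The main technical obstacle is showing such a $g$ always exists in this residual sub-case; I expect this to follow from a coset-counting argument within $\langle a,b\rangle/\langle 2a,2b\rangle$ (an elementary $2$-group of order at most $4$) combined with the constraint $\langle 2a,2b\rangle\cap A=\emptyset$.
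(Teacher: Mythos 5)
Your structure (a 3-cycle $\text{(i)}\Rightarrow\text{(ii)}\Rightarrow\text{(iii)}\Rightarrow\text{(i)}$ together with $\text{(i)}\Leftrightarrow\text{(iv)}$) differs from the paper's single chain $\text{(i)}\Rightarrow\text{(ii)}\Rightarrow\text{(iii)}\Rightarrow\text{(iv)}\Rightarrow\text{(i)}$. In particular, your direct proof of $\text{(iii)}\Rightarrow\text{(i)}$ --- merging the $x\geq 2$ crossing handlebars of $G$ into one nested handlebar labelled $\sum_{i\in C}v_i$ to produce a planar member of $\mathcal{C}$ --- is a genuinely different route not taken by the paper (which instead goes through (iv)). It does work, but you should actually verify (O3)--(O5) for the constructed $G'$; for (O5), lift a coefficient function $f'$ on $G'$ to a function on $G$ supported on one chosen handle $P_i$ per original handlebar, placing the new handlebar's total coefficient on every $P_i$ with $i\in C$, and then apply (O5) for $G$. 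You are also implicitly giving every handle of $\mathcal{P}'_i$ the constant value $v_i$, which is legitimate since $G'$ is built from scratch, but should be stated.

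The genuine gap is in the residual $t=2$ sub-case of your $\text{(i)}\Rightarrow\text{(iv)}$ contrapositive. You require a $g\in A\cap(a+\gen{2a,2b})$ with the extra condition $2(g-a)\in\gen a$. Such a $g$ need not exist: take $\Gamma=\mathbb{Z}^2$, $a=(1,0)$, $b=(0,1)$, $A=\{(1,1),(2,1),(1,2)\}$. Then $X=\{a,b\}$ is a counterexample to (iv) lying in your residual sub-case, but $A\cap(a+\gen{2a,2b})=\{(1,2)\}$ and $2\bigl((1,2)-(1,0)\bigr)=(0,4)\notin\gen{(1,0)}$, so the coset-counting argument you anticipate cannot recover a suitable $g$. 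The fix is that the extra condition is superfluous. For any $g\in A\cap(a+\gen{2a,2b})$ --- this set is nonempty since $\gen{a,2b}\cap A\neq\emptyset$ (the residual hypothesis), $\gen{a,2b}=\gen{2a,2b}\cup(a+\gen{2a,2b})$, and $\gen{2a,2b}\cap A=\emptyset$ --- the pair $X'=\{a,\,g-a\}$ is a counterexample to (iv), and because $g-a\in\gen{2a,2b}$ one gets $\gen{2a,\,g-a}\subseteq\gen{2a,2b}$ and hence $\gen{2a,\,g-a}\cap A=\emptyset$ automatically. So you should label the nested handlebar $a$ and the in-series one $g-a$ --- the opposite assignment to the one your constraint was engineered for --- and the (O5) check succeeds. (The paper's own repair in this sub-case is a different but equally easy replacement $\{g_1,g_2\}=\{a x_2,\ 2b(x_1+x_2)\}$.)
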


\begin{proof}
    By definition of~$\kappa$,~\ref{item:planar-1} implies~\ref{item:planar-1.5}. 
    Trivially,~\ref{item:planar-1.5} implies~\ref{item:planar-2}. 
    
    Now suppose that~\ref{item:planar-2} holds and~\ref{item:planar-3} does not hold. 
    Let~${X \subseteq \Gamma}$ be a counterexample to~\ref{item:planar-3} minimizing~$\abs{X}$.
    Then for every subset~$Y$ of~$X$ of size~${y \leq 2}$, we have~${\gen{yY} \cap A = \emptyset}$. 
    If there is a proper subset~$X'$ of~$X$ such that~${\gen{X'} \cap A \neq \emptyset}$, 
    then there is a subset~$X''$ of~$\gen{X'}$ with~${\abs{X''} < \abs{X}}$ such that~${\sum_{g \in X''} g \in A}$ and each element of~$X''$ is a multiple of some element of~$X$. 
    By the minimality of~$\abs{X}$, the set~$X''$ has a subset~$Y'$ of size~${y \leq 2}$ such that~${\gen{yY'} \cap A \neq \emptyset}$. 
    In this case, there is a subset~$Y$ of~$X$ of size~$y$ such that~${\gen{yY'} \subseteq \gen{yY}}$, contradicting the assumption that~${\gen{yY} \cap A = \emptyset}$. 
    Therefore, ${\gen{X'} \cap A = \emptyset}$ for every proper subset~$X'$ of~$X$. 
    
    If~${\abs{X} \geq 3}$, then~${\mathcal{C}(\kappa,\theta,\Gamma,A)}$ contains the obstruction~${(G,\gamma)}$ consisting of a wall~$W$ of order at least~$\theta$ and a family~${( \mathcal{P}_x \colon x \in X )}$ of $W$\nobreakdash-handlebars, each in series, with~${\gamma(P) = x}$ for all~${P \in \mathcal{P}_x}$. 
    Since~$G$ is planar, this contradicts~\ref{item:planar-2}. 
    Now suppose~${\abs{X} = 2}$ and let~${X = \{x_1,x_2\}}$, where~${\gen{2x_1,2x_2} \cap A = \emptyset}$.
    If~${\gen{2x_1,x_2} \cap A = \emptyset}$, then set~${(g_1,g_2) := (x_1,x_2)}$ and otherwise let~$a$ and~$b$ be integers such that~${ax_2 + 2b(x_1+x_2) \in A}$
    and set~${(g_1,g_2) := (ax_2,2b(x_1+x_2))}$. 
    Observe that~${\{g_1,g_2\}}$ is a counterexample to~\ref{item:planar-3} with~${\gen{2g_1,g_2} \cap A = \emptyset}$. 
    Now~${\mathcal{C}(\kappa,\theta,\Gamma,A)}$ contains the obstruction~${(G,\gamma)}$ consisting of a wall~$W$ of order at least~$\theta$ and two $W$-handlebars~${\mathcal{P}_1}$ and~${\mathcal{P}_2}$ 
    such that~$\mathcal{P}_1$ is nested and~$\mathcal{P}_2$ is in series, and such that~${\gamma(P) = g_i}$ for all~${P \in \mathcal{P}_i}$ for each~${i \in [2]}$. 
    As before,~$G$ is planar, contradicting~\ref{item:planar-2}.
    Therefore,~\ref{item:planar-2} implies~\ref{item:planar-3}. 
    
    Now suppose that~\ref{item:planar-3} holds, and let~${(G,\gamma) \in \mathcal{C}(\kappa,\theta,\Gamma,A)}$ which consists of a wall~$W$ and a family~${\mathfrak{P} = (\mathcal{P}_i \colon i \in [t])}$ of $W$-handlebars as described in Definition~\ref{def:obstructions}. 
    Let~${(P_i \colon i \in [t])}$ be a family such that~${P_i \in \mathcal{P}_i}$ for all~${i \in [t]}$, and let~${X = \{ \gamma(P_i) \colon i \in [t] \}}$. 
    By property~\ref{item:obstructions-minimality}, ${\gen{Y} \cap A = \emptyset}$ for every proper subset~${Y}$ of~${X}$ and therefore~${\abs{X} \leq 2}$ by~\ref{item:planar-3}. 
    If property~\ref{subitem:obstructions-seriesnonseries} holds, then~${\abs{X} \geq 2}$ and so by~\ref{item:planar-3}, we have that~${\gen{2X} \cap A \neq \emptyset}$, which implies that~$\mathcal{P}_{i}$ is in series for all~${i \in [t]}$ by~\ref{item:obstructions-even}, contradicting property~\ref{subitem:obstructions-seriesnonseries}. 
    Thus,~$G$ does not satisfy property~\ref{subitem:obstructions-seriesnonseries}.
    Now~$G$ does not satisfy property~\ref{subitem:obstructions-3series} since this would require~${\abs{X} \geq 3}$.
    We conclude that~$G$ satisfies property~\ref{subitem:obstructions-oddcrossing}, and therefore~\ref{item:planar-3} implies~\ref{item:planar-1}. 
\end{proof}

By applying Theorem~\ref{thm:mainobstruction} with Proposition~\ref{prop:planar}, 
we obtain the following corollary.

\begin{corollary}
    \label{cor:orientablesurface}
    For all positive integers~$m$ and~$\omega$ and every compact orientable surface~$\mathbb{S}$, there is a function~${f \colon \mathbb{N} \to \mathbb{N}}$ satisfying the following property. 
    Let~${\Gamma = \prod_{j \in [m]} \Gamma_j}$ be a product of~$m$ abelian groups, and for each~${j \in [m]}$, let~$\Omega_j$ be a subset of~$\Gamma_j$ with~$\abs{\Omega_j} \leq \omega$.
    Let~$A$ be the set of all elements~${g \in \Gamma}$ such that~${\pi_j(g) \in \Gamma_j \setminus \Omega_j}$ for all~${j \in [m]}$. 
    Suppose that 
    \begin{enumerate}
        [label=$(\ast)$]
        \item\label{item:orientablesurface} every finite subset~$X$ of~$\Gamma$ with~${\sum_{g \in X} g \in A}$  
        contains a subset~$Y$ of size~${y \leq 2}$ such \linebreak that~${\gen{yY} \cap A \neq \emptyset}$. 
    \end{enumerate}
    Let~$G$ be a $\Gamma$-labelled graph embeddable in~$\mathbb{S}$ with $\Gamma$-labelling~$\gamma$ and let~${\mathcal{O}}$ be the set of all cycles of~$G$ whose $\gamma$-values are in~$A$.
    Then for all~${k \in \mathbb{N}}$, there exists 
    a set of~$k$ pairwise vertex-disjoint cycles in~$\mathcal{O}$
    or a hitting set for~$\mathcal{O}$ of size at most~${f(k)}$. 
\end{corollary}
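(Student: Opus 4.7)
The plan is to derive this as a direct consequence of Theorem~\ref{thm:mainobstruction} combined with Proposition~\ref{prop:planar}. First I would choose an integer~${\kappa \geq 3}$, depending only on~$\mathbb{S}$, such that no wall of order at least~$\kappa$ carrying a crossing $W$-handlebar of size at least~$\kappa$ embeds in~$\mathbb{S}$; this is possible because the Euler genus of such a configuration tends to infinity with~$\kappa$. Setting~${\theta := \kappa}$ and~${f(k) := \hat f_{m,\omega}(k,\kappa,\theta)}$ with~$\hat f_{m,\omega}$ as in Theorem~\ref{thm:mainobstruction}, I would then apply Theorem~\ref{thm:mainobstruction} to~${(G,\gamma)}$.

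This yields one of three outcomes: a packing of~$k$ vertex-disjoint cycles in~$\mathcal{O}$; a hitting set for~$\mathcal{O}$ of size at most~${f(k)}$; or a subgraph~$H$ of~$G$ together with a subset~${J \subseteq [m]}$ such that~${(H,\gamma'') \in \mathcal{C}(\kappa,\theta,\Gamma/\Gamma_J,A+\Gamma_J)}$ for the $(\Gamma/\Gamma_J)$-labelling~$\gamma''$ induced by the restriction to~$H$ of a $\Gamma$-labelling shifting-equivalent to~$\gamma$. The first two outcomes immediately give the conclusion, so the task reduces to excluding the third.

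The key step is to verify that hypothesis~\ref{item:orientablesurface} descends to~${\Gamma/\Gamma_J}$ with~${A + \Gamma_J}$. Given a finite subset~${X' \subseteq \Gamma/\Gamma_J}$ with~${\sum_{g' \in X'} g' \in A+\Gamma_J}$, I would pick distinct lifts~${g_1,\dots,g_n \in \Gamma}$ of the elements of~$X'$. The sum~${\sum_i g_i}$ then differs from some~${a \in A}$ by an element~${h \in \Gamma_J}$, so replacing~$g_1$ by~${g_1 - h}$ (which lies in the same coset modulo~$\Gamma_J$ and is therefore still distinct from the other lifts) gives a set~${X \subseteq \Gamma}$ of size~${\abs{X'}}$ with~${\sum_{g \in X} g \in A}$. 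Applying~\ref{item:orientablesurface} to~$X$ produces~${Y \subseteq X}$ of size~${y \leq 2}$ with~${\gen{yY} \cap A \neq \emptyset}$, and projecting~$Y$ back to the quotient yields~${Y' \subseteq X'}$ of size~$y$ with~${\gen{yY'} \cap (A+\Gamma_J) \neq \emptyset}$. Thus statement~\ref{item:planar-3} of Proposition~\ref{prop:planar} holds for~${\Gamma/\Gamma_J}$ with~${A+\Gamma_J}$, so by that proposition no graph in~${\mathcal{C}(\kappa,\theta,\Gamma/\Gamma_J,A+\Gamma_J)}$ is embeddable in~$\mathbb{S}$.

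Since shifting the labelling does not change the underlying graph, the subgraph~$H$ inherits an embedding in~$\mathbb{S}$ from~$G$, contradicting the preceding conclusion. Hence the third outcome cannot occur, which completes the argument. The only genuinely delicate point is ensuring that the descent of hypothesis~\ref{item:orientablesurface} preserves both the sum condition and the cardinality of the selected subset; everything else is a direct invocation of the two cited results.
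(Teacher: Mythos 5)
Your proof is correct and takes essentially the same route as the paper: reduce to Theorem~\ref{thm:mainobstruction}, then rule out the obstruction outcome via Proposition~\ref{prop:planar} by verifying that condition~\ref{item:orientablesurface} descends to~${\Gamma/\Gamma_J}$ with~${A+\Gamma_J}$. Your lift-and-correct argument (adjusting one representative by the~$\Gamma_J$-error $h$) is the same construction the paper uses, just spelled out in more detail; the paper also separately handles the trivial case~${A=\emptyset}$ at the outset, but as you implicitly observe, that case is vacuous anyway.
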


\begin{proof}
    We may assume that~${A \neq \emptyset}$, because otherwise the result is trivial. 
    Let~$\kappa$ be an integer such that every wall of order at least~$\kappa$ with a crossing handlebar of size at least~$\kappa$ is not embeddable on~$\mathbb{S}$. 
    Let~${\theta := \kappa}$.
    By Theorem~\ref{thm:mainobstruction}, it is enough to show that no graph in~${\mathcal{C}(\kappa,\theta,\Gamma/\Gamma_J,A+\Gamma_J)}$ is embeddable in~$\mathbb{S}$ for every~${J \subseteq [m]}$.
    By Proposition~\ref{prop:planar}, it suffices to show that condition~\ref{item:orientablesurface} holds for the group~${\Gamma/\Gamma_J}$ and its subset~${A+\Gamma_J}$ for every subset~$J$ of~${[m]}$. 
    Let~${J \subseteq [m]}$ and let~${X = \{g_i \colon i \in [s]\}}$ be a finite subset of~${\Gamma/\Gamma_J}$ such that~${\sum_{i \in [s]} g_i \in A + \Gamma_J}$. 
    Since~${A \neq \emptyset}$, we can pick a representative~${g'_i \in g_i}$ for each~${i \in [s]}$ so that~${\sum_{i \in [s]} g'_i \in A}$. 
    Now by condition~\ref{item:orientablesurface} there is a subset~${Y \subseteq [s]}$ of size~${y \leq 2}$ such that~${\gen{yg'_i \colon i \in Y} \cap A \neq \emptyset}$. 
    By the definition of~$A$, we have~${\gen{yg_i \colon i \in Y} \cap (A + \Gamma_J) \neq \emptyset}$, as required. 
\end{proof}

Now we show a converse of Corollary~\ref{cor:orientablesurface} analogous to Theorem~\ref{thm:maingroup2} for compact orientable surfaces, namely that whenever condition~\ref{item:orientablesurface} of Corollary~\ref{cor:orientablesurface} fails to hold, there is a planar obstruction.

\begin{corollary}
    \label{cor:orientablesurface2}
    Let~$A$ be a subset of an abelian group~$\Gamma$ such that~$A$ does not satisfy the following condition:
    \begin{enumerate}
        [label=$(\ast)$]
        \item \label{item:planarproperty} every finite subset~$X$ of~$\Gamma$ with~${\sum_{g \in X} g \in A}$ contains a subset~$Y$ of size~${y \leq 2}$ such \linebreak that~${\gen{yY} \cap A \neq \emptyset}$. 
    \end{enumerate}
    Then for every positive integer~$t$, there is a planar graph~$G_{\Gamma,A,t}$ with a $\Gamma$-labelling~$\gamma$ such that for the set~$\mathcal{O}$ of cycles of~$G_{\Gamma,A,t}$ with values in~$A$, 
    there are no three vertex-disjoint cycles in~$\mathcal{O}$ and there is no hitting set for~$\mathcal{O}$ of size at most~$t$. 
\end{corollary}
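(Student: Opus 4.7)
The plan is to construct, for each positive integer $t$, a planar $\Gamma$-labelled graph $(G, \gamma) \in \mathcal{C}(t+1, t+2, \Gamma, A)$. Once this is done, Proposition~\ref{prop:obstructions} gives that $G$ has no three pairwise vertex-disjoint cycles in $\mathcal{O}$, while a pigeonhole argument on the $t+2$ columns and size-$(t+1)$ handlebars of the underlying wall, carried out exactly as in the proofs of Theorem~\ref{thm:maingroup2} and Corollary~\ref{cor:Sml}, shows that no set of at most $t$ vertices meets every cycle in $\mathcal{O}$.

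Let $X \subseteq \Gamma$ be a counterexample to $(\ast)$ with $\abs{X}$ as small as possible. Arguing as in the first step of the proof of Proposition~\ref{prop:planar}, we first establish that $\langle X' \rangle \cap A = \emptyset$ for every proper subset $X' \subsetneq X$; in particular $\abs{X} \geq 2$. If $\abs{X} \geq 3$, let $W$ be a wall of order $t+2$ and attach along its first column a family $(\mathcal{P}_x : x \in X)$ of pairwise vertex-disjoint non-mixing $W$-handlebars, each in series and of size $t+1$. Set $\gamma \equiv 0$ on $E(W)$ and $\gamma(P) := x$ for $P \in \mathcal{P}_x$. Since all handlebars are in series and sit on the same side of $W$, $G$ is planar; axioms \ref{item:obstructions-union}--\ref{item:obstructions-allowabletransversals} follow from the construction, \ref{item:obstructions-minimality} is the minimality claim applied to $X \setminus \{x\}$, \ref{item:obstructions-even} is vacuous, and \ref{item:obstructions-handlebars} is witnessed by \ref{subitem:obstructions-3series}.

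If $\abs{X} = 2$, write $X = \{x_1, x_2\}$ and define $(g_1, g_2)$ as in the proof of Proposition~\ref{prop:planar}: take $(g_1, g_2) := (x_1, x_2)$ when $\langle 2x_1, x_2 \rangle \cap A = \emptyset$, and otherwise take $(g_1, g_2) := (ax_2,\, 2b(x_1+x_2))$ for integers $a, b$ with $ax_2 + 2b(x_1+x_2) \in A$. A direct computation yields $g_1 + g_2 \in A$ together with the inclusions $\langle g_1 \rangle \subseteq \langle x_2 \rangle$, $\langle g_2 \rangle \subseteq \langle 2x_1, 2x_2 \rangle$, and $\langle 2g_1, g_2 \rangle \subseteq \langle 2x_1, 2x_2 \rangle$, and by the choice of $X$ all three subgroups miss $A$. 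Build $G$ from a wall $W$ of order $t+2$ equipped with a nested $W$-handlebar $\mathcal{P}_1$ on one side and an in-series $W$-handlebar $\mathcal{P}_2$ on the opposite side, each of size $t+1$, with $\gamma(P) := g_i$ for $P \in \mathcal{P}_i$. This $G$ is planar; the only non-trivial axiom is \ref{item:obstructions-even}, which follows from the observation that if $\sum_{P \in \mathcal{P}_1} f(P)$ is even then $\sum_{i, P} f(P)\gamma(P) \in \langle 2g_1, g_2 \rangle$. Axiom \ref{item:obstructions-handlebars} is witnessed by \ref{subitem:obstructions-seriesnonseries}.

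In either case $(G, \gamma) \in \mathcal{C}(t+1, t+2, \Gamma, A)$, and Proposition~\ref{prop:obstructions} yields the non-existence of three vertex-disjoint cycles in $\mathcal{O}$. For the hitting-set lower bound, given $T \subseteq V(G)$ with $\abs{T} \leq t$, the $t+2$ columns of $W$ provide at least two disjoint from $T$, and each handlebar (of size $t+1$) contains a handle whose two incident rows of $W$ together with the handle itself all avoid $T$; the union of these columns, rows, and chosen handles contains a cycle in $\mathcal{O}$ disjoint from $T$ by the same argument as in the proofs of Theorem~\ref{thm:maingroup2} and Corollary~\ref{cor:Sml}, so $T$ is not a hitting set. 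The main technical delicacy is the minimality argument for $X$: when passing from $Y'' \subseteq X''$ to the corresponding $Y \subseteq X$ (where each element $c_x x$ of $Y''$ is replaced by $x$), two distinct elements of $Y''$ may come from the same $x \in X$, forcing $\abs{Y} < \abs{Y''}$; however, in that case $\langle \abs{Y''} \cdot Y'' \rangle \subseteq \langle x \rangle = \langle \abs{Y} \cdot Y \rangle$, which still contradicts $X$ being a counterexample.
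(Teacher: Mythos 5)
Your proof is correct and follows the same overall plan as the paper's: construct a planar labelled graph in $\mathcal{C}(t+1,t+2,\Gamma,A)$, invoke Proposition~\ref{prop:obstructions} to rule out three disjoint allowable cycles, and then a pigeonhole argument over the $t+2$ columns and the size-$(t+1)$ handlebars shows that no set of $t$ vertices is a hitting set. The $\abs{X}=2$ case (nested $\mathcal{P}_1$ plus in-series $\mathcal{P}_2$ labelled $g_1,g_2$ chosen exactly as in Proposition~\ref{prop:planar}) is identical to the paper's.

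The one genuine divergence is the $\abs{X}\geq 3$ case. The paper first shows $\gen{x_1,x_2}\cap A=\emptyset$ for all distinct $x_1,x_2\in X$, concludes that condition~\ref{item:mainintro2-2} of Theorem~\ref{thm:maingroup2} fails, and reuses that theorem's three-handlebar construction. You instead prove the stronger fact $\gen{X'}\cap A=\emptyset$ for \emph{every} proper subset $X'\subsetneq X$ (as in the proof of Proposition~\ref{prop:planar}) and build the obstruction directly with $\abs{X}$ in-series handlebars. Both routes produce a planar graph, but yours is more self-contained: passing from the two-element fact to the failure of condition~\ref{item:mainintro2-2} when $\abs{X}>3$ really does require the full proper-subset claim you establish (one must set $c:=x_3+\cdots+x_n$, and $\gen{x_1,c}\cap A=\emptyset$ does not follow from the pairwise statement alone), so your version is arguably the more careful reading of the argument. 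Your final remark about collisions in the minimality argument (two elements of $Y''$ being multiples of the same $x\in X$, in which case $\gen{yY''}\subseteq\gen{x}$ already contradicts the choice of $X$) correctly patches a detail that the paper glosses over. One small imprecision: when you attach $\abs{X}$ handlebars of size $t+1$ in series along one column of a wall of \emph{order} $t+2$, you should explicitly take the wall tall enough, say with at least $2(t+1)\abs{X}$ rows (still order $t+2$), so that the handlebars can be placed in pairwise disjoint segments of the column and remain non-mixing; the paper has the same implicit assumption.
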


\begin{proof}
    Let~${X \subseteq \Gamma}$ be a counterexample to~\ref{item:planarproperty} of minimum size. 
    Then for all subsets~$Y\subseteq X$ of size~${y \leq 2}$, we have~${\gen{yY} \cap A = \emptyset}$. 
    In particular, ${0 \notin A}$ 
    and~${\gen{a} \cap A = \emptyset}$ for all~${a \in X}$.
    Furthermore, if~$a$ and~$b$ are distinct elements of~$X$, then~${\gen{2a,2b} \cap A = \emptyset}$. 
    
    Suppose that~${\abs{X} \geq 3}$. 
    If~$X$ has two elements~$x_1$ and~$x_2$ such that~${\gen{x_1,x_2} \cap A \neq \emptyset}$, then there are~${c_1, c_2 \in \mathbb{Z}}$ such that~${c_1 x_1 + c_2 x_2 \in A}$. 
    Since~${\gen{x_1} \cap A = \emptyset}$ and~${\gen{x_2} \cap A = \emptyset}$, we have both~${c_1 \neq 0}$ and~${c_2 \neq 0}$, as well as~${c_1 x_1 \neq c_2 x_2}$. 
    Let~${X' = \{c_1x_1,c_2x_2\}}$. 
    Since~${\gen{2c_1x_1,2c_2x_2} \subseteq \gen{2x_1,2x_2}}$, the set~$X'$ is also a counterexample to~\ref{item:planarproperty}, contradicting the minimality of~$\abs{X}$. 
    Therefore~$X$ has no two elements~$x_1$ and~$x_2$ such that~${\gen{x_1,x_2} \cap A \neq \emptyset}$ 
    and hence~$A$ does not satisfy the second property in Theorem~\ref{thm:maingroup2}. 
    Then the graph constructed in the proof of Theorem~\ref{thm:maingroup2} is planar. 

    Hence we may assume that~${X = \{x_1,x_2\}}$ for some pair of distinct elements of~$\Gamma$ since condition~\ref{item:planarproperty} is trivially satisfied by subsets of~$\Gamma$ of size at most~$1$. 
    As in the proof of Proposition~\ref{prop:planar}, we may assume that~${\gen{2x_1,x_2} \cap A = \emptyset}$. 
   
    Let~$G_{\Gamma,A,t}$ be a graph consisting of a wall~$W$ of order at least~${t+2}$ and a pair~${\mathfrak{P} = \{\mathcal{P}_i \colon i \in [2]\}}$ of pairwise vertex-disjoint non-mixing $W$-handlebars each of size~${t+1}$, such that~$\mathcal{P}_1$ is nested and~$\mathcal{P}_2$ is in series. 
    Defining~$\gamma$ such that~${\gamma(e) = 0}$ for all~${e \in E(W)}$ and~${\gamma(P) = x_i}$ for all~${i \in [2]}$ and~${P \in \mathcal{P}_i}$ yields that~${(G, \gamma) \in \mathcal{C}(t+1,t+2,\Gamma,A)}$. 
    It now follows from Proposition~\ref{prop:obstructions} that there are no three vertex-disjoint cycles in~$\mathcal{O}$. 
    Now consider a set~${T \subseteq V(G)}$ of size at most~$t$. 
    Since~$W$ has at least~${t+2}$ columns, there are two columns~$C^W_{\ell_1}$ and~$C^W_{\ell_2}$ of~$W$ having no vertex of~$T$.
    For each~${i \in [2]}$, since~${\abs{\mathcal P_i} = t + 1}$, 
    there is a $W$-handle~${P_i \in \mathcal{P}_i}$ such that neither~$P_i$ nor any of the two rows~$R^W_{j_i}$ and~$R^W_{k_i}$ that intersect~$P_i$ contains a vertex of~$T$.
    Now~${P_1 \cup R_{j_1}^W \cup R_{k_1}^W \cup P_2 \cup R_{j_2}^W \cup R_{k_2}^W \cup C^W_{\ell_1} \cup C^W_{\ell_2}}$ has no vertex in~$T$ and contains a cycle in~$\mathcal{O}$. 
    Hence,~$T$ is not a hitting set for~$\mathcal{O}$ as desired.
\end{proof}

For example, cycles that are either odd or of length~$16$ modulo~$30$ satisfy an Erd\H{o}s-P\'{o}sa type property when restricted to planar graphs; 
to see this, consider
\[
    \Gamma := \mathbb{Z}_{30} \text{ and }A := \{1,3,5,\ldots,29\} \cup \{16\},
\] 
and the class of $\Gamma$-labelled graphs whose edges all have value $1$.
Let~${X \subseteq \Gamma}$ with~${\sum_{g \in X} g \in A}$. 
We claim that~$X$ contains a subset~$Y$ of size~${y \leq 2}$ such  
that~${\gen{yY} \cap A \neq \emptyset}$. 
Since~${0 \notin A}$, we have that~$X$ is nonempty. 
If~$X$ contains some~${a \in A}$,  then~${\gen{a} \cap A \neq \emptyset}$. 
If~${\gcd(a,30) \mid 2}$ for some~${a \in X}$, then~${16 \in \gen{a}}$. 
So we may assume that every element of~$X$ is even and a multiple of either~$3$ or~$5$. 
Then~${\sum_{g \in X} g \equiv 16 \pmod{30}}$. 
Then there is~${g_1 \in X}$ such that~${g_1 \not\equiv 0 \pmod{3}}$ and similarly there is~${g_2 \in X}$ such that~${g_2 \not\equiv 0 \pmod{5}}$. 
Choose~${a_1 \in \{10,20\}}$ so that~${a_1 g_1 \equiv 1 \pmod 3}$ and choose~${a_2 \in \{6,12,18,24\}}$ so that~${a_2 g_2 \equiv 1 \pmod 5}$. 
Let~${m = a_1 g_1 + a_2 g_2}$. 
Then~$m$ is even, ${m \equiv 1 \pmod 3}$ 
and~${m\equiv 1\pmod 5}$, which imply that~${m \equiv 16 \pmod {30}}$. 
Thus,~${\gen{2g_1,2g_2} \cap A \neq \emptyset}$ and condition~\ref{item:orientablesurface} of Corollary~\ref{cor:orientablesurface} holds. 

However, the cycles that are either odd or of length~$106$ modulo~$210$ do not satisfy an Erd\H{o}s-P\'{o}sa type property when restricted to planar graphs.
This is because~${120 + 70 + 126 \equiv 106 \pmod{210}}$ and for every proper subset~$Y$ of~${\{120,70,126\}}$ of size~$y$, the subset~$\gen{yY}$ of~${\mathbb{Z}_{210}}$ has empty intersection with~${\{1,3,5,\ldots,209\} \cup \{106\}}$. 

We will now derive the exact characterisation of when cycles of length~$\ell$ modulo~$z$ satisfy an Erd\H{o}s-P\'{o}sa type result in planar graphs. 

\planarmod*
\begin{proof}
    Let~${\Gamma := \mathbb{Z}_z}$ and~${A := \{\ell\}\subseteq \Gamma}$.
    Given a $\Gamma$-labelled graph~$(G,\gamma)$ with no three pairwise vertex-disjoint cycles whose $\gamma$-values are in~$A$, we can construct a graph~$G$ with no three pairwise vertex-disjoint cycles of length~$\ell$ modulo~$z$ by replacing every edge~${e \in E(G)}$ by a path of length~$\gamma(e)$ modulo~$z$. 
    The size of the smallest hitting set for the cycles of length~$\ell$ modulo~$z$ in the newly constructed graph will equal the size of the smallest hitting set for the cycles of~$(G,\gamma)$ whose $\gamma$-values are in~$A$. 
    Conversely, given a graph~$G$, the $\Gamma$-labelling in which every edge has value~$1$ has the property that a cycle has $\gamma$-value in~$A$ if and only if it has length~$\ell$ modulo~$z$. 
    Thus, by Corollaries~\ref{cor:orientablesurface} and~\ref{cor:orientablesurface2}, it suffices to show that the second statement is equivalent to condition~\ref{item:planarproperty} for~$\Gamma$ and~$A$.
    
    First, suppose that the second statement holds. 
    By Theorem~\ref{cor:mainmod}, we may assume that~${p_1 = 2}$, that~${\ell \not\equiv 0 \pmod {2^{a_1}}}$, and that~${\ell \equiv 0 \pmod {z/2^{a_1}}}$. 
    Let~$t$ be the minimum positive integer such that~${2^t \nmid \ell}$. 
    Note that~${t \leq a_1}$.
    Let~$X$ be a subset of~$\Gamma$ such that~${\sum_{g \in X} g \equiv\ell\pmod z}$. 
    Since~${2^t \nmid \ell}$ and~${2^t \mid z}$, there exists~${g \in X}$ such that~${2^t \nmid g}$. 
    Then~${\gcd(g,z) \mid \ell}$ and therefore there is an integer~$a$ such that~${a g \equiv \ell \pmod z}$, so condition~\ref{item:planarproperty} holds. 
    
    Suppose instead that the second statement does not hold. 
    Let~${J \subseteq [n]}$ be the set of indices $j$ such that~${p_j^{a_j}\nmid \ell }$.
    By the Chinese remainder theorem,
    for each~${j \in J}$, there exists an integer~$g_j$ such that~${g_j \equiv \ell \pmod {p_j^{a_j}}}$
    and~${g_j \equiv 0 \pmod {p_k^{a_k}}}$ for all~${k \in [n] \setminus \{j\}}$. 
    Let~${X := \{g_j \colon j \in J\}}$, and note that~${\sum_{g \in X} g \equiv \ell \pmod z}$. 
    
    If~$Y$ is a proper subset of~$X$, then for each~${j \in J}$ with~${g_j \in X \setminus Y}$, 
    we have ${p_j^{a_j} \nmid \ell}$ 
    and ${p_j^{a_j} \mid g}$ for all~${g \in Y}$, 
    and therefore ${\ell \notin \gen{Y}}$.
    This implies that if~${\abs{J} \geq 3}$, then condition~\ref{item:planarproperty} of Corollary~\ref{cor:orientablesurface2} fails. 
    Thus we may assume that~${\abs{J} \leq 2}$. 
    Hence~\ref{enum:planarmod1} fails, and we have~${p_1 = 2}$, ${\ell \not\equiv 0 \pmod{p_1^{a_1}}}$, and ${\ell \not\equiv 0 \pmod{z/p_1^{a_1}}}$. 
    In particular,~${\abs{J} = 2}$ and we have~$J=\{1,m\}$ for some $m \in [n]\setminus\{1\}$.
    Let~$t$ be the minimum positive integer such that~${2^t \nmid \ell}$. 
    Then~${2^t \mid 2g_1}$ by definition and~${2^t \mid 2 g_m}$ since $2^{a_1}\mid g_m$. 
    So,~${\ell \notin \gen{2X}}$, which implies that condition~\ref{item:planarproperty} of Corollary~\ref{cor:orientablesurface2} fails, as required.
\end{proof}

\medskip

When considering surface embeddings of graphs, it is also natural to consider the homology classes of cycles. 
For graphs embedded in a fixed compact surface, Huynh, Joos, and Wollan obtained a half-integral Erd\H{o}s-P\'{o}sa result for the non-null-homologous cycles of the embedding~\cite[Theorem~6]{HuynhJW2017}, and an integral Erd\H{o}s-P\'{o}sa result for these cycles when the surface is orientable~\cite[Corollary~41]{HuynhJW2017}. 
They did this by considering a different type of group labelling, where the two orientations of each edge are assigned labels that are inverse to each other. 

Since in our setting we do not distinguish between the two orientations of an edge, we are unable to directly apply our results to homology classes in the first homology group with coefficients in~$\mathbb{Z}$. 
However orientations can be ignored when considering the first homology group with coefficients in~$\mathbb{Z}_2$, and so our results are applicable. 
Note that for a closed orientable surface, the set of simple closed curves homologous to zero for the $\mathbb{Z}_2$-homology is exactly the same as for the $\mathbb{Z}$-homology. 
This follows the universal coefficient theorem (see~\cite{Hatcher02}), which allows us to relate the $\mathbb{Z}$-homology with the $\mathbb{Z}_2$-homology by taking all coefficients modulo~${2}$. 
We then apply a classical result which states that no simple closed curve has $\mathbb{Z}$-homology class~${kh}$ for any integer~${k \geq 2}$ and any non-zero element~$h$ of the $\mathbb{Z}$-homology (see for example~\cite{Schafer1976}). 

The following elementary observation allows us to encode the $\mathbb{Z}_2$-homology classes of cycles in our group labelling setting (see {\cite[Proposition~3.5]{GollinHKKO2021}} for a proof). 
A subgraph~$H$ of~$G$ is called \emph{spanning} if~${V(H) = V(G)}$.
A graph~$H$ is called \emph{even} if every vertex of~$H$ has even degree. 
For a graph~$G$, let~${\mathcal{C}(G)}$ denote the \emph{cycle space} of~$G$ over~$\mathbb{Z}_2$, which is the vector space of all spanning even subgraphs~$H$ of~$G$ with the symmetric difference of edge sets as its operation. 

It is easy to observe the following by taking~${\gamma(e) = 0}$ for all edges in a fixed spanning tree of each component. 

\begin{observation}%
    \label{obs:cyclespace-hom-to-gamma}
    Let~$G$ be a graph, let~$\Gamma$ be an abelian group, and let~${\phi \colon \mathcal{C}(G) \to \Gamma}$ be a group homomorphism. 
    Then there is a $\Gamma$-labelling~$\gamma$ of~$G$ such that~${\gamma(H) = \phi(H)}$ for every even subgraph~$H$ of~$G$. 
\end{observation}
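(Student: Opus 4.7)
The plan is to construct $\gamma$ explicitly via fundamental cycles with respect to a spanning forest and then verify the identity using the fact that fundamental cycles form a basis of $\mathcal{C}(G)$.

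First, I would fix a spanning forest $F$ of $G$, i.e., a spanning tree in each connected component. For every tree edge $e \in E(F)$, set $\gamma(e) := 0$. For every non-tree edge $e \in E(G) \setminus E(F)$, the graph $F + e$ contains a unique cycle $C_e$, the fundamental cycle of $e$ with respect to $F$; viewing $C_e$ as a spanning even subgraph (i.e.\ an element of $\mathcal{C}(G)$) via the obvious inclusion, define $\gamma(e) := \phi(C_e)$.

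Next I would invoke the standard fact from algebraic graph theory that $\{C_e : e \in E(G) \setminus E(F)\}$ is a basis of the $\mathbb{Z}_2$-vector space $\mathcal{C}(G)$. Concretely, every spanning even subgraph $H$ equals the symmetric difference $\bigtriangleup_{e \in E(H) \setminus E(F)} C_e$, since this symmetric difference agrees with $H$ on every non-tree edge and both are even subgraphs, which forces agreement on tree edges as well (any difference would be a nonempty even subgraph of the forest $F$, and forests have no cycles).

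The identity is then immediate from the definition of $\gamma$, the fact that tree edges contribute zero, and the homomorphism property of $\phi$:
\[
 \gamma(H) = \sum_{e \in E(H) \setminus E(F)} \gamma(e) = \sum_{e \in E(H) \setminus E(F)} \phi(C_e) = \phi\Bigl(\bigtriangleup_{e \in E(H) \setminus E(F)} C_e\Bigr) = \phi(H).
\]
For a non-spanning even subgraph $H$, one simply regards $H$ as the spanning even subgraph obtained by adding the missing isolated vertices, which does not alter $\gamma(H)$ or the class in $\mathcal{C}(G)$. There is no real obstacle here; the only content beyond bookkeeping is the basis statement for fundamental cycles, which is routine.
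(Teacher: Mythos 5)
Your proof is correct and follows exactly the hint the paper gives (set $\gamma$ to zero on a spanning forest, define $\gamma$ on non-tree edges via fundamental cycles, and verify through the basis property of fundamental cycles). No issues.
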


We also need the following lemma. 
Let~${\chi(\mathbb{S})}$ denote the Euler characteristic of a surface~$\mathbb{S}$. 

\begin{lemma}[{See Diestel~\cite[Lemma~B.6]{Diestel2018}}]
    \label{lem:euler}
    Let~$\mathbb{S}$ be a compact surface and let~$\mathcal{C}$ be a finite set of pairwise disjoint circles in~$\mathbb{S}$. 
    If 
    \begin{itemize}
        \item $\mathbb{S} \setminus \bigcup \mathcal{C}$ has a component~$D_0$ whose closure in~$\mathbb{S}$ meets every circle in~$\mathcal{C}$ and
        \item no circle in~$\mathcal{C}$ bounds a disk in~$\mathbb{S}$ that is disjoint from~$D_0$,
    \end{itemize}
    then~$\abs{\mathcal{C}}\leq 2-\chi(\mathbb{S})$. 
\end{lemma}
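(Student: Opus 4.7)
The plan is to prove this via a Euler characteristic computation, exploiting the fact that cutting a surface along disjoint circles preserves $\chi$, and that among the resulting pieces, only $D_0$ can be ``complicated'' by our hypotheses.

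First, I would form the cut surface $\mathbb{S}'$ obtained from $\mathbb{S}$ by cutting along every circle $C \in \mathcal{C}$, noting that each cut introduces $2$ new boundary circles when $C$ is two-sided and $1$ new boundary circle when $C$ is one-sided. Since cutting along a circle (via removing a thin annular or Möbius-band collar) does not change the Euler characteristic, we have $\chi(\mathbb{S}) = \chi(\mathbb{S}')$. The components of $\mathbb{S}'$ are in natural bijection with the components of $\mathbb{S} \setminus \bigcup \mathcal{C}$, so if we denote them $D_0', D_1', \ldots, D_k'$ corresponding to $D_0, D_1, \ldots, D_k$, then $\chi(\mathbb{S}) = \sum_{i=0}^k \chi(D_i')$.

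Next I would argue that $\chi(D_i') \leq 0$ for every $i \geq 1$. Each $D_i'$ is a compact connected surface (possibly non-orientable) with boundary, and from the classification $\chi(F) = 2 - 2g - b$ or $2 - k - b$ one sees $\chi(D_i') \leq 1$ with equality precisely when $D_i'$ is a closed disk. But if $D_i'$ were a disk, its single boundary circle would come from some $C \in \mathcal{C}$, and then $D_i \subseteq \mathbb{S}$ is an open disk bounded by $C$ and disjoint from $D_0$, violating the second hypothesis. Hence $\chi(\mathbb{S}) \leq \chi(D_0')$.

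Finally I would bound $\chi(D_0')$ using the first hypothesis. Since $\overline{D_0}$ meets every $C \in \mathcal{C}$, at least one side of each $C$ borders $D_0$, so cutting along $C$ contributes at least one boundary component to $D_0'$. Therefore $D_0'$ has at least $|\mathcal{C}|$ boundary components, and the same classification of surfaces gives $\chi(D_0') \leq 2 - |\mathcal{C}|$. Combining yields $\chi(\mathbb{S}) \leq 2 - |\mathcal{C}|$, which rearranges to the desired inequality.

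The only mild subtlety --- and the step I would write most carefully --- is the bookkeeping around one-sided circles: verifying that cutting preserves $\chi$ in that case (the Möbius collar becomes an annulus, changing $\chi$ by $0$) and that such a $C$ still contributes a boundary component to $D_0'$ when $\overline{D_0}$ meets it. Everything else is a routine application of additivity of Euler characteristic and the classification of compact surfaces with boundary.
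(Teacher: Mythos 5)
The paper does not give its own proof here; it cites Diestel~\cite[Lemma~B.6]{Diestel2018}, and the argument you outline --- cut $\mathbb{S}$ along the circles, use that cutting preserves $\chi$, apply additivity of $\chi$ over the resulting components, rule out the ``small'' components being disks via the hypotheses, and bound $\chi(D_0')$ by its number of boundary circles --- is essentially the standard proof of this fact and in the spirit of Diestel's appendix. The reasoning is correct, with one subtlety beyond the one you flag. In ruling out $\chi(D_i')=1$ for $i\geq 1$, you argue that if $D_i'$ were a disk, its boundary circle would come from some $C\in\mathcal{C}$ and then $D_i$ would be a disk bounded by $C$ and disjoint from $D_0$, violating hypothesis (2). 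That inference is immediate when $C$ is two-sided, but if $C$ is one-sided then $\partial D_i'$ is the connected double cover of $C$, and $D_i\cup C$ is a projective plane rather than a disk, so hypothesis (2) is not directly applicable. This case is instead excluded by the (implicit, but standard in Diestel's conventions) assumption that $\mathbb{S}$ is connected: if $D_i\cup C$ were a closed surface it would be a component of $\mathbb{S}$, forcing $\mathbb{S}=D_i\cup C$ and hence $\mathcal{C}=\{C\}$ and $D_i=D_0$, contradicting $i\geq 1$. (Connectedness of $\mathbb{S}$ is genuinely needed: on a disjoint union such as a torus plus a sphere with a single non-separating circle on the torus, the hypotheses hold but $|\mathcal{C}|=1>0=2-\chi$.) With that adjustment, and the trivial observation that the bound is vacuous when $\mathcal{C}=\emptyset$, the proof is complete.
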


We now obtain a strengthening of the integral Erd\H{o}s-P\'{o}sa result of Huynh, Joos, and Wollan for graphs embedded in a fixed orientable surface~\cite[Corollary~41]{HuynhJW2017}.

\begin{corollary}
    Let~$\mathbb{S}$ be a compact orientable surface with $\mathbb{Z}_2$-homology group~$\Gamma$ and let~$A$ be a set of $\mathbb{Z}_2$-homology classes of~$\mathbb{S}$. 
    There exists a function~${f \colon \mathbb{N} \to \mathbb{N}}$
    such that for all~${k \in \mathbb{N}}$ and every graph~$G$ embedded in~$\mathbb{S}$, 
    there exist~$k$ vertex-disjoint cycles whose $\mathbb{Z}_2$-homology classes are in~$A$
    or a hitting set of size at most~${f(k)}$
    for the set of cycles  whose $\mathbb{Z}_2$-homology classes are in~$A$.
\end{corollary}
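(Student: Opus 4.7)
The strategy is to encode the $\mathbb{Z}_2$-homology of cycles as a $\Gamma$-labelling $\gamma$ of $G$ using Observation~\ref{obs:cyclespace-hom-to-gamma}: since $C \mapsto [C] \in H_1(\mathbb{S}; \mathbb{Z}_2) =: \Gamma$ is a group homomorphism from $\mathcal{C}(G)$ into the finite group $\Gamma \cong \mathbb{Z}_2^{2g}$ (with $g$ the genus of $\mathbb{S}$), Observation~\ref{obs:cyclespace-hom-to-gamma} yields a $\Gamma$-labelling $\gamma$ with $\gamma(C) = [C]$ for every cycle $C$. Setting $m := 1$ and $\Omega_1 := \Gamma \setminus A$, so that $\omega := \abs{\Omega_1} \leq 2^{2g}$, the set $\mathcal{O}$ of cycles with $\mathbb{Z}_2$-homology class in $A$ coincides with the set of cycles whose $\gamma$-value lies in $A$. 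I then invoke Theorem~\ref{thm:mainobstruction} with the given $k$ and with $\kappa, \theta$ chosen as sufficiently large functions of $g$, and set $f(k) := \hat{f}_{1, 2^{2g}}(k, \kappa, \theta)$.

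Conclusions~\ref{item:main-packing} and~\ref{item:main-hittingset} of Theorem~\ref{thm:mainobstruction} directly yield the desired packing or hitting set. Otherwise conclusion~\ref{item:main-obstruction} produces an obstruction ${(H, \gamma'') \in \mathcal{C}(\kappa, \theta, \Gamma/\Gamma_J, A + \Gamma_J)}$ for some ${J \subseteq \{1\}}$ with $H \subseteq G$ (hence embedded in $\mathbb{S}$), together with a half-integral packing of $\kappa$ cycles in $\mathcal{O}$ inside $H$. The case $J = \{1\}$ cannot occur: the quotient $\Gamma/\Gamma_J$ is trivial, so $A + \Gamma_J$ becomes $\{0\}$ in the quotient, whereas the subgroup appearing in property~\ref{item:obstructions-minimality} always contains $0$, violating~\ref{item:obstructions-minimality}. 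An analogous argument shows that if $0 \in A$ then $\mathcal{C}(\kappa, \theta, \Gamma, A)$ is itself empty. So we may assume $J = \emptyset$, ${0 \notin A}$, and $(H, \gamma) \in \mathcal{C}(\kappa, \theta, \Gamma, A)$ is embedded in $\mathbb{S}$.

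The main obstacle is to rule out such an embedded obstruction once $\kappa$ and $\theta$ are sufficiently large in terms of $g$ alone. Proposition~\ref{prop:planar} would reduce this to verifying its condition~\ref{item:planar-3} for our $\Gamma$ and $A$, which in the present $2$-torsion setting states that every finite $X \subseteq \Gamma$ with $\sum_{g' \in X} g' \in A$ contains an element of $A$; this condition can fail for general $A$, so Proposition~\ref{prop:planar} alone does not suffice. The plan is to exploit that $\gamma$ is not an arbitrary $\Gamma$-labelling but is induced by the $\mathbb{Z}_2$-homology of an orientable surface, so the intersection pairing ${\omega \colon \Gamma \times \Gamma \to \mathbb{Z}_2}$ is non-degenerate and symplectic. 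In particular, any two pairwise vertex-disjoint cycles in $G \subseteq \mathbb{S}$ pair to $0$ under $\omega$, and any isotropic subspace of $\Gamma$ has dimension at most $g$. Within each handlebar $\mathcal{P}_i$ of the obstruction, pairing the $\kappa$ vertex-disjoint handles with disjoint row-paths of $W$ produces many pairwise vertex-disjoint cycles, forcing the handle labels of $\mathcal{P}_i$ to lie in a common isotropic subspace of $\Gamma$. Combining this with properties~\ref{item:obstructions-allowabletransversals}--\ref{item:obstructions-even} and with Lemma~\ref{lem:euler} applied to families of pairwise disjoint separating cycles built from rows and columns of $W$ should bound both $\kappa$ and the number $t$ of handlebars in terms of $g$ alone, contradicting the choice of $\kappa$ and completing the proof.
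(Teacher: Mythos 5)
Your setup agrees with the paper: encode $\mathbb{Z}_2$-homology via Observation~\ref{obs:cyclespace-hom-to-gamma}, apply Theorem~\ref{thm:mainobstruction} with $m=1$ and $\Omega_1 = \Gamma\setminus A$, rule out $J=\{1\}$ via~\ref{item:obstructions-minimality}, and note that Proposition~\ref{prop:planar}\ref{item:planar-3} need not hold in the $2$-torsion setting, so the topology of the embedding must enter. But the final step, excluding an obstruction $(H,\gamma'')\in\mathcal{C}(\kappa,\theta,\Gamma,A)$ embedded in $\mathbb{S}$, is only sketched, and the sketch would not close as written. The intersection pairing is a detour: it shows the labels of each handlebar lie in an isotropic subspace of dimension at most $g$, hence take at most $2^g$ distinct values, but this does not bound $\kappa$ (many handles may share a label) and does not interact with~\ref{item:obstructions-allowabletransversals}--\ref{item:obstructions-even}. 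Moreover, for a nested handlebar the cycles obtained by closing each handle with a column segment are \emph{not} pairwise vertex-disjoint (nested segments overlap), so the claim that every handlebar yields $\kappa$ disjoint cycles fails; this construction only works for a handlebar in series. Finally, your appeal to Lemma~\ref{lem:euler} for cycles built from rows and columns of $W$ does not help, since such cycles are null-homologous by~\ref{item:obstructions-zerowall} regardless, and null-homologous does not imply bounding a disk.

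The missing argument is more direct. Choose $\theta$ and $\kappa$ so that $\kappa > 2-\chi(\mathbb{S})$ and no wall of order $\geq\theta$ with a crossing handlebar of size $\geq\kappa$ embeds in $\mathbb{S}$. If an obstruction $H\subseteq G$ appears, it has no crossing handlebar, so~\ref{subitem:obstructions-oddcrossing} fails, and~\ref{subitem:obstructions-seriesnonseries} or~\ref{subitem:obstructions-3series} forces \emph{some handlebar $\mathcal{P}$ in series}. Each of its $\kappa$ handles together with the column segment between its endvertices forms a cycle, and because $\mathcal{P}$ is in series these $\kappa$ cycles are pairwise vertex-disjoint. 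Since $\kappa > 2-\chi(\mathbb{S})$, Lemma~\ref{lem:euler} forces some such cycle $O$ to bound a disk, so $\gamma'(O)=\gamma(O)=0$. The column segment in $O$ is an $N^W$-path, hence $\gamma'$-zero by~\ref{item:obstructions-zerowall}, so the handle $P\subseteq O$ satisfies $\gamma'(P)=0$. Choosing $P$ as the $\mathcal{P}$-entry of the transversal in~\ref{item:obstructions-allowabletransversals} then exhibits an element of $A$ in the group generated by the labels of the other handlebars, contradicting~\ref{item:obstructions-minimality}. That Euler-characteristic argument on the in-series handlebar, not the intersection pairing, is what you need.
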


\begin{proof}
    We will apply Theorem~\ref{thm:mainobstruction} with~${m := 1}$, ${\omega := \abs{\Gamma \setminus A}}$, ${\Gamma_1 := \Gamma}$, and~${\Omega_1 := \Gamma \setminus A}$. 
    Let~$\kappa$ and~$\theta$ be integers such that~$\kappa > 2-\chi(\mathbb{S})$ and no graph containing a wall~$W$ of order at least~$\theta$ and a crossing $W$-handlebar of size~$\kappa$ is embeddable in~$\mathbb{S}$, and let~${f(k) := f_{1,\omega}(k,\kappa,\theta)}$. 
    Let~$G$ be a graph embedded in~$\mathbb{S}$, and let~$\gamma$ be a $\Gamma$-labelling of~$G$ such that~$\gamma(H)$ is the~$\mathbb{Z}_2$-homology class of~$H$ for every even subgraph~$H$ of~$G$ (see Observation~\ref{obs:cyclespace-hom-to-gamma}). 
    
    Suppose for a contradiction that there are neither~$k$ vertex-disjoint cycles whose $\mathbb{Z}_2$-homology classes are in~$A$ 
    nor a hitting set of size at most~${f(k)}$ for these cycles. 
    By Theorem~\ref{thm:mainobstruction}, for some~$\gamma'$ shifting-equivalent to~$\gamma$, there is a subgraph~$H$ of~$G$ and a subset~${J}$ of $[1]$ such that 
    ${(H,\gamma'') \in \mathcal{C}(\kappa, \theta, \Gamma / \Gamma_J, A + \Gamma_J )}$
    for the $\left(\Gamma / \Gamma_J \right)$-labelling~$\gamma''$ induced by the restriction of~$\gamma'$ to~$H$.
    Note that ${\Gamma/\Gamma_J}$ is not the trivial group by properties~\ref{item:obstructions-allowabletransversals} and~\ref{item:obstructions-minimality} of Definition~\ref{def:obstructions}, hence~${J = \emptyset}$. 
    Let~$W$ be the wall in~$H$ and let~$\mathfrak{P}$ be the family $W$-handlebars in~$H$ described in Definition~\ref{def:obstructions}. 
    By our choice of~$\kappa$ and~$\theta$, there is
    no crossing~$W$-handlebar of size~$\kappa$ in~$H$, so by property~\ref{item:obstructions-handlebars}, some $W$-handlebar~$\mathcal{P}$ in~$\mathfrak{P}$ is in series. 
    Consider the set~$S$ of cycles in the union of the first and last column of~$W$ together with~${\bigcup \mathcal{P}}$, and note that~${\abs{S} = \kappa}$. 
    By Lemma~\ref{lem:euler}, there is a cycle~$O$ in~$S$ whose image in~$\mathbb{S}$ bounds a disk, and hence~${\gamma(O)=\gamma'(O) = 0}$. 
    But now by property~\ref{item:obstructions-zerowall}, 
    every path~${P \in \mathcal{P}}$ contained in~$O$ satisfies ${\gamma'(P) = 0}$, contradicting properties~\ref{item:obstructions-allowabletransversals} and~\ref{item:obstructions-minimality}. 
\end{proof}

\subsection{Vertex-labellings}
\label{subsec:vertexlabellings}

Let~$\Gamma$ be an abelian group and let~$G$ be a graph. 
A \emph{$\Gamma$-vertex-labelling} of~$G$ is a function~${\nu \colon V(G) \to \Gamma}$. 
By slight abuse of notation, we use the analogues of terminology from the $\Gamma$-(edge-)labellings for $\Gamma$-vertex-labellings (for example \emph{$\nu$-value}) without further explanation. 

We discuss how to translate our results on edge-labelled graphs to vertex-labelled graphs. 
On the one hand, given an abelian group~$\Gamma$ and a $\Gamma$-(edge-)labelled graph~${(G,\gamma)}$, as discussed in~\cite{GollinHKKO2021} we can construct a $\Gamma$-vertex-labelled graph~${(G',\nu)}$ by subdividing each edge of~$G$ and setting~${\nu(v) = \gamma(e)}$ for the subdivision vertex~$v$ of the edge~$e$, and setting~${\nu(v) = 0}$ for every vertex~${v \in V(G)}$. 
Now the cycle~$O'$ of~$G'$ obtained from a cycle~$O$ of~$G$ by subdividing each edge of~$O$ has the property that~${\gamma(O) = \nu(O')}$. 
From this one can easily derive vertex-labelled analogues of Theorem~\ref{thm:maingroup2} and Corollary~\ref{cor:orientablesurface2}. 

On the other hand, the approach to translate positive results is slightly more elaborate than in~\cite{GollinHKKO2021}. 
The following lemma is a straightforward consequence of the fundamental theorem of finitely generated abelian groups, but the reader can extract a proof of it from the proof of~\cite[Lemma~3.4]{GollinHKKO2021}. 

\begin{lemma}
    \label{lem:vertexlabelhomomorphism}
    For every finitely generated abelian group~$\Gamma$,
    there exist an abelian group~$\Gamma'$ and an injective homomorphism~${\psi \colon \Gamma \to \Gamma'}$ whose image is~${2\Gamma'}$.\qed
\end{lemma}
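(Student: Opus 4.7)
The plan is to invoke the fundamental theorem of finitely generated abelian groups to decompose $\Gamma$ as a direct sum of cyclic groups, and then build $\Gamma'$ and $\psi$ coordinate-wise by handling each cyclic factor separately. Write $\Gamma \cong \mathbb{Z}^{r} \oplus \bigoplus_{i \in [k]} \mathbb{Z}_{n_i}$ for some non-negative integer $r$ and positive integers $n_i$. It then suffices to prove the statement for each cyclic summand, since the direct sum of injective homomorphisms $\psi_i \colon H_i \to H_i'$ with $\psi_i(H_i) = 2 H_i'$ is again an injective homomorphism whose image is $2 (\bigoplus_i H_i')$.

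For the free summand $\mathbb{Z}$, I would take the target to be $\mathbb{Z}$ itself with the map $n \mapsto 2n$; this is clearly injective with image $2\mathbb{Z}$. For a torsion summand $\mathbb{Z}_{n}$, the argument splits on the parity of $n$. If $n$ is odd, then $2$ is invertible modulo $n$, so multiplication by $2$ is an automorphism of $\mathbb{Z}_n$ and satisfies $2\mathbb{Z}_n = \mathbb{Z}_n$, so I take the target to be $\mathbb{Z}_n$ itself. If $n$ is even, I take the target to be $\mathbb{Z}_{2n}$ and define $\psi(x) = 2x \pmod{2n}$; this is injective because $2x \equiv 2y \pmod{2n}$ forces $x \equiv y \pmod{n}$, and its image is precisely the subgroup $2\mathbb{Z}_{2n}$.

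Assembling these coordinate maps yields $\Gamma' := \mathbb{Z}^{r} \oplus \bigoplus_{i \in [k]} \Gamma_i'$, where $\Gamma_i' = \mathbb{Z}_{n_i}$ if $n_i$ is odd and $\Gamma_i' = \mathbb{Z}_{2 n_i}$ if $n_i$ is even, together with the direct sum map $\psi$. By construction $\psi$ is an injective homomorphism, and its image equals $2\Gamma'$ because this holds in each coordinate. There is no real obstacle in the argument; the only subtlety is recognising that for even $n_i$ the naive choice of target $\mathbb{Z}_{n_i}$ fails (multiplication by $2$ then has a nontrivial kernel), which forces the passage to the enlarged cyclic group $\mathbb{Z}_{2 n_i}$.
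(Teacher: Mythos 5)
Your proof is correct and uses exactly the approach the paper points to, namely a direct application of the fundamental theorem of finitely generated abelian groups, handling the free and cyclic torsion summands separately and assembling by direct sum. The paper itself gives no inline proof (it refers the reader to the proof of Lemma~3.4 of~\cite{GollinHKKO2021}), so your coordinate-wise construction, including the key observation that the target must be enlarged to $\mathbb{Z}_{2n_i}$ when $n_i$ is even, fills in precisely the details the authors leave implicit.
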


Given an abelian group~$\Gamma$ and $\Gamma$-vertex-labelled graph~${(H,\nu)}$, we say that a wall~$W$ of~$H$ is \emph{$\nu$-homogeneous} if every cycle in~$W$ has $\nu$-value zero and for every $W$-handle~$P$ in~$H$ there is some~${g_P \in \Gamma}$ such that~${\nu(O) = g_P}$ for every cycle~$O$ with~${P \subseteq O \subseteq P \cup W}$. 
In this case we define~${\mu_{W,\nu}(P) := g_P}$. 
The following definition is the analogue of Definition~\ref{def:obstructions} for vertex-labellings.

\begin{definition}
    \label{def:VLobstructions}
    For positive integers~$\kappa$ and~$\theta$, an abelian group~$\Gamma$, and~${A \subseteq \Gamma}$, 
    let~${\mathcal{C}'(\kappa,\theta,\Gamma,A)}$ be the class of all $\Gamma$-vertex-labelled graphs~${(G,\nu)}$ 
    having 
    a wall~$W$ of order at least~$\theta$ and 
    a nonempty family~${( \mathcal{P}_i \colon i \in [t] )}$ of pairwise vertex-disjoint non-mixing $W$-handlebars each of size at least~$\kappa$
    such that 
    \begin{enumerate}
        [label=(O$'$\arabic*)]
        \item \label{item:VLobstructions-union} $G$ is the union of $W$ and  $\bigcup \{ \bigcup \mathcal{P}_i \colon i \in [t] \}$, 
        \item \label{item:VLobstructions-zerowall}
             $W$ is~$\nu$-homogeneous,
            \item \label{item:VLobstructions-allowabletransversals}
            ${\sum_{i \in [t]} \mu_{W,\nu}(P_i) \in  A}$ for any family ${(P_i \colon i\in [t])}$ such that~${P_i \in \mathcal{P}_i}$ for all~${i \in [t]}$, 
        \item \label{item:VLobstructions-minimality}
            for each~${i \in [t]}$, we have~${\gen{\mu_{W,\nu}(P) \colon P \in \bigcup_{j \in [t] \setminus \{i\}} \mathcal{P}_j} \cap A = \emptyset}$, 
        \item \label{item:VLobstructions-even}
            if ${\sum_{j \in [t]} \sum_{P \in \mathcal{P}_j} f(P) \mu_{W,\nu}(P) \in A}$ for a function~${f \colon \bigcup_{j \in [t]} \mathcal{P}_j \to \mathbb{Z}}$, then 
            for each ${i \in [t]}$, 
            $\mathcal{P}_i$ is in series
            or~${\sum_{P \in \mathcal{P}_i} f(P)}$ is odd, and 
        \item \label{item:VLobstructions-handlebars}
            at least one of the following properties holds. 
            \begin{enumerate}
                [label=(O$'$6\alph*)]
                \item \label{subitem:VLobstructions-oddcrossing} 
                The number of crossing $W$-handlebars in~${( \mathcal{P}_i \colon i \in [t] )}$ is odd. 
                \item \label{subitem:VLobstructions-seriesnonseries} At least one but not all $W$-handlebars in~${( \mathcal{P}_i \colon i \in [t] )}$ are in series. 
                \item \label{subitem:VLobstructions-3series} At least three $W$-handlebars in~${( \mathcal{P}_i \colon i \in [t] )}$ are in series. 
            \end{enumerate}
    \end{enumerate}
\end{definition}

Observe that if ${(G,\gamma) \in \mathcal{C}(\kappa,\theta,\Gamma,A)}$ and $\nu$ is a $\Gamma$-vertex-labelling of~$G$ such that~${\nu(O) = \gamma(O)}$ for every cycle~$O$ of~$G$, then~${(G,\nu) \in \mathcal{C}'(\kappa,\theta,\Gamma,A)}$. 
With this definition, we can obtain vertex-labelled analogues of all of the results in this paper which reference edge-labellings. 
To illustrate this, we now prove the following analogue of Theorem~\ref{thm:mainobstruction}. 
The proofs of the analogues of other results are almost verbatim the original proofs with Definition~\ref{def:VLobstructions} and Theorem~\ref{thm:vertexlabellings} in place of Defintion~\ref{def:obstructions} and Theorem~\ref{thm:mainobstruction}, so we omit them. 

\begin{restatable}{theorem}{vertexlabellings}
    \label{thm:vertexlabellings}
    For all positive integers~$m$ and~$\omega$, there is a function~${\widehat f_{m,\omega} \colon \mathbb{N}^3 \to \mathbb{Z}}$ satisfying the following property. 
    Let~${\Gamma = \prod_{j \in [m]} \Gamma_j}$ be a product of~$m$ abelian groups, and for every~${j \in [m]}$, let~$\Omega_j$ be a subset of~$\Gamma_j$ with~${\abs{\Omega_j} \leq \omega}$. 
    For each~${j \in [m]}$, let~${A_j := \pi_j^{-1}(\Gamma_j\setminus \Omega_j)\subseteq \Gamma}$ 
    and ${A := \bigcap_{j \in [m]} A_j}$. 
    Let~$G$ be a graph with a $\Gamma$-vertex-labelling~$\nu$ and let~$\mathcal{O}$ be the set of all cycles of~$G$ whose $\nu$-values are in~$A$. 
    Then for every three positive integers~$k$, $\kappa$, and~$\theta$, at least one of the following statements is true. 
    \begin{enumerate}
        [label=(\roman*)]
        \item \label{item:VL-packing} There are~$k$ vertex-disjoint cycles in~$\mathcal{O}$. 
        \item \label{item:VL-hittingset} There is a hitting set for~$\mathcal{O}$ of size at most~${\widehat f_{m,\omega}(k, \kappa, \theta)}$. 
        \item \label{item:VL-obstruction} There is a subgraph~$H$ of~$G$ such that for some~${J \subseteq [m]}$ and for the $\left(\Gamma / \Gamma_J \right)$-labelling~$\nu'$ induced by the restriction of~$\nu$ to~$H$, we have $(H,\nu') \in \mathcal{C}'(\kappa, \theta, \Gamma / \Gamma_J, A + \Gamma_J )$ and~$H$ contains a half-integral packing of~$\kappa$ cycles in~$\mathcal{O}$.
    \end{enumerate}
\end{restatable}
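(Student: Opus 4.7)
The plan is to reduce Theorem~\ref{thm:vertexlabellings} to Theorem~\ref{thm:mainobstruction} by translating the vertex-labelling into an edge-labelling via Lemma~\ref{lem:vertexlabelhomomorphism}. Given $(G, \nu)$, for each $j \in [m]$ let $\Gamma_j'$ be the subgroup of $\Gamma_j$ generated by $\{\pi_j(\nu(v)) : v \in V(G)\}$, which is finitely generated since $G$ is finite. We apply Lemma~\ref{lem:vertexlabelhomomorphism} to obtain an abelian group $\Gamma_j''$ and an injective homomorphism $\psi_j : \Gamma_j' \to \Gamma_j''$ with image $2\Gamma_j''$. For each vertex $v$ and each $j$, we pick $h_v^j \in \Gamma_j''$ with $2 h_v^j = \psi_j(\pi_j(\nu(v)))$ and define an edge-labelling $\gamma$ of $G$ by $\gamma(uv) := (h_u^j + h_v^j)_{j \in [m]}$, with values in $\Gamma'' := \prod_j \Gamma_j''$. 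Setting $\psi := \prod_j \psi_j$, a direct telescoping yields $\gamma(O) = \psi(\nu(O))$ for every cycle $O$ in $G$. Letting $\Omega_j'' := \psi_j(\Omega_j \cap \Gamma_j')$, which has size at most $\omega$, and letting $A''$ be the analogue of $A$ for the $\Omega_j''$, the injectivity of $\psi$ ensures that a cycle $O$ satisfies $\gamma(O) \in A''$ if and only if $\nu(O) \in A$, so the set of allowable cycles agrees in the two settings.

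Next, we apply Theorem~\ref{thm:mainobstruction} to $(G, \gamma)$ with parameters $(m, \omega, k, \kappa, \theta)$, setting $\hat f_{m,\omega}(k, \kappa, \theta)$ to the value given by that theorem. Outcomes~\ref{item:main-packing} and~\ref{item:main-hittingset} transfer directly. In outcome~\ref{item:main-obstruction}, we obtain a shifting-equivalent labelling $\gamma'$, a subset $J \subseteq [m]$, and a subgraph $H$ of $G$ with a half-integral packing of $\kappa$ cycles in $\mathcal{O}$ such that $(H, \gamma'') \in \mathcal{C}(\kappa, \theta, \Gamma''/\Gamma''_J, A'' + \Gamma''_J)$ for the induced labelling $\gamma''$. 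Since shifting preserves cycle values, $\gamma'(O) = \psi(\nu(O))$ for every cycle $O$ of $G$, and our plan is to show, using the same wall $W$ and family of handlebars, that $(H, \nu') \in \mathcal{C}'(\kappa, \theta, \Gamma/\Gamma_J, A + \Gamma_J)$, where $\nu'$ is the induced $(\Gamma/\Gamma_J)$-vertex-labelling.

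The main obstacle is to verify that properties~\ref{item:obstructions-union}--\ref{item:obstructions-handlebars} translate to their vertex-labelled analogues~\ref{item:VLobstructions-union}--\ref{item:VLobstructions-handlebars}. The structural properties~\ref{item:VLobstructions-union} and~\ref{item:VLobstructions-handlebars} transfer immediately. For~\ref{item:VLobstructions-zerowall}, property~\ref{item:obstructions-zerowall} gives $\gamma'(Q) \in \Gamma''_J$ for every $N^W$-path $Q$; combining this with $\gamma'(O) = \psi(\nu(O))$ for cycles and the identity $\psi^{-1}(\Gamma''_J) = \Gamma'_J \subseteq \Gamma_J$ (which follows from the injectivity of each $\psi_j$), we conclude that every cycle in $W$ is $\nu'$-zero, and that $\mu_{W, \nu'}(P)$ is well-defined for each $W$-handle $P$, yielding the key congruence $\psi(\mu_{W, \nu}(P)) \equiv \gamma'(P) \pmod{\Gamma''_J}$. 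Using this congruence, each of properties~\ref{item:obstructions-allowabletransversals}--\ref{item:obstructions-even} translates to its vertex-labelled counterpart by applying $\psi$ coordinate-wise. The most delicate step is showing that $\sum_i \mu_{W, \nu}(P_i) \in A + \Gamma_J$ follows from $\psi(\sum_i \mu_{W, \nu}(P_i)) \in A'' + \Gamma''_J$; this is done by a coordinate-wise analysis that distinguishes $j \in J$ from $j \notin J$, using the injectivity of $\psi_j$ in the latter case and using that $A \neq \emptyset$ to select suitable representatives in the $J$-coordinates in the former.
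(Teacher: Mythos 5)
Your proposal is correct and follows essentially the same strategy as the paper's proof: restrict to the finitely generated subgroup of values arising in~$G$, apply Lemma~\ref{lem:vertexlabelhomomorphism} coordinate-wise to obtain an injective homomorphism~$\psi$ into a group where every relevant element is divisible by~$2$, define the edge-labelling~$\gamma(uv) := f(u) + f(v)$ from a ``half'' $f$ of $\psi\circ\nu$ so that $\gamma(O)=\psi(\nu(O))$ on cycles, invoke Theorem~\ref{thm:mainobstruction}, and translate the obstruction back. The only cosmetic difference is that the paper includes~$\Omega_j$ inside the generated subgroup (so $\psi_j(\Omega_j)$ is defined without intersecting), while you intersect $\Omega_j$ with the subgroup before pushing forward; both handle the translation of the allowable set identically. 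You are also somewhat more explicit than the paper about why properties~\ref{item:obstructions-allowabletransversals}--\ref{item:obstructions-even} transfer (the paper compresses this to ``it immediately follows''), and your coordinate-wise argument distinguishing $j\in J$ from $j\notin J$ --- using injectivity of~$\psi_j$ outside~$J$ and $A\neq\emptyset$ to pick representatives inside~$J$ --- is the correct way to justify that step.
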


\begin{proof}
    Let $\widehat{f}_{m,\omega}$ be as in Theorem~\ref{thm:mainobstruction}.
    For each~${i \in [m]}$, let~${\Gamma''_i := \gen{ \Omega_i \cup \{\nu_i(v) \colon v \in V(G)\}}}$. 
    Since ${\Gamma''_i}$ is finitely generated,
    by Lemma~\ref{lem:vertexlabelhomomorphism}, there exist an abelian group~$\Gamma'_i$ and an injective homomorphism~${\psi_i \colon \Gamma''_i \to \Gamma'_i}$ whose image is~$2\Gamma'_i$.
    Let~$\Gamma''$ be the subgroup~${\prod_{i\in [m]} \Gamma''_i}$ of~$\Gamma$, let~${\Gamma' := \prod_{i \in [m]} \Gamma'_i}$, and let~${\psi \colon \Gamma''\to \Gamma'}$ such that~${\pi_i(\psi(g)) = \psi_i(\pi_i(g))}$ for all~${i \in [m]}$ and~${g \in \Gamma''}$. 
    For each~${i \in [m]}$, let~${\Omega'_i := \psi_i(\Omega_i)}$, let ${A'_i := \pi^{-1}_i(\Gamma'_i \setminus \Omega'_i)}$, and let~${A' := \bigcap_{i \in [m]} A'_i}$.

    Since $\phi(\Gamma'')=2\Gamma'$, there is a function~${f \colon V(G) \to \Gamma'}$ with~${2f(v) = \psi(\nu(v))}$ for every~${v \in V(G)}$.
    For every edge~${vw \in E(G)}$, let~${\gamma(vw) := f(v)+f(w)}$.
    Then $\gamma$ is a $\Gamma'$-(edge-)labelling of~$G$ and 
    for every cycle~$O$ of~$G$, we have
    \[ 
            \gamma(O) 
            = \sum_{vw \in E(O)} \gamma(vw) 
            = \sum_{vw \in E(O)} (f(v) + f(w))
            =  \sum_{v \in V(O)} 2f(v)
            = \psi ( \nu(O) ). 
    \]
    In particular, $\mathcal{O}$ is exactly the set of cycles whose $\gamma$-values are in~$A'$. 
    Thus, if neither condition~\ref{item:VL-packing} nor condition~\ref{item:VL-hittingset} is satisfied, then by Theorem~\ref{thm:mainobstruction} we have that for some $\Gamma'$-labelling~$\gamma'$ which is shifting equivalent to~$\gamma$, there is a subgraph~$H$ of~$G$ such that for some~${J \subseteq [m]}$ and for the ${\left(\Gamma' / \Gamma'_J \right)}$-labelling~$\gamma''$ induced by the restriction of~$\gamma'$ to~$H$, we have ${(H,\gamma'') \in \mathcal{C}(\kappa, \theta, \Gamma' / \Gamma'_J, A' + \Gamma'_J )}$ and~$H$ contains a half-integral packing of~$\kappa$ cycles in~$\mathcal{O}$. 
    Let~$\nu'$ be the ${\left(\Gamma/\Gamma_J\right)}$-vertex-labelling induced by the restriction of~$\nu$ to~$H$ and let~${\psi' \colon \Gamma/\Gamma_J \to \Gamma'/\Gamma'_J}$ be the injective homomorphism induced by~$\psi$. 
    Since~$\gamma$ and~$\gamma'$ are shifting equivalent, we have~${\psi'(\nu'(O)) = \gamma''(O)}$ for every cycle~$O$ of~$H$. 
    
    Let~$W$ be the wall of order at least~$\theta$ in~$H$ as in Definition~\ref{def:obstructions}. Given a $W$-handle~$P$ of~$W$ in~$H$ and a cycle~$O$ in~${W \cup P}$, property~\ref{item:obstructions-zerowall} implies that~${\gamma''(O) = \gamma''(P)}$ if~${P \subseteq O}$ and~${\gamma''(O) = 0}$ if~${O \subseteq W}$. 
    It follows that~$W$ is $\nu$-homogeneous, and that~${\psi'(\mu_{W,\nu}(P)) = \gamma''(P)}$ for every $W$-handle~$P$ in~$H$. 
    From this, it immediately follows that~${(H,\nu')\in \mathcal{C}'(\kappa,\theta,\Gamma/\Gamma_J,A+\Gamma_J)}$, as required.
\end{proof}

\subsection{A negative result for finite allowable subsets of infinite groups}
\label{subsec:finiteA}

The following theorem shows that if the set of allowable values of cycles is a nonempty finite subset of an infinite abelian group, then a ${(1/s)}$-integral analogue of the Erd\H{o}s-P\'{o}sa theorem fails for every positive integer~$s$.

\begin{theorem}
   \label{thm:infiniteEPctex}
    Let~${A}$ be a nonempty finite subset of an infinite abelian group~$\Gamma$.
    For integers~${s \geq 2}$ and~${t \geq 1}$, 
    there is a graph~$G$ with a $\Gamma$-labelling~$\gamma$ 
    such that 
    \begin{itemize}
        \item for every set of~$s$ cycles of~$G$ whose $\gamma$-values are in~$A$, there is a vertex that belongs to all of the~$s$ cycles,
        \item there is no hitting set of size at most~$t$ 
        for the set of all cycles of~$G$ whose $\gamma$-values are in~$A$, and 
        \item no vertex belongs to $s+1$ distinct cycles whose $\gamma$-values are in~$A$.
    \end{itemize}
\end{theorem}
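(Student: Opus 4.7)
My strategy is to construct, for each $s\geq 2$ and $t\geq 1$, a finite graph $G$ together with a $\Gamma$-labelling $\gamma$ such that the set $\mathcal{O}$ of cycles of $G$ with $\gamma$-value in $A$ has a rigid Helly-type structure. The construction has two parts: first, a purely combinatorial design that fixes which cycles should be allowable; second, an explicit labelling that realises this design, exploiting the finiteness of $A$ together with the infiniteness of $\Gamma$.

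For the design, set $N:=\max\{ts+1,\,s+2\}$ and, for each $s$-subset $I\in\binom{[N]}{s}$, introduce a vertex $v_I$. For each $w\in[N]$, let $C_w$ be a cycle visiting the vertex set $\{v_I:w\in I\}$ in an arbitrary cyclic order; then subdivide every edge of every $C_w$ enough times so that the cycles $C_w$ are pairwise edge-disjoint (while still sharing the vertices $v_I$). The resulting graph $G$ is simple, $v_I$ lies in precisely the $s$ cycles $\{C_w:w\in I\}$, and each new subdivision vertex lies in exactly one cycle. In particular any $s$ of the $C_w$'s share the unique vertex $v_{\{w_1,\ldots,w_s\}}$, and every set of at most $t$ vertices of $G$ meets at most $ts<N$ of the $C_w$'s, so no such set can be a hitting set for $\mathcal{O}$. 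Thus the three desired properties will follow once we arrange $\mathcal{O}=\{C_w:w\in[N]\}$.

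To realise $\mathcal{O}=\{C_w:w\in[N]\}$, fix any $a\in A$ and, using edge-disjointness, pick one distinguished edge $f_w\in E(C_w)$ for each $w$. Treat the labels on $E(G)\setminus\{f_w:w\in[N]\}$ as free parameters and define each $\gamma(f_w)$ by the equation $\gamma(C_w)=a$; by construction $\gamma(C_w)\in A$ for all $w$. The crux is to choose the free parameters so that $\gamma(C)\notin A$ for every cycle $C$ of $G$ different from the $C_w$. This is the main obstacle: such a cycle $C$ cannot contain the full edge set of any single $C_w$ (otherwise $C=C_w$), so for each $w$ with $E(C)\cap E(C_w)\neq\emptyset$ there exists $e_w\in E(C_w)\setminus E(C)$; substituting the defining formula for $\gamma(f_w)$ into $\gamma(C)$ then shows $\gamma(C)$ depends nontrivially on the free parameter $\gamma(e_w)$. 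Hence each condition $\gamma(C)\in A$ cuts out a finite union of proper affine subvarieties of the free-parameter space $\Gamma^{E(G)\setminus\{f_w:w\in[N]\}}$, proper because $A$ is finite and $\Gamma$ is infinite. Since there are only finitely many compound cycles to avoid, the union of all their forbidden subvarieties is still a proper subset of this infinite space, so a valid assignment of free parameters exists and completes the construction.
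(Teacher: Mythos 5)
Your combinatorial construction is essentially the same as the paper's: build a union of edge-disjoint cycles indexed by $[N]$ (paper uses $N=s(t+1)$ and a complete bipartite host graph; you use a subdivided union of cycles, which amounts to the same thing), ensure any $s$ of them share a dedicated vertex $v_I$ and no vertex lies in more than $s$ of them, set $N>st$, then rig the labelling so that the allowable cycles are exactly these $N$ cycles. So the structure, the Helly property, and the hitting-set lower bound all match the paper.

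The gap is in the labelling step. You argue generically: after substituting the defining equation for each $\gamma(f_w)$, every non-$C_w$ cycle $C$ yields a non-constant linear condition on the free parameters, and you then assert that a finite union of proper ``affine subvarieties'' of $\Gamma^{E(G)\setminus\{f_w\}}$ cannot cover the whole space. That assertion is genuinely non-trivial for an arbitrary abelian group, and as stated it is not even correct: a non-constant linear form over $\Gamma$ can have a condition $\ell(x)=\alpha$ whose solution set is \emph{all} of $\Gamma^n$ if its coefficients are divisible by a torsion order of $\Gamma$ (e.g.\ coefficient $2$ in $\Gamma=\mathbb{Z}_2^{\mathbb{N}}$). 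What saves your construction is that, because the cycles $C_w$ are pairwise edge-disjoint, every coefficient appearing after substitution is in $\{-1,0,1\}$; this is the key fact that makes each bad set a coset of an infinite-index subgroup. You never state or use this, and even granting it, the claim that a finite union of such cosets is proper still needs an argument (an easy induction on the number of free parameters using the $\pm1$ coefficient, or B.~H.~Neumann's covering lemma). So the last sentence of your proof (``the union of all their forbidden subvarieties is still a proper subset\dots'') is exactly what needs to be proved, and you have not proved it. The paper sidesteps this entirely by constructing a concrete ``Sidon-like'' label set $\{g_i\}$ with the property that no $\{-1,0,1\}$-combination shifted by $k\alpha$ (for $0\le k\le s(t+1)$) lands in $A$, and then verifying directly that every other cycle misses $A$. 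To complete your argument, either make explicit the $\pm1$ coefficient observation and give the short inductive avoidance argument, or replace the genericity claim with an explicit label set as in the paper.
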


\begin{proof}
    Let~${\alpha \in A}$. 
    We claim that there is an infinite set~${\{g_i \colon i \in \mathbb{N}\}}$ of elements of~$\Gamma$ such that for all integers~$k'$ with~${0 \leq k' \leq s(t+1)}$ and for all distinct finite subsets~${S_1, S_2 \subseteq \mathbb{N}}$, we have 
    \begin{align}
        k'\alpha + \sum_{i \in S_1} g_i - \sum_{j \in S_2} g_j \not\in A. 
        \label{eqn:infiniteEPctex}
    \end{align}
    Indeed, if~$\Gamma$ has an element~$g'$ of infinite order, then we may choose a sufficiently large multiple~$g$ of~$g'$ so that no non-zero element of $\gen{g}$ is in the finite set ${\{ \alpha' - k'\alpha \colon \alpha' \in A,\ 0 \leq k' \leq s(t+1)\}}$. 
    Then~${\{ 2^{i}g \colon i \in \mathbb{N}\}}$ satisfies (\ref{eqn:infiniteEPctex}). 
    If every element of~$\Gamma$ has finite order, then we may sequentially choose an arbitrary element~${g_i \not\in \gen{A \cup \{ g_j \colon 1 \leq j \leq i-1 \}}}$ for all~${i \in \mathbb{N}}$. 
    This proves the claim.
    
    We will construct a graph by constructing~${s(t+1)}$ edge-disjoint cycles with the property that any set of~$s$ of them share a common vertex but no vertex is contained in more than~$s$ of them. 
    
    Let~$V$ be the set of subsets of~${[s(t+1)]}$ of size~$s$, let~${W := [s(t+1)] \times [\binom{s(t+1)-1}{s-1}]}$,
    and let~$G$ be the complete bipartite graph with bipartition~${(V,W)}$. 
    For each~${i \in [s(t+1)]}$, let~$O_i$ be a cycle of~$G$ whose vertex set is exactly 
    the union of~${\{i\} \times [\binom{s(t+1)-1}{s-1}]}$
    and the set of vertices in~$V$ containing~$i$. 
    Let~$e_i$ be an arbitrary edge of~$O_i$. 
    Observe that $E(O_i)\cap E(O_j)=\emptyset$ for distinct $i$,~$j$.
    Let~$\gamma$ be a $\Gamma$-labelling of~$G$ 
    assigning each edge in ${E(G) \setminus \{e_i \colon i \in [s(t+1)]\}}$ a distinct value in~${\{g_i \colon i \in \mathbb{N}\}}$, and for each~${i \in [s(t+1)]}$ assigning~$e_i$ the value~${\alpha - \gamma(O_i-e_i)}$.
    Each vertex of~$G$ is contained in at most~$s$ cycles in $\{O_i \colon i \in [s(t+1)]\}$, so every hitting set for~$\{O_i \colon i \in [s(t+1)]\}$ has size at least~${t+1}$, and by construction, for every set of $s$~cycles in~$\{O_i \colon i \in [s(t+1)]\}$, there is a vertex in~$V$ that belongs to all of the~$s$ cycles
    and no vertex is in $s+1$ distinct cycles in $\{O_i:i\in [s(t+1)]\}$.

    We will finish the proof by showing that
    the set $\mathcal O$ of all cycles of $G$ whose $\gamma$-values are in $A$
    is equal to~${\{O_i \colon i \in [s(t+1)]\}}$. 
    By definition, ${\gamma(O_i) = \alpha \in A}$ for each~${i \in [s(t+1)]}$.
    Now suppose that~${O}$ is a cycle in~${\mathcal{O}}$. 
    Let~${I := \{i \in [s(t+1)] \colon e_i \in E(O)\}}$, let~${F := E(O) \setminus \{e_i \colon i \in I\}}$, and let ${F' := \bigcup_{i \in I} ( E(O_i) \setminus \{e_i\} )}$. 
    Then
    \[
        \gamma(O) 
        = \sum_{i \in I} (\alpha - \gamma(O_i - e_i)) + \sum_{e \in F} \gamma(e)
        = \abs{I}\alpha+\sum_{e\in F} \gamma(e) - \sum_{e \in F'} \gamma(e),
    \]
    so by~(\ref{eqn:infiniteEPctex}), we have~${F = F'}$.
    Since~$O$ is a cycle and cycles in~${\{O_i \colon i \in [s(t+1)]\}}$ are edge-disjoint, we deduce that~${\abs{I} = 1}$ and~${O = O_i}$ for some~${i \in [s(t+1)]}$. 
    Hence,~${\mathcal{O} = \{O_i \colon i \in [s(t+1)]\}}$. 
\end{proof}

\subsection{Open problems}\label{subsec:openproblems}

We now discuss some interesting directions for future research in this area.

\begin{problem}
    \label{prob:1}
    Characterise subsets~$A$ of an abelian group~$\Gamma$ admitting a function~${f \colon \mathbb{N} \to \mathbb{N}}$ such that for every positive integer~$k$, every $\Gamma$-labelled graph~${(G,\gamma)}$ contains~$k$ vertex-disjoint cycles whose $\gamma$-values are in~$A$ or a hitting set  of size at most~$f(k)$
    for the set of cycles of~$G$ whose $\gamma$-values are in~$A$. 
\end{problem}

As seen in Theorem~\ref{thm:infiniteEPctex}, for an infinite group, such a set~$A$ needs to be infinite as well. 
This problem is already interesting for the group~$\mathbb{Z}$. 

As a surprising negative result, for every positive integer~$t$, there is a $\mathbb{Z}$-labelled graph~${(G,\gamma)}$ with no two vertex-disjoint cycles with $\gamma$-value at least~$0$ and no hitting set of size at most~$t$ for these cycles.
Let~$G$ be the graph with vertex set~${\{ v_i \colon i \in [4t+4] \}}$, where each vertex with an even index~$2i$ is adjacent to all vertices with odd indices~$j$ for which ${j \leq 2i+1}$. 
Let~$\gamma$ be the $\mathbb{Z}$-labelling of~$G$ which assigns value~${t+3}$ to the edge~$v_{2i}v_{2i+1}$ for all~${i \in [2t+1]}$ and value~${-1-t}$ to all other edges. 
In every cycle of this graph, both edges incident with the vertex of highest index in the cycle and both edges incident with the vertex of lowest index in the cycle have value~${-1-t}$, 
so there are at least two more edges of value~${-1-t}$ than of value~${t+3}$. 
From this, it is easy to verify that 
any cycle of $\gamma$-value at least~$0$ has length at least~${2t+4}$
and the construction satisfies the desired properties. 
This construction can easily be adapted to apply to cycles of $\gamma$-value at least~$L$ for any integer~$L$. 
This is in contrast to the case of cycles of length at least~$L$, where Thomassen~\cite{Thomassen1988} showed that an Erd\H{o}s-Posa result holds. 
Thus we also present the following variant of Problem~\ref{prob:1}.

\begin{problem}
    Characterise sets~$A$ of positive integers admitting a function~${f \colon \mathbb{N} \to \mathbb{N}}$ such that for every positive integer~$k$, every graph~$G$ contains~$k$ vertex-disjoint cycles whose lengths are  in~$A$ 
    or a hitting set of size at most~$f(k)$ for 
    the set of cycles of~$G$ whose lengths are in~$A$.
\end{problem}

The construction presented above can be also adapted to show that a ${(1/s)}$-integral analogue of the Erd\H{o}s-P\'{o}sa theorem fails for cycles of non-negative values in $\mathbb{Z}$-labelled graphs for every positive integer~$s$. 
Interestingly, we know of no natural example where a half-integral analogue of the Erd\H{o}s-P\'{o}sa theorem fails but some fractional analogue of the Erd\H{o}s-P\'{o}sa theorem holds. 
In fact, we conjecture the following.
For a subset $A$ of an abelian group $\Gamma$
and a $\Gamma$-labelled graph~$(G,\gamma)$, 
let $\mathcal O^A_{G,\gamma}$ be the set of all cycles whose $\gamma$-values are in~$A$.

\begin{conjecture}
    Let~$\Gamma$ be an abelian group, let~${A \subseteq \Gamma}$, and let~${s \geq 4}$ be an integer.

    If there is a function~${f \colon \mathbb{N} \to \mathbb{N}}$ such that 
    for every $\Gamma$-labelled graph $(G,\gamma)$ 
    and every positive integer~$k$, 
    there exist either $k$ cycles in $\mathcal O^A_{G,\gamma}$ such that 
    no vertex is in~$s$ of them
    or a hitting set for  $\mathcal O^A_{G,\gamma}$ of size at most~${f(k)}$, 
    then there is a function~${f' \colon \mathbb{N} \to \mathbb{N}}$ such that for
    every $\Gamma$-labelled graph~${(G',\gamma')}$ 
    and every positive integer $k$, 
    there exist either~$k$ cycles in $\mathcal O^A_{G',\gamma'}$ such that no vertex is in three of them 
    or a hitting set for  $\mathcal O^A_{G',\gamma'}$ of size at most~${f'(k)}$. 
\end{conjecture}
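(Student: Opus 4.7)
The plan is to prove the contrapositive: assuming the $(1/2)$-integral \EP{} property fails for $(\Gamma,A)$, show that the $(1/(s-1))$-integral \EP{} property also fails for every integer $s\geq 4$. I would reduce this to a purely combinatorial \emph{extraction lemma}: there exists a function $g_s\colon\mathbb{N}\to\mathbb{N}$ such that in any graph, any family of $g_s(k)$ cycles in which each vertex appears in at most $s-1$ of them contains a subfamily of $k$ cycles in which each vertex appears in at most $2$ of them.

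Granting this lemma, the conjecture follows cleanly. Suppose $(1/(s-1))$-integral \EP{} holds for $(\Gamma,A)$ with function $f$. For any $(G,\gamma)$ and $k$, apply the hypothesis with parameter $g_s(k)$: either there is a family of $g_s(k)$ cycles in $\mathcal{O}^{A}_{G,\gamma}$ with each vertex in at most $s-1$ of them --- from which the extraction lemma yields $k$ cycles with each vertex in at most $2$ of them --- or a hitting set of size $f(g_s(k))$. Thus $(1/2)$-integral \EP{} holds with function $f'(k):=f(g_s(k))$.

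For the extraction lemma itself, my first attempt would be an LP-rounding argument. Encode the problem as a $2$-matching in the auxiliary hypergraph $H$ whose vertices are the $N:=g_s(k)$ cycles and whose hyperedges, one per vertex of $G$, record which cycles pass through it; by hypothesis each hyperedge has size at most $s-1$. Setting $x_C=\tfrac{2}{s-1}$ for every cycle $C$ is a feasible fractional $2$-matching of value $\tfrac{2N}{s-1}$, so the fractional optimum is $\Omega(N/s)$. To derive an integral $2$-matching of size $\Omega(N/\mathrm{poly}(s))$ I would try a probabilistic alteration: include each cycle independently with probability $p$ slightly below $\tfrac{2}{s-1}$, then, at each vertex of $G$ that still sees more than two included cycles, delete enough of them to restore the bound.

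The principal obstacle will be analysing the deletion step, since deletions cascade: removing one cycle can affect many other vertices, and one has to prove that the total loss is only a constant fraction of the expected number of inclusions. If this purely combinatorial route yields too weak a bound, my backup plan is structural: extend Theorem~\ref{thm:mainobstruction} to describe obstructions to $(1/2)$-integral \EP{} (analogously to how the current obstructions prevent integral \EP{} yet always admit half-integral packings), and then show that each such obstruction simultaneously caps the size of any $(1/(s-1))$-integral packing, ideally by arguing that the handlebar configuration producing a half-integral-only obstruction already forbids arbitrarily many coexistent cycles even with multiplicity $s-1$ allowed.
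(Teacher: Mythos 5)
This statement is presented in the paper as an open conjecture (Subsection 7.7); there is no proof there to compare against, so the only question is whether your proposal is sound, and unfortunately its central ingredient fails.

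The extraction lemma you propose — that in any graph, any family of $g_s(k)$ cycles with each vertex on at most $s-1$ of them contains a subfamily of $k$ cycles with each vertex on at most $2$ — is false, and the paper itself contains the counterexample. Take the bipartite construction from the proof of Theorem~\ref{thm:infiniteEPctex} with parameter $s-1$ in place of $s$ (so $s-1\geq 3$ since $s\geq 4$): one side of $G$ is the set of $(s-1)$-element subsets of $[(s-1)(t+1)]$, the other side is $[(s-1)(t+1)]\times[\binom{(s-1)(t+1)-1}{s-2}]$, and for each $i\in[(s-1)(t+1)]$ the cycle $O_i$ passes through $\{i\}\times[\binom{(s-1)(t+1)-1}{s-2}]$ together with all $(s-1)$-subsets containing $i$. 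Every vertex lies on at most $s-1$ of these $N:=(s-1)(t+1)$ cycles, yet because any three indices $i_1,i_2,i_3$ extend to an $(s-1)$-subset, every three of the cycles $O_{i_1},O_{i_2},O_{i_3}$ share a vertex lying on all three. Hence the largest subfamily in which every vertex lies on at most two cycles has size exactly $2$, no matter how large $t$ (and thus $N$) is. In particular $g_s(3)$ cannot exist. (The group labelling in Theorem~\ref{thm:infiniteEPctex} plays no role here; this is a purely graph-theoretic obstruction.) The same example exposes the flaw in your LP heuristic: the uniform fractional $2$-matching has value $2N/(s-1)\to\infty$ while the maximum integral $2$-matching is $2$, so the integrality gap of the hypergraph $2$-matching polytope is unbounded and no alteration argument can close it.

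With the extraction lemma gone, the direct reduction collapses. Your backup plan — extending Theorem~\ref{thm:mainobstruction} to characterise obstructions to the \emph{half-integral} property and then arguing that any such obstruction also caps $(1/(s-1))$-integral packings — is exactly the direction the paper's discussion gestures towards, but it is at present an open research program rather than a proof. Bear in mind that the paper also points out the conjecture becomes false if one is allowed to restrict to a subclass of $\Gamma$-labelled graphs, so any eventual argument must genuinely exploit the freedom to take arbitrary $\Gamma$-labelled graphs as input.
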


If we strengthen this conjecture to allow restricting the class of $\Gamma$-labelled graphs considered, then there are examples for which this strengthening fails.
In other words, there exist
a subset $A$ of an abelian group $\Gamma$
and 
a class $\mathcal C$ of $\Gamma$-labelled graphs 
for which the following statement is false.
\begin{quote}
    \itshape 
    If there is a function $f:\mathbb N\to \mathbb N$ 
    such that for every $(G,\gamma)\in \mathcal C$
    and every positive integer $k$, 
    there exist either $k$ cycles in $\mathcal O^A_{G,\gamma}$
    such that no vertex is in $s$ of them or 
    a hitting set for $O^A_{G,\gamma}$ of size at most $f(k)$,
    then 
    there is a function~${f' \colon \mathbb{N} \to \mathbb{N}}$ such that for every ${(G',\gamma')}\in \mathcal C$ and every positive integer $k$, 
    there exist either~$k$ cycles in~$\mathcal O^A_{G',\gamma'}$ such that no vertex is in three of them 
    or a hitting set for  $\mathcal O^A_{G',\gamma'}$ of size at most~${f'(k)}$.
\end{quote}
Moreover, whenever $A$ is a nonempty finite subset of an infinite abelian group~$\Gamma$,  
the above statement is false by Theorem~\ref{thm:infiniteEPctex}.
However, we do not know of any counterexample to the above statement 
for which there is a class~${\mathcal G}$ of graphs 
such that 
${\mathcal C}$ is the class of all ${\Gamma}$-labelled graphs ${(G,\gamma)}$
with~${G\in \mathcal G}$.

\subsection*{Acknowledgements.}
The authors would like to thank the anonymous reviewers for their careful reading of the manuscript and their helpful comments.
\subsection*{Declarations}\ \\
\noindent
\textbf{Data availability.}
Data sharing not applicable to this article as no data sets were generated or analyzed during the current study.\\
\noindent
\textbf{Conflict of interest.}
On behalf of all authors, the corresponding author states that there is no conflict of interest.

\end{document}